\title{Genus zero Gopakumar-Vafa type invariants for Calabi-Yau 4-folds}
\date{}
\author{Yalong Cao}
\address{Kavli Institute for the Physics and Mathematics of the Universe (WPI),The University of Tokyo Institutes for Advanced Study, The University of Tokyo, Kashiwa, Chiba 277-8583, Japan}
\address{Mathematical Institute, University of Oxford, Andrew Wiles Building, Radcliffe Observatory Quarter, Woodstock Road, Oxford, OX2 6GG, U.K.}
\email{yalong.cao@ipmu.jp; yalong.cao@maths.ox.ac.uk}
\author{Davesh Maulik}
\address{Massachusetts Institute of Technology, Departement of Mathematics, 77
Massachusetts Avenue Cambridge, MA 02139, US.}
\email{maulik@mit.edu}
\author{Yukinobu Toda}
\address{Kavli Institute for the Physics and Mathematics of the Universe (WPI),The University of Tokyo Institutes for Advanced Study, The University of Tokyo, Kashiwa, Chiba 277-8583, Japan}
\email{yukinobu.toda@ipmu.jp}
\DeclareFontFamily{U}{rsfs}{%
\skewchar\font127}
\DeclareFontShape{U}{rsfs}{m}{n}{%
<-6>rsfs5<6-8.5>rsfs7<8.5->rsfs10}{}
\DeclareSymbolFont{rsfs}{U}{rsfs}{m}{n}
\DeclareRobustCommand*\rsfs{%
\@fontswitch\relax\mathrsfs}
\theoremstyle{plain}
\newtheorem{thm}{Theorem}[section]
\newtheorem{prop}[thm]{Proposition}
\newtheorem{lem}[thm]{Lemma}
\newtheorem{defi}[thm]{Definition}
\newtheorem{rmk}[thm]{Remark}
\newtheorem{cor}[thm]{Corollary}
\newtheorem{claim}[thm]{Claim}
\newtheorem{prop-defi}[thm]{Proposition-Definition}
\newtheorem{thm-defi}[thm]{Theorem-Definition}
\newtheorem{lem-defi}[thm]{Lemma-Definition}
\newtheorem{conj}[thm]{Conjecture}
\newdimen\argwidth
\def\db[#1\db]{
 \setbox0=\hbox{$#1$}\argwidth=\wd0
 \setbox0=\hbox{$\left[\box0\right]$}
  \advance\argwidth by -\wd0
 \left[\kern.3\argwidth\box0 \kern.3\argwidth\right]}
\newcommand{\aA}{\mathcal{A}}
\newcommand{\cC}{\mathcal{C}}
\newcommand{\eE}{\mathcal{E}}
\newcommand{\fF}{\mathcal{F}}
\newcommand{\gG}{\mathcal{G}}
\newcommand{\hH}{\mathcal{H}}
\newcommand{\iI}{\mathcal{I}}
\newcommand{\lL}{\mathcal{L}}
\newcommand{\nN}{\mathcal{N}}
\newcommand{\oO}{\mathcal{O}}
\newcommand{\uU}{\mathcal{U}}
\newcommand{\vV}{\mathcal{V}}
\newcommand{\zZ}{\mathcal{Z}}
\newcommand{\Supp}{\mathop{\rm Supp}\nolimits}
\newcommand{\Hom}{\mathop{\rm Hom}\nolimits}
\newcommand{\dR}{\mathbf{R}}
\newcommand{\dL}{\mathbf{L}}
\newcommand{\Hilb}{\mathop{\rm Hilb}\nolimits}
\newcommand{\Pic}{\mathop{\rm Pic}\nolimits}
\newcommand{\id}{\textrm{id}}
\newcommand{\ch}{\mathop{\rm ch}\nolimits}
\newcommand{\Ext}{\mathop{\rm Ext}\nolimits}
\newcommand{\Spec}{\mathop{\rm Spec}\nolimits}
\newcommand{\Coh}{\mathop{\rm Coh}\nolimits}
\newcommand{\cneq}{\mathrel{\raise.095ex\hbox{:}\mkern-4.2mu=}}
\newcommand{\eqcn}{\mathrel{=\mkern-4.5mu\raise.095ex\hbox{:}}}
\newcommand{\Cok}{\mathop{\rm Cok}\nolimits}
\newcommand{\DT}{\mathop{\rm DT}\nolimits}
\newcommand{\Ker}{\mathop{\rm Ker}\nolimits}
\newcommand{\RHom}{\mathop{\dR\mathrm{Hom}}\nolimits}
\begin{document}
\maketitle
\begin{abstract}
In analogy with the Gopakumar-Vafa conjecture on CY 3-folds, Klemm and Pandharipande defined GV type invariants on Calabi-Yau 4-folds using Gromov-Witten theory
and conjectured their integrality. In this paper, we propose
 a sheaf-theoretic interpretation of their genus zero
 invariants using Donaldson-Thomas theory on CY 4-folds.
More specifically, we conjecture genus zero GV type invariants are $\mathrm{DT_{4}}$ invariants for one-dimensional stable sheaves
on CY 4-folds.
Some examples are computed for both compact and non-compact CY 4-folds to 
support our conjectures. We also propose an equivariant version of the conjectures for local curves and verify
them in certain cases.
\end{abstract}

\tableofcontents

\section{Introduction}
\subsection{Background}
Gromov-Witten invariants \cite{BF, LT} are rational numbers which virtually count stable maps from complex curves to algebraic varieties (or symplectic manifolds).
Because of multiple-cover contributions, they are in general not integers and hence are not honest enumerative invariants. On a Calabi-Yau 3-fold $Y$, motivated by
string duality,  Gopakumar-Vafa \cite{GV} conjectured the existence of integral invariants $n_{g,\beta}$ ($g\geqslant0$, $\beta\in H_2(Y)$) which determine Gromov-Witten invariants $\mathrm{GW}_{g,\beta}$ by the identity
\begin{equation} \sum_{\beta>0,g\geqslant0}\mathrm{GW}_{g,\beta} \lambda^{2g-2}t^{\beta}= \sum_{\beta>0, g\geqslant0, k\geqslant1}\frac{n_{g,\beta}}{k}\Big(2\sin\left(\frac{k\lambda}{2}\right)  \Big)^{2g-2}t^{k\beta} .  \nonumber \end{equation}
In particular, when $g=0$, it recovers the Aspinwall-Morrison multiple cover formula
\begin{align*}
\mathrm{GW}_{0, \beta}=\sum_{k\geqslant 1, k|\beta} \frac{1}{k^3}
n_{0, \beta/k}.
\end{align*}
Moreover, the invariants $n_{g, \beta}$ should be interpreted in
a sheaf-theoretic way~\cite{GV, HST, Katz, KL3, MT}, for example when $g=0$, the invariant $n_{0, \beta}$ is conjectured to be the
Donaldson-Thomas invariant~\cite{Thomas} counting one-dimensional stable sheaves $E$ with $[E]=\beta$, $\chi(E)=1$ on $Y$.

As Gromov-Witten invariants can be defined for smooth varieties of any dimension, it is natural to ask for generalizations of GV type invariants in higher dimensions. In~\cite{KP}, Klemm-Pandharipande gave a definition of
GV type invariants on Calabi-Yau 4-folds via Gromov-Witten theory,
and conjectured that they are integers.

\subsection{GV type invariants on CY 4-folds}
Let $X$ be a smooth projective Calabi-Yau 4-fold \cite{Yau}.
Note that in this case,
Gromov-Witten invariants
vanish for genus $g\geqslant2$ by the
dimension reason, so one only needs to consider the genus 0 and 1 cases.

The genus 0 GW invariants on $X$ are defined using
insertions:
 for integral classes $\gamma_i \in H^{m_i}(X, \mathbb{Z}), \
1\leqslant i\leqslant n$,
one defines
\begin{equation}
\mathrm{GW}_{0, \beta}(\gamma_1, \ldots, \gamma_n)
:=\int_{[\overline{M}_{0, n}(X, \beta)]^{\rm{vir}}}
\prod_{i=1}^n \mathrm{ev}_i^{\ast}(\gamma_i),
\nonumber \end{equation}
where $\mathrm{ev}_i \colon \overline{M}_{0, n}(X, \beta)\to X$
is the $i$-th evaluation map.
The invariants
\begin{align}\label{intro:n}
n_{0, \beta}(\gamma_1, \ldots, \gamma_n) \in \mathbb{Q}
\end{align}
are defined in \cite{KP} by the identity
\begin{align*}
\sum_{\beta>0}\mathrm{GW}_{0, \beta}(\gamma_1, \ldots, \gamma_n)q^{\beta}=
\sum_{\beta>0}n_{0, \beta}(\gamma_1, \ldots, \gamma_n) \sum_{d=1}^{\infty}
d^{n-3}q^{d\beta}.
\end{align*}
\begin{conj}\emph{(\cite{KP})}
The invariants 
$n_{0, \beta}(\gamma_1, \ldots, \gamma_n)$ are integers.
\end{conj}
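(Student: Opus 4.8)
The plan is to deduce integrality not from the Gromov--Witten definition itself---where the rational numbers $\mathrm{GW}_{0,\beta}$ and the denominators hidden in the multiple cover series $\sum_{d\ge1}d^{n-3}q^{d\beta}$ obscure any integral structure---but by realizing $n_{0,\beta}(\gamma_1,\dots,\gamma_n)$ as a genuinely integer-valued sheaf-theoretic count, in exact analogy with the identification of the genus zero Calabi--Yau $3$-fold invariant $n_{0,\beta}$ with the Donaldson--Thomas count of one-dimensional stable sheaves recalled above. Thus I would attach to each effective $\beta$ the moduli space $M_\beta$ of Gieseker-stable one-dimensional sheaves $E$ on $X$ with $[\mathrm{supp}\,E]=\beta$ and $\chi(E)=1$, build an integer invariant $\mathrm{DT}_4(\gamma_1,\dots,\gamma_n)$ from its Donaldson--Thomas theory, and reduce the conjecture to the equality $n_{0,\beta}(\gamma_1,\dots,\gamma_n)=\pm\,\mathrm{DT}_4(\gamma_1,\dots,\gamma_n)$.

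Constructing this invariant is the first step. Since $X$ is a Calabi--Yau $4$-fold, $M_\beta$ carries a $(-2)$-shifted symplectic structure, and the work of Borisov--Joyce and Oh--Thomas produces, after a choice of orientation, a virtual fundamental class $[M_\beta]^{\mathrm{vir}}\in H_2(M_\beta,\mathbb{Z})$; indeed Hirzebruch--Riemann--Roch gives $\chi(E,E)=0$ for a one-dimensional sheaf, so the complex virtual dimension is $1-\tfrac12\chi(E,E)=1$. Imposing $\chi(E)=1$ forces semistable sheaves to be stable whenever $\beta$ is primitive, so that $M_\beta$ is a projective scheme carrying a universal sheaf $\mathbb{E}$ on $M_\beta\times X$. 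For integral classes $\gamma_i\in H^{m_i}(X,\mathbb{Z})$ I would form the primary insertions
\[
\tau(\gamma_i):=\pi_{M\ast}\big(\mathrm{ch}_3(\mathbb{E})\cdot\pi_X^\ast\gamma_i\big)\in H^{m_i-2}(M_\beta),
\]
so that, when $\sum_i m_i=2n+2$ is the dimension constraint making $\mathrm{GW}_{0,\beta}(\gamma_1,\dots,\gamma_n)$ nonzero, the total insertion degree $\sum_i(m_i-2)=2$ matches the real dimension of $[M_\beta]^{\mathrm{vir}}$, and I set $\mathrm{DT}_4(\gamma_1,\dots,\gamma_n)=\int_{[M_\beta]^{\mathrm{vir}}}\prod_i\tau(\gamma_i)$.

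Integrality of $\mathrm{DT}_4$ is the next step, and it is already not completely formal: the virtual class is integral and the $\gamma_i$ are integral, but $\mathrm{ch}_3(\mathbb{E})$ is a rational characteristic class that can a priori carry a denominator, so one must check that the full pairing lands in $\mathbb{Z}$, either by exhibiting an integral refinement of the insertion or by a parity argument on $M_\beta$. The heart of the matter, however, is the correspondence $n_{0,\beta}=\pm\,\mathrm{DT}_4$. I would establish it by a genus zero Gromov--Witten/Donaldson--Thomas comparison on the $4$-fold: for an isolated rigid rational curve $C\subset X$ the all-degree multiple cover contribution to the Gromov--Witten side should assemble precisely into the series $\sum_{d\ge1}d^{n-3}q^{d\beta}$ stripped off in the Klemm--Pandharipande definition, while on the sheaf side each such $C$ contributes a single reduced point of $M_{[C]}$ with a computable weight; the defining multiple cover factor is exactly what is needed to make the two bookkeepings agree.

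The main obstacle is proving this correspondence beyond the locus of isolated rigid curves, where the naive point-by-point matching fails and positive-dimensional families of sheaves, non-reduced structures, and curves of lower degree sitting inside $\beta$ all contribute. The cleanest route I foresee is to prove the equality first for local models---a neighbourhood of a rigid rational curve together with its normal bundle, and local surface or threefold geometries embedded in $X$---verify the multiple cover series there by direct computation, and then propagate to the global statement using deformation invariance of both $\mathrm{GW}$ and $\mathrm{DT}_4$ together with a degeneration argument. A parallel foundational difficulty, which must be resolved before the sign in $n_{0,\beta}=\pm\,\mathrm{DT}_4$ can even be pinned down, is that $[M_\beta]^{\mathrm{vir}}$ depends on orientation data; one has to fix orientations compatibly across all $\beta$ and all insertions so that the signs are consistent, and it is this global coherence---rather than any single local computation---that I expect to be the crux.
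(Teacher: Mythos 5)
This statement is not proved in the paper: it is Klemm--Pandharipande's integrality conjecture, quoted verbatim from \cite{KP}, and the paper's entire contribution is to \emph{propose} a sheaf-theoretic route to it (Conjecture \ref{conj:GW/GV}) and verify that proposal in special cases. Your plan reproduces that proposal almost exactly --- the moduli space $M_\beta$ of one-dimensional stable sheaves with $\chi=1$, the Borisov--Joyce virtual class $[M_\beta]^{\rm{vir}}\in H_2(M_\beta,\mathbb{Z})$, the insertion $\tau(\gamma)=\pi_{M\ast}(\pi_X^{\ast}\gamma\cup\ch_3(\eE))$, and the identity $n_{0,\beta}(\gamma_1,\ldots,\gamma_n)=\mathrm{DT}_4(\beta\mid\gamma_1,\ldots,\gamma_n)$ --- but the central step, the genus zero GW/$\mathrm{DT}_4$ correspondence, is precisely what remains conjectural. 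The paper's only general argument for it is the heuristic in Subsection \ref{subsec:geometric}, valid for an ``ideal'' Calabi--Yau 4-fold in which all curves are smooth of expected dimension; beyond that the paper checks elliptic fibre classes, products $Y\times E$, hyperk\"ahler reduced invariants, and local surfaces and curves. Your proposed route via local models plus deformation invariance and degeneration is plausible as a research program but is not carried out anywhere, and the orientation coherence you correctly identify as the crux is likewise unresolved in the paper (the conjecture is stated only ``for a suitable choice of orientation''). So there is a genuine gap: the proposal is a restatement of the open conjecture, not a proof of the statement.

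Two smaller corrections. First, your concern that $\ch_3(\mathbb{E})$ might carry denominators is unfounded here: for a sheaf supported in dimension one on a 4-fold, $\ch_3(\eE)$ is the Poincar\'e dual of the fundamental cycle of the support, so $\tau(\gamma)$ lands in integral cohomology and $\mathrm{DT}_4(\beta\mid\gamma_1,\ldots,\gamma_n)\in\mathbb{Z}$ with no parity argument needed. Second, the restriction to primitive $\beta$ for stability is unnecessary: with $\chi(E)=1$ every semistable one-dimensional sheaf is automatically stable for any $\beta$ (cf.\ Remark \ref{rmk on stability}), so $M_\beta$ is a fine projective moduli space in all classes.
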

In~\cite{KP}, 
genus zero 
GW invariants on $X$ are computed directly in many examples, 
using either virtual localization or mirror symmetry techniques, in support of their conjectures.

\subsection{Our proposal}
The aim of this paper is to give a sheaf-theoretic interpretation of the above GV type invariants (\ref{intro:n}) in terms of
Donaldson-Thomas invariants for CY 4-folds
(called $\mathrm{DT}_4$-invariants)
introduced by Cao-Leung~\cite{CL}
and Borisov-Joyce~\cite{BJ}.
More specifically, we consider the moduli space $M_\beta$ of 1-dimensional stable sheaves with Chern character $(0,0,0,\beta,1)$.
By the results of~\cite{CL, BJ}, assuming the existence of a suitable orientation on this space,
there exists a $\DT_4$-virtual class
\begin{align}\label{intro:DT4vir}
[M_{\beta}]^{\rm{vir}} \in H_{2}(M_{\beta}, \mathbb{Z}). 
\end{align}
The virtual class (\ref{intro:DT4vir}) depends on the choice of an orientation on certain line bundle.  On each connected component
of $M_{\beta}$, there are two choices of orientations, which affect the corresponding contribution to the class
(\ref{intro:DT4vir}) by a sign (for each connected component). 

In order to define the invariants, we require 
some insertions. 
Let $\eE$ be a universal sheaf on $X \times M_{\beta}$. 
We define the map $\tau$ by 
\begin{align*}
\tau \colon H^{m}(X)\to H^{m-2}(M_{\beta}), \
\tau(\gamma)=\pi_{M\ast}(\pi_X^{\ast}\gamma \cup\ch_3(\eE) ),
\end{align*}
where $\pi_X$, $\pi_M$ are projections from $X \times M_{\beta}$
to corresponding factors, and $\ch_3(\eE)$ is the
Poincar\'e dual to the
fundamental cycle of the universal sheaf $\eE$.
For $\gamma_i \in H^{m_i}(X, \mathbb{Z}), \
1\leqslant i\leqslant n$, the $\mathrm{DT}_{4}$ invariant
is defined by 
\begin{align*}\mathrm{DT_{4}}(\beta \mid \gamma_1,\ldots,\gamma_n):=\int_{[M_{\beta}]^{\rm{vir}}} \prod_{i=1}^{n}\tau(\gamma_i). 
 \end{align*}

\begin{conj}\emph{(Conjecture \ref{conj:GW/GV})} \label{intro:conj:GW/GV}
For a suitable choice of orientation, we
have the identity
\begin{align*}
n_{0,\beta}(\gamma_1, \ldots, \gamma_n)=
\mathrm{DT}_{4}(\beta \mid \gamma_1, \ldots, \gamma_n).
\end{align*}
In particular, we have the multiple cover formula
\begin{align*}
\mathrm{GW}_{0, \beta}(\gamma)=
\sum_{k|\beta}\frac{1}{k^{2}}\cdot\mathrm{DT}_{4}(\beta/k \mid \gamma).
\end{align*}
\end{conj}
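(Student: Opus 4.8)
The statement bundles two assertions: the identity $n_{0,\beta}(\gamma_1,\ldots,\gamma_n)=\mathrm{DT}_{4}(\beta\mid\gamma_1,\ldots,\gamma_n)$, and the multiple cover formula ``in particular.'' The second is a purely formal consequence of the first, so I would dispose of it at once. Taking a single insertion $\gamma$ (the case $n=1$) in the Klemm--Pandharipande defining relation and substituting the conjectured identity gives
\begin{align*}
\sum_{\beta>0}\mathrm{GW}_{0,\beta}(\gamma)\,q^{\beta}
=\sum_{\beta>0}\mathrm{DT}_{4}(\beta\mid\gamma)\sum_{d=1}^{\infty}d^{\,1-3}\,q^{d\beta}
=\sum_{\beta>0}\mathrm{DT}_{4}(\beta\mid\gamma)\sum_{d=1}^{\infty}\frac{q^{d\beta}}{d^{2}}.
\end{align*}
Comparing the coefficients of $q^{\beta}$, the surviving terms are those with $d\beta'=\beta$; writing $k=d$ and $\beta'=\beta/k$, this is precisely $\mathrm{GW}_{0,\beta}(\gamma)=\sum_{k\mid\beta}k^{-2}\,\mathrm{DT}_{4}(\beta/k\mid\gamma)$. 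Thus the multiple cover formula requires no further input once the main identity is known; it is the genus-zero CY$4$ analogue of the Aspinwall--Morrison formula recalled in the introduction.

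It remains to address the main identity, and here I would begin with the consistency check that the two sides are even numbers under the same hypotheses. The moduli space $\overline{M}_{0,n}(X,\beta)$ has virtual dimension $(\dim X-3)+\int_{\beta}c_{1}(X)+n=n+1$ since $X$ is Calabi--Yau, so $[\overline{M}_{0,n}(X,\beta)]^{\mathrm{vir}}$ has real dimension $2n+2$ and $\mathrm{GW}_{0,\beta}(\gamma_1,\ldots,\gamma_n)$ is nonzero only when $\sum_i m_i=2n+2$. On the sheaf side $[M_{\beta}]^{\mathrm{vir}}\in H_{2}(M_{\beta})$ has real dimension $2$, while $\tau(\gamma_i)\in H^{m_i-2}(M_{\beta})$, so $\prod_i\tau(\gamma_i)$ pairs nontrivially with the virtual class exactly when $\sum_i(m_i-2)=2$, i.e. again $\sum_i m_i=2n+2$. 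The matching of these constraints, together with the matching power $k^{-2}$ above, is the first evidence that the identity is correctly normalised.

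The geometric core is a correspondence between genus-zero stable maps and one-dimensional stable sheaves, parallel to the genus-zero Gopakumar--Vafa/Donaldson--Thomas picture on CY$3$-folds. The plan is to send a stable map $f\colon C\to X$ in class $\beta$ to the pushforward $f_{\ast}\oO_{C}$, whose fundamental cycle records $\beta$ and whose Euler characteristic is normalised to $\chi=1$; the insertion $\tau(\gamma)=\pi_{M\ast}(\pi_X^{\ast}\gamma\cup\ch_3(\eE))$ is designed so that $\ch_3(\eE)$ plays the role of the image cycle and thereby matches $\mathrm{ev}^{\ast}\gamma$ on the Gromov--Witten side. Concretely I would reduce to a local model of an isolated rational curve $C\cong\mathbb{P}^1$ with normal bundle of determinant $\oO(-2)$, compute the multiple-cover contributions to $\mathrm{GW}_{0,k\beta}$ on one side and the contribution of the sheaves supported on thickenings of $C$ to $[M_{k\beta}]^{\mathrm{vir}}$ on the other, and check that they agree term by term. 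For local curves this is exactly where the equivariant version of the conjecture and torus localization become the effective computational tool.

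The main obstacle is that, unlike the three-fold case, there is presently no general Gromov--Witten$/\mathrm{DT}_{4}$ correspondence---no degeneration or wall-crossing machinery---allowing one to assemble the local contributions into a proof for arbitrary $\beta$, and the class $[M_{\beta}]^{\mathrm{vir}}$ is constructed only through the gauge-theoretic framework of Cao--Leung and Borisov--Joyce, which makes direct evaluation delicate. Compounding this, the virtual class depends on a choice of orientation on each connected component, flipping the sign of the contribution; pinning down the ``suitable'' orientation that reproduces the structured Gromov--Witten predictions is itself a genuine difficulty. For these reasons I would not attempt a uniform proof, but rather verify the identity in the compact and non-compact examples and, equivariantly, for local curves, treating the orientation as a normalization to be fixed compatibly across components so that the signs align with the Klemm--Pandharipande invariants.
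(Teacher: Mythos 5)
Your proposal is correct and matches what the paper actually does: the statement is a conjecture, and the paper's only ``proof'' content is (i) the purely formal derivation of the multiple cover formula from the Klemm--Pandharipande defining relation with $n=1$, exactly as you carry out, and (ii) a heuristic argument for an `ideal' $X$ plus verifications in examples (elliptic fibrations, $Y\times E$, hyperk\"ahler, local surfaces, equivariant local curves), which is the strategy you outline. The one detail where your heuristic drifts from the paper's: in the ideal CY 4-fold the rational curves are not isolated but move in compact one-dimensional families with generic normal bundle $\oO_{\mathbb{P}^1}(-1,-1,0)$ (this is why $[M_\beta]^{\rm{vir}}$ lies in $H_2$ and an insertion is needed), so the local model to compare is the fundamental class of the family $[T]$ paired with $\tau(\gamma)$, rather than a rigid $\mathbb{P}^1$ with normal bundle of determinant $\oO(-2)$.
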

In the current formulation, we do not specify how to choose the orientation in order to match invariants.  While a priori there are many choices
of orientations, gauge theory arguments (explained later) suggest there are deformation invariant orientations.  However, specifying the choice among them requires further investigation.  \\

Our proposal is based on the
heuristic argument in Subsection~\ref{subsec:geometric},
 where we prove
Conjecture~\ref{intro:conj:GW/GV}
assuming the CY 4-fold $X$ to be 'ideal', i.e. curves in $X$ are smooth of expected dimensions.
Apart from that, we verify our conjecture in examples as follows.
\subsection{Verifications of the conjecture I: compact examples}
We first prove Conjecture~\ref{intro:conj:GW/GV}
in some examples of compact CY 4-folds. 

${}$  \\
\textbf{Elliptic fibrations}. 
We consider a projective CY 4-fold $X$ which
admits an elliptic fibration 
\begin{align*}
\pi \colon X\to\mathbb{P}^{3}
\end{align*}
 over $\mathbb{P}^{3}$, given by a Weierstrass model. 
Let $f$ be a general fiber of $\pi$. Then we have  
\begin{prop}\emph{(Proposition \ref{prop on elliptic fib})} 
For multiple fiber classes $\beta=r[f]$, $r\geqslant1$, 
Conjecture~\ref{intro:conj:GW/GV} is true.  
\end{prop}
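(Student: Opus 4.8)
The plan is to compute both sides explicitly for fibre classes and to match them, the central geometric fact being that the moduli space $M_{r[f]}$ is governed by the relative Fourier--Mukai transform of the Weierstrass fibration.

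\emph{Identification of the moduli space.} A stable sheaf $E$ with $\ch(E)=(0,0,0,r[f],1)$ is set-theoretically supported in the fibres of $\pi$, and by stability it must sit over a single point $p\in\mathbb{P}^3$; over the locus of smooth fibres $E$ restricts to a rank $r$, degree $1$ stable bundle on the elliptic curve $E_p$. First I would apply the relative Fourier--Mukai transform along $\pi$ (the relative Jacobian of a Weierstrass model being $X$ itself) to produce an isomorphism $M_{r[f]}\cong X$ for every $r\geqslant 1$, commuting with the maps to $\mathbb{P}^3$. This requires checking that the transform extends across the discriminant $\Delta\subset\mathbb{P}^3$; the explicit Weierstrass normal form keeps the singular (generically nodal, hence rational) fibres under control, and one also obtains that $M_{r[f]}$ is smooth.

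\emph{The virtual class.} The normal bundle of a fibre is trivial, $N_{E_p/X}\cong\mathcal{O}_{E_p}^{\oplus 3}$, so the local-to-global spectral sequence computes $\Ext^1(E,E)\cong\mathbb{C}^4$ and $\Ext^2(E,E)\cong\mathbb{C}^6$; thus $M_{r[f]}$ is smooth of dimension $4$ with a rank $6$ obstruction bundle, matching real virtual dimension $2$. The Serre-duality pairing exhibits the obstruction bundle as a hyperbolic bundle $V\oplus V^{\ast}$ over $X$, with isotropic half $V\cong\lL\otimes\pi^{\ast}T_{\mathbb{P}^3}$ for an explicit line bundle $\lL$ (a twist of $\pi^{\ast}\mathcal{O}(4)$). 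For such a bundle the square root Euler class entering the $\DT_4$ construction equals $c_3(V)$ up to the orientation sign, so that $[M_{r[f]}]^{\rm vir}=\pm\,c_3(V)\cap[X]\in H_2(X)$. Because the Fourier--Mukai equivalence matches the $\Ext$-groups, this class is independent of $r$.

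\emph{Evaluation and matching.} The fundamental cycle of the universal sheaf is $r$ times the universal fibre $X\times_{\mathbb{P}^3}X$, so base change along the Cartesian square defining the fibre product gives $\tau(\gamma)=r\,\pi^{\ast}\pi_{\ast}\gamma$, whence $\DT_4(r[f]\mid\gamma)=\pm\,r\int_X c_3(V)\cup\pi^{\ast}\pi_{\ast}\gamma$, linear in $r$. On the Gromov--Witten side, genus zero stable maps of class $r[f]$ land in fibres, and since smooth fibres contain no rational curves the whole contribution localizes on the rational nodal fibres over $\Delta$; a standard multiple-cover analysis of this family computes $\mathrm{GW}_{0,r[f]}(\gamma)$, and the Klemm--Pandharipande relation extracts $n_{0,r[f]}(\gamma)$. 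I would then check $n_{0,r[f]}(\gamma)=\DT_4(r[f]\mid\gamma)$ together with the multiple cover formula $\mathrm{GW}_{0,r[f]}(\gamma)=\sum_{k\mid r}k^{-2}\DT_4((r/k)[f]\mid\gamma)$.

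The hard part will be the $\DT_4$ side together with its comparison to Gromov--Witten theory: making the identification of the obstruction bundle precise across $\Delta$, justifying the reduction of the square root Euler class to $c_3(V)$, and---most delicately---fixing the orientation so that its sign reproduces the Gromov--Witten answer. As emphasised after Conjecture~\ref{intro:conj:GW/GV}, the orientation is exactly what the conjecture leaves open, so the genuine content is to exhibit one choice making both sides agree.
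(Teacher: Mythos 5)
Your skeleton agrees with the paper's: identify $M_{r[f]}\cong X$, express $[M_{r[f]}]^{\rm vir}$ as a degree-six characteristic class on $X$, use $\ch_3(\eE)=r[X\times_Y X]$ to get $\tau(\gamma)=r\,\pi^{\ast}\pi_{\ast}\gamma$, and compare with the Klemm--Pandharipande numbers. The last step you describe is exactly the paper's. But the two items you defer as ``the hard part'' are precisely where the content lies, and your proposed route through them does not work. For the virtual class, you compute the obstruction space fibrewise over the locus of \emph{smooth} elliptic fibres and guess that the isotropic half is $V\cong\lL\otimes\pi^{\ast}T_{\mathbb{P}^3}$ with $\lL$ a twist of $\pi^{\ast}\oO(4)$, so that $[M_{r[f]}]^{\rm vir}=\pm c_3(V)$. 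Any part of $c_3(V)$ pulled back from $\mathbb{P}^3$ pairs to zero with $\pi^{\ast}\pi_{\ast}\gamma$ (since $\pi_{\ast}\pi^{\ast}=0$ here), so with $V$ essentially a pullback your formula would give $\mathrm{DT}_4(r[f]\mid\gamma)=0$ for every insertion, contradicting the required nonzero value $\pm 960r$ for $\gamma=B\cdot E$. The invariant is carried entirely by the singular fibres, so ``extending the identification across $\Delta$'' is not a technicality to be cleaned up afterwards --- it is the whole computation, and a smooth-locus analysis cannot detect it. The paper sidesteps this: by \cite{B-M2} the relative moduli space $M_{r[f]}$ is a smooth CY 4-fold derived equivalent to $X$; it is shown to be \emph{isomorphic} to $X$ not by analyzing the Fourier--Mukai kernel on singular fibres but by birational geometry (birational smooth CY 4-folds are connected by flops \cite{Kawaflo}, and $X$ has Picard rank two with no flopping contractions); and the derived equivalence transports the obstruction theory with its Serre pairing to that of the moduli of skyscraper sheaves $\{\oO_x\}_{x\in X}$, for which $[M]^{\rm vir}=\pm\mathrm{PD}(c_3(X))$ is already known from \cite[Proposition~7.17]{CL}. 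Note that $c_3(X)$ is \emph{not} a pullback class, exactly because of the singular fibres.

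The second gap is the Gromov--Witten side. You propose to compute $\mathrm{GW}_{0,r[f]}(\gamma)$ directly by localizing on the family of nodal rational fibres over the discriminant and running a ``standard multiple-cover analysis''; no such computation is standard, and none is carried out in the paper. The authors simply evaluate $\mathrm{DT}_4(r[f]\mid B^2)=0$ and $\mathrm{DT}_4(r[f]\mid B\cdot E)=\pm 960r$ and match these against the values $n_{0,r[f]}(\gamma)$ tabulated in \cite[Table~7]{KP}, which were obtained via mirror symmetry (Picard--Fuchs equations); correspondingly, the proposition is only established for the insertions $\gamma=B^2$ and $\gamma=B\cdot E$. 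As written, your proposal leaves both the virtual-class identification and the GW input unproved, and the explicit form you guess for the obstruction bundle is inconsistent with the answer it must produce.
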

In this case, we can directly compute the $\DT_4$ invariants and 
check the compatibility with the computation 
of GW invariants in~\cite{KP}. 

${}$  \\
\textbf{CY 3-fold fibrations}. 
We consider a projective CY 4-fold $X$ 
which admits a CY 3-fold fibration 
\begin{align*}
\pi \colon X\to C
\end{align*}
 over a curve $C$.
In this case, 
 we conjecture a $\mathrm{DT}_{4}/\mathrm{DT}_{3}$ correspondence (Conjecture \ref{conj on CY4/CY3}), which roughly says that $\mathrm{DT}_{4}$ invariants for one-dimensional stable sheaves supported on general fibers of $\pi$ equal $\mathrm{DT}_{3}$ invariants for one-dimensional stable sheaves on those general fibers (CY 3-folds).
A special case is when
\begin{align}\label{intro:product}
X=Y\times E
\end{align}
 where $Y$ is a CY 3-fold, 
$E$ is an elliptic curve and 
$\pi$ is the projection to $E$. 
In this situation, we verify the conjecture:
\begin{prop}\emph{(Corollary \ref{g=0 product of elliptic curve and CY3})}
Suppose $X$ is given by (\ref{intro:product}).
Then for any $\beta\in H_{2}(Y)\subseteq H_{2}(X)$ and divisor $H\subseteq X$, we have  
\begin{equation}\label{DT4/DT3 invs}\mathrm{DT}_{4}(\beta\mid H\cdot Y)=\mathrm{DT}_{3}(\beta)\cdot(H\cdot\beta), \end{equation}
for certain choice of orientation in defining the LHS. Here $\mathrm{DT}_{3}(\beta)=\deg[M_{Y,\beta}]^{\rm{vir}}$ is the $\mathrm{DT_{3}}$ invariant~\cite{Thomas} for one dimensional stables sheaves with 
Chern character given by $(0,0,\beta,1)$. 
\end{prop}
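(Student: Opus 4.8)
The plan is to reduce the four-dimensional computation to the three-dimensional one by first identifying the moduli space as a product, then factoring the virtual class, and finally evaluating the insertion by a Künneth computation.

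First I would identify the moduli space. If $F$ is a stable one-dimensional sheaf on $X=Y\times E$ with $\ch(F)=(0,0,0,\beta,1)$ and $\beta\in H_2(Y)$, then its support is a curve whose class projects to $0\in H_2(E)$; hence the projection $\Supp(F)\to E$ is constant and $F=i_{e\ast}F'$ for a unique point $e\in E$ and a sheaf $F'$ on the fibre $Y\times\{e\}\cong Y$ with $\ch(F')=(0,0,\beta,1)$. Stability of $F$ on $X$ is equivalent to stability of $F'$ on $Y$, and the translation action of $E$ on the $E$-factor trivialises the resulting fibration $M_{\beta}\to E$. This gives an isomorphism $M_{\beta}\cong M_{Y,\beta}\times E$, and under it the universal sheaf becomes $\eE\cong \eE_Y\boxtimes \oO_{\Delta}$, where $\eE_Y$ is the universal sheaf on $Y\times M_{Y,\beta}$ and $\Delta\subset E\times E$ is the diagonal.

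The main step, and the hard part, is to establish the product formula for the virtual class
\[ [M_{\beta}]^{\mathrm{vir}}=[M_{Y,\beta}]^{\mathrm{vir}}\boxtimes [E]\in H_0(M_{Y,\beta})\otimes H_2(E)\subset H_2(M_{\beta}), \]
for a suitable orientation. Using the Künneth formula for $\Ext$-groups of external products together with $\Ext^\ast_E(\oO_e,\oO_e)=\mathbb{C}\oplus\mathbb{C}[-1]$, I would compute $\Ext^k_X(i_{e\ast}F,i_{e\ast}F)\cong \Ext^k_Y(F,F)\oplus \Ext^{k-1}_Y(F,F)$. Thus $\Ext^1_X=\Ext^1_Y\oplus\Ext^0_Y$ splits the tangent space into the DT$_3$ directions and the direction $T_eE$, while the obstruction space $\Ext^2_X=\Ext^2_Y\oplus(\Ext^1_Y\otimes T_eE)$ is \emph{hyperbolic}: Serre duality on the CY $4$-fold pairs the two summands, which are interchanged by Serre duality on $Y$ via $\Ext^2_Y\cong(\Ext^1_Y)^\ast$ (in particular $\mathrm{ext}^1_X-\tfrac12\mathrm{ext}^2_X=(\mathrm{ext}^1_Y+1)-\mathrm{ext}^1_Y=1=\dim E$, as required). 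The Borisov--Joyce/Cao--Leung half-dimensional real obstruction bundle attached to a maximal positive subspace of this hyperbolic form is then isomorphic to $\Ext^2_Y$ (equivalently, by duality on $Y$, to $\Ext^1_Y$) pulled back from $M_{Y,\beta}$, with no $E$-dependence; choosing the DT$_4$ orientation to match the complex orientation of $\Ext^2_Y$, its localised Euler class recovers the Behrend--Fantechi DT$_3$ class, and since the bundle is pulled back along $M_{\beta}\to M_{Y,\beta}$ its localised class is constant in the $E$-direction, giving the displayed factorisation. The difficulty is carrying this pointwise linear-algebra picture out in families over all of $M_{\beta}$ and matching the two virtual-class constructions through it, including the global existence and choice of orientation; this is precisely where I expect to invoke the deformation invariance of orientations mentioned after Conjecture~\ref{intro:conj:GW/GV}.

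It remains to evaluate the insertion, which is a Künneth computation on $Y\times E\times M_{Y,\beta}\times E$. Writing $H\cdot Y=\pi_Y^\ast D\cup\pi_E^\ast[\mathrm{pt}]$ with $D=H|_Y\in H^2(Y)$ (the remaining components of $H$ being annihilated by $\pi_E^\ast[\mathrm{pt}]$), and using $\ch(\oO_\Delta)=[\Delta]$ (valid since $[\Delta]^2=0$ on $E\times E$ for an elliptic curve), I obtain $\ch_3(\eE)=\ch_2(\eE_Y)\boxtimes[\Delta]$. Cupping with $\pi_X^\ast(H\cdot Y)$ sends the diagonal class to $[\mathrm{pt}]\boxtimes[\mathrm{pt}]$ on the two $E$-factors and pairs $D$ with the $H^4(Y)$-component of $\ch_2(\eE_Y)$, namely $\mathrm{PD}(\beta)\otimes 1_{M_{Y,\beta}}$; integrating over $Y\times E$ then gives
\[ \tau(H\cdot Y)=\Big(\textstyle\int_Y D\cup\mathrm{PD}(\beta)\Big)\,(1_{M_{Y,\beta}}\boxtimes[\mathrm{pt}]_E)=(H\cdot\beta)\,(1_{M_{Y,\beta}}\boxtimes[\mathrm{pt}]_E). \]
Finally, pairing with the virtual class from the previous step and using $\int_E[\mathrm{pt}]_E=1$,
\[ \mathrm{DT}_{4}(\beta\mid H\cdot Y)=\int_{[M_{Y,\beta}]^{\mathrm{vir}}\boxtimes[E]}(H\cdot\beta)\,(1_{M_{Y,\beta}}\boxtimes[\mathrm{pt}]_E)=(H\cdot\beta)\cdot\deg[M_{Y,\beta}]^{\mathrm{vir}}=(H\cdot\beta)\cdot\mathrm{DT}_{3}(\beta), \]
which is the asserted identity.
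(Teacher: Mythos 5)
Your proposal follows essentially the same route as the paper's Lemma 3.6 and Corollary 3.7: identify $M_{X,\beta}\cong M_{Y,\beta}\times E$ via pushforward along the fibres, observe that $\Ext^2_X(F,F)\cong\Ext^2_Y\oplus(\Ext^2_Y)^{\vee}$ is hyperbolic with maximal isotropic summand $\Ext^2_Y$ so that the $\DT_4$ class reduces to $\deg[M_{Y,\beta}]^{\rm{vir}}\cdot[E]$, and then evaluate $\tau(H\cdot Y)=(H\cdot\beta)\cdot(1\boxtimes[\mathrm{pt}]_E)$. The only cosmetic difference is that the paper deduces $F=\iota_{t\ast}\mathcal{E}$ from $\Hom(F,F)=\mathbb{C}$ (its Lemma on scheme-theoretic support) rather than from the vanishing of the pushforward of the curve class, and it delegates the family version of the virtual-class comparison to the cited results of Cao--Leung, exactly the step you flag as the hard part.
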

This would then imply that our Conjecture ~\ref{intro:conj:GW/GV} is consistent with the genus zero GV/DT conjecture on CY 3-folds \cite{Katz}. In particular, by combining the GW/DT/PT correspondence, geometric vanishing and wall-crossing formula on CY 3-folds (see Corollary \ref{cor:CI}), we obtain
the following result: 
\begin{thm}\emph{(Theorem \ref{DT4/DT3 primitive})}\label{intro:thm:DT4/DT3}
Let $Y$ be a complete intersection CY 3-fold in a product of projective 
spaces and let $X=Y \times E$ for an elliptic curve $E$. 
Then for any primitive curve class $\beta\in H_{2}(Y)\subseteq H_{2}(X)$ and divisor $H\subseteq X$, we have
\begin{equation}\mathrm{GW}_{0, \beta}(H \cdot Y)=\DT_4(\beta \mid H \cdot Y), \nonumber \end{equation}
for certain choice of orientation in defining the RHS, i.e. Conjecture ~\ref{intro:conj:GW/GV} holds in this case.  
\end{thm}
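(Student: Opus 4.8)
The plan is to reduce both sides to invariants of the Calabi-Yau $3$-fold $Y$ and match them there. The right-hand side is already controlled: Corollary~\ref{g=0 product of elliptic curve and CY3} gives, for a suitable orientation,
\[
\DT_4(\beta\mid H\cdot Y)=\DT_3(\beta)\cdot(H\cdot\beta),
\]
where $\DT_3(\beta)=\deg[M_{Y,\beta}]^{\mathrm{vir}}$ counts one-dimensional stable sheaves on $Y$ with Chern character $(0,0,\beta,1)$. Inheriting this orientation, it therefore suffices to prove
\[
\mathrm{GW}_{0,\beta}(H\cdot Y)=(H\cdot\beta)\cdot\DT_3(\beta).
\]
I would establish this in two moves: first a product/divisor reduction expressing the left-hand side through genus-zero Gromov-Witten invariants of $Y$, and then the genus-zero GV/DT correspondence on $Y$ to bring in $\DT_3(\beta)$.

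For the first move, observe that since $\beta\in H_2(Y)$ and $E$ has genus one, every genus-zero stable map $f\colon C\to X=Y\times E$ of class $\beta$ has constant $E$-component, so the $E$-coordinate identifies $\overline{M}_{0,1}(X,\beta)\cong\overline{M}_{0,1}(Y,\beta)\times E$. Along this constant direction $f^{\ast}T_X=f_Y^{\ast}T_Y\oplus\mathcal{O}_C$, and since $H^0(C,\mathcal{O}_C)=\mathbb{C}$, $H^1(C,\mathcal{O}_C)=0$ in genus zero, the $E$-factor contributes only translation deformations and no obstruction; hence the perfect obstruction theory splits and $[\overline{M}_{0,1}(X,\beta)]^{\mathrm{vir}}=[\overline{M}_{0,1}(Y,\beta)]^{\mathrm{vir}}\times[E]$. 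Writing $H=\mathrm{pr}_Y^{\ast}D+a\,\mathrm{pr}_E^{\ast}[\mathrm{pt}]$ by Künneth with $D\in H^2(Y)$ and $D\cdot\beta=H\cdot\beta$, and using $\mathrm{pr}_E^{\ast}[\mathrm{pt}]\cup\mathrm{pr}_E^{\ast}[\mathrm{pt}]=0$, the insertion pulls back to $\mathrm{ev}_1^{\ast}(H\cdot Y)=\mathrm{ev}_Y^{\ast}D\cup\mathrm{pr}_E^{\ast}[\mathrm{pt}]$. Integrating out the $E$-factor and applying the divisor equation on $Y$ then yields
\[
\mathrm{GW}_{0,\beta}(H\cdot Y)=\int_{[\overline{M}_{0,1}(Y,\beta)]^{\mathrm{vir}}}\mathrm{ev}_Y^{\ast}D=(D\cdot\beta)\cdot\mathrm{GW}^Y_{0,\beta}=(H\cdot\beta)\cdot\mathrm{GW}^Y_{0,\beta},
\]
where $\mathrm{GW}^Y_{0,\beta}$ is the primary genus-zero Gromov-Witten invariant of $Y$.

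It remains to identify $\mathrm{GW}^Y_{0,\beta}$ with $\DT_3(\beta)$. Here primitivity of $\beta$ is essential: the Aspinwall-Morrison formula has no $k>1$ terms, so $\mathrm{GW}^Y_{0,\beta}$ equals the genus-zero Gopakumar-Vafa invariant $n^Y_{0,\beta}$ of $Y$. For $Y$ a complete intersection in a product of projective spaces, the equality $n^Y_{0,\beta}=\DT_3(\beta)$ is exactly Corollary~\ref{cor:CI}, obtained from the GW/DT/PT correspondence together with the geometric vanishing and the wall-crossing on $Y$. Combining these gives $\mathrm{GW}_{0,\beta}(H\cdot Y)=(H\cdot\beta)\DT_3(\beta)=\DT_4(\beta\mid H\cdot Y)$. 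I expect the main obstacle to be precisely this last input: the genus-zero GV/DT correspondence on $Y$ is the deep ingredient, and the restriction to primitive $\beta$ on a complete intersection is what makes it accessible, since it removes multiple-cover corrections on both the Gromov-Witten and the sheaf-counting sides so that the product reduction and Corollary~\ref{cor:CI} fit together cleanly. The product splitting of the virtual class, while routine, also deserves care to confirm that the obstruction spaces genuinely decompose as claimed.
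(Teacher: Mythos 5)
Your proposal is correct and follows essentially the same route as the paper: the paper deduces the theorem by combining Corollary~\ref{g=0 product of elliptic curve and CY3} (the product splitting $M_{X,\beta}\cong M_{Y,\beta}\times E$ and the reduction of both $\DT_4(\beta\mid H\cdot Y)$ and $\mathrm{GW}_{0,\beta}(H\cdot Y)$ to $(H\cdot\beta)$ times the corresponding invariant of $Y$) with Corollary~\ref{cor:CI} (the identity $n^{\rm{GW}}_{0,\beta}=\DT_3(\beta)$ for primitive $\beta$ on a complete intersection, via Pandharipande--Pixton and wall-crossing). Your more explicit treatment of the K\"unneth decomposition of $H$ and the splitting of the GW obstruction theory just fills in details the paper leaves implicit.
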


${}$  \\
\textbf{Hyperk\"ahler 4-folds}. When the CY 4-fold $X$ has a holomorphic symplectic form, i.e. $X$ is a hyperk\"ahler 4-fold, GW invariants 
vanish, and so do the GV type invariants. To verify 
Conjecture~\ref{intro:conj:GW/GV}, we are left to prove the vanishing
of $\mathrm{DT_4}$ invariants. In \cite{CL}, such vanishing is shown for torsion-free sheaves by considering the trace map, but this argument does not apply to the case of torsion sheaves. Instead, we construct a cosection map from the (trace-free) obstruction sheaf of moduli spaces of stable sheaves to a trivial bundle which is compatible with Serre duality (Proposition \ref{surj cosection}).
We expect the following vanishing result then follows. 
\begin{claim}\emph{(Claim \ref{vanishing for hk4})}
Let $X$ be a projective hyperk\"{a}hler 4-fold and $M$ be a proper moduli scheme of simple perfect complexes $F$'s with 
$\ch_{4}(F)\neq 0$ or $\ch_{3}(F)\neq 0$. Then the virtual class  
$[M]^{\rm{vir}}\in H_*(M)$ vanishes.
\end{claim}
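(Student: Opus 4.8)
The plan is to deduce the vanishing from the cosection furnished by Proposition \ref{surj cosection} together with an isotropic version of cosection localization adapted to the self-dual $\mathrm{DT}_4$-obstruction theory. First I would record the structure of the deformation--obstruction theory on $M$: for a simple perfect complex $F$ the tangent space is $\Ext^1(F,F)_0$ and the obstruction space is $\Ext^2(F,F)_0$, and since $K_X\cong\mathcal{O}_X$ Serre duality equips the obstruction sheaf $\Obs$ with a nondegenerate symmetric pairing $Q$ (the $\Ext^2(F,F)_0\otimes\Ext^2(F,F)_0\to\Ext^4(F,F)\xrightarrow{\tr}H^4(\mathcal{O}_X)=\mathbb{C}$ quadratic form). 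The class $[M]^{\rm{vir}}$ of \cite{CL, BJ} is, locally, the orientation-dependent localized square-root Euler class of this self-dual bundle. The guiding principle of the whole argument is that a nowhere-vanishing cosection compatible with $Q$ splits off a trivial isotropic line, and the square-root Euler class of a self-dual bundle containing a hyperbolic (trivial isotropic) summand vanishes.

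Next I would make the cosection precise. By Proposition \ref{surj cosection}, cup product with the holomorphic symplectic form $\sigma\in H^0(X,\Omega^2_X)$, together with the trace, defines a map of sheaves $\phi\colon\Obs\to\mathcal{O}_M$ compatible with Serre duality; concretely this means the section $\phi^\vee\colon\mathcal{O}_M\to\Obs$ produced from $\phi$ via $Q$ is $Q$-isotropic, $Q(\phi^\vee,\phi^\vee)=0$. This isotropy is the crucial output of the Serre-duality compatibility and is exactly what keeps the construction inside the self-dual formalism: it allows one to present $\Obs$, at least formally, as $\mathcal{O}_M\oplus\Obs'\oplus\mathcal{O}_M$ with the two copies of $\mathcal{O}_M$ isotropic and dually paired and $\Obs'$ a self-dual bundle of smaller rank.

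I would then verify surjectivity of $\phi$, which is where the hypothesis $\ch_4(F)\neq 0$ or $\ch_3(F)\neq 0$ enters. Pointwise, $\phi$ is nonzero at $[F]$ precisely when cup product with the nondegenerate form $\sigma$ acts nontrivially on $\Ext^*(F,F)_0$; nondegeneracy of $\sigma$ forces this as soon as $F$ has nonzero top Chern character or nonzero three-dimensional support class, since these are the components of $\ch(F)$ that pair nontrivially with $\sigma$ and $\sigma^2$. Carrying this out fibrewise gives that $\phi$ is surjective, equivalently that $\phi^\vee$ is nowhere zero, on all of $M$.

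The final and hardest step is the vanishing itself: passing from a surjective, Serre-duality-compatible cosection of $\Obs$ to $[M]^{\rm{vir}}=0$. This is the $\mathrm{DT}_4$ analogue of Kiem--Li cosection localization, and it is the part the statement flags as expected rather than routine. The nowhere-zero isotropic section $\phi^\vee$ trivializes a rank-one isotropic subbundle of the self-dual obstruction bundle; deforming the defining isotropic section of the $\mathrm{DT}_4$-localized Euler class in the direction of $\phi^\vee$ makes it nowhere zero along this trivial factor, so the localized square-root Euler class, hence $[M]^{\rm{vir}}$, must vanish. Making this rigorous requires a square-root/isotropic refinement of cosection localization compatible with the orientation data and with the Borisov--Joyce and Cao--Leung constructions in \cite{CL, BJ} — in particular the reduction of a self-dual bundle by a hyperbolic summand and the behaviour of the square-root Euler class under such reduction. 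Establishing that compatibility, which Proposition \ref{surj cosection} is designed to feed into, is the main obstacle.
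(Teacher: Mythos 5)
You should first be aware that the paper does not actually prove Claim \ref{vanishing for hk4}: it is stated only as a claim, the sole established input being the cosection of Proposition \ref{surj cosection}, and the authors explicitly say that the deduction of $[M]^{\rm{vir}}=0$ must wait for a Kiem--Li type cosection localization compatible with the Borisov--Joyce/Cao--Leung construction (their ``evidence'' being the smooth case and a hypothetical twistor-family argument). Your proposal has exactly this structure and honestly flags the same missing ingredient, so at the level of strategy it matches the paper's intended (and equally incomplete) argument.

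There is, however, one concrete error in your second step: the $Q$-dual section $\phi^\vee$ is \emph{not} always isotropic. By Proposition \ref{surj cosection} the dual of the cosection is spanned by $\mathrm{At}(F)^2\lrcorner\,\sigma$, and $Q(\mathrm{At}(F)^2\lrcorner\,\sigma,\mathrm{At}(F)^2\lrcorner\,\sigma)$ is proportional to $\ch_4(F)$. Hence when $\ch_4(F)\neq 0$ the decomposition is $\Ext^2(F,F)=\Ker(\phi)\oplus\mathbb{C}\langle\mathrm{At}(F)^2\lrcorner\,\sigma\rangle$ with $Q$ \emph{non-degenerate} on the trivial line --- there is no hyperbolic summand. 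Only when $\ch_4(F)=0$ and $\ch_3(F)\neq 0$ is that line isotropic, and then it must be completed to a hyperbolic plane by the second vector $\kappa_X\circ\mathrm{At}(F)$ (the Kodaira--Spencer class Serre dual to $\ch_3(F)$), which is precisely what part (2) of Proposition \ref{surj cosection} supplies and what your sketch omits. The vanishing heuristic survives in both cases --- a trivial complex line on which $Q$ is non-degenerate contributes a trivial real line to the real form $Ob_+$, whose Euler class kills the half-Euler class just as a trivial hyperbolic plane would --- but your uniform ``split off an isotropic $\mathcal{O}_M$ paired with a dual $\mathcal{O}_M$'' picture must be replaced by the paper's case division. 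The genuinely hard step, a square-root/isotropic cosection localization for the Borisov--Joyce virtual class, remains open in your account exactly as it does in the paper's.
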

At the moment, we are lack of Kiem-Li type theory of cosection localization for D-manifolds in the sense of Joyce or Kuranishi space structures in
the sense of Fukaya-Oh-Ohta-Ono. We believe that when such a theory is established, our claim should follow automatically. 
Nevertheless, we have the following evidence for the claim.

1. At least when $M_\beta$ is smooth, Proposition \ref{surj cosection} gives the vanishing of virtual class.

2. If there is a complex analytic version of $(-2)$-shifted symplectic geometry \cite{PTVV} and the corresponding construction of virtual classes \cite{BJ},
one could prove the vanishing result as in $\mathrm{GW}$ theory, i.e. taking a generic complex structure in the $\mathbb{S}^{2}$-twistor family 
of the hyperk\"ahler 4-fold which does not support coherent sheaves and then vanishing of virtual classes follows from their deformation invariance. \\

By taking away trivial obstruction factors, one can define reduced virtual classes and the corresponding invariants. 
We explicitly do this in an example and prove a version of Conjectures~\ref{intro:conj:GW/GV} for reduced invariants (Proposition \ref{g=0 reduced inv conj}). 

\subsection{Verifications of the conjecture II: local surfaces}
For a smooth projective surface $S$, we 
consider the 
non-compact CY 4-fold 
\begin{align*}
X=\mathrm{Tot}_{S}(L_1\oplus L_2)
\end{align*}
 where $L_1$, $L_2$ are line bundles on $S$ 
satisfying $L_1\otimes L_2\cong K_S$. 
We can also investigate an analogue of 
Conjecture~\ref{intro:conj:GW/GV} previously formulated on projective CY 4-folds. In particular, when $L^{-1}_1$, $L^{-1}_2$ are ample,
the
moduli space $M_{X, \beta}$ of one dimensional stable sheaves on $X$ is
 compact (Proposition \ref{vir:loc neg}).
So $\mathrm{DT_4}$ invariants are well-defined 
and we can study Conjecture~\ref{intro:conj:GW/GV} in this case. 
In Subsection~\ref{subsec:locP2}, we
 check 
this for low degree curves when $S=\mathbb{P}^2$ and
\begin{align*}
X=\mathrm{Tot}_{\mathbb{P}^{2}}(\mathcal{O}_{\mathbb{P}^{2}}(-1)\oplus\mathcal{O}_{\mathbb{P}^{2}}(-2)).
\end{align*}

In general, the moduli space
$M_{X, \beta}$ is non-compact. On the other hand, 
there is a $\mathbb{C}^{\ast}$-action on 
$X$ fiberwise over $S$, 
which preserves the CY 4-form on $X$, and such that
the $\mathbb{C}^{\ast}$-fixed 
locus of $M_{X, \beta}$ is compact. 
In this case, $\DT_4$ invariants may be defined via equivariant residue, yielding
rational functions of the equivariant parameters.
Namely we define
\begin{align}\label{intro:localization}
[M_{X,\beta}]^{\rm{vir}}:=
[M_{X,\beta}^{\mathbb{C}^{\ast}}]^{\rm{vir}}
\cdot  e( \dR \hH om_{\pi_M}(\eE, \eE)^{\rm{mov}})^{1/2}
\in H_{\ast}(M_{X,\beta}^{\mathbb{C}^{\ast}})[t^{\pm 1}],
\end{align}
as in \cite[Section 8]{CL}. Here 
\begin{align*}
[M_{X,\beta}^{\mathbb{C}^{\ast}}]^{\rm{vir}}
\in H_{\ast}(M_{X,\beta}^{\mathbb{C}^{\ast}})
\end{align*} 
should be the $\mathrm{DT_{4}}$ virtual class of the $\mathbb{C}^{\ast}$-fixed locus, $\eE$ is the universal sheaf 
on $X \times M_{X, \beta}$,  
$\pi_M \colon X\times M_{X,\beta}\to M_{X,\beta}$ is the projection
and $t$ is the equivariant parameter for the $\mathbb{C}^{\ast}$-action. 
Since the localization formula for $\DT_4$-virtual class is not 
yet available, the definition of (\ref{intro:localization}) is only 
heuristic at this moment. 

Note that the moduli space $M_{S,\beta}$ of one-dimensional stable sheaves on $S$ is a union of connected components of $M_{X,\beta}^{\mathbb{C}^{\ast}}$ and we can determine its contribution to (\ref{intro:localization}) (Proposition \ref{equiv vir cycle surface cpn}). When the surface component is the only $\mathbb{C}^{\ast}$-fixed locus, i.e. $M_{X,\beta}^{\mathbb{C}^{\ast}}=M_{S,\beta}$,
we can rigorously define (\ref{intro:localization}) 
and the following residue $\DT_4$ invariant
$\DT_4^{\rm{res}}(\beta) \in \mathbb{Z}$ by taking the residue of 
(\ref{intro:localization})
at $t=0$:
\begin{align*}
\DT_4^{\rm{res}}(\beta):=\mathrm{Res}_{t=0}
\int_{[M_{X,\beta}^{\mathbb{C}^{\ast}}]^{\rm{vir}}}
e( \dR \hH om_{\pi_M}(\eE, \eE)^{\rm{mov}})^{1/2}. 
\end{align*}
Then we 
propose an analogue of Conjecture~\ref{intro:conj:GW/GV} 
(see Conjecture \ref{conj:red}) for residue invariants 
as follows
\begin{equation}\label{intro: residue invs conj}
\mathrm{GW}_{0, \beta}^{\rm{res}}=\sum_{k|\beta}\frac{1}{k^3} \DT_4^{\rm{res}}(\beta/k).
\end{equation}
Here $\mathrm{GW}_{0, \beta}^{\rm{res}}\in\mathbb{Q}$
is the corresponding residue GW invariants. Note that the power of $1/k$ becomes three (instead of two in (\ref{intro:conj:GW/GV})) as there is no insertion here. The above conjecture is verified in the following:  
\begin{thm}\emph{(Theorem \ref{toric del-Pezzo}, \ref{rational elliptic})}
Let $X=\mathrm{Tot}_S(\oO_S \oplus K_S)$. Then $M_{X,\beta}^{\mathbb{C}^{\ast}}=M_{S,\beta}$ and (\ref{intro: residue invs conj}) is true if 
\begin{enumerate}
\item $S$ is a smooth toric del-Pezzo surface and $\beta\in H_2(X)$ is any curve class; 
\item $S$ is a rational elliptic surface, and $\beta=\beta_n$ are primitive classes defined in (\ref{beta:n}).
\end{enumerate}
\end{thm}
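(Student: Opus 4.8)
The plan is to reduce both sides of the residue multiple-cover formula (\ref{intro: residue invs conj}) to the genus zero Gopakumar--Vafa theory of the local Calabi--Yau $3$-fold $Y'=\mathrm{Tot}_S(K_S)$, exploiting the product structure $X=\mathrm{Tot}_S(\oO_S\oplus K_S)\cong Y'\times\mathbb{A}^1$. First I would pin down the fixed locus. The CY $4$-form is preserved exactly by the subtorus acting with weights $(1,-1)$ on the two fibre coordinates $(\zeta_1,\zeta_2)$, so $X^{\mathbb{C}^{\ast}}=S$ is the zero section. A $\mathbb{C}^{\ast}$-fixed stable sheaf $E$ has $\mathbb{C}^{\ast}$-invariant support; pushing forward along $p\colon X\to S$ and decomposing $p_{\ast}E$ into weight spaces over $p_{\ast}\oO_X$, I would argue that stability together with the positivity of $K_S^{-1}$ (del Pezzo case) forces the two fibre coordinates to act trivially, so that $E$ is scheme-theoretically supported on $S$; for the rational elliptic surface the classes $\beta_n$ of (\ref{beta:n}) are primitive and chosen so that the supporting curves cannot spread into the fibres. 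This yields $M_{X,\beta}^{\mathbb{C}^{\ast}}=M_{S,\beta}$, which is compact since $S$ is projective.

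On the sheaf side I would then invoke Proposition~\ref{equiv vir cycle surface cpn} to isolate the contribution of the surface component and compute $\DT_4^{\rm res}(\beta)$. For $i\colon S\hookrightarrow X$ and $E=i_{\ast}F$ with $F\in M_{S,\beta}$, the $\dR\hH om$ on $X$ is computed from that on $S$ via the Koszul resolution of the normal bundle $N=\oO_S\oplus K_S$, giving $\dR\hH om(\eE,\eE)=\bigoplus_{j}\dR\hH om_S(F,F\otimes\Lambda^{j}N)$, and the $\mathbb{C}^{\ast}$-weights separate this into fixed and movable parts. Taking the square-root Euler class of $\dR\hH om_{\pi_M}(\eE,\eE)^{\rm mov}$ and the residue at $t=0$, the contributions of the two fibre directions combine so that, under $X\cong Y'\times\mathbb{A}^1$, the half-Euler class reduces to the $\DT_3$ virtual class of $M_{S,\beta}$ attached to the CY $3$-fold $Y'$. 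For a suitable orientation this identifies $\DT_4^{\rm res}(\beta)$ with the genus zero DT/GV invariant $n_{0,\beta}$ of $Y'$, exactly in the spirit of the $Y\times E$ computation in Corollary~\ref{g=0 product of elliptic curve and CY3}.

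On the Gromov--Witten side I would run $\mathbb{C}^{\ast}$-virtual localization on $\overline{M}_{0,0}(X,\beta)$, whose fixed locus consists of stable maps into the zero section $S$. Matching the weights $(1,-1)$ of the two fibre factors against the obstruction theory $H^{\bullet}(C,f^{\ast}N)$, the residue $\mathrm{GW}_{0,\beta}^{\rm res}$ reduces to the genus zero local Gromov--Witten invariant of $Y'=\mathrm{Tot}_S(K_S)$; the extra $\oO_S$-direction supplies precisely the equivariant weight that makes the residue pick out the CY $3$-fold invariant. For a smooth toric del Pezzo surface these local invariants are known (by toric virtual localization, the topological vertex, or mirror symmetry), while for the rational elliptic surface the primitive classes $\beta_n$ are handled by the explicit geometry of the linear systems $|\beta_n|$; note that for primitive $\beta_n$ the only surviving term on the right-hand side of (\ref{intro: residue invs conj}) is $k=1$, so there one only needs $\mathrm{GW}_{0,\beta_n}^{\rm res}=\DT_4^{\rm res}(\beta_n)$.

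Combining the two computations, formula (\ref{intro: residue invs conj}) becomes precisely the classical genus zero Gopakumar--Vafa multiple-cover relation $\mathrm{GW}_{0,\beta}^{\rm local}=\sum_{k\mid\beta}k^{-3}\,n_{0,\beta/k}$ for the local CY $3$-fold $Y'$, which is established for toric del Pezzo surfaces in all classes and for the rational elliptic surface in the classes $\beta_n$. I expect the main obstacle to be the orientation and square-root Euler class bookkeeping in the second step: one must verify that the movable obstruction bundle has the correct rank so that the residue at $t=0$ is finite with no higher-order pole, and that the sign of the half-Euler class can be fixed consistently over all of $M_{S,\beta}$ so as to reproduce $+\,n_{0,\beta}$ rather than its negative. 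Establishing $M_{X,\beta}^{\mathbb{C}^{\ast}}=M_{S,\beta}$ with no stray fixed components, especially for non-primitive classes on the del Pezzo surfaces, is the other delicate point.
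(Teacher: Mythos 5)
Your overall strategy coincides with the paper's: both sides are reduced to the local Calabi--Yau 3-fold $Y=\mathrm{Tot}_S(K_S)$ via the product structure $X\cong Y\times\mathbb{A}^1$, the residue $\DT_4$ invariant is identified with $\pm\DT_3(\beta)$ on $Y$ (this is Lemma \ref{lem:locDT4/3}, whose hypotheses $\Hom(F,F\otimes K_S)=0$ and $M_{S,\beta}=M_{Y,\beta}$ you must still verify in each case), and the residue GW invariant is identified with the local genus-zero GW invariant of $Y$. For part (1) your plan is essentially the paper's proof of Theorem \ref{toric del-Pezzo}; the one point to sharpen is that you need the sheaf-theoretic statement $n^{\rm GW}_{0,\beta}=\DT_3(\beta)$ for local toric del Pezzo surfaces (Corollary \ref{cor:toric}), which the paper deduces from the GW/PT correspondence of \cite{MNOP, Konishi} together with Toda's wall-crossing; the topological vertex or mirror symmetry alone computes the GW side but does not by itself produce the moduli-of-sheaves invariant against which you matched $\DT_4^{\rm res}$. (Also, for the $\oO_S$-direction it is not positivity but connectedness of the support of a stable sheaf, i.e. $\Hom(F,F)=\mathbb{C}$ and Lemma \ref{lem:known}, that confines $F$ to a single slice $Y\times\{t\}$, after which $\mathbb{C}^{\ast}$-fixedness forces $t=0$.)

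For part (2) there is a genuine gap: you assert that the local genus-zero multiple-cover relation is ``established for the rational elliptic surface in the classes $\beta_n$'' and that the fixed-locus statement is ``handled by the explicit geometry of the linear systems $|\beta_n|$'', but this is essentially the assertion to be proved, and $K_S^{-1}$ is not ample here, so the Harder--Narasimhan argument of Proposition \ref{vir:loc neg} does not apply to confine sheaves to the zero section. The paper's actual inputs are: the Fourier--Mukai identification $M_{S,\beta_n}=M_{Y,\beta_n}\cong\Hilb^n(S)$ of Hosono--Saito--Takahashi (Lemma \ref{lem:ratell}), which simultaneously gives $\Hom(F,F\otimes K_S)=0$, the equality $M^{\mathbb{C}^{\ast}}_{X,\beta_n}=M_{S,\beta_n}$, and $\DT_4^{\rm res}(\beta_n)=\pm\chi(\Hilb^n(S))$; then G\"ottsche's formula on the sheaf side and the G\"ottsche--Yau--Zaslow formula of Bryan--Leung on the GW side, both generating series equal to $\prod_{k\geqslant 1}(1-q^k)^{-12}$. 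Without naming these inputs your reduction does not close.
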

Similarly to Theorem~\ref{intro:thm:DT4/DT3}, 
the above result is reduced to the 
similar result for the
non-compact CY 3-fold $Y=\mathrm{Tot}_S(K_S)$. 

\subsection{Verifications of the conjecture III: local curves}
Let $C$ be a smooth projective curve with genus $g(C)$. 
We consider a CY 4-fold $X$ given by 
\begin{align*}
X=\mathrm{Tot}_{C}(L_1\oplus L_2\oplus L_3),
\end{align*}
 where 
$L_1, L_2, L_3$ are line bundles on $C$ satisfying 
$L_1\otimes L_2\otimes L_3\cong \omega_C$. 
The
three dimensional complex torus $T=(\mathbb{C}^{\ast})^{\times 3}$
acts on $X$ fiberwise over $C$. 
The $T$-equivariant GW invariants
\begin{align*}
\mathrm{GW}_{0, d[C]} \in \mathbb{Q}(\lambda_1, \lambda_2, \lambda_3)
\end{align*}
can be defined via equivariant residue (e.g.~\cite{Kont}). 
Here $\lambda_i$ are the equivariant parameters
with respect to the $T$-action. 

On the other hand, there 
is a two dimensional subtorus $T_0\subseteq (\mathbb{C}^{\ast})^{3}$ which preserves the CY 4-form on $X$. 
As in the local surface case~(\ref{intro:localization}), 
we may 
define equivariant $\mathrm{DT_4}$ invariants 
\begin{align*}
\mathrm{DT}_{4}(d[C])\in\mathbb{Q}(\lambda_1, \lambda_2)
\end{align*}
as rational functions in terms of equivariant parameters of $T_0$. 
We explicitly determine $\mathrm{DT}_{4}(d[C])$ for $d\leqslant2$ (Corollary \ref{cor on deg one local curve}, \ref{DT4 for 2C}) and propose the 
following conjecture: 
\begin{conj}\label{intro: conj on local P1}\emph{(Conjecture \ref{conj on local P1})}
For any smooth projective curve $C$ and line bundles $L_i$ ($i=1,2,3$) on $C$ with $L_1 \otimes L_2 \otimes L_3 \cong \omega_C$, we have the identity
\begin{align*} \mathrm{GW}_{0,2[C]}=\mathrm{DT}_{4}(2[C])+\frac{1}{8}\mathrm{DT}_{4}([C])\in\mathbb{Q}(\lambda_1, \lambda_2) \end{align*}
after substituting $\lambda_3=-\lambda_1-\lambda_2$ in the LHS.
\end{conj}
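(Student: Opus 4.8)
The plan is to recognize the asserted identity as the degree-two instance of the genus-zero multiple cover formula without insertions, namely
\begin{align*}
\mathrm{GW}_{0,2[C]}=\sum_{k\mid 2}\frac{1}{k^{3}}\,\mathrm{DT}_{4}\big((2/k)[C]\big)=\mathrm{DT}_{4}(2[C])+\frac{1}{8}\,\mathrm{DT}_{4}([C]),
\end{align*}
so that $\tfrac18=\tfrac{1}{2^{3}}$ is precisely the multiple-cover weight predicted in the no-insertion (residue) setting (cf. (\ref{intro: residue invs conj})). Both sides are rational functions in the equivariant parameters, so the strategy is to compute each side explicitly as an equivariant residue and to compare after the substitution $\lambda_3=-\lambda_1-\lambda_2$. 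I would first split according to whether $g(C)\geqslant 1$ or $g(C)=0$, since the two geometries behave very differently.

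For $g(C)\geqslant 1$ the Gromov--Witten side vanishes: any genus-zero stable map of degree $d\geqslant 1$ over $C$ would, after composition with the projection $X\to C$, produce a nonconstant map from a tree of rational curves onto $C$, which Riemann--Hurwitz forbids. Hence $\mathrm{GW}_{0,2[C]}=0$, and the content of the conjecture in this range is the purely sheaf-theoretic cancellation $\mathrm{DT}_{4}(2[C])=-\tfrac18\,\mathrm{DT}_{4}([C])$, which I would read off directly from the explicit expressions of Corollaries~\ref{cor on deg one local curve} and~\ref{DT4 for 2C}.

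The substantive case is $g(C)=0$, the local $\mathbb{P}^{1}$. Here I would compute $\mathrm{GW}_{0,2[\mathbb{P}^{1}]}$ by $T$-equivariant virtual localization on $\overline{M}_{0,0}(X,2[\mathbb{P}^{1}])$: the fixed loci consist of the degree-two covers of the zero section together with configurations carrying rational tails into the fibers over the two torus-fixed points of $\mathbb{P}^{1}$, and the resulting sum may be organized along the lines of the local-curve Gromov--Witten theory of Bryan--Pandharipande. In parallel, $\mathrm{DT}_{4}(2[\mathbb{P}^{1}])$ and $\mathrm{DT}_{4}([\mathbb{P}^{1}])$ are the explicit rational functions furnished by the cited corollaries, extracted from the $T_0$-fixed loci of the moduli spaces $M_{X,2[C]}$ and $M_{X,[C]}$ of one-dimensional stable sheaves and the equivariant square-root Euler class of the movable part $\dR\hH om_{\pi_M}(\eE,\eE)^{\rm{mov}}$. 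The argument then concludes by checking the rational-function identity after setting $\lambda_3=-\lambda_1-\lambda_2$.

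I expect the main obstacle to lie on the Donaldson--Thomas side. First, the localization prescription (\ref{intro:localization}) for the $\DT_4$-virtual class and its square-root Euler class is only heuristic, so making the degree-two computation rigorous requires controlling the orientation chosen on each $T_0$-fixed component and verifying that it is the one compatible with the Gromov--Witten normalization; an inconsistent sign on even a single component would destroy the match. Second, whereas $M_{X,[C]}$ is essentially a point (the structure sheaf of the zero section), the degree-two moduli space $M_{X,2[C]}$ is genuinely more intricate --- it contains sheaves supported on nonreduced thickenings of $\mathbb{P}^{1}$ as well as pushforwards from smooth double covers inside the fibers --- so isolating its fixed locus and evaluating the half-Euler class there is the technical heart of the proof. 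The real crux is pinning down \emph{precisely} how the degree-one locus reappears, with weight $1/2^{3}$, as the multiple-cover contribution inside the degree-two Gromov--Witten count, which is what forces the coefficient $\tfrac18$ to come out correctly.
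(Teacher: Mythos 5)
Your overall route coincides with the paper's: split by genus, show both sides vanish for $g(C)\geqslant 1$, and reduce the $g(C)=0$ case to an explicit comparison of equivariant residues. For $g(C)\geqslant 1$ the paper indeed gets $\mathrm{GW}_{0,2[C]}=0$ from $\overline{M}_0(C,d[C])=\emptyset$, and shows that $\mathrm{DT}_4([C])$ and $\mathrm{DT}_4(2[C])$ each vanish separately (Proposition \ref{cor on DT4 C} and Corollary \ref{cor on DT4 2C}, via the $\Pic^0(C)$-factor argument), rather than cancelling against one another; note that Corollary \ref{DT4 for 2C}, which you cite for this, is the $g=0$ formula. Also, the difficulties you anticipate on the DT side are largely already resolved in the paper: the $T_0$-fixed locus of $M_{2[C]}$ is completely described (Proposition \ref{isom:deg2}), the half Euler class is evaluated on each component (Lemmas \ref{lem:thick}, \ref{lem:g=0}), and the square-root/orientation ambiguity is removed by fiat through the preferred choice (\ref{chi:half}) built into the definition (\ref{DT4-2C}) of $\mathrm{DT}_4(2[C])$; the conjecture is stated for that specific definition.

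The genuine gap is in your last sentence for the $g(C)=0$ case: ``the argument then concludes by checking the rational-function identity after setting $\lambda_3=-\lambda_1-\lambda_2$.'' That check \emph{is} the open content of the conjecture. After both sides are made explicit --- the GW side by the localization formula (\ref{GW02}) and the DT side by Corollary \ref{DT4 for 2C} as a sum of residues $A(l_1,l_2,l_3,k)$ and $B(l_1,l_2,l_3,k)$ --- one is left with a family of highly non-trivial identities of rational functions in $\lambda_1,\lambda_2$, one for each triple $(l_1,l_2,l_3)$ with $l_1+l_2+l_3=-2$, which the authors do not know how to prove in closed form: Theorem \ref{prop on local P1} only verifies them by Mathematica for $|l_1|\leqslant 10$, $|l_2|\leqslant 10$, and the appendix exhibits the scale of the cancellations involved. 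So your proposal reproduces the paper's reduction but does not supply the missing step, and it mislocates the essential difficulty (which is combinatorial, not foundational). A genuinely new ingredient --- e.g.\ a direct bijective or residue-theoretic proof of the identity $\mathrm{GW}_{0,2}|_{\lambda_3=-\lambda_1-\lambda_2}=A+B+\frac{1}{8}\lambda_1^{-l_1-1}\lambda_2^{-l_2-1}(-\lambda_1-\lambda_2)^{-l_3-1}$ for all admissible $(l_1,l_2,l_3)$ --- would be required to turn this into a proof.
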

By computing $\DT_4([2C])$ explicitly and using Mathematica, we 
prove the following: 
\begin{thm}\emph{(Theorem \ref{prop on local P1})} 
Conjecture~\ref{intro: conj on local P1} is true if 
\begin{enumerate}
\item 
$g(C)\geqslant1$; 
\item 
$g(C)=0$ and $|l_1|\leqslant10$, $|l_2|\leqslant10$ for $l_i=\deg(L_i)$
with $l_1 \geqslant l_2 \geqslant l_3$. 
\end{enumerate}
\end{thm}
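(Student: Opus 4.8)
The plan is to compute both sides of the identity
\[
\mathrm{GW}_{0,2[C]}=\mathrm{DT}_{4}(2[C])+\frac{1}{8}\,\mathrm{DT}_{4}([C])
\]
by torus localization and to compare the resulting rational functions in $\lambda_1,\lambda_2$ after imposing the Calabi--Yau relation $\lambda_3=-\lambda_1-\lambda_2$. Since $T=(\mathbb{C}^{\ast})^{3}$ acts on the fibres of $X\to C$ with the zero section $C$ as its only fixed locus, every $T$-fixed stable map to $X$ factors through $C$ and every $T_0$-fixed one-dimensional sheaf is scheme-theoretically supported on $C$. Thus the GW side localizes to an integral over $\overline{M}_{0,0}(C,2[C])$ against the equivariant Euler class of the index bundle $\dR\pi_\ast f^{\ast}(L_1\oplus L_2\oplus L_3)$, while the values $\mathrm{DT}_{4}([C])$ and $\mathrm{DT}_{4}(2[C])$ are the explicit rational functions produced in Corollary~\ref{cor on deg one local curve} and \ref{DT4 for 2C}, which I take as given input.

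\textbf{The case $g(C)\geq 1$.} A genus-zero stable map has a rational (tree-of-$\mathbb{P}^1$) domain, and there is no nonconstant morphism $\mathbb{P}^1\to C$ when $g(C)\geq 1$; hence every genus-zero map to $C$ is constant and no stable map in class $2[C]$ exists. The $T$-fixed locus of $\overline{M}_{0,0}(X,2[C])$ is therefore empty and $\mathrm{GW}_{0,2[C]}=0$. It then remains to verify the purely sheaf-theoretic cancellation $\mathrm{DT}_{4}(2[C])=-\frac{1}{8}\mathrm{DT}_{4}([C])$. I would read this off the degree-one and degree-two formulas of Corollary~\ref{cor on deg one local curve}, \ref{DT4 for 2C}: each is, after the substitution $\lambda_3=-\lambda_1-\lambda_2$, a rational function of $\lambda_1,\lambda_2$ and the discrete data $(g,l_1,l_2,l_3)$, and the assertion becomes a single algebraic identity that can be confirmed directly from those formulas uniformly in $g\geq 1$.

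\textbf{The case $g(C)=0$.} Now $C=\mathbb{P}^1$, nonconstant double covers exist, and $\mathrm{GW}_{0,2[\mathbb{P}^1]}$ is genuinely nonzero. Here $T$ acts trivially on the base, so the GW side is the integral over $\overline{M}_{0,0}(\mathbb{P}^1,2)$ of the equivariant Euler class of the virtual index bundle $\dR\pi_\ast f^{\ast}(\mathcal{O}(l_1)\oplus\mathcal{O}(l_2)\oplus\mathcal{O}(l_3))$ (of virtual rank $2(l_1+l_2+l_3)+3=-1$), which I would evaluate by introducing an auxiliary $\mathbb{C}^{\ast}$-action on $\mathbb{P}^1$ and applying virtual localization, the fixed loci being the Galois double cover totally ramified over the two base-fixed points and the two-component chains. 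Combined with the explicit DT$_4$ expressions, both sides become rational functions of $\lambda_1,\lambda_2$ depending on $(l_1,l_2,l_3)$ with $l_1+l_2+l_3=-2$, and the identity is checked symbolically in the stated range $|l_1|,|l_2|\leq 10$ with Mathematica.

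\textbf{Main obstacle.} The essential difficulty lies in the degree-two DT$_4$ input: one must describe the $T_0$-fixed moduli of one-dimensional stable sheaves in class $2[C]$ with $\chi=1$ --- rank-two sheaves on $C$ together with rank-one sheaves on fibrewise thickenings of $C$ --- equip it with its DT$_4$ virtual class, and evaluate the square-root Euler class $e(\dR\hH om_{\pi_M}(\eE,\eE)^{\rm{mov}})^{1/2}$, including fixing the orientation sign. Once this closed form is available (as in \ref{DT4 for 2C}), the $g\geq 1$ identity is elementary algebra and the $g=0$ comparison reduces to a finite symbolic check; the lack of a $g$-independent closed form for the $g=0$ GW generating series is exactly what confines that case to bounded $|l_i|$.
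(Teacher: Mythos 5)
Your proposal is correct and follows essentially the same route as the paper: for $g(C)\geqslant 1$ the GW side vanishes since $\overline{M}_0(C,d[C])=\emptyset$, and the paper's Proposition~\ref{cor on DT4 C} and Corollary~\ref{cor on DT4 2C} show that $\DT_4([C])$ and $\DT_4(2[C])$ each vanish individually, so the ``cancellation'' you describe is in fact termwise vanishing rather than a nontrivial identity. For $g(C)=0$ the paper likewise evaluates $\mathrm{GW}_{0,2[C]}$ by $(\mathbb{C}^{\ast})^2$-localization on $\mathbb{P}^1$ as in Kontsevich's scheme and compares it with the closed-form residue expression of Corollary~\ref{DT4 for 2C} symbolically in Mathematica over the stated range $|l_1|,|l_2|\leqslant 10$, exactly as you outline.
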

Indeed the conjectural formula 
in Conjecture~\ref{intro: conj on local P1}
implies some non-trivial identities of rational functions of 
$\lambda_1, \lambda_2$. See Appendix~B. 


\subsection{Speculation for genus one GV type invariants}
For genus one,
virtual dimensions of GW moduli spaces 
on a CY 4-fold $X$ are zero, so
the GW invariants
\begin{align*}
\mathrm{GW}_{1, \beta}=\int_{[\overline{M}_{1, 0}(X, \beta)]^{\rm{vir}}}
1 \in \mathbb{Q}
\end{align*}
can be defined
without insertions.
The
invariants
\begin{align}\label{intro:n1}
n_{1, \beta} \in \mathbb{Q}
\end{align}
 are defined in~\cite{KP} by the identity
\begin{align*}
\sum_{\beta>0}
\mathrm{GW}_{1, \beta}q^{\beta}=
&\sum_{\beta>0} n_{1, \beta} \sum_{d=1}^{\infty}
\frac{\sigma(d)}{d}q^{d\beta}
+\frac{1}{24}\sum_{\beta>0} n_{0, \beta}(c_2(X))\log(1-q^{\beta}) \\
&-\frac{1}{24}\sum_{\beta_1, \beta_2}m_{\beta_1, \beta_2}
\log(1-q^{\beta_1+\beta_2}),
\end{align*}
where $\sigma(d)=\sum_{i|d}i$ and $m_{\beta_1, \beta_2}\in\mathbb{Z}$ 
are called meeting invariants which can be inductively determined by genus 
zero GW invariants. In~\cite{KP}, the invariants (\ref{intro:n1}) are also conjectured to be integers. 

It is a natural problem to 
give a sheaf-theoretic interpretation to the invariants (\ref{intro:n1}). 
In the CY 3-fold case, the current 
proposals for such invariants 
involve refined versions of DT invariants
(see~\cite{HST, KL, MT}), 
 which are not available for CY 4-folds. 
Instead, we speculate that there is an interpretation of
the invariants (\ref{intro:n1}) using the
$\DT_4$-version of 
Pandharipande-Thomas' stable pair invariants~\cite{PT}.

Namely let us 
consider the stable pair moduli space $P_{n}(X,\beta)$ which parametrizes stable pairs 
\begin{align*}
(s:\oO_{X} \to F), \ [F]=\beta, \ \chi(F)=n
\end{align*}
i.e. $F$ is a pure one dimensional sheaf and $s$ is surjective in dimension 
one. By assuming an orientability result, one can similarly define the virtual class
\begin{align}\label{PT:virtual}
[P_{n}(X,\beta)]^{\rm{vir}} \in H_{2n}(P_{n}(X,\beta),\mathbb{Z}). 
\end{align}
A difference from the CY 3-fold case is that the virtual 
dimension of (\ref{PT:virtual}) 
depends on the holomorphic Euler characteristic $n$. 
A special case is when $n=0$, where
the virtual dimension is zero, we can take its degree: 
\begin{align*}
P_{0, \beta}:=\int_{[P_0(X, \beta)]^{\rm{vir}}} 1 \in \mathbb{Z}. 
\end{align*}
We expect the above invariants to contain information on the genus one GV type invariants (\ref{intro:n1}). 
Some of our computations suggest 
the following formula  
\begin{align*}
\sum_{\beta \geqslant 0}
P_{0, \beta}q^{\beta}=
\prod_{\beta>0} M\left(q ^{\beta}\right)^{n_{1, \beta}},
\end{align*}
where $M(q)=\prod_{k\geqslant 1}(1-q^{k})^{-k}$ is the MacMahon function.
Some more analysis of the above formula 
may be pursued in a future work.

\subsection{Notation and convention}
In this paper, all varieties and schemes are defined over $\mathbb{C}$. 
For a morphism $\pi \colon X \to Y$ of schemes, 
and for $\fF, \gG \in \mathrm{D^{b}(Coh(X))}$, we denote by 
$\dR \hH om_{\pi}(\fF, \gG)$ 
the functor $\dR \pi_{\ast} \dR \hH om_X(\fF, \gG)$. 
We also denote by $\mathrm{ext}^i(\fF, \gG)$ the dimension of 
$\Ext^i_X(\fF, \gG)$. 

A class $\beta\in H_2(X,\mathbb{Z})$ is called \textit{irreducible} (resp. \textit{primitive}) if it is not the sum of two non-zero effective classes
(resp. if it is not a positive integer multiple of an effective class).

For a scheme $X$, we denote by $\chi(X)$ its topological Euler characteristic. 
For a sheaf $\fF$ on $X$, we denote by $\chi(\fF)$ its holomorphic Euler characteristic. 

\subsection{Acknowledgement}
We would like to thank Duiliu-Emanuel Diaconescu for suggesting the problem and useful discussions and Dominic Joyce for helpful comments on our
preprint. The first author is grateful to Meya Zhao for her help in using 'Mathematica' to verify our conjectures in examples. 
Y.~C.~ is supported by the Royal Society Newton International Fellowship.
D.~M.~is supported by 
NSF grants DMS-1645082 and DMS-1564458.
Y.~T.~is supported by
Grant-in Aid
for Scientific Research grant (No.~26287002)
from MEXT, Japan. 
Y.~C.~and Y.~T.~ 
were supported by World Premier International Research Center Initiative (WPI), MEXT, Japan.

\section{Definitions and conjectures}
Throughout this paper, unless stated otherwise, $X$ will always denote a smooth projective Calabi-Yau 4-fold over $\mathbb{C}$, i.e. $K_X\cong \mathcal{O}_X$. 

\subsection{GW/GV conjecture on Calabi-Yau 4-folds}
Let $\overline{M}_{g, n}(X, \beta)$
be the moduli space of genus $g$, $n$-pointed stable maps
to $X$ with curve class $\beta$.
Its virtual dimension is given by
\begin{align*}
-K_X \cdot \beta+(\dim X-3)(1-g)+n=1-g+n.
\end{align*}
For cohomology classes
\begin{align}\label{gamma}
\gamma_i \in H^{m_i}(X, \mathbb{Z}), \
1\leqslant i\leqslant n, 
\end{align}
the corresponding Gromov-Witten invariant is defined by
\begin{align}\label{GWinv}
\mathrm{GW}_{g, \beta}(\gamma_1, \ldots, \gamma_n)
=\int_{[\overline{M}_{g, n}(X, \beta)]^{\rm{vir}}}
\prod_{i=1}^n \mathrm{ev}_i^{\ast}(\gamma_i),
\end{align}
where $\mathrm{ev}_i \colon \overline{M}_{g, n}(X, \beta)\to X$
is the $i$-th evaluation map.

For $g=0$, the
virtual dimension of $\overline{M}_{0, n}(X, \beta)$
is $n+1$, and (\ref{GWinv})
is zero unless
\begin{align}\label{sum:m}
\sum_{i=1}^{n}(m_i-2)=2.
\end{align}
In analogy with the Gopakumar-Vafa conjecture for CY 3-folds \cite{GV}, Klemm-Pandharipande \cite{KP} defined invariants $n_{0, \beta}(\gamma_1, \ldots, \gamma_n)$ on CY 4-folds by the identity
\begin{align*}
\sum_{\beta>0}\mathrm{GW}_{0, \beta}(\gamma_1, \ldots, \gamma_n)q^{\beta}=
\sum_{\beta>0}n_{0, \beta}(\gamma_1, \ldots, \gamma_n) \sum_{d=1}^{\infty}
d^{n-3}q^{d\beta},
\end{align*}
and conjecture the following
\begin{conj}\emph{(\cite[Conjecture~0]{KP})}\label{KPconj}
The invariants $n_{0, \beta}(\gamma_1, \ldots, \gamma_n)$
are integers.
\end{conj}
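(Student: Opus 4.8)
The plan is to deduce the integrality statement from the sheaf-theoretic identity of Conjecture~\ref{intro:conj:GW/GV}, which carries the real content. A priori $n_{0,\beta}(\gamma_1,\ldots,\gamma_n)$ is merely a rational number: it is obtained from the (rational) Gromov--Witten numbers $\mathrm{GW}_{0,\beta'}(\gamma_1,\ldots,\gamma_n)$ by M\"obius-type inversion of the multiple-cover resummation $\sum_{d\geqslant1}d^{n-3}q^{d\beta}$, with no evident reason to be an integer. The $\mathrm{DT}_4$ invariant $\mathrm{DT}_4(\beta\mid\gamma_1,\ldots,\gamma_n)$, by contrast, is an integer by construction. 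Hence once the equality $n_{0,\beta}(\gamma_1,\ldots,\gamma_n)=\mathrm{DT}_4(\beta\mid\gamma_1,\ldots,\gamma_n)$ is known, Conjecture~\ref{KPconj} is immediate, and I would split the work into (i) recording the integrality of the $\mathrm{DT}_4$ side and (ii) establishing the correspondence.

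For step (i), the integrality of $\mathrm{DT}_4(\beta\mid\gamma_1,\ldots,\gamma_n)=\int_{[M_\beta]^{\rm{vir}}}\prod_{i=1}^n\tau(\gamma_i)$ is a formal matter once the ingredients are assembled. The virtual class $[M_\beta]^{\rm{vir}}\in H_2(M_\beta,\mathbb{Z})$ of \eqref{intro:DT4vir} is an integral homology class by the Borisov--Joyce construction (granting orientability). The insertion $\tau(\gamma)=\pi_{M\ast}(\pi_X^{\ast}\gamma\cup\ch_3(\eE))$ lands in integral cohomology $H^{m-2}(M_\beta,\mathbb{Z})$: here $\gamma\in H^m(X,\mathbb{Z})$ is integral, $\ch_3(\eE)$ is the Poincar\'e dual of the integral fundamental cycle of $\eE$, and this uses the existence of an honest universal sheaf $\eE$ on $X\times M_\beta$, which is available since $\chi(E)=1$ makes the numerical type coprime and removes any Brauer obstruction. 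Finally the dimension constraint~\eqref{sum:m} forces $\prod_i\tau(\gamma_i)\in H^2(M_\beta,\mathbb{Z})$, matching the degree of $[M_\beta]^{\rm{vir}}$, so the invariant is the pairing of an integral cohomology class with an integral homology class and lies in $\mathbb{Z}$.

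Step (ii) is where the substance lies. I would first treat the \emph{ideal} case, following the heuristic of Subsection~\ref{subsec:geometric}: when every curve in class $\beta$ is smooth of the expected dimension, $M_\beta$ is controlled explicitly, the genus-zero stable maps factor through rigid rational curves, and one matches the two counts directly, the factor $1/k^2$ in the multiple-cover formula emerging from the local contribution of a $k$-fold cover inside the CY 4-fold. To pass to general $X$ I would invoke deformation invariance: both $\mathrm{GW}_{0,\beta}$ (hence $n_{0,\beta}$) and the $\mathrm{DT}_4$ invariants are deformation invariant, so the identity would follow for any $X$ deformation equivalent, with $\beta$ and the $\gamma_i$ deforming along, to an ideal model. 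Where no such model exists I would fall back on the verified geometries of the excerpt---elliptic fibrations (Proposition~\ref{prop on elliptic fib}) and products $Y\times E$ with $Y$ a complete intersection CY 3-fold (Theorem~\ref{intro:thm:DT4/DT3}, via Corollary~\ref{g=0 product of elliptic curve and CY3})---which already yield Conjecture~\ref{KPconj} for the corresponding classes by reduction to the established genus-zero GV/DT story on CY 3-folds.

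The hard part is proving Conjecture~\ref{intro:conj:GW/GV} in full generality, and I expect this to be the genuine obstacle rather than routine verification. Two difficulties stand out. First, the two virtual classes live on entirely different moduli spaces---stable maps versus one-dimensional sheaves---so matching them requires either a master-space/wall-crossing mechanism interpolating the theories or a GW/$\mathrm{DT}_4$ degeneration formula, neither of which is presently available; the multiple-cover analysis of \emph{families} of curves (as opposed to rigid ones) is especially delicate. Second, the $\mathrm{DT}_4$ virtual class depends on a choice of orientation on each connected component of $M_\beta$, and the conjecture asserts the equality only ``for a suitable choice''; pinning down the correct deformation-invariant sign assignment is itself unresolved. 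Consequently, while the reduction of Conjecture~\ref{KPconj} to the correspondence is clean and the correspondence can be established in the ideal and the listed cases, a complete proof of integrality in general rests on resolving these two points.
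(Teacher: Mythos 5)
The statement you are asked to prove is stated in the paper only as a conjecture, attributed to Klemm--Pandharipande, and the paper offers no proof of it; indeed the entire paper is organized around \emph{proposing} the identity $n_{0,\beta}(\gamma_1,\ldots,\gamma_n)=\mathrm{DT}_4(\beta\mid\gamma_1,\ldots,\gamma_n)$ (Conjecture~\ref{conj:GW/GV}) precisely because the right-hand side is an integer by construction, so that the correspondence would imply Conjecture~\ref{KPconj}. Your step~(i) is therefore exactly the paper's motivating observation, and your reduction of integrality to the sheaf-theoretic identity coincides with the authors' intended strategy. But this is a reduction, not a proof: Conjecture~\ref{conj:GW/GV} is itself open, and is only verified in the paper in special geometries (multiple fiber classes of elliptic fibrations, $Y\times E$ for complete intersection $Y$ and primitive classes, certain local models) and for an ``ideal'' $X$ at a purely heuristic level. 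You acknowledge this candidly, so the gap is not hidden, but it must be named as a gap: no proof of the integrality statement exists in the paper or in your proposal.

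Two further points in your step~(ii) would fail even as a program. First, the deformation argument --- deform a general $X$ to an ``ideal'' model where curves are smooth of expected dimension --- is not available: there is no reason a given projective CY 4-fold admits such a deformation, and the ideal hypotheses (e.g.\ that every rational curve moves in a compact smooth one-dimensional family with normal bundle $\oO_{\mathbb{P}^1}(-1,-1,0)$ at the generic member) are not known to hold generically even for a single $X$. Second, deformation invariance of the $\mathrm{DT}_4$ side is itself conditional: the virtual class depends on a choice of orientation of $(\lL,Q)$ on each connected component of $M_\beta$, and the paper only sketches (under the hypotheses $\mathrm{Hol}(X)=SU(4)$ and $H^{\rm{odd}}(X,\mathbb{Z})=0$) how to choose orientations consistently in a family; the conjecture as stated asserts equality only ``for a suitable choice of orientation,'' which has not been pinned down. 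So the honest conclusion is that your write-up correctly reproduces the paper's proposed route to integrality and its supporting evidence, but neither establishes the statement.
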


For $g\geqslant2$, GW invariants vanish for dimensional reasons, so a GW/GV type integrality conjecture on CY 4-folds makes sense only for genus 0 and 1.
We will propose a sheaf-theoretic definition of  GV-type integral invariants 
using $\DT_4$ invariants~\cite{CL, BJ}. 

\subsection{Review of $\mathrm{DT_ 4}$ invariants}\label{sec:review}
Before stating our proposal, we first review the framework for $\mathrm{DT_ 4}$ invariants.
We fix an ample divisor $\omega$ on $X$
and take a cohomology class
$v \in H^{\ast}(X, \mathbb{Q})$.

The coarse moduli space $M_{\omega}(v)$
of $\omega$-Gieseker semistable sheaves
$E$ on $X$ with $\ch(E)=v$ exists as a projective scheme.
We always assume that
$M_{\omega}(v)$ is a fine moduli space, i.e.
any point $[E] \in M_{\omega}(v)$ is stable and
there is a universal family
\begin{align}\label{universal}
\eE \in \Coh(X \times M_{\omega}(v)).
\end{align}

In~\cite{CL, BJ}, under certain hypotheses,
the authors construct 
a $\mathrm{DT}_{4}$ virtual
class
\begin{align}\label{virtual}
[M_{\omega}(v)]^{\rm{vir}} \in H_{2-\chi(v, v)}(M_{\omega}(v), \mathbb{Z}), \end{align}
where $\chi(-,-)$ is the Euler pairing.
Notice that this class will not necessarily be algebraic.

Roughly speaking, in order to construct such a class, one chooses at
every point $[E]\in M_{\omega}(v)$, a half-dimensional real subspace
\begin{align*}\Ext_{+}^2(E, E)\subset \Ext^2(E, E)\end{align*}
of the usual obstruction space $\Ext^2(E, E)$, on which the quadratic form $Q$ defined by Serre duality is real and positive definite. 
Then one glues local Kuranishi-type models of form 
\begin{equation}\kappa_{+}=\pi_+\circ\kappa: \Ext^{1}(E,E)\to \Ext_{+}^{2}(E,E),  \nonumber \end{equation}
where $\kappa$ is a Kuranishi map of $M_{\omega}(v)$ at $E$ and $\pi_+$ is the projection 
according to the decomposition $\Ext^{2}(E,E)=\Ext_{+}^{2}(E,E)\oplus\sqrt{-1}\cdot\Ext_{+}^{2}(E,E)$.  \\

In \cite{CL}, local models are glued in three special cases: 
\begin{enumerate}
\item when $M_{\omega}(v)$ consists of locally free sheaves only; 
\item  when $M_{\omega}(v)$ is Kuranishi smooth, i.e. local Kuranishi maps $\kappa$'s vanish; 
\item when $M_{\omega}(v)$ is a shifted cotangent bundle of a derived smooth scheme. 
\end{enumerate}
And the corresponding virtual classes are constructed using either gauge theory or algebro-geometric perfect obstruction theory in the sense of Behrend-Fantechi \cite{BF} and Li-Tian \cite{LT}.

The general gluing construction is due to Borisov-Joyce \cite{BJ}\,\footnote{One needs to assume that $M_{\omega}(v)$ can be given a $(-2)$-shifted symplectic structure as in Claim 3.29 \cite{BJ} to apply their constructions.}, based on Pantev-T\"{o}en-Vaqui\'{e}-Vezzosi's theory of shifted symplectic geometry \cite{PTVV} and Joyce's theory of derived $C^{\infty}$-geometry.
The corresponding virtual class is constructed using Joyce's
D-manifold theory (a machinery similar to Fukaya-Oh-Ohta-Ono's theory of Kuranishi space structures used in defining Lagrangian Floer theory).

In this paper, all computations and examples will only involve the virtual class constructions in situations (2), (3), mentioned above. We briefly 
review them as follows:  
\begin{itemize}
\item When $M_{\omega}(v)$ is Kuranishi smooth, the obstruction sheaf $Ob\to M_{\omega}(v)$ is a vector bundle endowed with a quadratic form $Q$ via Serre duality. Then the $\DT_4$ virtual class is given by
\begin{equation}[M_{\omega}(v)]^{\rm{vir}}=\mathrm{PD}(e(Ob,Q)).   \nonumber \end{equation}
Here $e(Ob, Q)$ is the half-Euler class of 
$(Ob,Q)$ (i.e. the Euler class of its real form $Ob_+$), 
and $\mathrm{PD}(-)$ is its 
Poincar\'e dual. 
Note that the half-Euler class satisfies 
\begin{align*}
e(Ob,Q)^{2}&=(-1)^{\frac{\mathrm{rk}(Ob)}{2}}e(Ob),  \textrm{ }\mathrm{if}\textrm{ } \mathrm{rk}(Ob)\textrm{ } \mathrm{is}\textrm{ } \mathrm{even}, \\
 e(Ob,Q)&=0, \textrm{ }\mathrm{if}\textrm{ } \mathrm{rk}(Ob)\textrm{ } \mathrm{is}\textrm{ } \mathrm{odd}. 
\end{align*}
\item When $M_{\omega}(v)$ is a shifted cotangent bundle of a derived smooth scheme, roughly speaking, this means that at any closed point $[F]\in M_{\omega}(v)$, we have Kuranishi map of type
\begin{equation}\kappa \colon
 \Ext^{1}(F,F)\to \Ext^{2}(F,F)=V_F\oplus V_F^{*},  \nonumber \end{equation}
where $\kappa$ factors through a maximal isotropic subspace $V_F$ of $(\Ext^{2}(F,F),Q)$. Then the $\DT_4$ virtual class of $M_{\omega}(v)$ is, 
roughly speaking, the 
virtual class of the perfect obstruction theory formed by $\{V_F\}_{F\in M_{\omega}(v)}$. 
When $M_{\omega}(v)$ is furthermore smooth as a scheme, 
then it is
simply the Euler class of the vector bundle 
$\{V_F\}_{F\in M_{\omega}(v)}$ over $M_{\omega}(v)$. 
\end{itemize}
${}$ \\
\textbf{On orientations}.
To construct the above virtual class (\ref{virtual}) with coefficients in $\mathbb{Z}$ (instead of $\mathbb{Z}_2$), we need an orientability result 
for $M_{\omega}(v)$, which is stated as follows.
Let  
\begin{equation}\label{det line bdl}
 \lL:=\mathrm{det}(\dR \hH om_{\pi_M}(\eE, \eE))
 \in \Pic(M_{\omega}(v)), \quad  
\pi_M \colon X \times M_{\omega}(v)\to M_{\omega}(v),
\end{equation}
be the determinant line bundle of $M_{\omega}(v)$, equipped with a symmetric pairing $Q$ induced by Serre duality.  An \textit{orientation} of 
$(\mathcal{L},Q)$ is a reduction of its structure group (from $O(1,\mathbb{C})$) to $SO(1, \mathbb{C})=\{1\}$; in other words, we require a choice of square root of the isomorphism
\begin{equation}\label{Serre duali}Q: \lL\otimes \lL \to \oO_{M_{\omega}(v)}  \end{equation}
to construct virtual class (\ref{virtual}).
An existence result of orientations is proved in \cite[Theorem 2.2]{CL2} for CY 4-folds $X$ such that $\mathrm{Hol}(X)=SU(4)$ and $H^{\rm{odd}}(X,\mathbb{Z})=0$. 
Notice that, if orientations exist, their choices form a torsor for $H^{0}(M_{\omega}(v),\mathbb{Z}_2)$. 

At the moment, there is no canonical choice of orientation for defining the virtual class (\ref{virtual}). 
We restrict to the case of CY 4-fold $X$ with $\mathrm{Hol}(X)=SU(4)$ and $H^{\rm{odd}}(X,\mathbb{Z})=0$ and explain
how to obtain a deformation invariant virtual class (hence also the corresponding invariants).

Let $\mathfrak{M}$ be the space of all complex structures on $X$. 
We take a finite type connected open subset $\mathcal{U}$ of $\mathfrak{M}$, and
a relative ample line bundle $\mathcal{O}_{X_\mathcal{U}}(1)$. Let $\{M_{\mathcal{O}_{X_t}(1)}(v)\}_{t\in\mathcal{U}}$ be a family of fine Gieseker moduli space (for some $v\in H^{*}(X,\mathbb{Q})$).
We can do a family of Seidel-Thomas twists which identifies 
$\{M_{\mathcal{O}_{X_t}(1)}(v)\}_{t\in\mathcal{U}}$ with a family of moduli space $\{\mathcal{M}_{si,t}^{bdl}\}_{t\in\mathcal{U}}$ of simple holomorphic structures on a complex vector bundle $E$.

Recall that the proof of \cite[Theorem 2.2] {CL2} shows the orientation of $(\mathcal{L},Q)$ over 
$M_{\mathcal{O}_{X}(1)}(v)$ is given as follows: firstly use S-T twists to identify the moduli space to a moduli of simple holomorphic structures on a complex bundle $E$ and then get an induced orientation from the space $\widetilde{\mathcal{B}}_{E',X}$ of gauge equivalence classes of framed connections on $E'=E\oplus(\det E)^{-1}\oplus \mathbb{C}^{N}$ with $N\gg0$. Note that $\widetilde{\mathcal{B}}_{E',X}$ does not depend on the choice of complex structures on $X$, and we know $\pi_1(\widetilde{\mathcal{B}}_{E',X})=0$ by \cite[Theorem 2.1] {CL2}. So we can choose orientation of $(\mathcal{L},Q)$ consistently over the family $\mathcal{U}$ such that the virtual class is invariant under deformation of complex structures.

\subsection{One-dimensional stable sheaves and the genus zero conjecture}
Let us take $\beta \in H_2(X, \mathbb{Z})\cong H^{6}(X,\mathbb{Z})$. By taking the cohomology class
\begin{align*}
v=(0, 0, 0, \beta, 1) \in H^0(X) \oplus H^2(X) \oplus H^4(X) \oplus H^6(X)\oplus H^8(X),
\end{align*}
we set
\begin{align*}
M_{\beta}=M_{\omega}(0, 0, 0, \beta, 1).
\end{align*}
\begin{rmk}\label{rmk on stability}
Note that $M_{\beta}$ is
the moduli space of one-dimensional
sheaves $E$'s on $X$ satisfying the following:
for any $0\neq E' \subsetneq E$, we have $\chi(E')\leqslant 0$.
In particular, it is
independent of the choice of $\omega$ and a fine moduli space.
\end{rmk}
Since $\chi(v, v)=0$ in this case, we have
\begin{align*}
[M_{\beta}]^{\rm{vir}} \in H_2(M_{\beta}, \mathbb{Z}).
\end{align*}
We define insertions as follows:
for a class $\gamma \in H^m(X, \mathbb{Z})$, set
\begin{equation}\label{tau gamma}
\tau(\gamma):=\pi_{M\ast}(\pi_X^{\ast}\gamma \cup\ch_3(\eE) )
\in H^{m-2}(M_{\beta}, \mathbb{Z}). 
\end{equation}
Here $\pi_X$, $\pi_M$ are projections from $X \times M_{\beta}$
onto corresponding factors, and $\ch_3(\eE)$ is the
Poincar\'e dual to the fundamental class of the universal sheaf (\ref{universal}).
For integral classes (\ref{gamma}),
we define
\begin{align}\label{DT4}
\mathrm{DT}_{4}(\beta\mid \gamma_1, \ldots, \gamma_n)=
\int_{[M_{\beta}]^{\rm{vir}}}
\prod_{i=1}^{n} \tau(\gamma_i)
\in
\mathbb{Z}.
\end{align}
Note that (\ref{DT4}) is zero unless (\ref{sum:m}) holds.
We propose the following conjecture, which
gives a sheaf theoretic interpretation of
Conjecture~\ref{KPconj}.
\begin{conj}\label{conj:GW/GV}\emph{(Genus 0)}
We have the identity
\begin{align*}
n_{0,\beta}(\gamma_1, \ldots, \gamma_n)=
\mathrm{DT}_{4}(\beta\mid \gamma_1, \ldots, \gamma_n),
\end{align*}
for certain choice of orientation in defining the RHS. 

In particular, we have the multiple cover formula
\begin{align*}
\mathrm{GW}_{0, \beta}(\gamma)=
\sum_{k|\beta}\frac{1}{k^{2}}\cdot\mathrm{DT}_{4}(\beta/k \mid \gamma).
\end{align*}
\end{conj}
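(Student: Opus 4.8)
The plan is to verify the identity $n_{0,\beta}(\gamma)=\mathrm{DT}_4(\beta\mid\gamma)$ under the \emph{ideal} hypothesis, that every rational curve in $X$ is a smooth $\mathbb{P}^1$ moving in a family of the expected dimension, and then to read off the multiple cover formula from the Klemm--Pandharipande defining identity. Since $X$ is Calabi--Yau, a smooth rational curve $C\cong\mathbb{P}^1$ of class $\beta$ has $\deg N_{C/X}=-K_X\cdot C-2=-2$, and the only rank three bundle of degree $-2$ on $\mathbb{P}^1$ with vanishing $H^1$ is $N_{C/X}\cong\oO_{\mathbb{P}^1}(-1)^{\oplus2}\oplus\oO_{\mathbb{P}^1}$, so the ideal hypothesis forces this splitting. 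Consequently the curves of class $\beta$ sweep a smooth surface $S_\beta\subset X$ and are parametrised by a smooth compact curve $B_\beta$. The goal is then to show that both sides of the conjectured identity equal the intersection number $\gamma\cdot S_\beta$.

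For the $\mathrm{DT}_4$ side I would first identify the moduli space. A one dimensional stable sheaf $E$ with $\ch(E)=(0,0,0,\beta,1)$ and ideal support is the structure sheaf $\oO_C$ of a curve in the family, so $M_\beta\cong B_\beta$. Using $\mathcal{E}xt^q(\oO_C,\oO_C)=\wedge^qN_{C/X}$ and the local-to-global spectral sequence I would compute $\Ext^1(\oO_C,\oO_C)\cong H^0(N_{C/X})\cong\mathbb{C}$ and $\Ext^2(\oO_C,\oO_C)=H^0(\wedge^2N_{C/X})\oplus H^1(N_{C/X})=0$, so $M_\beta$ is smooth of the expected dimension one with vanishing obstruction space, whence $[M_\beta]^{\rm vir}=[B_\beta]\in H_2(B_\beta,\mathbb{Z})$ for the appropriate orientation. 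The universal sheaf $\eE$ is the structure sheaf of the universal curve $\mathcal{C}\subset X\times B_\beta$, so $\ch_3(\eE)$ is Poincar\'e dual to $[\mathcal{C}]$, and the projection formula gives
\begin{align*}
\int_{[M_\beta]^{\rm vir}}\tau(\gamma)
=\int_{X\times B_\beta}\pi_X^\ast\gamma\cup[\mathcal{C}]
=\int_{\mathcal{C}}\pi_X^\ast\gamma
=\gamma\cdot S_\beta,
\end{align*}
the last step because $\mathcal{C}\to S_\beta$ has degree one. Thus $\mathrm{DT}_4(\beta\mid\gamma)=\gamma\cdot S_\beta$.

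On the Gromov--Witten side, $\mathrm{GW}_{0,\beta}(\gamma)$ receives contributions from the honest curves of class $\beta$ together with the $k$-fold multiple covers of curves of class $\beta/k$. The honest contribution of $B_\beta$ is again $\gamma\cdot S_\beta$, computed as $\int_{\overline{M}_{0,1}(X,\beta)}\mathrm{ev}^\ast\gamma=\mathrm{ev}_\ast[\overline{M}_{0,1}(X,\beta)]^{\rm vir}\cdot\gamma=\gamma\cdot S_\beta$. The multiple cover contribution is a local computation in the neighbourhood $\mathrm{Tot}_{\mathbb{P}^1}(\oO(-1)^{\oplus2}\oplus\oO)$, which is the resolved conifold geometry times the trivial $\oO$ (family) direction; the extra $\oO$ factor together with the single $H^4(X)$ insertion changes the Aspinwall--Morrison factor from $1/k^3$ to $1/k^2$. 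Feeding this into the defining identity $\sum_\beta\mathrm{GW}_{0,\beta}(\gamma)q^\beta=\sum_\beta n_{0,\beta}(\gamma)\sum_d d^{-2}q^{d\beta}$ isolates $n_{0,\beta}(\gamma)=\gamma\cdot S_\beta$, matching the $\mathrm{DT}_4$ computation, and the stated multiple cover formula then follows formally by inverting the same identity.

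The hard part is twofold. First, already within the ideal setting, the multiple cover analysis requires the precise local genus zero contribution with an insertion, together with a matching description of $M_{d\beta_0}$ for non-primitive classes: one must check that no stable sheaf supported on a non-reduced thickening of a lower class curve occurs (such a sheaf is destabilised by its reduced structure sheaf, which has larger reduced slope), so that $M_{d\beta_0}$ is again a family of honest curves, and that the induced orientation produces the correct sign on each component. Second, and more seriously, removing the ideal hypothesis appears to be out of reach with current technology: the Borisov--Joyce virtual class carries no localisation or degeneration formalism, so in general the identity must remain conjectural and can only be tested in the special geometries treated in the later sections.
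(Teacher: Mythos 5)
Your proposal follows essentially the same route as the paper's own justification, which is the heuristic argument of Subsection~\ref{subsec:geometric}: assume $X$ is ``ideal'', identify $M_\beta$ with the parameter space of the curves, compute $\Ext^1(\oO_C,\oO_C)\cong H^0(N_{C/X})\cong\mathbb{C}$ and $\Ext^2(\oO_C,\oO_C)\cong H^1(N_{C/X})\oplus H^1(N_{C/X})^\vee=0$ so that $[M_\beta]^{\rm{vir}}$ is the fundamental class of the family, and conclude that $\DT_4(\beta\mid\gamma)$ equals $\gamma$ intersected with the surface swept out by the rational curves. Your treatment of the Gromov--Witten side (local multiple cover contributions in $\mathrm{Tot}_{\mathbb{P}^1}(\oO(-1)^{\oplus 2}\oplus\oO)$ producing the $1/k^2$ rather than $1/k^3$ factor) is slightly more explicit than the paper's, which simply appeals to the fact that Klemm--Pandharipande's $n_{0,\beta}(\gamma)$ is designed to count rational curves incident to $\gamma$. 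Your closing assessment of the status of the statement is accurate: the paper offers only this heuristic plus verifications in special geometries, and the general case remains conjectural.

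There is one concrete omission relative to the paper's argument: your identification $M_\beta\cong B_\beta$ accounts only for sheaves supported on rational curves. The paper's ideal hypothesis explicitly includes smooth super-rigid elliptic curves $E$ of class $\beta$, and a degree-one line bundle on such an $E$ is a one-dimensional stable sheaf with $\chi=1$, so $M_\beta$ acquires additional components isomorphic to $\Pic^1(E)\cong E$. A computation parallel to the rational case ($\Ext^1\cong H^0(N_{E/X})\oplus H^0(\oO_E)\cong\mathbb{C}$, $\Ext^2=0$) shows these components carry virtual class $[E]$, so they are not killed by the virtual class; one must argue separately, as the paper does, that the insertion $\tau(\gamma)$ annihilates their contribution because the support curve $E$ is rigid (the cycle swept out is only one-dimensional, so pairing with $\gamma\in H^4(X)$ gives zero). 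Without this step the equality $\DT_4(\beta\mid\gamma)=\gamma\cdot S_\beta$ is not established even heuristically. A second, smaller point: the ideal hypothesis in the paper only requires the \emph{general} member of the family $T$ to have normal bundle $\oO_{\mathbb{P}^1}(-1,-1,0)$, so your claim that the splitting is ``forced'' for every member overstates what is needed and what is true; the vanishing of $\Ext^2$ and the identification of the local contribution with $[T]$ should be carried out at a general point of $T$.
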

\begin{rmk}
For the insertions, if $m_n=2$ in (\ref{gamma}), we have
\begin{align*}
n_{0,\beta}(\gamma_1, \ldots, \gamma_n) &=
(\beta \cdot \gamma_n) \cdot n_{0,\beta}(\gamma_1, \ldots, \gamma_{n-1}), \\
\mathrm{DT}_{4}(\beta\mid \gamma_1, \ldots, \gamma_n)
&=(\beta \cdot \gamma_n) \cdot
\mathrm{DT}_{4}(\beta\mid \gamma_1, \ldots, \gamma_{n-1}).
\end{align*}
Therefore we may assume that $m_i \geqslant 3$ for all $i$
in Conjecture~\ref{conj:GW/GV}.
By (\ref{sum:m}), there are
two possibilities
\begin{itemize}
\item $n=1$ and $m_1=4$,
\item $n=2$ and $m_1=m_2=3$.
\end{itemize}
In particular, when $H^{\rm{odd}}(X, \mathbb{Z})=0$,
we only need to consider the first case.
\end{rmk}

\subsection{Heuristic explanation of the conjecture}\label{subsec:geometric}
In this subsection, we give a heuristic argument to explain why we expect Conjecture \ref{conj:GW/GV} to be true.  In this discussion,
we ignore questions of orientation.

Let $X$ be an 'ideal' $\mathrm{CY_{4}}$ in the sense that all curves inside are smooth of expected dimensions, i.e.
\begin{enumerate}
\item
 any rational curve in $X$
comes with a compact 1-dimensional smooth family of embedded rational curves, whose general member is smooth with 
normal bundle $\mathcal{O}_{\mathbb{P}^{1}}(-1,-1,0)$.  
\item
any elliptic curve $E$ in $X$ is smooth, super-rigid, i.e. 
the normal bundle is 
$L_1 \oplus L_2 \oplus L_3$
for general degree zero line bundle $L_i$ on $E$
satisfying $L_1 \otimes L_2 \otimes L_3=\oO_E$. 
Furthermore any two elliptic curves are 
disjoint. 

\item
there is no curve in $X$ with genus $g\geqslant2$.
\end{enumerate}

Let $C$ be a rational curve in $X$ with $[C]=\beta\in H_{2}(X,\mathbb{Z})$, and $\{C_{t}\}_{t\in T}$ be the 1-dimensional family $C$ sits in. Any one-dimensional stable sheaf $F\in M_{\beta}$ supported on $C_{t}$
is $\mathcal{O}_{C_{t}}$ for some curve $C_{t}$. 
For a general $t \in T$ such that 
$\mathcal{N}_{C_{t}/X}\cong\mathcal{O}_{\mathbb{P}^{1}}(-1,-1,0)$, 
there exists canonical isomorphisms
\begin{align*}
&\Ext^{1}_X(F,F)\cong H^{0}(C_{t},\mathcal{N}_{C_{t}/X})\cong\mathbb{C},  \\
&\Ext^{2}_X(F,F)\cong H^{1}(C_{t},\mathcal{N}_{C_{t}/X})\oplus H^{1}(C_{t},\mathcal{N}_{C_{t}/X})^{\vee}=0. 
\end{align*}
Hence the local contribution of $C_{t}$ to the
$\mathrm{DT_{4}}$ virtual class $[M_{\beta}]^{\rm{vir}}$ is the fundamental class of the family $[T]$.

Let $E$ be an elliptic curve in $X$
with $[E]=\beta\in H_{2}(X,\mathbb{Z})$. 
Then any one dimensional stable sheaf $F \in M_{\beta}$ supported on 
$E$ is a line 
bundle on $E$. 
A similar calculation shows that we have 
\begin{align*}
&\Ext^{1}_X(F,F)\cong H^{0}(E,\mathcal{N}_{E/X})\oplus H^{0}(E,\mathcal{O}_{E})
\cong\mathbb{C}, \\
&\Ext^{2}_X(F,F)\cong H^{1}(E,\mathcal{N}_{E/X})\oplus
H^{1}(E,\mathcal{N}_{E/X})^{\vee}=0.
\end{align*}
Hence the local contribution of $E$ to $\mathrm{DT_{4}}$ virtual class $[M_{\beta}]^{\rm{vir}}$
is the fundamental class of the Picard variety $\Pic(E)\cong E$.

Let us take $\gamma \in H^4(X, \mathbb{Z})$
and consider the $\DT_4$ invariant
\begin{align}\label{DT4:ideal}
\DT_4(\beta \mid \gamma)=\int_{[M_{\beta}]^{\rm{vir}}}
\tau(\gamma). 
\end{align} 
Since the insertion $\tau(\gamma)$ 
imposes a codimension one constraint on the 
deformation space of curves in $X$, 
an elliptic curve $E \subset X$ does not 
contribute to (\ref{DT4:ideal})
as it is rigid.  
By the above argument, we have 
\begin{align*}
\int_{[M_{\beta}]^{\rm{vir}}}\tau(\gamma)=\sum_{i}\gamma\cdot[C_{T_{i}}], 
\end{align*}
where $[C_{T_{i}}]\in H_{4}(X,\mathbb{Z})$ denotes the fundamental class of the $T_{i}$ family of rational curves. This heuristic argument confirms Conjecture \ref{conj:GW/GV} as both $n_{0,\beta}(\gamma)$ and $\mathrm{DT_{4}}(\beta\textrm{ } |\textrm{ } \gamma)$ are (virtually) enumerating rational curves of class $\beta$ incident to cycle dual to $\gamma$.

\section{Compact examples}
In this section, we verify Conjecture \ref{conj:GW/GV}
 for certain compact Calabi-Yau 4-folds.

\subsection{Elliptic fibrations}
For $Y=\mathbb{P}^3$, we take general elements
\begin{align*}
u \in H^0(Y, \oO_Y(-4K_Y)), \
v \in H^0(Y, \oO_Y(-6K_Y)).
\end{align*}
Let $X$
be a CY 4-fold with an elliptic fibration
\begin{align*}
\pi \colon X \to Y
\end{align*}
given by the equation
\begin{align*}
zy^2=x^3 +uxz^2+vz^3
\end{align*}
in the $\mathbb{P}^2$-bundle
\begin{align*}
\mathbb{P}(\oO_Y(-2K_Y) \oplus \oO_Y(-3K_Y) \oplus \oO_Y) \to Y,
\end{align*}
where $[x:y:z]$ are homogeneous coordinates for the above
projective bundle. A general fiber of
$\pi$ is a smooth elliptic curve, and any singular
fiber is either a nodal or cuspidal plane curve.
Moreover, $\pi$ admits a section $\iota$ whose image
correspond to the fiber point $[0: 1: 0]$.

Let $h$ be a hyperplane in $\mathbb{P}^3$, $f$ be a general fiber of $\pi \colon X\rightarrow Y$ and set
\begin{align}\label{div:BE}
B=\pi^{\ast}h, \ E=\iota(\mathbb{P}^3)\in H_{6}(X,\mathbb{Z}).
\end{align}
We consider the moduli space $M_{r[f]}$ of 
one dimensional stable sheaves on $X$ in the 
multiple fiber class $r[f]$.
\begin{lem}\label{lem:fib:isom}
For any $r \in \mathbb{Z}_{\geqslant 1}$, we have an isomorphism
$M_{r[f]} \cong X$, 
under which the virtual
class of $M_{r[f]}$ is given by
\begin{align}\label{Mvir}
[M_{r[f]}]^{\rm{vir}}=\pm \mathrm{PD}(c_3(X)) \in H_2(X, \mathbb{Z}),
\end{align}
where the sign corresponds to the choice of an orientation in defining the LHS.
\end{lem}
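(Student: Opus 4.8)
The plan is to prove the two assertions separately: the identification $M_{r[f]}\cong X$ as schemes, and then the computation of the virtual class under this identification.

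For the isomorphism, I would first show that any stable sheaf $E\in M_{r[f]}$ is scheme-theoretically supported on a single fibre. Since $\pi_\ast[\Supp E]=r\,\pi_\ast[f]=0$ in $H_2(Y)$, the support lies in fibres of $\pi$, and stability forces it to be connected, hence contained in one fibre $X_y=\pi^{-1}(y)$; this produces a morphism $\sigma\colon M_{r[f]}\to Y$. Over the locus of smooth fibres, $E$ is the pushforward of a rank-$r$ degree-one vector bundle $V$ on the elliptic curve $X_y$ (using $\chi(E)=\deg V$ as $\chi(\oO_{X_y})=0$), and since $\gcd(r,1)=1$, Atiyah's classification identifies the fibre of $\sigma$ with the moduli of such stable bundles, namely $\Pic^1(X_y)\cong X_y$ (the section $\iota$ supplying the base point). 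To upgrade this to an isomorphism of schemes over all of $Y$ — in particular over the nodal and cuspidal fibres, and to exclude sheaves on non-reduced thickenings — I would invoke the relative Fourier–Mukai transform for the Weierstrass fibration: because every fibre is integral, the relative Poincar\'e kernel is an equivalence under which $M_{r[f]}$ is carried isomorphically to the relative compactified Jacobian of fibrewise rank-one sheaves, and for a Weierstrass model the latter is identified with $X$ via Abel–Jacobi (using $\iota$). This also identifies $\sigma$ with $\pi$.

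For the virtual class, I would record that $M_{r[f]}\cong X$ is smooth of dimension $4$ and that a local computation (via the Koszul/local-to-global spectral sequence for $i\colon X_y\hookrightarrow X$ with $N_{X_y/X}\cong\pi^\ast T_Y|_{X_y}\cong\oO_{X_y}^{\oplus3}$) gives $\ext^1(E,E)=4$ and $\ext^2(E,E)=6$; since $\dim M=\ext^1$, the moduli space is unobstructed (Kuranishi smooth), so by the Kuranishi-smooth case of the reviewed framework we have $[M_{r[f]}]^{\rm vir}=\mathrm{PD}\big(e(Ob,Q)\big)$, the half-Euler class of the rank-$6$ obstruction bundle $Ob=\mathcal{E}xt^2_{\pi_M}(\eE,\eE)$ with its Serre-duality quadratic form. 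The key point is then to identify $(Ob,Q)$ with $(\wedge^2 T_X,Q_\wedge)$, where $Q_\wedge\colon\wedge^2T_X\otimes\wedge^2T_X\to\wedge^4T_X=K_X^{-1}\cong\oO_X$ is the wedge pairing. This follows from the fact that the Yoneda product makes $\Ext^\bullet_X(E,E)$ into the exterior algebra $\wedge^\bullet\Ext^1_X(E,E)$ — transparent over smooth fibres, where $\Ext^\bullet\cong H^\bullet(\mathcal{E}nd V)\otimes\wedge^\bullet N\cong\wedge^\bullet\mathbb C\otimes\wedge^\bullet\mathbb C^3\cong\wedge^\bullet\mathbb C^4$ — so that $\Ext^2\cong\wedge^2\Ext^1$ compatibly with Serre duality. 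Finally I would use the characteristic-class identity $e(\wedge^2V,Q_\wedge)=\pm c_3(V)$, valid for any rank-$4$ bundle $V$ with $c_1(V)=0$: by the splitting principle a maximal isotropic subbundle is $F=L\otimes(V/L)$ for a sub-line-bundle $L$, and a direct Chern-root computation gives $c_3(F)=c_3(V)$ independently of $L$. Applying this to $V=T_X$ (where $c_1(T_X)=0$) yields $[M_{r[f]}]^{\rm vir}=\pm\mathrm{PD}(c_3(X))$, the sign being the orientation choice.

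The main obstacle is controlling the nodal and cuspidal fibres throughout. For the isomorphism this is handled structurally by the integrality of the Weierstrass fibres and the Fourier–Mukai equivalence, but one must verify that the transform preserves the relevant stability and is an isomorphism of moduli \emph{schemes}, not merely a bijection of points. For the virtual class, the exterior-algebra identity $\Ext^2\cong\wedge^2\Ext^1$ and the splitting $Ob\cong\wedge^2 T_X$ are immediate over smooth fibres but need justification where $X_y$ is singular, since there $N_{X_y/X}$ and the Koszul computation degenerate and $T_{X/Y}$ ceases to be locally free. Here I would argue that the Yoneda map $\wedge^2 T_M\to Ob$ is a morphism of vector bundles on all of $M$ that is an isomorphism on the dense smooth-fibre locus, and then promote it to a global isomorphism either by deformation invariance or, more robustly, by computing $Ob$ globally through relative Serre duality for the Gorenstein fibration $q\colon X\times_Y X\to M$ (whose relative dualizing sheaf is a line bundle pulled back from $Y$), which exhibits the isotropic piece as $\pi^\ast(T_Y\otimes\oO(-4))$ with $c_3$ equal to $c_3(X)$.
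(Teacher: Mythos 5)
Your overall strategy is sound and your endgame is correct: the obstruction bundle is $\wedge^2 T_X$ with the wedge pairing, and the Chern-root computation showing that a maximal isotropic subbundle of $(\wedge^2 V,Q_\wedge)$ has top Chern class $\pm c_3(V)$ when $c_1(V)=0$ is exactly right (this is the content of \cite[Proposition~7.17]{CL}, which the paper simply cites). However, the paper takes a different route precisely at the two points you flag as ``main obstacles,'' and in both cases your proposed workarounds are not yet proofs. First, your support argument only gives \emph{set-theoretic} containment in a single fibre; to rule out scheme-theoretic thickening in the base direction one needs the observation that $\Hom(E,E)=\mathbb{C}$ forces the ideal of the point $y\in Y$ to act by zero on $E$ (Lemma~\ref{lem:known} in the paper). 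Second, your identification $M_{r[f]}\cong X$ rests on the relative Fourier--Mukai transform carrying rank-$r$, degree-$1$ stable sheaves on the \emph{singular} fibres to stable rank-one torsion-free sheaves; preservation of stability over nodal and cuspidal fibres is a nontrivial input that you assert rather than establish. The paper sidesteps this entirely: by \cite{B-M2} the relative moduli space $M_{r[f]}$ is a smooth projective CY 4-fold derived equivalent to $X$; Atiyah's classification over smooth fibres gives a birational map $M_{r[f]}\dashrightarrow X$; by \cite{Kawaflo} birational CY manifolds are connected by flops; and $X$ has Picard number two with nef cone spanned by $B$ and $E$, hence admits no flop, so the birational map is an isomorphism. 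No analysis of singular fibres is needed.

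The same issue recurs in your virtual-class step. Your identification $\Obs\cong\wedge^2 T_X$ is transparent only where $E$ is a bundle on a smooth fibre; over singular fibres $E$ need not be locally free and the Koszul computation degenerates, as you note. Your fallback --- a Yoneda map $\wedge^2 T_M\to \Obs$ that is an isomorphism on a dense open set --- does not suffice: a morphism of vector bundles of equal rank that is generically an isomorphism need not be an isomorphism everywhere (its determinant could vanish on a divisor), and ``deformation invariance'' does not repair this. The paper again uses the derived equivalence: under it the sheaves $E\in M_{r[f]}$ correspond to skyscraper sheaves $\oO_x$ on $X$, so $\Ext^{\bullet}_X(E,E)\cong \Ext^{\bullet}(\oO_x,\oO_x)=\wedge^{\bullet}T_xX$ \emph{as algebras with their Serre-duality pairings}, uniformly over all of $M_{r[f]}$ including the singular fibres. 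This gives the global isomorphism $(\Obs,Q)\cong(\wedge^2T_X,Q_\wedge)$ for free, after which the half-Euler-class computation you wrote finishes the proof. So: right final computation, but the two gaps you identified are genuine, and the paper's birational-geometry and derived-equivalence arguments are the missing ingredients that close them.
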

\begin{proof}
By stability, any one dimensional stable
sheaf $\eE$ with $[\eE]=r[f]$
satisfies $\Hom(\eE, \eE)=\mathbb{C}$, 
hence by Lemma~\ref{lem:known}
it  is scheme theoretically
supported on a fiber of $\pi$.
Therefore $M_{r[f]}$ is identified with the
$\pi$-relative moduli space of stable sheaves on $X$.
By~\cite[Theorem 2.1]{B-M2},
$M_{r[f]}$ is a smooth CY 4-fold which is derived equivalent to $X$.
There are rational maps
\begin{align*}
M_{r[f]} \stackrel{\phi_1}{\dashrightarrow}
M_{[f]} \stackrel{\phi_2}{\dashleftarrow} X,
\end{align*}
where $\phi_1$ sends a stable sheaf $\eE$ on
$\pi^{-1}(p)$ for a general point $p\in Y$ to
$\det (\eE)$, and
$\phi_2$ sends $x \in \pi^{-1}(p)$
to $I_x^{\vee}$
where $I_x \subset \oO_{\pi^{-1}(p)}$ is the
ideal sheaf of $x$ in $\pi^{-1}(p)$. It is well-known that
$\phi_1$, $\phi_2$ are isomorphisms
on a general fiber of $\pi$ by Atiyah's work,
hence they are birational maps.
Therefore, $M_{r[f]}$ and $X$ are connected by a finite number of flops
by~\cite{Kawaflo}.
On the other hand, the Picard number of $X$ is two by
the weak Lefschetz theorem and the
extremal rays of its nef cone are given by the divisors
(\ref{div:BE}).
Therefore $X$ does not admit any flop, and
the birational map 
$\phi_2^{-1} \circ \phi_1$ extends to an 
isomorphism
\begin{align*}
\phi_2^{-1} \circ \phi_1 \colon 
M_{r[f]} \stackrel{\cong}{\to} X.
\end{align*}

Using the derived equivalence between $M_{r[f]}$ and $X$ 
proved in~\cite{B-M2},
the deformation-obstruction spaces (with Serre duality pairing) on $M_{r[f]}$
are identified with those on $X$ 
viewed as a moduli space of
skyscraper sheaves $\{\oO_x\}_{x \in X}$.
Therefore the identity (\ref{Mvir}) holds
from~\cite[Proposition~7.17]{CL}.
\end{proof}
Here we used the following lemma, which is well-known: 
\begin{lem}\label{lem:known}
Let $f \colon X \to Y$ be a morphism of schemes 
and suppose we have $E \in \Coh(X)$ which is set theoretically 
supported on 
$f^{-1}(p)$ for some $p \in Y$
and satisfies $\Hom(E, E)=\mathbb{C}$. 
Then $E$ is scheme-theoretically supported on 
$f^{-1}(p)$. 
\end{lem}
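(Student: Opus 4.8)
The plan is to translate ``scheme-theoretically supported on $f^{-1}(p)$'' into the algebraic statement that the ideal sheaf $\mathcal{I}_{f^{-1}(p)} \subseteq \oO_X$ annihilates $E$, and then to verify this annihilation one local function at a time. Since the formation of the scheme-theoretic fibre is local on $Y$, I would choose an affine open neighbourhood $U=\Spec B \ni p$ and put $V=f^{-1}(U)$; the fibre $f^{-1}(p)\cap V$ is then cut out by the functions $f^{\ast}t$ as $t$ ranges over elements of $\mathfrak{m}_p$, i.e.\ functions in $B$ vanishing at $p$. Because $\Supp(E)\subseteq f^{-1}(p)\subseteq V$ is a closed subset of $X$ contained in the open set $V$, restriction identifies $\End_X(E)\cong \End_V(E|_V)$, so the hypothesis $\Hom(E,E)=\mathbb{C}$ is inherited by $E|_V$. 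Thus it suffices to show that multiplication by $f^{\ast}t$ kills $E|_V$ for every such $t$.

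The key observation is a Schur-type argument. For $t\in B$ with $t(p)=0$, multiplication by the pulled-back function $f^{\ast}t\in \oO_X(V)$ is an $\oO_V$-linear endomorphism of $E|_V$, hence by $\End_V(E|_V)=\mathbb{C}$ it equals $\lambda\cdot\id_E$ for some scalar $\lambda\in\mathbb{C}$. It remains to see that $\lambda=0$. Here I would use that $f^{\ast}t$ vanishes along the support: at any point $x\in\Supp(E)$ we have $f(x)=p$, so the local homomorphism $\oO_{Y,p}\to\oO_{X,x}$ carries $t\in\mathfrak{m}_p$ into the maximal ideal $\mathfrak{m}_x$, i.e.\ $(f^{\ast}t)_x\in\mathfrak{m}_x$.

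If $\lambda\neq 0$ then multiplication by $f^{\ast}t$ is an isomorphism of $E$, hence so is its stalk map $E_x\to E_x$; but this map is multiplication by $(f^{\ast}t)_x\in\mathfrak{m}_x$, whose surjectivity gives $E_x=\mathfrak{m}_x E_x$ and therefore $E_x=0$ by Nakayama's lemma, contradicting $x\in\Supp(E)$. (Equivalently, since $f^{\ast}t$ lies in the radical of the annihilator of $E$, some power $(f^{\ast}t)^N$ annihilates $E$, forcing $\lambda^N=0$.) Hence $\lambda=0$, so $f^{\ast}t$ annihilates $E|_V$. As this holds for every $t\in\mathfrak{m}_p$, the ideal sheaf $\mathcal{I}_{f^{-1}(p)}$ annihilates $E$, which is exactly the assertion that $E$ is scheme-theoretically supported on $f^{-1}(p)$ (the case $E=0$ being trivial).

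I expect the only genuinely delicate point to be the bookkeeping in the first step: making sure that the global hypothesis $\Hom(E,E)=\mathbb{C}$ transfers to the \emph{local} operator ``multiplication by $f^{\ast}t$'', which is a priori defined only over $V$. This is precisely what the containment $\Supp(E)\subseteq V$ buys us, since then $\End$ is unchanged under restriction to $V$; once this reduction is in place, both the Schur step and the Nakayama step are routine.
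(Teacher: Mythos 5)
Your proof is correct and follows essentially the same route as the paper's: a Schur-type argument showing multiplication by a function in $\mathfrak{m}_p$ is a scalar (equivalently, zero or an isomorphism), then ruling out the nonzero case using the set-theoretic support hypothesis. The paper excludes the isomorphism case by noting that some power of the function annihilates $E$ (your parenthetical alternative), while you use Nakayama at a stalk; these are interchangeable, and your extra care in restricting to $V=f^{-1}(U)$ is a harmless refinement of the same argument.
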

\begin{proof}
Let $m_{p} \subset \oO_Y$ be the ideal sheaf of $p$. 
For $u \in m_p$, 
the multiplication by $u$ defines the morphism 
\begin{align}\label{mult:u}
u \colon E \to E.
\end{align}
Since $\Hom(E, E)=\mathbb{C}$, 
the morphism (\ref{mult:u}) is either isomorphism or zero map. 
As $E$ is set theoretically supported on $f^{-1}(p)$, 
some compositions of (\ref{mult:u}) must be a zero map. 
Therefore (\ref{mult:u}) is a zero map
for any $u\in m_p$, which implies that $E$ is 
$\oO_{f^{-1}(p)}$-module. 
\end{proof}

We now verify Conjecture~\ref{conj:GW/GV} for multiple fiber classes.
\begin{prop}\label{prop on elliptic fib}
Conjecture~\ref{conj:GW/GV} is true for $\beta=r[f]$,
$\gamma=B^2$ or $B\cdot E$.
\end{prop}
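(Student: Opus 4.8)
The plan is to compute the right-hand side $\mathrm{DT}_4(r[f]\mid\gamma)$ directly from Lemma~\ref{lem:fib:isom} and then match it against the genus zero Gromov--Witten computation of~\cite{KP}. By Lemma~\ref{lem:fib:isom} we have an isomorphism $M_{r[f]}\cong X$ which is compatible with the projections to $Y=\mathbb{P}^3$ (both $\phi_1$ and $\phi_2$ respect the fibration), and under it $[M_{r[f]}]^{\rm{vir}}=\pm\mathrm{PD}(c_3(X))\in H_2(X,\mathbb{Z})$. Since $B=\pi^{\ast}h$ is pulled back from $Y$, it corresponds to $B$ on $M_{r[f]}\cong X$, so for a single insertion $\gamma\in H^4(X)$ (the relevant case $n=1$, $m_1=4$) the whole invariant reduces to identifying $\tau(\gamma)\in H^2(M_{r[f]})=H^2(X)$ and evaluating
\[
\mathrm{DT}_4(r[f]\mid\gamma)=\pm\int_X \tau(\gamma)\cup c_3(X).
\]

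First I would analyse the support of the universal sheaf $\eE$ on $X\times M_{r[f]}$. Every point $[\eE_0]\in M_{r[f]}$ lying over $p\in Y$ is scheme-theoretically supported on the fibre $\pi^{-1}(p)$ by Lemma~\ref{lem:known}, so the universal support $\mathcal{C}=\Supp(\eE)$ is contained in $X\times_Y M_{r[f]}$; in particular the two maps $\mathcal{C}\to Y$ induced by $\pi_X$ and by $M_{r[f]}\cong X\xrightarrow{\pi}Y$ coincide. Since $\ch_3(\eE)$ is by definition the fundamental cycle of $\mathcal{C}$, the projection formula along $\mathcal{C}\hookrightarrow X\times M_{r[f]}$ lets me transfer any class pulled back from $Y$ from the $X$-factor to the $M_{r[f]}$-factor after capping with $\ch_3(\eE)$. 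For $\gamma=B^2=\pi^{\ast}(h^2)$ this gives $\tau(B^2)=B^2\cup\pi_{M\ast}\ch_3(\eE)=0$, because $\pi_{M\ast}\ch_3(\eE)\in H^{-2}(M_{r[f]})=0$; hence $\mathrm{DT}_4(r[f]\mid B^2)=0$. For $\gamma=B\cdot E$ the same transfer applied to the factor $B$ yields $\tau(B\cdot E)=B\cup\tau(E)$, where $\tau(E)\in H^0(M_{r[f]})$ is the constant $\int_X E\cup\ch_3(\eE_0)=r\,(E\cdot f)=r$, using $\ch_3(\eE_0)=r[f]$ and that the section $E$ meets each fibre once. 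Thus $\tau(B\cdot E)=rB$ and
\[
\mathrm{DT}_4(r[f]\mid B\cdot E)=\pm\,r\int_X \pi^{\ast}h\cup c_3(X).
\]

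It then remains to compute the characteristic number $\int_X\pi^{\ast}h\cup c_3(X)$ from the Weierstrass model --- a routine Chern class computation in the $\mathbb{P}^2$-bundle over $\mathbb{P}^3$ --- and to compare $\mathrm{DT}_4(r[f]\mid B^2)=0$ and $\mathrm{DT}_4(r[f]\mid B\cdot E)=\pm r\int_X\pi^{\ast}h\cdot c_3(X)$ with the invariants $n_{0,r[f]}(B^2)$ and $n_{0,r[f]}(B\cdot E)$ read off from the genus zero series of~\cite{KP}. I expect the main obstacle to be this matching step: one must recall (or reproduce) Klemm--Pandharipande's explicit fibre-class computation, check that $n_{0,r[f]}(B\cdot E)$ is linear in $r$ with coefficient $\pm\int_X\pi^{\ast}h\cdot c_3(X)$, and that $n_{0,r[f]}(B^2)=0$ --- the vanishing reflecting that $B^2$ is pulled back from the base and so imposes a degenerate condition on the rational curves lying in the singular fibres over the discriminant in $\mathbb{P}^3$. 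Finally, as $[M_{r[f]}]^{\rm{vir}}$ carries an overall sign depending on the orientation and $M_{r[f]}\cong X$ is connected, I fix that orientation so the signs agree, which establishes Conjecture~\ref{conj:GW/GV} in these cases; the stated multiple cover formula is then automatic from the defining identity of $n_{0,\beta}$.
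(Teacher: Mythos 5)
Your proposal is correct and follows essentially the same route as the paper: both use Lemma~\ref{lem:fib:isom} together with the identification $\ch_3(\eE)=r[X\times_Y X]$ (your support analysis of $\eE$ is exactly this) to reduce the invariant to $\pm r\int_X\pi^{\ast}\pi_{\ast}\gamma\cup c_3(X)$, giving $0$ for $\gamma=B^2$ and $\pm r\int_X B\cup c_3(X)$ for $\gamma=B\cdot E$. The only remaining steps you defer --- evaluating $\int_X B\cup c_3(X)=960$ from the Weierstrass model and matching $0$ and $960r$ against $n_{0,r[f]}(\gamma)$ from Table~7 of~\cite{KP} --- are handled in the paper by direct citation, so nothing essential is missing.
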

\begin{proof}
Let $\eE$ be the universal sheaf on $X \times M_{r[f]}$.
Under the isomorphism $M_{r[f]} \cong X$ in 
Lemma~\ref{lem:fib:isom}, we have
\begin{align*}
\ch_3(\eE)=r[X \times_Y X] \in H^6(X \times X, \mathbb{Z}).
\end{align*}
Together with the identity (\ref{Mvir}),
for $\gamma \in H^4(X, \mathbb{Z})$ we obtain
\begin{align*}
\mathrm{DT}_{4}(r[f]\textrm{ }\big| \textrm{ }\gamma)=\pm
r \int_{X} \pi^{\ast}\pi_{\ast}\gamma \cup c_3(X).
\end{align*}
For $\gamma=B^2$,
we have $\pi_{\ast}\gamma=0$, hence
\begin{align}\label{B2}
\mathrm{DT}_{4}(r[f]\textrm{ }\big| \textrm{ }B^2)=0, \
r \in \mathbb{Z}_{\geqslant 1}.
\end{align}
For $\gamma=B \cdot E$, we have
$\pi^{\ast}\pi_{\ast}\gamma=B$ and
\begin{align}\label{BE}
\mathrm{DT}_{4}(r[f]\textrm{ }\big| \textrm{ }B \cdot E)=\pm
r \int_{X} B \cup c_3(X)=
\pm\, 960r.
\end{align}
In both cases (\ref{B2}), (\ref{BE}), the
results agree (for a suitable choice of sign) with $n_{r[f]}(\gamma)$ (which is $0$, $960r$ respectively) given
in~\cite[Table~7]{KP}\footnote{In \cite[Table~7]{KP}, genus zero GW invariants are computed by Picard-Fuchs equations through mirror symmetry. Fortunately, our $\mathrm{CY_{4}}$ is a hypersurface in a toric variety, mirror principle has been verified in this case by the works of Lian-Liu-Yau and Givental. }.\end{proof}

\subsection{Quintic fibrations}
We consider a compact CY 4-fold $X$ which admits a quintic 3-fold fibration structure
\begin{equation}\pi: X\rightarrow \mathbb{P}^{1},  \nonumber \end{equation}
i.e. $\pi$ is a proper morphism whose general fiber is a smooth quintic 3-fold $Y\subseteq\mathbb{P}^{4}$.
Examples of such CY 4-folds 
include resolutions of degree 10 orbifold hypersurface in $\mathbb{P}^{5}(1,1,2,2,2,2)$ and hypersurface of bidegree $(2,5)$ in $\mathbb{P}^{1}\times \mathbb{P}^{4}$ (see \cite{KP}).
We will explain some part of \cite[Table~4, Table~6]{KP} for these two examples.   
Let $h$ be the hyperplane class of $\mathbb{P}^1$ and $B=\pi^{\ast}h$ be the fiber class of $\pi$, $F\in H_6(X)$ be a divisor Poincar\'e dual to degree one curve in the quintic fiber. 
Notice that the genus zero integral invariants $n_{0,\beta}^{2}$'s (w.r.t. insertion $B\cdot F$) for both tables are the same
when $\beta\in H_2(Y)\subseteq H_2(X)$. Moreover, under the identification $H_2(Y)\cong\mathbb{Z}$, $\beta\mapsto d$, these numbers are $d$ times the degree $d$, genus zero Gopakumar-Vafa invariants of quintic 3-fold $Y$ \cite{GV} which are conjecturally the same as $\mathrm{DT}_{3}$ invariants for one dimensional stable sheaves on $Y$ \cite{Katz}. Hence, Conjecture \ref{conj:GW/GV} for this case may be reduced to a relation between $\mathrm{DT}_{4}$ invariants of $X$ and $\mathrm{DT}_{3}$ invariants of $Y$.

More generally, we expect the following conjecture: 
\begin{conj}\label{conj on CY4/CY3}
Let $X$ be a projective CY 4-fold which admits a CY 3-fold fibration\begin{equation}\pi \colon X\rightarrow C  \nonumber \end{equation} 
over a smooth curve $C$.

Let $X_{p}=\pi^{-1}(p)$ be a general fiber and $\beta\in\mathrm{Im}(i_*:H_2(X_p)\to H_{2}(X))$. Then for an ample divisor $H\subseteq X$, we have
\begin{equation}\mathrm{DT}_{4}(\beta\mid H\cdot X_{p})=(H\cdot\beta)\cdot\sum_k\mathrm{DT}_{3}(\beta_k), \nonumber \end{equation}
for certain choice of orientation in defining the LHS.
Here $\{\beta_k\}\subseteq H_2(X_p)$ are (finitely many) non-zero effective curve classes of $X_p$ which map to $\beta$. $\mathrm{DT}_{3}(\beta_k)$ is the DT invariant \cite{Thomas} for 1-dimensional stable sheaves $\mathcal{F}\in \Coh(X_{p})$ 
with $[\mathcal{F}]=\beta_k$ and $\chi(\fF)=1$.
\end{conj}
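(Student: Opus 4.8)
The plan is to realize $M_{X,\beta}$ as a family over $C$ of Donaldson--Thomas moduli spaces on the fibers and to match the $\mathrm{DT}_4$ virtual class with the fiberwise $\mathrm{DT}_3$ classes. First I would produce a support morphism $\rho \colon M_{X,\beta}\to C$. Since $\beta$ lies in the image of $H_2(X_p)$ it satisfies $\pi_\ast\beta=0$, so the connected support of any stable $\fF\in M_{X,\beta}$ is set-theoretically contained in a single fiber $X_{\rho(\fF)}$; because $\Hom(\fF,\fF)=\mathbb{C}$, Lemma~\ref{lem:known} upgrades this to scheme-theoretic support on $X_{\rho(\fF)}$. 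Over a general $p\in C$, where $X_p$ is a smooth CY $3$-fold, the fiber $\rho^{-1}(p)$ is the disjoint union $\bigsqcup_k M_{X_p,\beta_k}$ of moduli spaces of one-dimensional stable sheaves on $X_p$ computing $\mathrm{DT}_3(\beta_k)$, and $\rho$ is proper because $M_{X,\beta}$ is.

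Next I would reduce the insertion to a base class. Writing $[X_p]=\pi^\ast[\mathrm{pt}]$ and letting $\mathcal{C}\subset X\times M_{X,\beta}$ be the universal support cycle representing $\ch_3(\eE)$, the composite $\mathcal{C}\hookrightarrow X\times M_{X,\beta}\to C\times M_{X,\beta}$ factors through the graph of $\rho$, so $\pi_X^\ast[X_p]$ restricts on $\mathcal{C}$ to $\pi_M^\ast\rho^\ast[\mathrm{pt}]$. Since $\tau(H)=\pi_{M\ast}(\pi_X^\ast H\cap\ch_3(\eE))=H\cdot\beta$ is the constant fiberwise degree, the projection formula yields
\[
\tau(H\cdot X_p)=(H\cdot\beta)\,\rho^\ast[\mathrm{pt}]\in H^2(M_{X,\beta}).
\]
Taking $[\mathrm{pt}]$ at a general point therefore gives
\[
\mathrm{DT}_4(\beta\mid H\cdot X_p)=(H\cdot\beta)\int_{[M_{X,\beta}]^{\rm{vir}}}\rho^\ast[\mathrm{pt}]=(H\cdot\beta)\cdot\deg\!\big(\rho_\ast[M_{X,\beta}]^{\rm{vir}}\big),
\]
and only the generic smooth fiber of $\rho$ contributes, which conveniently sidesteps the singular fibers.

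It remains to identify $\deg(\rho_\ast[M_{X,\beta}]^{\rm{vir}})$ with $\sum_k\mathrm{DT}_3(\beta_k)$. Here I would use that $X_p$ is a smooth divisor with trivial normal bundle $N_{X_p/X}\cong\oO_{X_p}$, so for $\fF$ supported on $X_p$ the embedding gives
\[
\Ext^i_X(\fF,\fF)\cong\Ext^i_{X_p}(\fF,\fF)\oplus\Ext^{i-1}_{X_p}(\fF,\fF).
\]
In particular $\Ext^1_X=\Ext^1_{X_p}\oplus\mathbb{C}$, the extra summand being the base direction $T_pC$, while the $\mathrm{DT}_4$ obstruction space is $\Ext^2_X=\Ext^2_{X_p}\oplus\Ext^1_{X_p}\cong V\oplus V^\vee$ with $V=\Ext^2_{X_p}(\fF,\fF)$; Serre duality on $X_p$ makes $V$ a maximal isotropic subspace for the quadratic form $Q$, and the base direction is unobstructed, so the Kuranishi map factors through $V$. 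This is exactly the shifted cotangent situation (case~(3) of Subsection~\ref{sec:review}), so $[M_{X,\beta}]^{\rm{vir}}$ is the virtual class of the perfect obstruction theory $\{V_\fF\}$, which over the generic locus is the $\mathrm{DT}_3$ obstruction theory of the fibers together with the smooth one-dimensional base. Its pushforward to $C$ has generic fiber degree $\sum_k\deg[M_{X_p,\beta_k}]^{\rm{vir}}=\sum_k\mathrm{DT}_3(\beta_k)$, which combined with the previous step proves the identity.

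The hard part will be this last step: the Borisov--Joyce $\mathrm{DT}_4$ virtual class is only constructed and computable in the special cases of Subsection~\ref{sec:review}, and $M_{X,\beta}$ need not be smooth, so rigorously identifying $[M_{X,\beta}]^{\rm{vir}}$ with the pushforward of the relative $\mathrm{DT}_3$ classes requires a functoriality or virtual-pullback statement for $\mathrm{DT}_4$ classes along $\rho$ that is not yet available in general. A second subtlety is the orientation: one must check that the $\mathrm{DT}_4$ orientation on $M_{X,\beta}$ restricts to the canonical sign of the $\mathrm{DT}_3$ virtual class on each fiber, which is the source of the clause ``for a certain choice of orientation''. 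The product case $X=Y\times E$ of Corollary~\ref{g=0 product of elliptic curve and CY3}, where $M_{X,\beta}\cong M_{Y,\beta}\times E$ and the fibration is trivial, is precisely where both difficulties disappear and the strategy can be carried out unconditionally.
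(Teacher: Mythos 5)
This statement is a conjecture; the paper does not prove it but offers a rigorous proof only in the trivial-fibration case $X=Y\times E$ (Lemma~\ref{DT4/DT3} and Corollary~\ref{g=0 product of elliptic curve and CY3}) together with a heuristic explanation for the general case. Your proposal follows essentially the same outline as that heuristic: the support morphism $\rho=\pi_M\colon M_{X,\beta}\to C$ via Lemma~\ref{lem:known}, the identity $\tau(H\cdot X_p)=(H\cdot\beta)\,[\pi_M^{-1}(p)]$, and the identification of the generic fiber of $\rho$ with $\bigsqcup_k M_{X_p,\beta_k}$. Where you genuinely diverge is in the last step. The paper's heuristic assumes an ``ideal'' situation in which $\Ext^1_{X_p}(\eE,\eE)=\Ext^2_{X_p}(\eE,\eE)=0$ for general $p$, so that $\pi_M^{-1}(p)$ is a reduced finite set of points and $\pi_M^{-1}(U(p))\cong\pi_M^{-1}(p)\times U(p)$; the count then drops out immediately. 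You instead keep the full $\DT_3$ obstruction theory and invoke the shifted-cotangent-bundle case (3) of Subsection~\ref{sec:review}, with $V=\Ext^2_{X_p}(\fF,\fF)$ maximal isotropic in $\Ext^2_X(\fF,\fF)\cong V\oplus V^\vee$. That is exactly the mechanism the paper uses in its rigorous treatment of the product case, so your route is arguably the more faithful one; but note that in a non-trivial fibration the claim that the Kuranishi map factors through $V$ (equivalently, that deforming $\fF$ to nearby fibers contributes no obstruction in the $V^\vee$-direction) is not automatic: it is controlled by the Kodaira--Spencer class of the family composed with the Atiyah class of $\fF$, which vanishes for the product but requires an argument in general. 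The paper's ``ideal geometry'' assumption sidesteps this because both summands of $\Ext^2_X$ vanish. You correctly identify the remaining obstacles (absence of a localization/functoriality statement for Borisov--Joyce classes along $\rho$, and the orientation matching), which are precisely why the statement is left as a conjecture.
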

When $\pi \colon X\to C$ is a trivial CY 3-fold fibration, the above conjecture will be proved in Corollary \ref{g=0 product of elliptic curve and CY3}. For the general case, we give a heuristic explanation as follows.

Note that the fibration $\pi \colon X\to C$ induces a fibration on the moduli space 
\begin{align*}
\pi_M \colon M_{\beta}\to C.
\end{align*}
As any
one dimensional stable sheaf on $X$ is scheme theoretically supported on some fiber (see Lemma~\ref{lem:known}), 
the fiber $\pi_M^{-1}(p)$ is regarded as 
the moduli space of stable sheaves on $X_p$. 
For a 'general'  $p\in C$, ideally, the moduli space $\pi_{M}^{-1}(p)$ is smooth (Kuranishi maps are zero) of expected dimension 0. We take $\iota_*\mathcal{E}\in\pi_{M}^{-1}(p)$ and have
\begin{align*}
&\Ext^{1}_{X}(\iota_*\mathcal{E},\iota_*\mathcal{E})\cong \Ext^{1}_{X_p}(\mathcal{E},\mathcal{E})\oplus \Ext^{0}_{X_p}(\mathcal{E},\mathcal{E})\cong \mathbb{C}, \\
&
\Ext^{2}_{X}(\iota_*\mathcal{E},\iota_*\mathcal{E})\cong \Ext^{2}_{X_p}(\mathcal{E},\mathcal{E})\oplus \Ext^{2}_{X_p}(\mathcal{E},\mathcal{E})^{\vee}=0.
\end{align*}
Then for a small neighborhood $U(p)\subseteq C$, there is an isomorphism 
\begin{align*}
\pi_M^{-1}(U(p))\cong \pi_{M}^{-1}(p)\times U(p).
\end{align*}
In \cite{BJ}, to define virtual class of $M_\beta$, local models
of type $\kappa_{+}: \Ext^1(E,E)\to \Ext_{+}^{2}(E,E)$ are glued using
partition of unity.
Hence, the above local model
\begin{equation}\kappa_+=0: \Ext^{1}_{X}(\iota_*\mathcal{E},\iota_*\mathcal{E})\to
\Ext^{2}_{X}(\iota_*\mathcal{E},\iota_*\mathcal{E})  \nonumber \end{equation}
for closed subset $\pi_M^{-1}(p)\subseteq M_{\beta}$ can be glued over $M_\beta$. Then the representing manifold of $[M_\beta]^{\rm{vir}}$, when viewed as a submanifold of $M_\beta$, can be chosen to be $\pi_M^{-1}(U(p))$ (which is homeomorphic to $\coprod_{i=1}^{\sum_k\mathrm{DT}_{3}(\beta_k)}U(p)$) near $\pi_M^{-1}(p)$.
Note that the insertion $\tau: H^ {4}(X)\to H^{2}(M_\beta)$ satisfies 
\begin{align*}
\tau(H\cdot X_{p})=(H\cdot\beta)\,[\pi_M^{-1}(p)].
\end{align*}
Hence, we have
\begin{equation}\mathrm{DT}_{4}(\beta\mid H\cdot X_{p})=\int_{[M_\beta]^{\rm{vir}}} \tau(H\cdot X_{p}) =(H\cdot\beta)\cdot\sum_k\mathrm{DT}_{3}(\beta_k). 
\nonumber \end{equation}
\begin{cor}\label{prop g=0 quintic fib}
Assuming Conjecture \ref{conj on CY4/CY3}, then Conjecture \ref{conj:GW/GV} is true for $\beta\in H_2(Y)\subseteq H_2(X)$, $\gamma=B\cdot F$ in the quintic fibration examples in Table 4 and Table 6 of \cite{KP} if and only if the ($CY_{3}$) genus zero Gopakumar-Vafa/Donaldson-Thomas conjecture \cite{Katz} is true for $\beta\in H_2(Y)$. Here $Y$ is the quintic 3-fold realized as a general fiber of $\pi:X\to\mathbb{P}^{1}$.
\end{cor}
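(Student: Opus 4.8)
The plan is to evaluate both sides of Conjecture~\ref{conj:GW/GV} for $\beta \in H_2(Y) \subseteq H_2(X)$ and $\gamma = B \cdot F$, express each in terms of an invariant of the quintic fiber $Y$, and then recognize the resulting equality as the genus zero GV/DT conjecture on $Y$. Write $d$ for the image of $\beta$ under $H_2(Y) \cong \mathbb{Z}$; since $\beta$ is a nonzero effective class, $d \geqslant 1$.

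For the $\mathrm{DT}_4$ side I would apply the assumed Conjecture~\ref{conj on CY4/CY3}. The fiber is Poincar\'e dual to $B = \pi^{\ast}h$, and to match the insertion $B \cdot F$ with an \emph{ample} class I would take $H = F + NB$ for $N \gg 0$. Since $h^2 = 0$ on $\mathbb{P}^1$ we have $B^2 = 0$, and since $\pi_{\ast}\beta = 0$ we have $B \cdot \beta = 0$; hence $H \cdot [X_p] = F \cdot B = B \cdot F$ and $H \cdot \beta = F \cdot \beta = d$. Moreover, because $i_{\ast} \colon H_2(X_p) = H_2(Y) \cong \mathbb{Z} \to H_2(X)$ is injective, $\beta$ is the unique effective class on the fiber mapping to it, so the sum $\sum_k \mathrm{DT}_3(\beta_k)$ in Conjecture~\ref{conj on CY4/CY3} collapses to a single term. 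This gives $\mathrm{DT}_4(\beta \mid B\cdot F) = d \cdot \mathrm{DT}_3(\beta)$.

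For the Gromov-Witten side I would simply quote the computation recalled above from \cite[Tables~4 and~6]{KP}: for $\beta \in H_2(Y)$ the invariant $n_{0,\beta}(B \cdot F)$ equals $d$ times the genus zero Gopakumar-Vafa invariant of $Y$ in degree $d$. Combining the two evaluations, Conjecture~\ref{conj:GW/GV} for $(\beta, B\cdot F)$ becomes $d \cdot n = d \cdot \mathrm{DT}_3(\beta)$, where $n$ is the genus zero Gopakumar-Vafa invariant of $Y$ in degree $d$; as $d \geqslant 1$ we may cancel $d$, and the resulting identity $n = \mathrm{DT}_3(\beta)$ is exactly the genus zero GV/DT conjecture of \cite{Katz} for $\beta \in H_2(Y)$. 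Running this chain of equivalences in both directions yields the stated ``if and only if''. The only genuine work is the bookkeeping of the second paragraph---verifying $B^2 = 0$, $B\cdot\beta = 0$, and the injectivity that collapses the sum---which I expect to be the main (though routine) obstacle, since everything else is quoted input.
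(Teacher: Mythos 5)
Your argument is correct and essentially the same as the paper's: the paper only remarks that ``a similar proof will be given after Corollary~\ref{g=0 product of elliptic curve and CY3}'', and that proof runs through exactly the chain you describe --- evaluate $\mathrm{DT}_4(\beta\mid\gamma)$ as $(H\cdot\beta)\cdot\mathrm{DT}_3(\beta)$ via the fibration conjecture, quote the Klemm--Pandharipande computation identifying the GW side with $d$ times the quintic's genus zero GV invariant, and cancel the common factor $d\geqslant 1$. Your observation that one should replace $F$ by the ample divisor $F+NB$ (using $B^2=0$ and $B\cdot\beta=0$, and that $F$ is $\pi$-ample) so as to meet the ampleness hypothesis of Conjecture~\ref{conj on CY4/CY3} is a detail the paper leaves implicit, and is a welcome addition.
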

\begin{proof}
A similar proof will be given after Corollary \ref{g=0 product of elliptic curve and CY3}.
\end{proof}

\subsection{Product of elliptic curves and Calabi-Yau 3-folds}
Sitting in between examples given by elliptic and quintic fibrations, in this subsection, we consider a CY 4-fold of type $X=Y\times E$, where $Y$ is a projective CY 3-fold and $E$ is an elliptic curve. We will show our Conjecture \ref{conj:GW/GV} for CY 4-folds is consistent with the genus zero Gopakumar-Vafa/Donaldson-Thomas conjecture \cite{Katz} for CY 3-folds.

We pick a reference point $0\in E$ and embed $\iota \colon 
Y\hookrightarrow X$ via
$y \mapsto (y, 0)$. 
We take a curve class
\begin{align*}
\beta\in H_{2}(Y,\mathbb{Z}) \stackrel{\iota}{\hookrightarrow}
 H_{2}(X,\mathbb{Z})
\end{align*}
 and consider the moduli spaces 
\begin{align*}
M_{X,\beta}=M(0,0,0,\beta,1), \ 
M_{Y,\beta}=M(0,0,\beta,1)
\end{align*}
 of 1-dimensional stable sheaves on $X$ and $Y$
respectively. 
\begin{lem}\label{DT4/DT3}
There exists an isomorphism
$M_{X,\beta}\cong M_{Y,\beta}\times E$
under which the $\mathrm{DT_{4}}$ virtual class satisfies
\begin{equation}[M_{X,\beta}]^{\rm{vir}}=\deg[M_{Y,\beta}]^{\rm{vir}}\cdot[E],  \nonumber \end{equation}
for certain choice of orientation in defining the LHS. 
Here $[M_{Y,\beta}]^{\rm{vir}}$ is the $\mathrm{DT_{3}}$ virtual class.
\end{lem}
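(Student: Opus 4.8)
The plan is to prove the statement in two stages: first identify the two moduli spaces as schemes, and then compare their deformation--obstruction theories so as to read off the $\mathrm{DT}_4$ virtual class from the $\mathrm{DT}_3$ one. For the isomorphism, let $F \in M_{X,\beta}$. Since $F$ is stable it is simple, hence indecomposable, so its set-theoretic support is connected. As $\beta \in H_2(Y)$ has trivial image in $H_2(E)$, no irreducible component of the support can dominate $E$ under $\mathrm{pr}_E \colon X \to E$ (otherwise $\mathrm{pr}_{E\ast}$ of the effective class $\beta$ would be a nonzero multiple of $[E]$); by connectedness the whole support lies in a single fiber $Y \times \{e\}$. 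Applying Lemma~\ref{lem:known} to $\mathrm{pr}_E$ upgrades this to scheme-theoretic support, so $F = \iota_{e\ast}\fF$ with $\iota_e \colon Y \cong Y\times\{e\} \hookrightarrow X$ and $\fF \in M_{Y,\beta}$; equivalently $F = \fF \boxtimes \oO_e$, and the stability conditions match since $\iota_{e\ast}$ is exact and preserves $\chi$ and subobjects. I would then promote this bijection to a morphism $M_{Y,\beta}\times E \to M_{X,\beta}$ built from the universal sheaf on $Y \times M_{Y,\beta}$ twisted by the structure sheaf of the $E$-diagonal, and construct an inverse from the support morphism $M_{X,\beta}\to E$ together with restriction to the supporting fiber, checking the two are mutually inverse.

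Next I would compute the deformation--obstruction theory at $F = \fF\boxtimes\oO_e$. Using $\Ext^\ast_E(\oO_e,\oO_e) = \mathbb{C}\oplus\mathbb{C}[-1]$ and the K\"unneth formula on $X = Y\times E$, one gets
\[
\Ext^k_X(F,F) \cong \Ext^k_Y(\fF,\fF)\oplus \Ext^{k-1}_Y(\fF,\fF).
\]
In particular $\Ext^1_X(F,F) = \Ext^1_Y(\fF,\fF)\oplus\Ext^0_Y(\fF,\fF)$, where the second summand is $\mathbb{C} = T_eE$, and $\Ext^2_X(F,F) = \Ext^2_Y(\fF,\fF)\oplus\Ext^1_Y(\fF,\fF)$. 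The crucial point is the Serre-duality quadratic form $Q$ on $\Ext^2_X(F,F)$: since Serre duality on $X$ is the tensor product of $\mathrm{CY}_3$ duality on $Y$ with $\mathrm{CY}_1$ duality on $E$, the pairing identifies $\Ext^2_Y(\fF,\fF)$ with $\Ext^1_Y(\fF,\fF)^\vee$ and exhibits $(\Ext^2_X(F,F),Q)$ as the hyperbolic space $V\oplus V^\vee$ with $V = \Ext^2_Y(\fF,\fF)$ a maximal isotropic subspace. Because $X$ is a product, the deformation theory splits, so the $\mathrm{DT}_4$ Kuranishi map $\kappa \colon \Ext^1_X(F,F)\to\Ext^2_X(F,F)$ is $\kappa_{M_{Y,\beta}}\oplus 0$ and factors through $V$.

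This is exactly the ``shifted cotangent bundle of a derived smooth scheme'' situation of the review (case (3)), with derived smooth scheme $\mathbf{M}_{Y,\beta}\times E$ and isotropic family $\{V_F\} = \{\Ext^2_Y(\fF,\fF)\}$. I would check that this family is globally the pullback of the $\mathrm{DT}_3$ obstruction sheaf on $M_{Y,\beta}$, so that the perfect obstruction theory $\{V_F\}$ is the product of the symmetric $\mathrm{DT}_3$ obstruction theory on $M_{Y,\beta}$ with the tangent bundle of the smooth factor $E$. The $\mathrm{DT}_4$ virtual class is then the virtual class of this product obstruction theory, and by the Behrend--Fantechi product formula (with $E$ contributing its honest fundamental class) it equals $[M_{Y,\beta}]^{\rm{vir}}\times[E]$; since $[M_{Y,\beta}]^{\rm{vir}}\in H_0$, reading the $0$-cycle as its degree gives $\deg[M_{Y,\beta}]^{\rm{vir}}\cdot[E]$, as claimed.

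The hardest part will be matching orientations. Choosing the maximal isotropic $V = \Ext^2_Y(\fF,\fF)$ (rather than $V^\vee = \Ext^1_Y(\fF,\fF)$) corresponds to a specific reduction of the structure group of $(\lL,Q)$, and I must verify this choice can be made consistently over all of $M_{X,\beta}$ and that it reproduces the $\mathrm{DT}_3$ class with a globally $+1$ sign rather than signs varying component-by-component; this is precisely the ``certain choice of orientation'' in the statement. A secondary technical point is making the case-(3) gluing rigorous at the global level, i.e. that the local models $\kappa_{M_{Y,\beta}}\oplus 0$ glue to the product obstruction theory and that the Borisov--Joyce construction returns the Behrend--Fantechi virtual class of $\mathbf{M}_{Y,\beta}\times E$; here the product structure of $X$ and the resulting splitting of the deformation theory are exactly what make the argument tractable.
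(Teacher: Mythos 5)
Your proposal is correct and follows essentially the same route as the paper: scheme-theoretic support on fibers (via Lemma~\ref{lem:known}) gives the isomorphism $M_{X,\beta}\cong M_{Y,\beta}\times E$, the triviality of $N_{Y/X}$ yields $\Ext^1_X(F,F)\cong\Ext^1_Y(\fF,\fF)\oplus\mathbb{C}$ and $\Ext^2_X(F,F)\cong\Ext^2_Y(\fF,\fF)\oplus\Ext^2_Y(\fF,\fF)^\vee$ with $\Ext^2_Y(\fF,\fF)$ maximal isotropic, and the Kuranishi map of the form $(\kappa_Y,0)$ places one in the shifted-cotangent-bundle case, giving $[M_{X,\beta}]^{\rm{vir}}=\deg[M_{Y,\beta}]^{\rm{vir}}\cdot[E]$ up to orientation. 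Your treatment of the set-theoretic support step (connectedness of the support of an indecomposable sheaf plus vanishing of $\mathrm{pr}_{E\ast}\beta$) is a detail the paper leaves implicit, but the argument is the same.
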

\begin{proof}
By Lemma~\ref{lem:known}, 
any stable sheaf $F\in M_{X,\beta}$ is scheme theoretically supported on $Y\times\{t\}$ for some $t\in E$, i.e. 
it is written as $F=\iota_{t*}\mathcal{E}$ for some $\mathcal{E}\in M_{Y,\beta}$ and $\iota_t \colon Y=Y\times\{t\} \hookrightarrow X$.
From the spectral sequence 
\begin{align*}
\Ext^{*}_{Y}(\mathcal{E},\wedge^{*}\nN_{Y/X}\otimes\mathcal{E})\Rightarrow \Ext^{*}_{X}(\iota_{t*}\mathcal{E},\iota_{t*}\mathcal{E}),
\end{align*}
and $\nN_{Y/X} \cong \oO_Y$, 
we have canonical isomorphisms
\begin{align*}
&\Ext^{1}_{X}(F,F)\cong \Ext^{1}_{Y}(\mathcal{E},\mathcal{E})\oplus \mathbb{C},
\\
&\Ext^{2}_{X}(F,F)\cong \Ext^{2}_{Y}(\mathcal{E},\mathcal{E})\oplus \Ext^{2}_{Y}(\mathcal{E},\mathcal{E})^{\vee},
\end{align*}
under which $\Ext^{2}_{Y}(\mathcal{E},\mathcal{E})$ is a maximal isotropic subspace of $(\Ext^{2}_{X}(F,F),Q_{\rm{Serre}})$. Moreover there exists a Kuranishi map $\kappa:\Ext^{1}_{X}(F,F)\rightarrow \Ext^{2}_{X}(F,F)$
for $M_{X,\beta}$ at $F$ of type
\begin{align*}
\kappa \colon \Ext^{1}_{Y}(\mathcal{E},\mathcal{E})\oplus \mathbb{C} 
&\rightarrow \Ext^{2}_{Y}(\mathcal{E},\mathcal{E})\oplus \Ext^{2}_{Y}(\mathcal{E},\mathcal{E})^{\vee}, \\
\kappa(x,t)&=(\kappa_{Y}(x),0), 
\end{align*}
where $\kappa_{Y}$ is a Kuranishi map of $M_{Y,\beta}$ at $\mathcal{E}$. Hence the map
\begin{align*}
\phi \colon 
 M_{Y,\beta}\times E \rightarrow M_{X,\beta}, \quad 
\phi(\mathcal{E},t) =(\iota_{t})_{*}\mathcal{E} 
\end{align*}
is an isomorphism. 

Similar to \cite[Theorem 6.5]{CL}, \cite[Theorem 1.6]{CL3}, under the above isomorphism, we have
\begin{equation}[M_{X,\beta}]^{\rm{vir}}=\deg[M_{Y,\beta}]^{\rm{vir}}\cdot[E],  \nonumber \end{equation}
for certain choice of orientation, where $[M_{Y,\beta}]^{\rm{vir}}\in A_{0}(M_{Y,\beta})$ is the $\mathrm{DT_{3}}$ virtual class.
\end{proof}
\begin{cor}\label{g=0 product of elliptic curve and CY3}
Let $X=Y\times E$ be a product of a projective $\mathrm{CY_{3}}$ with an elliptic curve. Fix a point $0\in E$ and denote $Y=Y\times\{0\}\subseteq X$.
Then for any $\beta\in H_{2}(Y)\subseteq H_{2}(X)$ and divisor $H\subseteq X$, we have
\begin{equation}\label{DT4/DT3 invs}\mathrm{DT}_{4}(\beta\mid H\cdot Y)=\mathrm{DT}_{3}(\beta)\cdot(H\cdot\beta), \end{equation}
for certain choice of orientation in defining the LHS.
Here $\mathrm{DT}_{3}(\beta)=\deg[M_{Y,\beta}]^{\rm{vir}}$ is the $\mathrm{DT_{3}}$ invariant (see Appendix~\ref{append:CY3}). 

In particular, Conjecture \ref{conj:GW/GV} is true for $\beta\in H_{2}(Y)\subseteq H_{2}(X)$ and $\gamma=H\cdot Y$ (for any divisor $H\subseteq X$) if and only if the ($CY_{3}$) genus zero Gopakumar-Vafa/Donaldson-Thomas 
conjecture~\cite{Katz}
(see Conjecture~\ref{conj:katz})
is true for $\beta\in H_{2}(Y)$. 
\end{cor}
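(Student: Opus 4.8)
The plan is to obtain the first identity (\ref{DT4/DT3 invs}) by feeding the geometric input of Lemma~\ref{DT4/DT3} into an explicit evaluation of the insertion $\tau(H\cdot Y)$, and then to treat the Gromov--Witten side by a parallel product computation. Lemma~\ref{DT4/DT3} already gives $M_{X,\beta}\cong M_{Y,\beta}\times E$ together with $[M_{X,\beta}]^{\rm{vir}}=\deg[M_{Y,\beta}]^{\rm{vir}}\cdot[E]$, so the only missing ingredient for (\ref{DT4/DT3 invs}) is the class $\tau(H\cdot Y)$. First I would record, using the K\"unneth splitting $H^2(X)=H^2(Y)\oplus H^2(E)$ (valid since $H^1(Y)=0$), that the divisor $Y=Y\times\{0\}$ is dual to the point class of $E$, whence $H\cdot Y=a\boxtimes[\mathrm{pt}_E]$ with $a=H|_{Y}\in H^2(Y)$ and, in particular, $H\cdot\beta=a\cdot\beta$.

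Next I would compute $\ch_3(\eE)$. Since every sheaf in $M_{X,\beta}$ is of the form $\iota_{t\ast}\mathcal{E}$ supported on $Y\times\{t\}$, the universal sheaf is a pushforward $\eE=j_{\ast}q^{\ast}\eE_Y$, where $\eE_Y$ is the universal sheaf on $Y\times M_{Y,\beta}$, the map $q\colon Y\times M_{Y,\beta}\times E\to Y\times M_{Y,\beta}$ forgets $E$, and $j\colon Y\times M_{Y,\beta}\times E\hookrightarrow X\times M_{X,\beta}$ is the codimension-one embedding cutting out the diagonal of the two $E$-factors. Because $\eE_Y$ has one-dimensional support we have $\ch_0(\eE_Y)=\ch_1(\eE_Y)=0$, so Grothendieck--Riemann--Roch for $j$ collapses with no Todd correction to $\ch_3(\eE)=j_{\ast}q^{\ast}\ch_2(\eE_Y)$. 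Applying the projection formula in (\ref{tau gamma}) I expect
\[
\tau(H\cdot Y)=\big(\pi_{M_Y\ast}(\pi_Y^{\ast}a\cup\ch_2(\eE_Y))\big)\boxtimes[\mathrm{pt}_E].
\]
The bracketed factor is the standard divisor insertion of $\mathrm{DT}_3$ theory; since the support cycle has class $\beta$ in every fibre it equals $(a\cdot\beta)\cdot 1_{M_{Y,\beta}}\in H^0(M_{Y,\beta})$, so $\tau(H\cdot Y)=(H\cdot\beta)\,(1\boxtimes[\mathrm{pt}_E])$. Pairing with $[M_{X,\beta}]^{\rm{vir}}=[M_{Y,\beta}]^{\rm{vir}}\times[E]$ and using $\int_{[E]}[\mathrm{pt}_E]=1$ then yields $\mathrm{DT}_{4}(\beta\mid H\cdot Y)=(H\cdot\beta)\,\mathrm{DT}_{3}(\beta)$, for the orientation supplied by Lemma~\ref{DT4/DT3}.

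For the equivalence with Conjecture~\ref{conj:katz}, I would treat the GW side symmetrically. Genus-zero stable maps of class $\beta\in H_2(Y)$ to $X=Y\times E$ project to constant maps on the elliptic curve, so $\overline{M}_{0,1}(X,\beta)\cong\overline{M}_{0,1}(Y,\beta)\times E$, and as $f^{\ast}T_E=\oO_C$ contributes a trivial deformation and no obstruction, $[\overline{M}_{0,1}(X,\beta)]^{\rm{vir}}=[\overline{M}_{0,1}(Y,\beta)]^{\rm{vir}}\times[E]$. With $\mathrm{ev}^{\ast}(H\cdot Y)=(\mathrm{ev}_Y^{\ast}a)\boxtimes[\mathrm{pt}_E]$, Fubini gives $\mathrm{GW}^{X}_{0,\beta}(H\cdot Y)=\mathrm{GW}^{Y}_{0,\beta}(a)=(a\cdot\beta)\,\mathrm{GW}^{Y}_{0,\beta}$ by the divisor axiom on $Y$. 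Inserting the Aspinwall--Morrison expansion for $Y$ into the Klemm--Pandharipande defining series for $X$ (with $n=1$, exponent $d^{n-3}=d^{-2}$), the extra factor $(H\cdot d\alpha)=d\,(H\cdot\alpha)$ converts each $d^{-3}$ into $d^{-2}$, forcing the genus-zero GV invariants to satisfy $n^{X}_{0,\beta}(H\cdot Y)=(H\cdot\beta)\,n^{Y}_{0,\beta}$. Combined with the first identity, Conjecture~\ref{conj:GW/GV} for $(\beta,H\cdot Y)$ reads $(H\cdot\beta)\,n^{Y}_{0,\beta}=(H\cdot\beta)\,\mathrm{DT}_{3}(\beta)$; choosing $H$ ample so that $H\cdot\beta\neq 0$ (both sides vanishing otherwise), this is equivalent to $n^{Y}_{0,\beta}=\mathrm{DT}_{3}(\beta)$, i.e. to Conjecture~\ref{conj:katz} for $\beta$.

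The main obstacle is not a single hard estimate but the careful Chern-character and K\"unneth bookkeeping in the two parallel computations: verifying that $\ch_3(\eE)=j_{\ast}q^{\ast}\ch_2(\eE_Y)$ with no correction terms, and that only the $H^0(M_{Y,\beta})\otimes H^2(E)$ component of $\tau(H\cdot Y)$ survives the pairing. I would justify the latter by noting that $[M_{X,\beta}]^{\rm{vir}}$ lies in $H_0(M_{Y,\beta})\otimes H_2(E)$, so every other K\"unneth piece of the insertion integrates to zero. The orientation is fixed throughout to the one of Lemma~\ref{DT4/DT3}, so no further sign analysis is required.
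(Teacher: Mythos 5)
Your proposal is correct and follows essentially the same route as the paper: Lemma~\ref{DT4/DT3} supplies the product decomposition of the moduli space and its virtual class, the insertion is evaluated as $\tau(H\cdot Y)=(H\cdot\beta)\,\mathrm{PD}[M_{Y,\beta}\times\{0\}]$, and the equivalence with Conjecture~\ref{conj:katz} is obtained by comparing with the product formula $\mathrm{GW}^{X}_{0,\beta}(H\cdot Y)=(H\cdot\beta)\,\mathrm{GW}^{Y}_{0,\beta}$ on the Gromov--Witten side. You merely supply more detail than the paper does (the GRR computation of $\ch_3(\eE)$, the K\"unneth bookkeeping, and the remark that one should take $H$ ample so that $H\cdot\beta\neq0$ for the ``if and only if''), and your reformulation via $n^{X}_{0,\beta}(H\cdot Y)=(H\cdot\beta)\,n^{Y}_{0,\beta}$ is trivially equivalent to the paper's multiple-cover identity (\ref{equ3}).
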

\begin{proof}
Let $\mathcal{F}$ be the universal sheaf for $M_{X,\beta}$. The insertion (\ref{tau gamma})
\begin{align*}
\tau \colon H^{4}(X,\mathbb{Z}) &\rightarrow H^{2}(M_{X,\beta},\mathbb{Z}), \\
\tau(\alpha) &=(\pi_{M})_{*}(\pi_{X}^{*}\alpha\cup [\mathcal{F}])
\end{align*}
satisfies $\tau(H\cdot Y)=(H\cdot\beta)\cdot[M_{Y,\beta}]$ for any divisor $H\subseteq X$. By Lemma \ref{DT4/DT3}, then
\begin{equation}\mathrm{DT}_{4}(\beta\mid H\cdot Y)=\int_{[M_{X,\beta}]^{\rm{vir}}}\tau(H\cdot Y)=
\pm\deg[M_{Y,\beta}]^{\rm{vir}}\cdot(H\cdot\beta).  \nonumber \end{equation}
Hence, Conjecture \ref{conj:GW/GV} holds for $\beta\in H_{2}(Y)\subseteq H_{2}(X)$ and $\gamma=H\cdot Y$ if and only
\begin{equation}\label{equ3}\mathrm{GW}_{0, \beta}(H\cdot Y)=
\sum_{d|\beta}\frac{1}{d^{2}}\cdot\mathrm{DT}_{4}(\beta/d \mid H\cdot Y)=
\sum_{d|\beta}\frac{1}{d^{3}}\cdot\mathrm{DT}_{3}(\beta/d)\cdot(H\cdot\beta). \end{equation}
Notice that GW invariants satisfy similar formula as (\ref{DT4/DT3 invs}), i.e.
\begin{equation}\mathrm{GW}_{0,\beta}(H\cdot Y)=\mathrm{GW}_{0,\beta}(Y)\cdot(H\cdot\beta).  \nonumber \end{equation}
So (\ref{equ3}) holds true if and only if
\begin{equation}\mathrm{GW}_{0,\beta}(Y)=\sum_{d|\beta}\frac{1}{d^{3}}\cdot\mathrm{DT}_{3}(\beta/d), \nonumber \end{equation}
i.e. genus zero GV/DT conjecture (Conjecture~\ref{conj:katz})
 for $\beta\in H_{2}(Y)$ in a CY 3-fold $Y$.
\end{proof}
By combining with Corollary~\ref{cor:CI}, we have the following: 
\begin{thm}\label{DT4/DT3 primitive}
Let $Y$ be a complete intersection CY 3-fold in the 
product of projective 
spaces $\mathbb{P}^{n_1} \times \cdots \times \mathbb{P}^{n_k}$, 
and $X=Y \times E$ for an elliptic curve $E$. 
Then for any primitive curve class $\beta$ on $Y$
and cycle class $H \cdot Y$ for any divisor $H$ on $X$, 
Conjecture~\ref{conj:GW/GV} holds, i.e. the identity
\begin{equation}\mathrm{GW}_{0, \beta}(H \cdot Y)=\DT_4(\beta \mid H \cdot Y) \nonumber \end{equation}
holds for any primitive curve class $\beta\in H_{2}(Y)\subseteq H_{2}(X)$, any divisor $H\subseteq X$ and certain choice of orientation in defining the RHS.
\end{thm}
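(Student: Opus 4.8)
The plan is to reduce the four-fold identity to the genus zero Gopakumar-Vafa/Donaldson-Thomas conjecture on the three-fold factor $Y$, and then to deduce the latter for primitive classes from the curve-counting correspondences available on this particular family of CY 3-folds. First I would apply Corollary~\ref{g=0 product of elliptic curve and CY3}: for $X = Y \times E$, any $\beta \in H_2(Y) \subseteq H_2(X)$ and any divisor $H \subseteq X$, the identity $\mathrm{GW}_{0,\beta}(H\cdot Y) = \DT_4(\beta \mid H\cdot Y)$ holds for a suitable orientation---which is where the sign in the $\DT_4$ virtual class is absorbed---exactly when Katz's conjecture (Conjecture~\ref{conj:katz}),
\begin{align*}
\mathrm{GW}_{0,\beta}(Y) = \sum_{d \mid \beta} \frac{1}{d^3}\, \mathrm{DT}_3(\beta/d),
\end{align*}
holds for $\beta$ on $Y$. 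Because $\beta$ is primitive, $\beta/d$ is an effective integral class only for $d=1$, so the right-hand side collapses and the target reduces to $\mathrm{GW}_{0,\beta}(Y) = \mathrm{DT}_3(\beta)$, i.e. $n_{0,\beta} = \mathrm{DT}_3(\beta)$.

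It then remains to prove $n_{0,\beta} = \mathrm{DT}_3(\beta)$ for primitive $\beta$ on a complete intersection CY 3-fold $Y \subseteq \mathbb{P}^{n_1} \times \cdots \times \mathbb{P}^{n_k}$, which is the content of Corollary~\ref{cor:CI}. My route would pass from Gromov-Witten theory to one-dimensional sheaves in two steps. First, the GW/PT correspondence, which is known precisely for complete intersections in products of projective spaces, transports the genus zero information in $\mathrm{GW}_{0,\beta}$ into the stable-pair invariants $P_{n,\beta}$ of $Y$; this is exactly why the hypothesis singles out such $Y$. Second, the Joyce-Song/Toda wall-crossing formula on the CY 3-fold $Y$ relates the stable-pair series $\sum_n P_{n,\beta}\, q^n$ to the generating series of one-dimensional stable sheaf invariants $N_{n,\beta}$, among which $\mathrm{DT}_3(\beta) = N_{1,\beta}$ is a single coefficient. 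The decisive point is that for primitive $\beta$ no proper sub-multiple of $\beta$ occurs: all multiple-cover terms on the Gromov-Witten side vanish, the wall-crossing product over smaller classes is trivial, and the geometric vanishing of the higher-genus GV contributions leaves precisely the identity $n_{0,\beta} = N_{1,\beta}$.

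The hard part will be marshalling these two correspondences in a form sharp enough to isolate a single class. One must invoke the GW/PT correspondence for the given complete intersections and then apply the wall-crossing identity together with the required vanishing so that the bookkeeping genuinely collapses to one term rather than an infinite product; primitivity of $\beta$ is what makes this collapse exact, and is the reason the statement is confined to that case. Extending to non-primitive $\beta$ would force one to control the full multiple-cover structure on the four-fold side simultaneously with the wall-crossing on the three-fold side. Granting Corollary~\ref{cor:CI}, combining it with the reduction of the first paragraph yields the theorem.
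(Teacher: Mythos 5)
Your proposal is correct and follows the paper's own route exactly: the paper proves this theorem by combining Corollary~\ref{g=0 product of elliptic curve and CY3} (reducing the 4-fold identity to Conjecture~\ref{conj:katz} on $Y$, with the primitive case collapsing the multiple cover sum) with Corollary~\ref{cor:CI}, which is itself established via the Pandharipande--Pixton GW/PT correspondence and the wall-crossing/multiple-cover formula for primitive classes (Lemma~\ref{lem:primitive}). No substantive differences to report.
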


\subsection{Hyperk\"{a}hler 4-folds and cosection localization}
In this subsection, we investigate Conjecture \ref{conj:GW/GV}
 for a CY 4-fold $X$ which admits a holomorphic symplectic form, i.e. a hyperk\"{a}hler 4-fold. GW invariants on hyperk\"{a}hler manifolds vanish as they are deformation invariants and there are no holomorphic curves
for generic complex structures in the $\mathbb{S}^{2}$-twistor family.  An alternate way to see this vanishing is through the existence of
a nowhere-vanishing cosection (see for example Kiem-Li \cite{KL}).

Given a perfect obstruction theory \cite{BF, LT} on a Deligne-Mumford stack $M$, the existence of a cosection
\begin{equation}\varphi: Ob_{M}\rightarrow \mathcal{O}_M   \nonumber \end{equation}
of the obstruction sheaf $Ob_{M}$ allows us to localize the virtual class of $M$ to the closed subspace $Z(\varphi)\subseteq M$ where $\varphi$ is not surjective.
In particular, if $\varphi$ is surjective everywhere (which is guaranteed by the existence of holomorphic symplectic forms), then the virtual class of $M$ vanishes. Moreover, by truncating the obstruction theory to remove the trivial factor $\mathcal{O}_M$, one can define a reduced obstruction theory and reduced virtual class.

To verify Conjecture \ref{conj:GW/GV}
for hyperk\"{a}hler 4-folds, we need to prove the vanishing of $\mathrm{DT_4}$ invariants for $M_\beta$. Heuristically, in the ideal case, when all curves in $X$ are smooth embedded, one could identify the
obstruction theory of $M_\beta$ with obstruction theory of GW theory as in ~\cite[Section 7.2]{CL}, hence vanishing of invariants
follows. We give a cosection argument as follows.

${}$ \\
\textbf{Cosection and vanishing of $\mathrm{DT_4}$ virtual classes}.
Fix a 1-dimensional stable sheaf $F\in M_\beta$.  By taking the wedge product with the square $\mathrm{At}(F)^{2}$ of the Atiyah class and contracting with the holomorphic symplectic form $\sigma$, we get a surjective map
\begin{equation}\xymatrix@1{
\phi:\Ext^{2}(F,F)\ar[r]^{\wedge\frac{\mathrm{At}(F)^{2}}{2}}& \Ext^{4}(F,F\otimes \Omega^{2}_X) \ar[r]^{\quad \lrcorner \sigma} & \Ext^{4}(F,F)
\ar[r]^{tr} & H^{4}(X,\mathcal{O}_X)}.  \nonumber \end{equation}
More generally, we have
\begin{prop}\label{surj cosection}
Let $X$ be a projective hyperk\"{a}hler 4-fold, $F$ be a perfect complex on $X$ and $Q$ be the Serre duality quadratic form on $\Ext^{2}(F,F)$. Then the composition map
\begin{equation}\xymatrix@1{\phi:\Ext^{2}(F,F)\ar[r]^{\wedge\frac{\mathrm{At}(F)^{2}}{2}}& \Ext^{4}(F,F\otimes \Omega^{2}_X) \ar[r]^{\quad \lrcorner \sigma} & \Ext^{4}(F,F)\ar[r]^{tr} & H^{4}(X,\mathcal{O}_X)   } \nonumber \end{equation}
is surjective if either $\ch_{3}(F)\neq0$ or $\ch_{4}(F)\neq0$.
Moreover,   
\begin{enumerate}
\item if $\ch_{4}(F)\neq0$, then we have a $Q$-orthogonal decomposition
\begin{equation}\Ext^{2}(F,F)=\Ker(\phi)\oplus\mathbb{C}\langle \mathrm{At}(F)^2\lrcorner\textrm{ } \sigma \rangle,   \nonumber \end{equation}
where $Q$ is non-degenerate on each subspace; 
\item if $\ch_{4}(F)=0$ and $\ch_{3}(F)\neq0$, then we have a $Q$-orthogonal decomposition
\begin{equation}\Ext^{2}(F,F)=\mathbb{C}\left\langle \mathrm{At}(F)^2\lrcorner\textrm{ } \sigma, \kappa_{X}\circ \mathrm{At}(F) \right\rangle \oplus
(\mathbb{C}\left\langle \mathrm{At}(F)^2\lrcorner\textrm{ } \sigma, \kappa_{X}\circ \mathrm{At}(F)\right\rangle)^{\perp},  \nonumber \end{equation}
where $Q$ is non-degenerate on each subspace. Here $\kappa_{X}$ is the Kodaira-Spencer class which is Serre dual to $\ch_3(F)$.
\end{enumerate}
\end{prop}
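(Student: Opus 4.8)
The plan is to show that $\phi$ is, up to a nonzero constant, nothing but the Serre-duality pairing $Q(-,e_0)$ against the distinguished class $e_0:=\mathrm{At}(F)^2\lrcorner\sigma\in\Ext^2(F,F)$, and then to deduce surjectivity and both decompositions purely from the Gram matrix of $Q$ on the span of $e_0$ and $e_1:=\kappa_X\circ\mathrm{At}(F)$. First I would record that, since $X$ is holomorphic symplectic and $\sigma$ is non-degenerate, the contractions $\lrcorner\sigma\colon\Omega^2_X\to\oO_X$ and $\lrcorner\sigma^{\otimes2}\colon\Omega^4_X\to\oO_X$ are isomorphisms of line bundles, $H^4(X,\oO_X)\cong\mathbb{C}$ by Serre duality, and the Serre form $Q(e,e')=\tr(e\circ e')$ on $\Ext^2(F,F)$ (using $K_X\cong\oO_X$) is non-degenerate. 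The key observation is that $\lrcorner\sigma$ touches only the $\Omega^2_X$-leg and $\tr$ only the $F$-legs, so both commute with Yoneda composition by a class $e\in\Ext^2(F,F)$; this yields the identity $\phi(e)=\tr\big(e\circ(\tfrac12\mathrm{At}(F)^2\lrcorner\sigma)\big)=\tfrac12 Q(e,e_0)$.

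Next I would evaluate the two relevant values of $Q$ using the standard Atiyah-class formula $\tr(\mathrm{At}(F)^k)=k!\,\ch_k(F)\in H^k(X,\Omega^k_X)$ together with the compatibility of $\tr$ with composition and with the $\sigma$- and $\kappa_X$-contractions. Composing $e_0$ with itself produces $\mathrm{At}(F)^4$ with all four form-legs contracted by $\sigma^{\otimes2}$, so $Q(e_0,e_0)$ is a nonzero multiple of $\ch_4(F)\lrcorner\sigma^{\otimes2}$, which vanishes precisely when $\ch_4(F)=0$. Composing $e_0$ with $e_1$ produces $\mathrm{At}(F)^3$ with two legs contracted by $\sigma$ and one by $\kappa_X$, so $Q(e_0,e_1)$ is a nonzero multiple of the Serre pairing $\langle\ch_3(F),\kappa_X\rangle$ of $\ch_3(F)\in H^3(X,\Omega^3_X)$ with $\kappa_X\in H^1(X,T_X)\cong H^1(X,\Omega^1_X)$; since $\ch_3(F)\neq0$ one can choose $\kappa_X$ making this pairing nonzero, which is exactly the meaning of ``$\kappa_X$ Serre dual to $\ch_3(F)$''.

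With these two computations in hand the conclusions are formal. By the key identity $\phi=\tfrac12 Q(-,e_0)$, surjectivity of $\phi$ onto $\mathbb{C}$ is equivalent to $e_0\neq0$. If $\ch_4(F)\neq0$ then $Q(e_0,e_0)\neq0$, so $e_0\neq0$, $\phi$ is surjective, the line $\mathbb{C}\langle e_0\rangle$ is $Q$-non-degenerate, $\Ker\phi=e_0^{\perp}$, and non-degeneracy of $Q$ forces $\Ext^2(F,F)=\Ker\phi\oplus\mathbb{C}\langle e_0\rangle$ with $Q$ non-degenerate on each summand, giving $(1)$. If instead $\ch_4(F)=0$ but $\ch_3(F)\neq0$, choose $\kappa_X$ as above; then the Gram matrix of $Q$ on $\langle e_0,e_1\rangle$ is $\left(\begin{smallmatrix}0&b\\ b&d\end{smallmatrix}\right)$ with $b=Q(e_0,e_1)\neq0$, whose determinant $-b^2\neq0$ shows this plane is $Q$-non-degenerate (so in particular $e_0,e_1$ are independent and $\phi(e_1)=\tfrac12 Q(e_1,e_0)\neq0$ gives surjectivity). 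Non-degeneracy of $Q$ then yields the orthogonal splitting of $(2)$ with $Q$ non-degenerate on $\langle e_0,e_1\rangle$ and on its complement.

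The formal skeleton above is straightforward; the genuine work, and the place I expect the main difficulty, is the two trace identities in the second step. Establishing $\tr(\mathrm{At}(F)^k)=k!\,\ch_k(F)$ for an arbitrary perfect complex and verifying that the trace is compatible with Yoneda composition and with contracting form-legs against $\sigma$ and $\kappa_X$ requires the full Atiyah-class calculus (in the style of Kapranov, Markarian and Huybrechts--Thomas) and careful bookkeeping of signs and normalizations, so that $Q(e_0,e_0)$ and $Q(e_0,e_1)$ really do become honest multiples of $\ch_4(F)$ and of the Serre pairing $\langle\ch_3(F),\kappa_X\rangle$. Once this compatibility is in place, the non-vanishing of every constant is automatic from the non-degeneracy of $\sigma$ trivializing $\Omega^4_X$, and the proposition follows.
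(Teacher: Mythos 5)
Your proposal is correct and follows essentially the same route as the paper: both rest on the identity $\phi(e)=\tfrac12 Q(e,\mathrm{At}(F)^2\lrcorner\sigma)$, evaluate $Q(e_0,e_0)$ via $\tr(\mathrm{At}(F)^4)=4!\,\ch_4(F)$ and $Q(e_0,\kappa_X\circ\mathrm{At}(F))$ via the pairing of $\kappa_X\lrcorner\sigma$ with $\ch_3(F)$, and then conclude by the non-degeneracy of the Gram matrix on the relevant span. The trace identities you defer are exactly the ones the paper imports from Buchweitz--Flenner (the semiregularity formula $\tr(\kappa_X\circ\mathrm{At}(F)\circ\mathrm{At}(F)^2)=-2\,\kappa_X\lrcorner\ch_3(F)$) together with the homotopy formula, so no new idea is missing.
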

\begin{proof}
(1) If $\ch_{4}(F)\neq0$, $\mathrm{At}(F)^{4}\in \Ext^{4}(F,F\otimes K_{X})$ is a nonzero element since
\begin{equation}tr(\mathrm{At}(F)^{4})=4!\cdot \ch_{4}(F)\in H^{4,4}(X,\mathbb{Q}).  \nonumber \end{equation}
We define an inclusion
\begin{equation}\iota:H^{4}(X,\mathcal{O}_{X})\rightarrow \Ext^{2}(F,F),   \nonumber \end{equation}
\begin{equation}1\mapsto \frac{1}{12\ch_{4}(F)}\cdot (\mathrm{At}(F)^{2}\lrcorner \textrm{ }\sigma).  \nonumber \end{equation}
Then $\phi\circ\iota=Id$ and gives a splitting
\begin{equation}\Ext^{2}(F,F)=\Ker(\phi)\oplus\mathbb{C}\left\langle \mathrm{At}(F)^2\lrcorner\textrm{ } \sigma \right\rangle. \nonumber \end{equation}
Note that $Q(\Ker(\phi),\mathrm{At}(F)^{2}\lrcorner \textrm{ }\sigma)=0$ from the definition of $Q$ and $\Ker(\phi)$.

(2) If $\ch_{3}(F)\neq0$, we denote $\beta\in H_2(X)$ to be the Poincar\'{e} dual of $\ch_3(F)$.
By the non-degeneracy of $\sigma$,  one can choose a first order deformation $\kappa_{X}\in H^{1}(X,TX)$ of $X$ such that
\begin{equation}\int_{\beta} \kappa_{X}\lrcorner \textrm{ } \sigma=1. \nonumber \end{equation}
By ~\cite[Proposition 4.2]{BFlenner}, the obstruction class $\kappa_{X}\circ \mathrm{At}(F)\in \Ext^{2}(F,F)$ satisfies
\begin{equation}tr(\kappa_{X}\circ \mathrm{At}(F)\circ \mathrm{At}(F)^{2})=-2\kappa_{X}\lrcorner \textrm{ } \ch_{3}(F).  \nonumber \end{equation}
Then
\begin{align*}
\int_{X}\phi(\kappa_{X}\circ \mathrm{At}(F))\wedge \sigma^{2}&= -\int_{X}(\kappa_{X}\lrcorner \textrm{ } \ch_{3}(F))\wedge\sigma  \\
&= \int_{X}(\kappa_{X}\lrcorner \textrm{ }\sigma )\wedge \ch_{3}(F)  \\
&= \int_{\beta}\kappa_{X}\lrcorner \textrm{ }\sigma =1,
\end{align*}
where the second equality is because of the homotopy formula ~\cite[Proposition 10]{MPT}
\begin{equation}0=\kappa_{X}\lrcorner \textrm{ }(\ch_{3}(F)\wedge\sigma)=(\kappa_{X}\lrcorner \textrm{ }\ch_{3}(F))\wedge\sigma+(\kappa_{X}\lrcorner \textrm{ }\sigma)\wedge \ch_{3}(F).  \nonumber \end{equation}
Thus the map
\begin{align*}
\iota:H^{4}(X,\mathcal{O}_{X}) &\rightarrow \Ext^{2}(F,F),   \\
1 & \mapsto \kappa_{X}\circ \mathrm{At}(F)   
\end{align*}
satisfies $\phi\circ\iota=\id$ and hence $\phi$ is surjective.

Notice that
\begin{align*}
Q(\kappa_{X}\circ \mathrm{At}(F),\mathrm{At}(F)^{2}\lrcorner \textrm{ } \sigma)
&=2\int_{X}\phi(\kappa_{X}\circ \mathrm{At}(F))\wedge\sigma^{2}=2, \\
\int_{X}\phi(\mathrm{At}(F)^{2}\lrcorner\textrm{ }\sigma)\wedge\sigma^{2}
&=\frac{1}{2}Q(\mathrm{At}(F)^{2}\lrcorner \textrm{ }\sigma,\mathrm{At}(F)^{2}\lrcorner\textrm{ }\sigma)=0, 
\end{align*}
since $\ch_4(F)=0$.  Thus  $\mathbb{C}\left\langle \mathrm{At}(F)^2\lrcorner\textrm{ } \sigma, \kappa_{X}\circ \mathrm{At}(F) \right\rangle$ is a two dimensional subspace on which $Q$ is non-degenerate.
The orthogonal complement $(\mathbb{C}\left\langle \mathrm{At}(F)^2\lrcorner\textrm{ } \sigma, \kappa_{X}\circ \mathrm{At}(F) \right\rangle)^{\perp}$ does not contain
$\mathrm{At}(F)^2\lrcorner\textrm{ } \sigma$ and $\kappa_{X}\circ \mathrm{At}(F)$, so we have
\begin{equation}\mathbb{C} \left\langle \mathrm{At}(F)^2\lrcorner\textrm{ } \sigma, \kappa_{X}\circ \mathrm{At}(F) \right\rangle\oplus(\mathbb{C}\langle \mathrm{At}(F)^2\lrcorner\textrm{ } \sigma, \kappa_{X}\circ \mathrm{At}(F)\rangle)^{\perp}=\Ext^{2}(F,F) \nonumber \end{equation}
by dimensions counting.
\end{proof}
We claim that the surjectivity of the cosection map leads to the vanishing of virtual class.
\begin{claim}\label{vanishing for hk4}
Let $X$ be a projective hyperk\"{a}hler 4-fold and $M$ be a proper moduli scheme of simple perfect complexes $F$'s with 
$\ch_{4}(F)\neq 0$ or $\ch_{3}(F)\neq 0$. Then the virtual class satisfies
\begin{equation}[M]^{\rm{vir}}=0.  \nonumber \end{equation}
\end{claim}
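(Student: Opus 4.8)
The plan is to promote the pointwise cosection of Proposition~\ref{surj cosection} to a global surjection of obstruction sheaves, and then to run a half-Euler-class version of the Kiem--Li cosection localization: a surjective cosection compatible with the Serre-duality form $Q$ splits off a distinguished summand of the obstruction whose half-Euler class vanishes, which kills $[M]^{\rm vir}$.

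First I would globalize the construction. The Atiyah class, the map $\wedge\,\mathrm{At}(\mathcal{F})^2/2$, contraction $\lrcorner\,\sigma$ with the holomorphic symplectic form, and the trace are all functorial and compatible with base change, so applied to a universal complex $\mathcal{F}$ on $X\times M$ they produce a morphism of coherent sheaves $\phi\colon \mathcal{E}xt^2_{\pi_M}(\mathcal{F},\mathcal{F})\to H^4(X,\oO_X)\otimes\oO_M\cong\oO_M$ which restricts at each $[F]$ to the map of Proposition~\ref{surj cosection}. Since $\ch_3(\mathcal{F})$ and $\ch_4(\mathcal{F})$ are locally constant, each connected component of $M$ lies entirely in case~(1) ($\ch_4\neq0$) or case~(2) ($\ch_4=0$, $\ch_3\neq0$), so $\phi$ is fibrewise surjective, hence surjective by Nakayama. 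Its $Q$-dual is then a nowhere-vanishing obstruction section $v_\phi$ proportional to $\mathrm{At}(F)^2\lrcorner\,\sigma$, and $Q(v_\phi,v_\phi)$ vanishes exactly when $\ch_4(F)=0$; thus $v_\phi$ is non-isotropic in case~(1) and isotropic in case~(2).

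Next I would settle the Kuranishi-smooth case, where the obstruction sheaf $Ob$ is a genuine vector bundle with nondegenerate $Q$ and $[M]^{\rm vir}=\mathrm{PD}(e(Ob,Q))$. Here the $Q$-orthogonal decompositions of Proposition~\ref{surj cosection} globalize, since their distinguished vectors $\mathrm{At}(F)^2\lrcorner\,\sigma$ and $\kappa_X\circ\mathrm{At}(F)$ vary holomorphically and are pinned down by $\phi$; write $Ob=D\oplus D^{\perp}$ with $D$ the distinguished summand. The half-Euler class is multiplicative over $Q$-orthogonal sums, since the real form of an orthogonal sum is the direct sum of the real forms and the Euler class is multiplicative over direct sums of real bundles; thus $e(Ob,Q)=e(D,Q)\cdot e(D^{\perp},Q)$ and it suffices that $e(D,Q)=0$. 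In case~(1), $D$ is the trivial line bundle spanned by the non-isotropic $v_\phi$, of odd complex rank, so $e(D,Q)=0$ by the odd-rank vanishing recalled in Subsection~\ref{sec:review}. In case~(2), $D$ has rank two and contains the nowhere-vanishing isotropic section $v_\phi$; isotropic reduction, together with its hyperbolic partner $\kappa_X\circ\mathrm{At}(F)$, exhibits $D$ smoothly as a trivialized hyperbolic plane, whose real form $D_+$ then carries a nowhere-vanishing section and hence $e(D,Q)=0$. In either case $[M]^{\rm vir}=0$.

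The hard part will be the general case, where $M$ is not Kuranishi-smooth and the $\DT_4$ class is built through the Borisov--Joyce D-manifold construction rather than as a half-Euler class, so the splitting argument above does not literally apply. What one really needs is a Kiem--Li type cosection-localization theorem in this $(-2)$-shifted setting, localizing $[M]^{\rm vir}$ to the locus where $\phi$ fails to be surjective; since $\phi$ is everywhere surjective this locus is empty and the class vanishes. As such a localization theory for D-manifolds (or for Fukaya--Oh--Ohta--Ono Kuranishi structures) is not yet available, I expect this to be the genuine obstacle, which is why the result is recorded as a claim. The smooth verification above, together with the alternative twistor-family deformation argument sketched in the introduction, is the supporting evidence.
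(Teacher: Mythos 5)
Your proposal matches the paper exactly: the paper does not prove this Claim either, but records precisely the same two pieces of evidence --- vanishing in the Kuranishi-smooth case via Proposition~\ref{surj cosection}, and the observation that the general case awaits a Kiem--Li type cosection localization for D-manifolds (with the twistor-family deformation argument as the alternative). Your elaboration of the smooth case, splitting off the distinguished $Q$-orthogonal summand and killing its half-Euler class by the odd-rank vanishing in case (1) and by trivializing the hyperbolic plane in case (2), is correct and somewhat more detailed than the paper's one-line remark, and you correctly identify the same genuine gap that leads the authors to state this as a Claim rather than a Proposition.
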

At the moment, we are lack of Kiem-Li type theory of cosection localization for D-manifolds in the sense of Joyce or Kuranishi space structures in
the sense of Fukaya-Oh-Ohta-Ono. We believe that when such a theory is established, our claim should follow automatically. 
Nevertheless, we have the following evidence for the claim.

1. At least when $M_\beta$ is smooth, Proposition \ref{surj cosection} gives the vanishing of virtual class.

2. If there is a complex analytic version of $(-2)$-shifted symplectic geometry \cite{PTVV} and the corresponding construction of virtual classes \cite{BJ},
one could prove the vanishing result as in $\mathrm{GW}$ theory, i.e. taking a generic complex structure in the $\mathbb{S}^{2}$-twistor family 
of the hyperk\"ahler 4-fold which does not support coherent sheaves and then vanishing of virtual classes follows from their deformation invariance.

${}$ \\
\textbf{Reduced $\mathrm{DT_4}$ virtual classes, an example}. By taking away the trivial factors in obstruction spaces, one could define reduced invariants, which
are computed in the following example.

Let $p: S\rightarrow\mathbb{P}^{1}$ be an elliptic $K3$ surface
with a section $i$. We assume general fibers of $p$ are smooth elliptic curves and any singular fiber is either a nodal or cuspidal plane curve.

Fix a CY surface $T$, we denote
\begin{align*}\pi \colon X =S\times T &\rightarrow \mathbb{P}^{1}\times T, \\
\pi(s,t)&=(p(s),t), 
\end{align*}
which is an elliptic fibration with a section $s=(i, \id)$.
Let $[f]$ be the fiber class of fibration $\pi$, and $\beta=r[f]\in H_2(X)$ with $r\geqslant1$.
As in 
Lemma~\ref{lem:fib:isom}, 
there exists an isomorphism
$M_{\beta}\cong X$
such that the $\mathrm{DT_4}$ virtual class satisfies 
\begin{align*}
[M_{\beta}]^{\rm{vir}}=\pm \mathrm{PD}(c_{3}(X))=0.
\end{align*}
Under the above isomorphism
$M_{\beta} \cong X$, the obstruction bundle of $M_{\beta}$ is
\begin{align*}
\wedge^{2}(TX)\cong \mathcal{O}_S\oplus\mathcal{O}_T\oplus (TS\otimes TT),  
\end{align*}
and the $\mathrm{DT_{4}}$ obstruction bundle can be chosen as
\begin{align*}\wedge^{2}_{+}(TX)=\mathcal{O}_S\oplus(TS\otimes TT)_{+}.  
\end{align*}
The trivial factor $\mathcal{O}_S$ in $\wedge^{2}_{+}(TX)$ makes the $\mathrm{DT_4}$ virtual class vanish.
We consider reduced obstruction bundle
\begin{equation}\wedge^{2}_{+}(TX)_{\rm{red}}:=(TS\otimes TT)_{+},  \nonumber \end{equation}
and reduced $\mathrm{DT_4}$ virtual class
\begin{equation}[M_{\beta}]_{\rm{red}}^{\rm{vir}}:=\mathrm{PD}(e(\wedge^{2}_{+}(TX)_{\rm{red}})). \nonumber \end{equation}
By the property of half Euler class (e.g. ~\cite[Remark 8.3]{CL} ), we have 
\begin{equation}e((TS\otimes TT)_{+})^{2}=e(TS\otimes TT)=-2c_{2}(S)\cdot c_{2}(T),  \nonumber \end{equation}
whose square roots are given by
\begin{equation}e(\wedge^{2}_{+}(TX)_{\rm{red}})=\pm\sqrt{e(TS\otimes TT)}=\pm\left(c_{2}(S)-c_{2}(T)\right).  \nonumber \end{equation}
Hence
\begin{equation}[M_{\beta}]_{\rm{red}}^{\rm{vir}}=\pm\left(\chi(S)\cdot[T]-\chi(T)\cdot[S] \right).  \nonumber \end{equation}
As for insertions, we consider
\begin{equation}\tau: H^{6}(X)\rightarrow H^{4}(M_{\beta}), \quad \tau(\gamma)=(\pi_{M_{\beta}})_{*}(\pi_{X}^{*}\gamma\cup \ch_{3}(\mathcal{E})),   \nonumber \end{equation}
where $\mathcal{E}$ is the universal sheaf and $\ch_3(\mathcal{E})=r[X\times_{(\mathbb{P}^{1}\times T)} X]$.

Then the \emph{reduced $\mathrm{DT_{4}}$ invariant}
\begin{equation}\mathrm{DT^{\rm{red}}_{4}}(r[f]\textrm{ }\big| \textrm{ }\gamma ):=\int_{[M_{\beta}]_{\rm{red}}^{\rm{vir}}}\tau(\gamma)
\nonumber \end{equation}
satisfies
\begin{equation}\mathrm{DT^{\rm{red}}_{4}}(r[f]\textrm{ }\big| \textrm{ }E)=r\int_{[M_{\beta}]_{\rm{red}}^{\rm{vir}}}\pi^{*}\pi_{*}(E)=\pm\, r\cdot\chi(S)\cdot\int_{T}[t]=\pm\, 24r, \nonumber \end{equation}
where $E=s(\mathbb{P}^{1}\times t)\in H_{2}(X)\cong H^{6}(X)$ as in Proposition \ref{prop on elliptic fib}.  \\

As for the corresponding GW theory, we have
\begin{equation}\overline{M}_{0,1}(X,r[f])\cong \overline{M}_{0,1}(S,r[f])\times T   \nonumber \end{equation}
whose virtual class vanishes. By considering the reduced obstruction theory \cite{KL} and insertions, the reduced GW invariant satisfies
\begin{align*}
\mathrm{GW}^{\rm{red}}_{0,r[f]}(E)&= \int_{[\overline{M}_{0,1}(X,r[f])]_{\rm{red}}^{\rm{vir}}}\mathrm{ev}^{*}(E) \\
&= \Big(\int_{[\overline{M}_{0,1}(S,r[f])]_{\rm{red}}^{\rm{vir}}}\mathrm{ev}^{*}(i(\mathbb{P}^{1}))\Big)\cdot\int_{T}[pt] \\
&= r\cdot\deg[\overline{M}_{0,0}(S,r[f])]_{\rm{red}}^{\rm{vir}},
\end{align*}
where $E=s(\mathbb{P}^{1}\times t)=i(\mathbb{P}^{1})\cdot 1 \in H_{2}(S)\otimes H_{0}(T)\subset H_{2}(X) $.  \\

A hyperk\"ahler version of Conjecture \ref{conj:GW/GV} for reduced invariants is given by
\begin{prop}\label{g=0 reduced inv conj}
In the above setting, we have a multiple cover formula  
\begin{equation}\mathrm{GW}^{\rm{red}}_{0,r[f]}(E)=\sum_{k|r}\frac{1}{k^{2}}\cdot\mathrm{DT^{\rm{red}}_{4}}\left(\frac{r}{k}[f]\textrm{ }\big| \textrm{ }E \right)
\nonumber \end{equation}
for certain choices of orientations in defining the RHS.
\end{prop}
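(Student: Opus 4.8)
The plan is to evaluate both sides of the asserted formula explicitly and to check that they coincide; both will equal $24r\sum_{k\mid r}k^{-3}$. Since the reduced $\mathrm{DT}_4$ invariant and the reduced Gromov--Witten invariant have essentially been reduced to numbers in the discussion preceding the statement, the proof is mostly a matter of assembling these evaluations, the only substantive input being a two-dimensional computation on $S$.

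First I would treat the right-hand side. For each divisor $k\mid r$ the class $\tfrac{r}{k}[f]$ is again a multiple of the fiber class, so the computation that produced $\mathrm{DT^{\rm{red}}_{4}}(r[f]\mid E)=\pm\,24r$ applies verbatim: the reduced virtual class $\pm(\chi(S)[T]-\chi(T)[S])$ is independent of the multiple, and only $\ch_3(\eE)$ sees it, giving
\begin{align*}
\mathrm{DT^{\rm{red}}_{4}}\Big(\tfrac{r}{k}[f]\,\Big|\,E\Big)=\pm\,24\cdot\tfrac{r}{k}.
\end{align*}
I would then fix, for each $k\mid r$, the orientation on $M_{(r/k)[f]}$ making this sign positive (this is the meaning of ``certain choices of orientations''), so that
\begin{align*}
\sum_{k\mid r}\frac{1}{k^{2}}\,\mathrm{DT^{\rm{red}}_{4}}\Big(\tfrac{r}{k}[f]\,\Big|\,E\Big)=\sum_{k\mid r}\frac{1}{k^{2}}\cdot 24\cdot\frac{r}{k}=24r\sum_{k\mid r}\frac{1}{k^{3}}.
\end{align*}

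Next I would treat the left-hand side. The computation preceding the statement gives $\mathrm{GW}^{\rm{red}}_{0,r[f]}(E)=r\cdot\deg[\overline{M}_{0,0}(S,r[f])]^{\rm{vir}}_{\rm{red}}$, so the whole proposition reduces to the purely two-dimensional identity
\begin{align*}
\deg[\overline{M}_{0,0}(S,r[f])]^{\rm{vir}}_{\rm{red}}=24\sum_{k\mid r}\frac{1}{k^{3}}
\end{align*}
for the reduced genus zero invariant of the elliptic $K3$ surface $S$ in the multiple fiber class. To establish this I would first note that, since $[f]$ is $p$-vertical, every connected genus zero stable map of class $r[f]$ has constant image in $\mathbb{P}^1$ and hence lies in a single fiber; as there are no nonconstant genus zero maps to a smooth (genus one) fiber, all such maps factor through one of the $24$ nodal rational fibers. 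I would then apply the genus zero Gopakumar--Vafa multiple cover formula for the reduced theory,
\begin{align*}
\deg[\overline{M}_{0,0}(S,r[f])]^{\rm{vir}}_{\rm{red}}=\sum_{k\mid r}\frac{1}{k^{3}}\,n_{0,(r/k)[f]},
\end{align*}
where $n_{0,\gamma}$ is the reduced genus zero BPS number of $S$ in class $\gamma$. By the Yau--Zaslow formula the primitive fiber class has $n_{0,[f]}=24$ (the $24$ nodal fibers), and the genus zero reduced BPS numbers of a $K3$ surface depend only on $\gamma^{2}$, not on the divisibility of $\gamma$. Since $\big((r/k)[f]\big)^{2}=0$ for every $k$, this forces $n_{0,(r/k)[f]}=24$ for all $k\mid r$, the displayed identity follows, and comparison with the right-hand side finishes the proof.

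The main obstacle is exactly this last input, the independence of the genus zero $K3$ BPS numbers of the divisibility of the class. It is genuinely indispensable: if only the primitive contribution $n_{0,[f]}=24$ were nonzero, the formula above would yield $24/r^{3}$ rather than $24\sum_{k\mid r}k^{-3}$, and the proposition would fail for $r\geqslant 2$. I would supply it by invoking the genus zero case of the (now established) Katz--Klemm--Vafa formula; alternatively one can argue directly through a multiple cover analysis of the nodal fibers in the reduced obstruction theory, which is the only step requiring real care. Everything else is bookkeeping of the two explicit evaluations above, and indeed the matching holds term by term in $k$, since $\tfrac{1}{k^{2}}\mathrm{DT^{\rm{red}}_{4}}(\tfrac{r}{k}[f]\mid E)=\tfrac{24r}{k^{3}}=r\cdot\tfrac{1}{k^{3}}n_{0,(r/k)[f]}$.
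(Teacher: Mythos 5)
Your proposal is correct and follows essentially the same route as the paper: both sides are evaluated using the computations preceding the statement ($\mathrm{DT^{\rm{red}}_{4}}((r/k)[f]\mid E)=\pm 24r/k$ and $\mathrm{GW}^{\rm{red}}_{0,r[f]}(E)=r\cdot\deg[\overline{M}_{0,0}(S,r[f])]^{\rm{vir}}_{\rm{red}}$), and the comparison is closed by the Aspinwall--Morrison multiple cover formula for reduced genus zero invariants of the $K3$ together with the Yau--Zaslow value $n_{0,[f]}(S)=24$ and the fact that the genus zero reduced BPS numbers depend only on the self-intersection of the class. The divisibility-independence you correctly flag as the essential input is exactly what the paper invokes via the Yau--Zaslow/KMPS results.
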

\begin{proof}
We have the following Aspinwall-Morrison formula
\begin{equation}\deg[\overline{M}_{0,0}(S,r[f])]_{\rm{red}}^{\rm{vir}}=\sum_{k|r}\frac{1}{k^{3}}\cdot n_{0,\frac{r}{k}[f]} (S) \nonumber \end{equation}
relating reduced GW invariants $\deg[\overline{M}_{0,0}(S,r[f])]_{\rm{red}}^{\rm{vir}}$ with genus zero BPS
numbers $n_{0,r[f]}(S)$ for $K3$ surface $S$ (\cite{YZ, KMPS}). Yau-Zaslow formula gives
\begin{equation}n_{0,r[f]}(S)=n_{0,[f]}(S)=\chi(S)=24, \nonumber \end{equation}
as $r[f]\cdot r[f]=[f]\cdot[f]=0$.
\end{proof}

\section{Local surfaces}

Let $(S,\mathcal{O}_S(1))$ be a smooth projective surface and
\begin{align}\label{X:tot}
\pi \colon
X=\mathrm{Tot}_S(L_1 \oplus L_2) \to S
\end{align}
be the total space of direct sum of two line bundles $L_1$, $L_2$ on $S$.
Assuming that
\begin{align}\label{L12}
L_1 \otimes L_2 \cong K_S,
\end{align}
then $X$ is a non-compact CY 4-fold.
In this section, we study Conjecture~\ref{conj:GW/GV}
for $X$. 
\subsection{Stable sheaves without thickening}
For a non-compact CY 4-fold (\ref{X:tot}),
we take a curve class
\begin{align*}
\beta \in H_2(X, \mathbb{Z})\cong H_2(S, \mathbb{Z}),
\end{align*}
and consider the moduli space $M_{\beta}=M_{X,\beta}$ of 1-dimensional stable sheaves $F$ with $[F]=\beta$ and $\chi(F)=1$.  
We also consider $M_{S, \beta}$, the moduli space of 
1-dimensional stable sheaves $F$ on $S$ with $[F]=\beta$
and $\chi(F)=1$. 
Note that $M_{S, \beta}$ is compact while $M_{X, \beta}$ may not be compact in general. 
On the other hand, 
for the zero section 
$\iota \colon S \hookrightarrow X$ 
of the projection (\ref{X:tot}), we have the 
push-forward 
embedding
\begin{align}\label{tau:emb}
\iota_{\ast} \colon M_{S, \beta} \hookrightarrow M_{X, \beta}. 
\end{align}
In the following case, 
the morphism (\ref{tau:emb}) is an isomorphism and 
$M_{X, \beta}$ has well-defined $\mathrm{DT_{4}}$ virtual class.
\begin{prop}\label{vir:loc neg}
If $L^{-1}_{1}$ and $L^{-1}_{2}$ are ample, then
(\ref{tau:emb}) is an isomorphism. 
Under the isomorphism (\ref{tau:emb}), we have
\begin{align*}[M_{X, \beta}]^{\rm{vir}}=\pm [M_{S, \beta}]\cdot
e\left(\eE xt^1_{\pi_{M_S}}(\mathbb{F}, \mathbb{F} \boxtimes L_1)\right), \end{align*}
for certain choices of orientations in defining the LHS. 
Here $\mathbb{F} \in \Coh(S \times M_{S, \beta})$ is the universal sheaf and $\pi_{M_{S}}:S\times M_{S, \beta}\rightarrow M_{S, \beta}$ is the projection.
\end{prop}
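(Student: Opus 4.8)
The plan is to prove the two assertions in turn: first that every sheaf parametrized by $M_{X,\beta}$ is scheme-theoretically supported on the zero section, so that (\ref{tau:emb}) is an isomorphism, and then to compute the $\DT_4$ virtual class under this identification.

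For the isomorphism I would exploit that $\pi$ is affine. Pushing forward identifies a compactly supported sheaf $F$ on $X$ with the $1$-dimensional sheaf $\mathcal{F}=\pi_\ast F$ on $S$ together with the two commuting tautological fibre-coordinate ``Higgs fields'' $\phi_1\colon \mathcal{F}\to \mathcal{F}\otimes L_1$ and $\phi_2\colon \mathcal{F}\to \mathcal{F}\otimes L_2$, and $F$ lies on the zero section exactly when $\phi_1=\phi_2=0$. The point is that $\phi_1$ is already a morphism $F\to F\otimes \pi^\ast L_1$ on $X$, so if it were nonzero, stability would be violated: writing $K=\ker\phi_1$ and $G=\im\phi_1\subseteq F\otimes\pi^\ast L_1$, purity of the stable sheaf $F$ forces $G$ to be a nonzero $1$-dimensional quotient of $F$ and a subsheaf of the (still stable) twist $F\otimes \pi^\ast L_1$. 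Comparing slopes $\mu(-)=\chi(-)/(H\cdot[-])$ for an ample class $H$ then gives $\mu(F)\leqslant\mu(G)\leqslant\mu(F\otimes\pi^\ast L_1)$, while $\mu(F\otimes\pi^\ast L_1)=\mu(F)+(L_1\cdot\beta)/(H\cdot\beta)<\mu(F)$ because $L_1^{-1}$ ample forces $L_1\cdot\beta<0$. This contradiction gives $\phi_1=0$, and symmetrically $\phi_2=0$; thus $F=\iota_\ast\mathcal{E}$ with $\mathcal{E}$ stable on $S$, so (\ref{tau:emb}) is an isomorphism.

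For the virtual class I would compute the deformation--obstruction spaces at $F=\iota_\ast\mathcal{E}$ from the local-to-global spectral sequence
\[
E_2^{p,q}=\Ext^p_S(\mathcal{E},\mathcal{E}\otimes \wedge^q N)\Rightarrow \Ext^{p+q}_X(\iota_\ast\mathcal{E},\iota_\ast\mathcal{E}),
\]
with $N=N_{S/X}=L_1\oplus L_2$ and $\wedge^2 N=K_S$. Ampleness of $L_1^{-1},L_2^{-1}$ makes $K_S^{-1}=L_1^{-1}\otimes L_2^{-1}$ ample, so the same slope comparison gives $\Hom_S(\mathcal{E},\mathcal{E}\otimes L_i)=0$ and $\Ext^2_S(\mathcal{E},\mathcal{E})\cong\Hom_S(\mathcal{E},\mathcal{E}\otimes K_S)^\vee=0$. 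Feeding these vanishings into the spectral sequence yields $\Ext^1_X(F,F)\cong\Ext^1_S(\mathcal{E},\mathcal{E})$ and
\[
\Ext^2_X(F,F)\cong \Ext^1_S(\mathcal{E},\mathcal{E}\otimes L_1)\oplus \Ext^1_S(\mathcal{E},\mathcal{E}\otimes L_2).
\]
Serre duality on $S$ together with $K_S\otimes L_2^{-1}=L_1$ identifies the second summand with the dual of the first, so, setting $V=\Ext^1_S(\mathcal{E},\mathcal{E}\otimes L_1)$, the obstruction space is the hyperbolic space $V\oplus V^\vee$ on which the Serre-duality form $Q$ of the Calabi--Yau $4$-fold $X$ is the standard pairing; in particular $V$ is maximal isotropic.

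Finally, since $\Ext^2_S(\mathcal{E},\mathcal{E})=0$ the surface moduli $M_{S,\beta}$ is smooth, so via the isomorphism $M_{X,\beta}\cong M_{S,\beta}$ we get a smooth projective variety whose Zariski tangent space is $\Ext^1_X(F,F)$, i.e. the Kuranishi maps vanish. The vanishing of $\Hom_S(\mathcal{E},\mathcal{E}\otimes L_1)$ and of $\Ext^2_S(\mathcal{E},\mathcal{E}\otimes L_1)\cong\Hom_S(\mathcal{E},\mathcal{E}\otimes L_2)^\vee$ shows, by cohomology and base change, that $V$ globalizes to the locally free sheaf $\eE xt^1_{\pi_{M_S}}(\mathbb{F},\mathbb{F}\boxtimes L_1)$. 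This places us in the Kuranishi-smooth case of the $\DT_4$ construction reviewed above, so $[M_{X,\beta}]^{\rm vir}=\mathrm{PD}\,e(V\oplus V^\vee,Q)$; the half-Euler class of a hyperbolic form equals $\pm e(V)$, which yields the claimed formula with the sign recording the orientation. I expect the last step to be the main obstacle: one must verify that the Serre-duality form on $\Ext^2_X$ is genuinely hyperbolic with $V$ isotropic (so that the abstract half-Euler class $e(Ob,Q)$ reduces to the honest Euler class $\pm e(\eE xt^1_{\pi_{M_S}}(\mathbb{F},\mathbb{F}\boxtimes L_1))$), and then track how the global orientation of $(\lL,Q)$ determines the overall sign.
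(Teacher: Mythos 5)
Your proposal is correct, and for the second assertion it follows essentially the paper's route: the same local-to-global spectral sequence for $\iota_{\ast}$, the same vanishings $\Hom_S(\mathcal{E},\mathcal{E}\otimes L_i)=0$ and $\Ext^2_S(\mathcal{E},\mathcal{E})=0$, the hyperbolic splitting $\Ext^2_X(F,F)\cong V\oplus V^{\vee}$ with $V=\Ext^1_S(\mathcal{E},\mathcal{E}\otimes L_1)$ maximal isotropic, and the reduction to the Kuranishi-smooth/shifted-cotangent case of the $\DT_4$ construction where the virtual class becomes $\pm e(V)$. Where you genuinely diverge is the first assertion. The paper stays on $S$: it runs the Higgs fields $\phi_i\colon \pi_{\ast}F\to\pi_{\ast}F\otimes L_i$ through the Harder--Narasimhan/Jordan--H\"older filtration of $\pi_{\ast}F$, shows $\phi_i$ preserves the first step $F_1$, and produces from $F_1$ a destabilizing subsheaf of $F$ on $X$. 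You instead work directly on $X$ with $G=\im\phi_1$ viewed simultaneously as a quotient of $F$ and (after untwisting) a subsheaf of $F$; this is shorter and avoids the filtration entirely. One small repair is needed in your slope chain: for one-dimensional sheaves whose support class may be reducible, tensoring by a line bundle does \emph{not} in general preserve Gieseker stability (the shift of the reduced Hilbert polynomial depends on the support class of the subsheaf), so the inequality $\mu(G)\leqslant\mu(F\otimes\pi^{\ast}L_1)$ justified by \emph{the still stable twist} is not licensed as stated. But the contradiction is obtained just as well without it: $G\otimes\pi^{\ast}L_1^{-1}$ is a nonzero subsheaf of $F$, and since $L_1\cdot[G]<0$ one gets
\begin{equation*}
\mu(G)<\mu\bigl(G\otimes\pi^{\ast}L_1^{-1}\bigr)\leqslant\mu(F)\leqslant\mu(G),
\end{equation*}
which is absurd. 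With that one-line substitution your argument is complete, and the rest (the identification of the obstruction bundle with $\eE xt^1_{\pi_{M_S}}(\mathbb{F},\mathbb{F}\boxtimes L_1)$ by cohomology and base change, and the half-Euler class of a hyperbolic form being $\pm e(V)$) checks out.
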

\begin{proof}
A coherent sheaf $F\in \Coh(X)$ is determined by $\pi_{*}F\in \Coh(S)$ and two morphisms \cite[Ex. 5.17 Chapter II]{Hart}
\begin{equation}\phi_i:\pi_{*}F\rightarrow\pi_{*}F\otimes L_i,\textrm{ } i=1,2.    \nonumber \end{equation}
We claim $\phi_i=0$ by using the ampleness of $L_i^{-1}$.
Take the Harder-Narasimhan and Jordan-H\"{o}lder filtration 
\begin{equation}0=F_{0}\subseteq F_{1}\subseteq F_{2}\subseteq\cdot\cdot\cdot\subseteq F_{n}=\pi_*F  \nonumber \end{equation}
of $\pi_*F$,
where the quotient $E_{i}=F_{i}/F_{i-1}$'s are stable with decreasing reduced Hilbert polynomial.
\begin{equation}p(E_{1})\geqslant p(E_{2})\geqslant \cdot\cdot\cdot\geqslant p(E_{n}).  \nonumber \end{equation}
We consider the following diagram 
\begin{equation}
\xymatrix{
0 \ar[r] & F_1  \ar[r] & \pi_*F \ar[d]^{\phi_i} \ar[r] & \pi_*F/F_1 \ar[r] & 0 \\
0 \ar[r] & F_1\otimes L_i  \ar[r] & \pi_*F\otimes L_i \ar[r] &
(\pi_*F/F_1)\otimes L_i \ar[r] & 0. }
\nonumber \end{equation}
Note that $p(F_1)\geqslant p(E_k) > p(E_k\otimes L_i)$ by the ampleness of $L_i^{-1}$ for any $k=1,2,\cdot\cdot\cdot,n$,
hence $\Hom(F_1,E_k\otimes L_i)=0$ \cite[Proposition 1.2.7]{HL}. 

As $(\pi_*F/F_1)$ fits into extensions of $\{E_k\}_{k=2,\cdot\cdot\cdot,n}$, so 
\begin{equation}\Hom(F_1,(\pi_*F/F_1)\otimes L_i)=0. \nonumber \end{equation}
Hence $\phi_i$ 
restricts to $\phi_i|_{F_1}:F_1\to F_1\otimes L_i$. 
This determines a subsheaf $\widetilde{F_1}\subseteq F$ 
on $X$ such that $p(\pi_{*}\widetilde{F_1})=p(F_1)\geqslant p(\pi_*F)$, 
contradicting with the stability of $F$. 

Hence, for any $F\in M_{X, \beta}$, there exists $\mathcal{E}\in M_{S,\beta}$ such that $F=\iota_*(\mathcal{E})$.
To compare the deformation-obstruction theory, we have canonical isomorphisms
\begin{align*}
&\Ext^{1}_{X}(\iota_*\mathcal{E},\iota_*\mathcal{E})\cong \Ext^{1}_{S}(\mathcal{E},\mathcal{E}),  \\
&\Ext^{2}_{X}(\iota_*\mathcal{E},\iota_*\mathcal{E})\cong \Ext^{1}_{S}(\mathcal{E},\mathcal{E}\otimes L_{1})\oplus \Ext^{1}_{S}(\mathcal{E},\mathcal{E}\otimes L_{1})^{\vee},  
\end{align*}
where $\Ext^{2}_{S}(\mathcal{E},\mathcal{E})\cong \Ext^{0}_{S}(\mathcal{E},\mathcal{E}\otimes K_S)^\vee=0$ and $\Ext^{0}_{S}(\mathcal{E},\mathcal{E}\otimes L_{i})=0$ ($i=1,2$) by the stability of $\eE$.
Hence 
the morphism (\ref{tau:emb}) is an isomorphism. 
The comparison of virtual classes is similar to \cite[Theorem 6.5]{CL}
(see the last part of Section~\ref{sec:review}). 
\end{proof}

\subsection{Computations for $\mathcal{O}_{\mathbb{P}^{2}}(-1,-2)$}
\label{subsec:locP2}
In this subsection, we fix $S=\mathbb{P}^{2}$ and
consider the case 
 $L_{1}=\mathcal{O}(-1)$
and $L_2=\mathcal{O}(-2)$. 
 For $d\in\mathbb{Z}$, we 
consider the moduli space $M_{X, d}$ of one dimensional stable sheaves $F$'s on $X=\mathcal{O}_{\mathbb{P}^{2}}(-1,-2)$ with
\begin{align*}
[F]=d\in H_2(X, \mathbb{Z})\cong H_2(\mathbb{P}^2, \mathbb{Z})
\cong\mathbb{Z}, \ \chi(F)=1. 
\end{align*}
 By Lemma \ref{vir:loc neg}, we have
the isomorphism $M_{S, d} \stackrel{\cong}{\to}M_{X, d}$ and 
 a well-defined virtual class on
 $M_{X, d}$, whose computation is reduced to the 
one on the moduli space $M_{S,d}$
on $S$.
We explain the calculation in degree $d=3$, since degrees $1$ and $2$ are easier versions of the same approach.

There is a natural support morphism to the linear system of degree $3$ curves
$$M_{S, 3} \rightarrow |\mathcal{O}(3)| = \mathbb{P}^9.$$  
Moreover, if we denote 
$$\mathcal{C} \hookrightarrow \mathbb{P}^9 \times \mathbb{P}^2$$
the universal curve over this linear system,
we have an isomorphism
$\mathcal{C} \cong M_{S, 3}$
which sends the pair $(C,p)$ to the dual (on $C$) of the ideal sheaf $I_{C,p}$.

Let $\mathcal{V}$ denote the vector bundle on $M_{S, 3}$ whose fiber at a point $[E]$ is 
$$\Ext^1_S(E,E(-1)) = \RHom_S(I_{C,p},I_{C,p}(-1))[1].$$  
Its top Chern class can be computed via the diagram:
\begin{align*}
\xymatrix{
\cC \times \mathbb{P}^2
\ar@<-0.3ex>@{^{(}->}[r]^j
\ar[d]_{\pi_{\cC}}\ar@{}[dr]|\square &
 \mathbb{P}^9 \times \mathbb{P}^2 \times \mathbb{P}^2
 \ar[d]^{\pi_{1, 2}} \\
\cC \ar@<-0.3ex>@{^{(}->}[r]^{j} & \mathbb{P}^9 \times \mathbb{P}^2. 
}
\end{align*}
The $K$-theory class of the universal ideal sheaf
$\iI$ on $\mathcal{C}\times\mathbb{P}^2$ is given by the pullback
$[\mathcal{I}] = j^*[\fF]$, 
where $[\fF]$ is given by
\begin{align*}
[\fF] &= \pi_{1,3}^{*}[\mathcal{O}_{\mathcal{C}}] - \pi_{2,3}^{*}[\mathcal{O}_{\Delta}] \\
&= 3[\mathcal{O}(0,-1,0)] - [\mathcal{O}(-1,0,-3)] - [\mathcal{O}(0,-1,1)] - [\mathcal{O}(0,-2,-1)].
\end{align*}
Therefore
\begin{align*}
[\mathcal{V}] = j^*\pi_{1,2,*}([\fF]^{\vee}\otimes[\fF]\otimes\mathcal{O}(0,0,-1))
\end{align*}
is the pullback of an explicit $K$-theory class $\gamma$ on $\mathbb{P}^9\times\mathbb{P}^2$.
Note that we have 
 $$j_*[\mathcal{C}] = H_1+3H_2 \in H^2(\mathbb{P}^9\times\mathbb{P}^2,\mathbb{Z}),$$
where $H_1$, $H_2$ are hyperplane classes on $\mathbb{P}^9$,  
$\mathbb{P}^2$. 
For the point class
$[\mathrm{pt}] \in H^4(\mathbb{P}^2, \mathbb{Z})$, 
we can compute 
\begin{align*}
\int_{[M_{X, 3}]^{\rm{vir}}}
\tau([\rm{pt}])
&=\pm\int_{[M_{S, 3}]} e(\vV) \cdot j^{\ast}H_1 \\
&=\pm\int_{[\mathbb{P}^9\times\mathbb{P}^2]} 
(H_1+3H_2)\cdot H_1\cdot \gamma = \pm 1,
\end{align*}
which matches the prediction via Gromov-Witten theory in~\cite[Section~3.2]{KP}.

The same approach works for curves of degree $(2,2)$ on $\mathbb{P}^1 \times \mathbb{P}^1$ and this again matches the
answer via Gromov-Witten theory in~\cite[Section~3.3]{KP}.

\subsection{Localization principle in $\mathrm{DT_4}$ theory}
In general, $M_{X, \beta}$ may not be compact, so we want to define
the integral of $[M_{X, \beta}]^{\rm{vir}}$ via virtual localization:

Let $\mathbb{C}^{\ast}$ act on the fibers of
(\ref{X:tot}) by weight $(1, -1)$, which preserves the CY4-form on $X$. So the action also lifts to the moduli space $M_{X, \beta}$ preserving
the Serre duality pairing.
Analogous to \cite[Section 8]{CL}, heuristically speaking, one should have virtual localization formula of type
\begin{align}\label{vir:loc}
[M_{X, \beta}]^{\rm{vir}}=
[M_{X, \beta}^{\mathbb{C}^{\ast}}]^{\rm{vir}}
\cdot  e( \dR \hH om_{\pi_M}(\eE, \eE)^{\rm{mov}})^{1/2}
\in H_{\ast}(M_{X, \beta}^{\mathbb{C}^{\ast}})[t^{\pm 1}],
\end{align}
where $M_{X, \beta}^{\mathbb{C}^{\ast}}$ should have $(-2)$-shifted symplectic structure and $[M_{X, \beta}^{\mathbb{C}^{\ast}}]^{\rm{vir}}$ is its $\mathrm{DT_{4}}$ virtual class, $\eE\in\Coh(X\times M_{X, \beta})$ is 
the universal sheaf and $\pi_M:X\times M_{X, \beta}\to 
M_{X, \beta}$ is the projection, 
$t$ is the equivariant parameter for the $\mathbb{C}^{\ast}$-action.
\begin{rmk}
Suppose that $M_{X, \beta}$ admits a $\mathbb{C}^{\ast}$-equivariant virtual class
induced by the $\mathbb{C}^{\ast}$-equivariant $(-2)$-shifted symplectic structure.
Then the RHS of (\ref{vir:loc}) may coincide with the integration of the $\mathbb{C}^{\ast}$-equivariant virtual class up to sign
by a virtual $\mathbb{C}^{\ast}$-localization formula. 
\end{rmk}

\subsection{Contribution from surface component $M_{S, \beta}$}
Notice that the moduli space $M_{S, \beta}$ of one dimensional stable sheaves $F$'s
on $S$ with $[F]=\beta$, $\chi(F)=1$ is a union of connected components of
$M_{X, \beta}^{\mathbb{C}^{\ast}}$. Below we
 determine the contribution of the \emph{surface component} $M_{S, \beta}$ to the equivariant localization formula (\ref{vir:loc}) of $M_{X, \beta}$.

Let $\mathbb{F} \in \Coh(S \times M_{S, \beta})$ be the universal sheaf, $\pi_{M_{S}} \colon M_{S, \beta} \times S \to M_{S, \beta}$ be the projection.
The standard deformation-obstruction theory
\begin{align}\label{obs:U}
(\tau_{\geqslant 1}\dR \hH om_{\pi_{M_{S}}}(\mathbb{F}, \mathbb{F}))^{\vee}[-1] \to
\mathbb{L}_{M_{S, \beta}}
\end{align}
of $M_{S, \beta}$ is perfect \cite{BF, LT} and defines a virtual class
\begin{align*}
[M_{S, \beta}]^{\rm{vir}} \in H_{2\beta^2+2}(M_{S, \beta},\mathbb{Z}).
\end{align*}
\begin{prop-defi}\label{equiv vir cycle surface cpn}
Suppose that $\Hom(F, F \otimes L_2)=0$ for
any $[F] \in M_{S, \beta}$. The contribution of $M_{S, \beta}$ to the equivariant virtual class (\ref{vir:loc}) of $M_{X, \beta}$ is
\begin{align}\label{id:MSvir}
\pm
[M_{S, \beta}]^{\rm{vir}} \cdot
e(-\dR \hH om_{\pi_{M_{S}}}(\mathbb{F}, \mathbb{F} \boxtimes L_1) \otimes t).
\end{align}
\end{prop-defi}
\begin{proof}
Let $j$ be the inclusion
\begin{align*}
j=(\iota, \id) \colon S \times M_{S, \beta} \hookrightarrow X \times M_{S, \beta},
\end{align*}
where $\iota$ is the zero section inclusion.
We set
\begin{align*}
\uU \cneq \dR \hH om_{\pi_{M_{S}}}(j_{\ast}\mathbb{F}, j_{\ast}\mathbb{F})
\in D^{b}_{\mathbb{C}^{\ast}}(M_{S, \beta}).
\end{align*}
Then there are isomorphisms
\begin{align*}
\uU &\cong
\dR \hH om_{\pi_{M_{S}}}(\dL j^{\ast} j_{\ast}\mathbb{F}, \mathbb{F}) \\
& \cong \dR \hH om_{\pi_{M_{S}}}\Big(\mathbb{F}
 \oplus (\mathbb{F} \boxtimes
N_{S/X}^{\vee})[1] \oplus (\mathbb{F} \boxtimes \wedge^2 N_{S/X}^{\vee})[2],
\mathbb{F}\Big) \\
& \cong \dR \hH om_{\pi_{M_{S}}}(\mathbb{F}, \mathbb{F}) \oplus
\dR \hH om_{\pi_{M_{S}}}(\mathbb{F}, \mathbb{F} \boxtimes K_S)[-2] \\
& \qquad \oplus
\dR \hH om_{\pi_{M_{S}}}(\mathbb{F}, \mathbb{F} \boxtimes L_1) \otimes t [-1]
\oplus
\dR \hH om_{\pi_{M_{S}}}(\mathbb{F}, \mathbb{F} \boxtimes L_2) \otimes t^{-1} [-1].
\end{align*}
For the $\mathbb{C}^{*}$-fixed part, the Grothendieck duality gives
\begin{align*}
&\hH^1(\uU)^{\rm{fix}} \cong \eE xt^1_{\pi_{M_{S}}}(\mathbb{F}, \mathbb{F}), \\
&\hH^2(\uU)^{\rm{fix}} \cong \eE xt^2_{\pi_{M_{S}}}(\mathbb{F}, \mathbb{F})
\oplus \eE xt_{\pi_{M_{S}}}^2(\mathbb{F}, \mathbb{F})^{\vee}.
\end{align*}
By the shifted cotangent bundle argument as in 
Subsection~\ref{sec:review}, we see that
\begin{align*}
[M_{X, \beta}]^{\rm{vir}}|_{M_{S, \beta}}=[M_{S,\beta}]^{\rm{vir}}.
\end{align*}
For the movable part,
using condition (\ref{L12}) and
Grothendieck duality, we have
\begin{align*}
&\hH^1(\uU)^{\rm{mov}} \cong
\left(\hH om_{\pi_{M_{S}}}(\mathbb{F}, \mathbb{F} \boxtimes L_1) \otimes t \right)
\oplus \left(\hH om_{\pi_{M_{S}}}(\mathbb{F}, \mathbb{F} \boxtimes L_2) \otimes t^{-1}
\right), \\
&\hH^2(\uU)^{\rm{mov}} \cong \left(\eE xt^1_{\pi_{M_{S}}}(\mathbb{F},
\mathbb{F} \boxtimes L_1) \otimes t\right) \oplus \left(\eE xt^1_{\pi_{M_{S}}}(\mathbb{F},
\mathbb{F} \boxtimes L_1)^{\vee} \otimes t^{-1}\right).
\end{align*}
By the assumption, we have
$\hH om_{\pi_{M_{S}}}(\mathbb{F}, \mathbb{F} \otimes L_2)=0$
and
\begin{align*}
\eE xt^2_{\pi_{M_{S}}}(\mathbb{F}, \mathbb{F} \otimes L_1)=
\hH om_{\pi_{M_{S}}}(\mathbb{F}, \mathbb{F} \otimes L_2)^{\vee}=0.
\end{align*}
Therefore we obtain the desired identity (\ref{id:MSvir}) following localization formula of type (\ref{vir:loc}).
\end{proof}

\begin{defi}\label{defi:DT4res}
Suppose that $\Hom(F, F \otimes L_2)=0$ for any $[F] \in M_{S, \beta}$, 
and $M_{X, \beta}^{\mathbb{C}^{\ast}}=M_{S, \beta}$ holds. 
We define the residue $\mathrm{DT_4}$
 invariant by 
the residue of (\ref{equiv vir cycle surface cpn})
at $t=0$, i.e. 
\begin{align*}
\DT_4^{\rm{res}}(\beta) \cneq \pm\int_{[M_{S, \beta}]^{\rm{vir}}}
c_{\beta^2+1}(-\dR \hH om_{\pi_M}(\mathbb{F}, \mathbb{F} \boxtimes L_1)). 
\end{align*}
\end{defi}

In the GW side, 
let $\pi \colon \cC \to \overline{M}_{0, 0}(S, \beta)$ be the universal 
curve and $f \colon \cC \to S$ be the universal
stable map. 
We define the residue GW invariant by 
\begin{align}\label{res:GW}
\mathrm{GW}_{0, \beta}^{\rm{res}}
\cneq \mathrm{Res}_{t=0}
\int_{[\overline{M}_{0, 0}(S, \beta)]^{\rm{vir}}} e(- \dR \pi_{\ast}f^{\ast}N_{S/X}) \in \mathbb{Q}. 
\end{align}
Here $N_{S/X}=(L_1 \otimes t) \oplus (L_2 \otimes t^{-1})$ is the 
$\mathbb{C}^{\ast}$-equivariant normal bundle
and $e(-)$ is the $\mathbb{C}^{\ast}$-equivariant Euler class. 
As an analogy of Conjecture~\ref{conj:GW/GV}, we propose the following conjecture:
\begin{conj}\label{conj:red}
We have the following identity
\begin{align}\label{id:res}\mathrm{GW}_{0, \beta}^{\rm{res}}=\sum_{k|\beta} \frac{1}{k^3} \DT_4^{\rm{res}}(\beta/k),\end{align}
for certain choices of orientations in defining the RHS.
\end{conj}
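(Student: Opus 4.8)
The plan is to prove the residue identity (\ref{id:res}) in the geometrically accessible case $L_1=\oO_S$, $L_2=K_S$ by reducing it to the genus zero Gopakumar--Vafa/Donaldson--Thomas correspondence on the non-compact CY $3$-fold $Y=\mathrm{Tot}_S(K_S)$, in direct analogy with the reduction of Theorem~\ref{DT4/DT3 primitive} to Corollary~\ref{g=0 product of elliptic curve and CY3}. Here $X=\mathrm{Tot}_S(\oO_S\oplus K_S)\cong Y\times\mathbb{C}$, and the weight $(1,-1)$ torus acts with weight $1$ on the trivial $\oO_S$-factor and weight $-1$ along $Y$. Throughout I assume, as in Definition~\ref{defi:DT4res}, that $M_{X,\beta}^{\mathbb{C}^{\ast}}=M_{S,\beta}$ and that $\Hom(F,F\otimes K_S)=0$ for all $[F]\in M_{S,\beta}$; under these hypotheses I will identify both sides of (\ref{id:res}) with invariants of $Y$.

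For the Donaldson--Thomas side I would start from Definition~\ref{defi:DT4res}, which for $L_1=\oO_S$ reads $\DT_4^{\rm{res}}(\beta)=\pm\int_{[M_{S,\beta}]^{\rm{vir}}}c_{\beta^2+1}(-\dR\hH om_{\pi_{M_S}}(\mathbb{F},\mathbb{F}))$. The hypothesis forces $\eE xt^2_{\pi_{M_S}}(\mathbb{F},\mathbb{F})\cong\hH om_{\pi_{M_S}}(\mathbb{F},\mathbb{F}\otimes K_S)^{\vee}=0$, while $\hH om_{\pi_{M_S}}(\mathbb{F},\mathbb{F})\cong\oO$; hence in $K$-theory $-[\dR\hH om_{\pi_{M_S}}(\mathbb{F},\mathbb{F})]=[\eE xt^1_{\pi_{M_S}}(\mathbb{F},\mathbb{F})]-[\oO]$, and the top Chern class collapses to the Euler class $e(\eE xt^1_{\pi_{M_S}}(\mathbb{F},\mathbb{F}))$ (its rank is $\beta^2+1=\mathrm{vdim}\,M_{S,\beta}$, since $\chi(E,E)=-\beta^2$). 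On the other hand $M_{Y,\beta}\cong M_{S,\beta}$ carries the $\mathrm{DT}_3$-obstruction bundle $\eE xt^1_{\pi_{M_S}}(\mathbb{F},\mathbb{F}\otimes K_S)\cong\eE xt^1_{\pi_{M_S}}(\mathbb{F},\mathbb{F})^{\vee}$, so that $\mathrm{DT}_3(\beta)=\int_{[M_{S,\beta}]^{\rm{vir}}}e(\eE xt^1_{\pi_{M_S}}(\mathbb{F},\mathbb{F})^{\vee})=(-1)^{\beta^2+1}\,\DT_4^{\rm{res}}(\beta)$ up to the overall orientation sign. Thus $\DT_4^{\rm{res}}(\beta)=\pm\mathrm{DT}_3(\beta)$, and since the moduli spaces $M_{S,\beta/k}$ are distinct for distinct $k$, the orientation on each may be chosen independently so that $\DT_4^{\rm{res}}(\beta/k)=\mathrm{DT}_3(\beta/k)$.

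For the Gromov--Witten side I would unwind (\ref{res:GW}) using $N_{S/X}=(\oO_S\otimes t)\oplus(K_S\otimes t^{-1})$, so that $e(-\dR\pi_\ast f^\ast N_{S/X})$ factors as $e(-\dR\pi_\ast\oO_{\cC}\otimes t)\cdot e(-\dR\pi_\ast f^\ast K_S\otimes t^{-1})$. For genus zero maps $\dR\pi_\ast\oO_{\cC}\cong\oO$, so the first factor is exactly $t^{-1}$, which the residue $\mathrm{Res}_{t=0}$ consumes after pairing against $[\overline{M}_{0,0}(S,\beta)]^{\rm{vir}}$; the surviving term is the non-equivariant Euler class of the virtual obstruction bundle $-\dR\pi_\ast f^\ast K_S$ of rank $-K_S\cdot\beta-1=\mathrm{vdim}$. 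This is precisely the genus zero local invariant of $Y$, giving $\mathrm{GW}_{0,\beta}^{\rm{res}}=\mathrm{GW}_{0,\beta}(Y)$. Combining the two reductions, (\ref{id:res}) becomes the Aspinwall--Morrison form of the genus zero GV/DT correspondence (Conjecture~\ref{conj:katz}) on $Y$,
\[
\mathrm{GW}_{0,\beta}(Y)=\sum_{k|\beta}\frac{1}{k^{3}}\,\mathrm{DT}_3(\beta/k),
\]
which is known for $Y=\mathrm{Tot}_S(K_S)$ when $S$ is toric del-Pezzo and for rational elliptic $S$ in the primitive classes $\beta_n$, via the GW/DT/PT correspondence, wall-crossing and geometric vanishing recorded in Corollary~\ref{cor:CI}; these are exactly the two cases of the verification theorem (Theorem~\ref{toric del-Pezzo}, \ref{rational elliptic}).

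The calculational reductions above are essentially formal, so the genuine content sits elsewhere. The substantive input is the genus zero GV/DT correspondence on the local CY $3$-fold, which is why the statement is only established for toric del-Pezzo surfaces and for primitive classes on rational elliptic surfaces rather than in full generality (and note the whole scheme is special to $L_1=\oO_S$, since $\mathrm{Tot}_S(L_2)$ is CY only for $L_2=K_S$). I expect the main technical obstacle to be the rational elliptic case: there $-K_S$ is only nef, so neither the compactness of $M_{S,\beta}$ nor the vanishing $\Hom(F,F\otimes K_S)=0$ follows from Proposition~\ref{vir:loc neg}, and both (together with $M_{X,\beta}^{\mathbb{C}^{\ast}}=M_{S,\beta}$) must be checked by hand precisely for the classes $\beta_n$. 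The orientation bookkeeping, by contrast, is mild: because each $\DT_4^{\rm{res}}(\beta/k)$ lives on its own moduli space, the signs $(-1)^{(\beta/k)^2+1}$ can be absorbed simultaneously by independent orientation choices, which is all that the phrase ``certain choices of orientations'' in the statement requires.
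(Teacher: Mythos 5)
Your two reductions are exactly the ones the paper uses: the identification $\DT_4^{\rm{res}}(\beta)=\pm\DT_3(\beta)$ under the hypotheses of Definition~\ref{defi:DT4res} is Lemma~\ref{lem:locDT4/3} (the paper phrases it as $\pm\chi(M_{S,\beta})$, using that $\eE xt^1_{\pi_M}(\mathbb{F},\mathbb{F})$ is the tangent bundle of the smooth $M_{S,\beta}$, which is the same computation as your dual-obstruction-bundle version), and the collapse of $\mathrm{GW}^{\rm{res}}_{0,\beta}$ to the local genus zero invariant of $Y=\mathrm{Tot}_S(K_S)$ is the first line of the proof of Theorem~\ref{toric del-Pezzo}. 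You also correctly flag that for rational elliptic $S$ the compactness and the vanishing $\Hom(F,F\otimes K_S)=0$ must be checked by hand for the classes $\beta_n$; the paper does this in Lemma~\ref{lem:ratell} via the Fourier--Mukai identification $M_{S,\beta_n}\cong\Hilb^n(S)$.

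The one genuine problem is your final citation. Corollary~\ref{cor:CI} concerns complete intersection CY 3-folds in products of projective spaces and is used for Theorem~\ref{DT4/DT3 primitive}; it covers neither of the two cases here. For toric del-Pezzo $S$ the relevant input is Corollary~\ref{cor:toric}, which rests on the GW/PT correspondence for toric CY 3-folds (\cite{MNOP, Konishi}) plus the wall-crossing of Lemma~\ref{lem:conj:katz}. For rational elliptic $S$ no GW/PT input is available (the local 3-fold is not toric and not a complete intersection), and the paper does \emph{not} route through Conjecture~\ref{conj:katz} at all: it computes both sides of (\ref{id:res}) explicitly, getting $\DT_4^{\rm{res}}(\beta_n)=\pm\chi(\Hilb^n(S))$ with generating series $\prod_k(1-q^k)^{-12}$ by G\"ottsche's formula (Corollary~\ref{rational elliptic surface}), and matching it against $\deg[\overline{M}_{0,0}(S,\beta_n)]^{\rm{vir}}$ via the G\"ottsche--Yau--Zaslow formula of \cite{BL} (here the obstruction bundle $-\dR\pi_\ast f^\ast K_S$ has rank zero since $-K_S\cdot\beta_n=1$, and primitivity of $\beta_n$ kills the multiple-cover sum). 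So your reduction scheme is the paper's, but as written the rational elliptic case is not closed: you need to replace the appeal to Corollary~\ref{cor:CI} by this direct two-sided computation.
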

Note that the power of $1/k$ in the coefficients is $3$ instead of $2$ because we do not put insertions here.

\subsection{Computations for $X=\mathrm{Tot}_S(\mathcal{O}_{S}\oplus K_{S})$}
Let $S$ be a smooth projective surface and 
consider the non-compact CY 4-fold
$X=\mathrm{Tot}_S(\mathcal{O}_{S}\oplus K_{S})$. 
In this case, the residue $\DT_4$ invariant 
in Definition~\ref{defi:DT4res}
is 
related to the $\DT_3$ invariant 
on the non-compact CY 3-fold 
$Y=\mathrm{Tot}_S(K_Y)$. 
We have the following lemma: 
\begin{lem}\label{lem:locDT4/3}
Suppose that $\Hom(F, F \otimes K_S)=0$ for any 
$[F] \in M_{S, \beta}$, and 
$M_{S, \beta}=M_{Y, \beta}$ holds. Then we have
\begin{align*}
\DT_4^{\rm{res}}(\beta)=\pm\chi(M_{S, \beta}).
\end{align*}
In particular, we have 
$\DT_4^{\rm{res}}(\beta)=\pm \DT_3(\beta)$, 
where the RHS is the $\DT_3$ invariant
on $Y$. 
\end{lem}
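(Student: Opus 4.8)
The plan is to use the hypothesis $\Hom(F,F\otimes K_S)=0$ to make $M_{S,\beta}$ smooth, and then to recognize the integrand of Definition~\ref{defi:DT4res} as the top Chern class of its tangent bundle. First I would note that Serre duality on $S$ gives $\Ext^2_S(\mathcal{E},\mathcal{E})\cong\Hom_S(\mathcal{E},\mathcal{E}\otimes K_S)^{\vee}=0$ for every $[\mathcal{E}]\in M_{S,\beta}$, so the obstruction space vanishes and $M_{S,\beta}$ is a smooth projective variety. Riemann--Roch gives $\chi(\mathcal{E},\mathcal{E})=-\beta^2$, whence $\dim M_{S,\beta}=\ext^1_S(\mathcal{E},\mathcal{E})=1+\beta^2$ and the virtual class is the ordinary fundamental class $[M_{S,\beta}]^{\mathrm{vir}}=[M_{S,\beta}]$.

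Since $L_1=\mathcal{O}_S$ here, the integrand of Definition~\ref{defi:DT4res} is built from $\dR\hH om_{\pi_M}(\mathbb{F},\mathbb{F})$, whose only nonzero relative cohomology sheaves are $\eE xt^0_{\pi_M}(\mathbb{F},\mathbb{F})\cong\mathcal{O}_{M_{S,\beta}}$ (trivialized by the identity endomorphism) and $\eE xt^1_{\pi_M}(\mathbb{F},\mathbb{F})\cong T_{M_{S,\beta}}$, the full tangent bundle. Hence in $K$-theory $-[\dR\hH om_{\pi_M}(\mathbb{F},\mathbb{F})]=[T_{M_{S,\beta}}]-[\mathcal{O}_{M_{S,\beta}}]$, so its total Chern class equals $c(T_{M_{S,\beta}})$. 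Extracting the degree $\beta^2+1=\dim M_{S,\beta}$ part yields the top Chern class $e(T_{M_{S,\beta}})$, and the Gauss--Bonnet--Chern theorem gives
\[
\DT_4^{\mathrm{res}}(\beta)=\pm\int_{M_{S,\beta}}e(T_{M_{S,\beta}})=\pm\,\chi(M_{S,\beta}),
\]
which is the first assertion.

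For the comparison with $\DT_3$ on $Y=\mathrm{Tot}_S(K_S)$, I would analyze the pushforward $\iota_{\ast}$ along the zero section via the local-to-global spectral sequence, exactly as in Lemma~\ref{DT4/DT3} but now with $N_{S/Y}\cong K_S$. For $F=\iota_{\ast}\mathcal{E}$ the vanishing hypothesis yields
\[
\Ext^1_Y(F,F)\cong\Ext^1_S(\mathcal{E},\mathcal{E})\cong T_{M_{S,\beta}},\qquad
\Ext^2_Y(F,F)\cong\Ext^1_S(\mathcal{E},\mathcal{E}\otimes K_S)\cong\Ext^1_S(\mathcal{E},\mathcal{E})^{\vee},
\]
the last isomorphism by Serre duality on $S$; these are Serre dual on the Calabi--Yau $3$-fold $Y$, as they must be. Thus $M_{Y,\beta}=M_{S,\beta}$ is smooth with obstruction bundle $\Omega_{M_{S,\beta}}$, so the $\DT_3$ virtual class is $e(\Omega_{M_{S,\beta}})\cap[M_{S,\beta}]$ and
\[
\DT_3(\beta)=\int_{M_{S,\beta}}c_{\beta^2+1}(\Omega_{M_{S,\beta}})=(-1)^{\beta^2+1}\chi(M_{S,\beta}),
\]
using $c_{\beta^2+1}(\Omega_{M_{S,\beta}})=(-1)^{\beta^2+1}c_{\beta^2+1}(T_{M_{S,\beta}})$. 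Comparing with the previous display gives $\DT_4^{\mathrm{res}}(\beta)=\pm\DT_3(\beta)$.

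I expect the main obstacle to be the careful identification of the relative $\eE xt$-sheaves with the (co)tangent bundle---in particular verifying that $\eE xt^0_{\pi_M}(\mathbb{F},\mathbb{F})\cong\mathcal{O}_{M_{S,\beta}}$ with the correct trivialization, and that the full (not trace-free) $\Ext^1_S(\mathcal{E},\mathcal{E})$ is the tangent space since the determinant is not fixed---together with matching the virtual-class (equivalently Behrend-function) normalizations so that the signs in the two $\pm$ statements are genuinely consistent. Once these identifications are in place, the result reduces to the two standard facts $c_k(E^{\vee})=(-1)^k c_k(E)$ and $\int_M e(T_M)=\chi(M)$.
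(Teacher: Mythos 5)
Your argument is correct and is essentially the proof the paper gives: smoothness of $M_{S,\beta}$ from $\Ext^2_S(\eE,\eE)=0$, identification of $c_{\beta^2+1}(-\dR \hH om_{\pi_M}(\mathbb{F},\mathbb{F}))$ with $e(T_{M_{S,\beta}})$ since $L_1=\oO_S$, and the standard computation of $\DT_3$ on $\mathrm{Tot}_S(K_S)$ as a signed Euler characteristic (the paper leaves this last step implicit). The only point you skip is verifying the standing hypothesis $M_{X,\beta}^{\mathbb{C}^{\ast}}=M_{S,\beta}$ of Definition~\ref{defi:DT4res}, which the paper deduces from the assumption $M_{S,\beta}=M_{Y,\beta}$ together with $M_{X,\beta}=M_{Y,\beta}\times\mathbb{A}^1$ (as $X=Y\times\mathbb{A}^1$).
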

\begin{proof}
Since $X=Y \times \mathbb{A}^1$, 
similarly to Lemma~\ref{DT4/DT3}
we have $M_{X, \beta}=M_{Y, \beta} \times \mathbb{A}^1$. 
Therefore the assumption 
$M_{S, \beta}=M_{Y, \beta}$
implies $M_{X, \beta}^{\mathbb{C}^{\ast}}=M_{S, \beta}$. 
Also the assumption $\Hom(F, F \otimes K_S)=0$ implies
that 
$\Ext_S^2(F, F)=0$
for any $[F] \in M_{S, \beta}$.
In particular, 
$M_{S, \beta}$ is non-singular
and $[M_{S, \beta}]^{\rm{vir}}=[M_{S, \beta}]$.  
From the definition of 
$\DT_4^{\rm{res}}(\beta)$, it follows that 
\begin{align*}
\DT_4^{\rm{res}}(\beta)=\pm\int_{M_{S, \beta}}
e(\eE xt_{\pi_M}^1(\mathbb{F}, \mathbb{F}))
\end{align*}
which is the topological 
Euler characteristic of $M_{S, \beta}$. Therefore the lemma follows. 
\end{proof}
In the following examples, conditions in Definition \ref{defi:DT4res} are satisfied, so Lemma \ref{lem:locDT4/3} can be used to verify Conjecture \ref{conj:red}. 

${}$ \\
\textbf{When $S$ is a toric del-Pezzo surface}.
In the toric del-Pezzo surface case, we have 
\begin{thm}\label{toric del-Pezzo}
Let $S$ be a smooth  
toric del-Pezzo surface. 
Then Conjecture~\ref{conj:red} holds for 
$X=\mathrm{Tot}_S(\oO_S \oplus K_S)$. 
\end{thm}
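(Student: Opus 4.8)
The plan is to reduce this residue identity on the CY$_4$ fourfold $X=\mathrm{Tot}_S(\oO_S\oplus K_S)$ to the genus zero Gopakumar--Vafa/Donaldson--Thomas correspondence on the local CY$_3$ threefold $Y=\mathrm{Tot}_S(K_S)$, in the spirit of Corollary~\ref{g=0 product of elliptic curve and CY3}. First I would check that the hypotheses of Definition~\ref{defi:DT4res} and Lemma~\ref{lem:locDT4/3} hold. Since $S$ is del-Pezzo, $K_S^{-1}$ is ample, so for any stable $[F]\in M_{S,\beta}$ one has $\chi(F\otimes K_S)=1+\beta\cdot K_S<1=\chi(F)$; hence $\Hom(F,F\otimes K_S)=0$ by stability, and therefore $\Ext^2_S(F,F)=0$. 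This makes $M_{S,\beta}$ smooth, and, arguing as in Proposition~\ref{vir:loc neg} with the ampleness of $K_S^{-1}$ ruling out any thickening in the fibre direction, one gets $M_{S,\beta}=M_{Y,\beta}=M_{X,\beta}^{\mathbb{C}^\ast}$. Lemma~\ref{lem:locDT4/3} then yields $\DT_4^{\rm{res}}(\beta)=\pm\,\chi(M_{S,\beta})=\pm\,\DT_3(\beta)$, where $\DT_3(\beta)$ is the genus zero DT invariant of $Y$.

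Next I would identify the left-hand side of (\ref{id:res}) with the genus zero local GW invariant of $Y$. Writing $N_{S/X}=(\oO_S\otimes t)\oplus(K_S\otimes t^{-1})$, the integrand of (\ref{res:GW}) factors as
\begin{equation}
e(-\dR\pi_\ast f^\ast N_{S/X})=e\big(-\dR\pi_\ast f^\ast\oO_S\otimes t\big)\cdot e\big(-\dR\pi_\ast f^\ast K_S\otimes t^{-1}\big). \nonumber
\end{equation}
For genus zero stable maps $\dR\pi_\ast\oO_{\cC}=\oO$, so the first factor is $t^{-1}$; and as $\deg f^\ast K_S=\beta\cdot K_S<0$ we have $\pi_\ast f^\ast K_S=0$, so $-\dR\pi_\ast f^\ast K_S=V:=R^1\pi_\ast f^\ast K_S$, a bundle whose rank equals $-\beta\cdot K_S-1=\mathrm{vdim}\,\overline{M}_{0,0}(S,\beta)$. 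All lower Chern classes of $V$ integrate to zero for dimension reasons, so $\int_{[\overline{M}_{0,0}(S,\beta)]^{\rm{vir}}}e(-\dR\pi_\ast f^\ast N_{S/X})=t^{-1}\int_{[\overline{M}_{0,0}(S,\beta)]^{\rm{vir}}}e(V)$, and taking the residue at $t=0$ gives
\begin{equation}
\mathrm{GW}_{0,\beta}^{\rm{res}}=\int_{[\overline{M}_{0,0}(S,\beta)]^{\rm{vir}}}e\big(R^1\pi_\ast f^\ast K_S\big)=\mathrm{GW}_{0,\beta}^{Y}, \nonumber
\end{equation}
the genus zero local Gromov--Witten invariant of $Y=\mathrm{Tot}_S(K_S)$.

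Combining the two reductions, the identity (\ref{id:res}) to be proved becomes
\begin{equation}
\mathrm{GW}_{0,\beta}^{Y}=\sum_{k\mid\beta}\frac{1}{k^3}\,\DT_3(\beta/k), \nonumber
\end{equation}
which is exactly the genus zero GV/DT correspondence (Conjecture~\ref{conj:katz}) for the local CY$_3$ threefold $Y$. The last step is to invoke this correspondence: there are only finitely many smooth toric del-Pezzo surfaces ($\mathbb{P}^2$, $\mathbb{P}^1\times\mathbb{P}^1$, and the toric blow-ups $\mathrm{Bl}_k\mathbb{P}^2$ with $k\leqslant 3$), and for each the genus zero local GW invariants of $K_S$ (computable by toric localization and the topological vertex) are known to agree with the signed Euler characteristics of the smooth moduli spaces $M_{S,\beta}$.

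I expect the main obstacle to be precisely this CY$_3$ input, i.e.\ establishing (or citing) the genus zero GV/DT correspondence for each local toric del-Pezzo $Y$; by contrast the hypothesis verification and the residue computation are routine bookkeeping. A secondary point is that the sign $(-1)^{\dim M_{S,\beta}}$ relating $\chi(M_{S,\beta})$ to $\DT_3(\beta)$, together with the orientation sign in Lemma~\ref{lem:locDT4/3}, must be absorbed consistently over all $k\mid\beta$ into the freedom of ``$\pm$'', which is what the phrase ``for certain choices of orientations'' in Conjecture~\ref{conj:red} permits.
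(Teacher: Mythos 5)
Your proposal is correct and follows essentially the same route as the paper: identify $\mathrm{GW}_{0,\beta}^{\rm{res}}$ with the local genus-zero GW invariant of $Y=\mathrm{Tot}_S(K_S)$, apply Lemma~\ref{lem:locDT4/3} to get $\DT_4^{\rm{res}}(\beta)=\pm\DT_3(\beta)$, and conclude by the genus-zero GV/DT correspondence for local toric del-Pezzo threefolds. The only cosmetic difference is in the last step: the paper obtains that CY$_3$ input as Corollary~\ref{cor:toric}, deduced from the GW/PT correspondence for toric threefolds (\cite{MNOP, Konishi}) together with the DT/PT wall-crossing of \cite{Toda2}, rather than from case-by-case topological-vertex computations, which by themselves would not cover all curve classes $\beta$.
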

\begin{proof}
By the virtual localization formula, 
it is easy to see that the residue GW invariant 
(\ref{res:GW}) is the usual GW invariant on $Y=\mathrm{Tot}_S(K_S)$. 
Then the result follows from 
Lemma~\ref{lem:locDT4/3} and Corollary~\ref{cor:toric}. 
\end{proof}


${}$ \\
\textbf{When $S$ is a rational elliptic surface}.
Let $p:S\rightarrow\mathbb{P}^{1}$ be a rational elliptic surface with a section $s$ and $C\triangleq s(\mathbb{P}^{1})\subseteq S$, $f$ be a general fiber of $p$. We consider primitive curve classes
\begin{align}\label{beta:n}
\beta_{n}=[C]+n[f]\in H_{2}(S,\mathbb{Z}),
\quad n\geqslant 0.
\end{align}
We first show the following: 
\begin{lem}\label{lem:ratell}
For any $[F] \in M_{S, \beta_n}$, we have 
$\Hom(F, F\otimes K_S)=0$ and
\begin{align*} 
M_{S,\beta_{n}} =M_{Y, \beta_n}= M_{X,\beta_{n}}^{\mathbb{C}^{*}}
\cong \Hilb^n(S).
\end{align*}
\end{lem}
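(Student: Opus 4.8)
The plan is to split the statement into three parts: the numerical vanishing $\Hom(F,F\otimes K_{S})=0$, the two moduli identifications $M_{X,\beta_{n}}^{\mathbb{C}^{*}}=M_{Y,\beta_{n}}=M_{S,\beta_{n}}$, and the geometric identification $M_{S,\beta_{n}}\cong\Hilb^{n}(S)$. I would begin with the numerics. For the rational elliptic surface the canonical bundle formula gives $K_{S}\cong\mathcal{O}_{S}(-f)$, and since $C$ is a section, $\beta_{n}\cdot f=([C]+n[f])\cdot f=1$, hence $\beta_{n}\cdot K_{S}=-1$. For stable $F$ with $[F]=\beta_{n}$ and $\chi(F)=1$, both $F$ and $F\otimes K_{S}$ are stable with the same support class $\beta_{n}$, but $\chi(F\otimes K_{S})=\chi(F)+\beta_{n}\cdot K_{S}=0$, so the reduced Hilbert polynomial of $F\otimes K_{S}$ is strictly smaller than that of $F$. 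A nonzero map $F\to F\otimes K_{S}$, having image a quotient of $F$ and a subsheaf of $F\otimes K_{S}$, would force $p(F)\leqslant p(F\otimes K_{S})$, a contradiction; hence $\Hom(F,F\otimes K_{S})=0$. By Serre duality on $S$ this is $\Ext^{2}_{S}(F,F)=0$, so $M_{S,\beta_{n}}$ is smooth, and Riemann--Roch ($\chi(F,F)=-\beta_{n}^{2}=1-2n$) gives $\dim M_{S,\beta_{n}}=\ext^{1}(F,F)=2n=\dim\Hilb^{n}(S)$.

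Next I would treat the moduli identifications. Since $X=Y\times\mathbb{A}^{1}$ with the weight-one $\mathbb{C}^{*}$-action on the $\mathcal{O}_{S}$-factor, the argument of Lemma~\ref{lem:locDT4/3} gives $M_{X,\beta_{n}}=M_{Y,\beta_{n}}\times\mathbb{A}^{1}$ and $M_{X,\beta_{n}}^{\mathbb{C}^{*}}=M_{Y,\beta_{n}}$. For $M_{Y,\beta_{n}}=M_{S,\beta_{n}}$, the zero-section pushforward (\ref{tau:emb}) is a closed immersion, and the local-to-global spectral sequence for $\iota\colon S\hookrightarrow Y$ with $N_{S/Y}\cong K_{S}$ (as in Lemma~\ref{DT4/DT3}) gives $\Ext^{1}_{Y}(\iota_{*}F,\iota_{*}F)\cong\Ext^{1}_{S}(F,F)\oplus\Hom_{S}(F,F\otimes K_{S})\cong\Ext^{1}_{S}(F,F)$ by the vanishing above. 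Thus the immersion is an isomorphism on tangent spaces and identifies $M_{S,\beta_{n}}$ with a union of connected components of $M_{Y,\beta_{n}}$. To see it is everything, I would present a sheaf on $Y$ as a pair $(\mathcal{G},\phi)$ with $\mathcal{G}=\pi_{*}\tilde{F}$ and $\phi\colon\mathcal{G}\to\mathcal{G}\otimes K_{S}$, and show $\phi=0$: because $\beta_{n}\cdot f=1$, over the generic point of $\mathbb{P}^{1}$ the horizontal part of $\mathcal{G}$ is a line bundle on a degree-one curve and $\phi$ restricts there to a section of $\mathcal{O}_{\mathbb{P}^{1}}(-1)$, hence vanishes, so $\im\phi$ is supported on finitely many fibers; stability of $\tilde{F}$ applied to this vertical quotient pair then forces $\im\phi=0$.

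Finally, for $M_{S,\beta_{n}}\cong\Hilb^{n}(S)$ I would use the relative Fourier--Mukai transform of the elliptic fibration $p\colon S\to\mathbb{P}^{1}$ normalized along the section $C$. On Mukai vectors it interchanges rank and fiber degree, sending the class of $F$ (fiber degree one, $\chi=1$) to the class $(1,0,-n)$ of an ideal sheaf of $n$ points, up to a fixed line-bundle twist that is harmless for moduli. Since the fiber degree is $1$, for a polarization in the appropriate chamber the transform preserves stability and admits no strictly semistable objects, so it induces an isomorphism $M_{S,\beta_{n}}\xrightarrow{\ \sim\ }M_{S}(1,0,-n)\cong\Hilb^{n}(S)$.

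I expect the main obstacle to be twofold, both stemming from the fact that $K_{S}^{-1}\cong\mathcal{O}_{S}(f)$ is only nef and not ample, so that the clean no-thickening argument of Proposition~\ref{vir:loc neg} does not directly apply: first, ruling out the vertical thickening of $(\mathcal{G},\phi)$ over the finitely many special fibers, and second, checking that the Fourier--Mukai transform carries the stable locus exactly onto the Hilbert scheme with the correct length $n$ while preserving stability in a valid chamber. The vanishing $\Hom(F,F\otimes K_{S})=0$ is the easy numerical input; the Fourier--Mukai identification is the principal content of the lemma.
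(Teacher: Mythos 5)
Your proposal is essentially correct and covers all the claims, but it distributes the work differently from the paper. The paper's proof is very short: it simply cites \cite[Proposition~4.8]{HST} for the relative Fourier--Mukai isomorphism $M_{S,\beta_n}=M_{Y,\beta_n}\cong \Hilb^n(S)$ (so both the Hilbert-scheme identification \emph{and} the no-thickening statement $M_{Y,\beta_n}=M_{S,\beta_n}$ are outsourced to that reference), and then \emph{deduces} the vanishing $\Hom(F,F\otimes K_S)=0$ from the derived equivalence, via $\Ext^2_S(F,F)\cong \Ext^2_S(I_Z,I_Z)$ and $\Hom(I_Z,I_Z\otimes K_S)=0$ since $h^{0,i}(S)=0$. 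You run the logic in the opposite direction: you prove $\Hom(F,F\otimes K_S)=0$ first, by a direct slope computation using $\beta_n\cdot K_S=-1$, and then attack the moduli identifications. Your slope argument is sound, with one small caveat: as stated it invokes stability of $F\otimes K_S$, which is not automatic for one-dimensional sheaves under a line-bundle twist; what you actually get from stability of $F$ (using $\chi(G)\leqslant 0$ for proper subsheaves $G\subset F$ and $[G]\cdot f\geqslant 0$) is semistability of $F\otimes K_S$ with $p(F\otimes K_S)=0<p(F)$, which suffices for the contradiction. The two places where your write-up is only a sketch --- ruling out vertical thickening in $M_{Y,\beta_n}=M_{S,\beta_n}$ (where, as you correctly note, the ampleness argument of Proposition~\ref{vir:loc neg} fails because $K_S^{-1}=\oO_S(f)$ is merely nef, and the Harder--Narasimhan argument can fail on fiber-supported pieces), and the stability-chamber analysis for the Fourier--Mukai transform --- are exactly the content of \cite[Proposition~4.8]{HST}; so your self-diagnosis of where the real work lies is accurate, and citing that result at those two points would close the argument. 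What your route buys is an independent, elementary proof of the $\Hom$-vanishing (plus the dimension count $\dim M_{S,\beta_n}=2n$) that does not presuppose the Fourier--Mukai equivalence; what the paper's route buys is brevity, at the cost of making the vanishing logically dependent on the citation.
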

\begin{proof}
By ~\cite[Proposition 4.8]{HST}, we have an isomorphism
$M_{S,\beta_{n}}=M_{Y, \beta_n} \cong \Hilb^{n}(S)$
given by a Fourier-Mukai transformation. 
In particular, the equivalence of derived categories implies that
for any $F\in M_{S,\beta_{n}}$, we have
\begin{align*}\Ext^{*}_S(F,F)\cong \Ext^{*}_S(I_{Z},I_{Z})
\end{align*}
for some 
zero dimensional subscheme $Z \subset S$ with length $n$. 
Hence 
\begin{align*}
\Hom(F,F\otimes K_S)\cong \Ext^{2}(F,F)^{\vee}
\cong \Ext_S^2(I_Z, I_Z)^{\vee} \cong \Hom(I_Z, I_Z \otimes K_S)=0
\end{align*}
 since $h^{0,i}(S)=0$ for $i=1, 2$. 
\end{proof}
As a corollary, we have the following: 
\begin{cor}\label{rational elliptic surface}
In the above situation, we have
$\DT_4^{\rm{res}}(\beta_n)=\pm \chi(\Hilb^n(S))$. 
In particular
if we take the plus sign as the orientation, 
they fit into the generating series
\begin{align*}
\sum_{n\geqslant 0}
\DT_4^{\rm{res}}(\beta_n)q^n
=\prod_{k\geqslant1}\frac{1}{(1-q^{k})^{12}}.
\end{align*}
\end{cor}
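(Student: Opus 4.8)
The plan is to verify that the hypotheses of Lemma~\ref{lem:locDT4/3} hold for the primitive classes $\beta_n=[C]+n[f]$, apply that lemma to reduce the residue invariant to a topological Euler characteristic, and then evaluate the resulting generating series via Göttsche's formula. The whole statement should then drop out as an immediate consequence of the two preceding lemmas together with one classical computation.

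First I would invoke Lemma~\ref{lem:ratell}, which supplies exactly the two inputs required to run Lemma~\ref{lem:locDT4/3}: the vanishing $\Hom(F, F\otimes K_S)=0$ for every $[F]\in M_{S,\beta_n}$, and the equality $M_{S,\beta_n}=M_{Y,\beta_n}$ (so that in particular $M_{X,\beta_n}^{\mathbb{C}^{\ast}}=M_{S,\beta_n}$). Applying Lemma~\ref{lem:locDT4/3} then yields immediately
\[
\DT_4^{\rm{res}}(\beta_n)=\pm\,\chi(M_{S,\beta_n}).
\]
Lemma~\ref{lem:ratell} moreover identifies $M_{S,\beta_n}\cong\Hilb^n(S)$ via the Fourier--Mukai transform of \cite{HST}, so $\chi(M_{S,\beta_n})=\chi(\Hilb^n(S))$, giving the first assertion $\DT_4^{\rm{res}}(\beta_n)=\pm\,\chi(\Hilb^n(S))$.

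For the generating series I would appeal to Göttsche's formula for the topological Euler characteristics of Hilbert schemes of points on a smooth projective surface, namely $\sum_{n\geqslant0}\chi(\Hilb^n(S))q^n=\prod_{k\geqslant1}(1-q^k)^{-\chi(S)}$. It then remains only to compute $\chi(S)$ for a rational elliptic surface. Since such an $S$ arises by blowing up $\mathbb{P}^2$ at the nine base points of a generic pencil of cubics, one has $\chi(S)=\chi(\mathbb{P}^2)+9=12$ (equivalently $b_2(S)=10$). Substituting $\chi(S)=12$ and selecting the plus sign for the orientation on each (connected) component $M_{S,\beta_n}\cong\Hilb^n(S)$ produces the claimed identity
\[
\sum_{n\geqslant0}\DT_4^{\rm{res}}(\beta_n)q^n=\prod_{k\geqslant1}\frac{1}{(1-q^k)^{12}}.
\]

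There is no genuinely hard step here; the argument is a direct assembly of Lemma~\ref{lem:ratell}, Lemma~\ref{lem:locDT4/3}, and Göttsche's formula. The only points needing mild care are bookkeeping: confirming the Euler number $\chi(S)=12$ rather than some other value, and noting that since each $M_{S,\beta_n}$ is connected the sign ambiguity reduces to a single global choice per $n$, so that the stated ``plus sign'' convention indeed makes the generating series equal to the product on the right. Both are routine, so the only substantive content is genuinely contained in the earlier lemmas.
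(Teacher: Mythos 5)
Your proposal is correct and follows exactly the paper's argument: Lemma~\ref{lem:ratell} supplies the hypotheses of Lemma~\ref{lem:locDT4/3}, which gives $\DT_4^{\rm{res}}(\beta_n)=\pm\chi(\Hilb^n(S))$, and G\"ottsche's formula together with $\chi(S)=12$ yields the generating series. The extra bookkeeping you spell out (the computation $\chi(S)=\chi(\mathbb{P}^2)+9=12$ and the sign convention) is implicit in the paper's two-line proof but entirely consistent with it.
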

\begin{proof}
The identity $\DT_4^{\rm{res}}(\beta_n)=\pm \chi(\Hilb^n(S))$
follows from 
Lemma~\ref{lem:locDT4/3}
and Lemma~\ref{lem:ratell}. 
By G\"{o}ttsche's formula \cite{Gott}, we obtain the generating series.
\end{proof}
Now we have the following: 
\begin{thm}\label{rational elliptic}  
Let $S$ be a rational elliptic surface
and take $\beta_n$ as in (\ref{beta:n}). 
Then Conjecture~\ref{conj:red} 
is true for 
$X=\mathrm{Tot}_S(\oO_S \oplus K_S)$ and 
$\beta=\beta_n$, i.e. 
$\mathrm{GW}_{0, \beta_n}^{\rm{res}}=
\DT_4^{\rm{res}}(\beta_n)$ holds. 
\end{thm}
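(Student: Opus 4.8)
The plan is to reduce everything to the threefold $Y=\mathrm{Tot}_S(K_S)$, exactly as in the proof of Theorem~\ref{toric del-Pezzo}, and to feed in the known genus zero Gromov-Witten count for the rational elliptic surface. First, since $\beta_n$ is primitive the only $k$ dividing $\beta_n$ is $k=1$, so the statement of Conjecture~\ref{conj:red} collapses to the single identity $\mathrm{GW}_{0,\beta_n}^{\rm{res}}=\DT_4^{\rm{res}}(\beta_n)$. The Donaldson-Thomas side is already in hand: Lemma~\ref{lem:ratell} verifies the hypotheses of Lemma~\ref{lem:locDT4/3} (namely $\Hom(F,F\otimes K_S)=0$ and $M_{S,\beta_n}=M_{Y,\beta_n}=M_{X,\beta_n}^{\mathbb{C}^{\ast}}\cong\Hilb^n(S)$), and Corollary~\ref{rational elliptic surface} then gives $\DT_4^{\rm{res}}(\beta_n)=\chi(\Hilb^n(S))$ for the plus orientation, with generating series $\prod_{k\geqslant 1}(1-q^k)^{-12}$ by G\"ottsche's formula. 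So the entire content of the theorem is to show the residue Gromov-Witten side produces the same numbers.

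For the left-hand side I would run the virtual localization argument verbatim from Theorem~\ref{toric del-Pezzo}: the ambient fourfold $X=\mathrm{Tot}_S(\oO_S\oplus K_S)$ and the weight $(1,-1)$ torus action are identical, so the computation is unchanged. In genus zero $\dR\pi_\ast f^\ast\oO_S$ is the trivial line bundle carrying equivariant weight $t$, so it contributes the factor $t^{-1}$ to $e(-\dR\pi_\ast f^\ast N_{S/X})$; taking $\mathrm{Res}_{t=0}$ then identifies $\mathrm{GW}_{0,\beta_n}^{\rm{res}}$ with the ordinary genus zero local Gromov-Witten invariant $\mathrm{GW}_{0,\beta_n}(Y)$ of the non-compact Calabi-Yau threefold $Y=\mathrm{Tot}_S(K_S)$.

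With both invariants now living on $Y$, the theorem becomes the genus zero Gopakumar-Vafa/Donaldson-Thomas correspondence (Conjecture~\ref{conj:katz}) for $Y$ in the class $\beta_n$. Because $\beta_n$ is primitive the Aspinwall-Morrison formula has no multiple-cover corrections, so $\mathrm{GW}_{0,\beta_n}(Y)=n_{0,\beta_n}(Y)$, and the desired equality reads $n_{0,\beta_n}(Y)=\DT_3(\beta_n)=\chi(\Hilb^n(S))$. I would finish by invoking the known genus zero Gromov-Witten computation for the rational elliptic surface in the classes $\beta_n=[C]+n[f]$, whose generating series is again the modular form $\prod_{k\geqslant 1}(1-q^k)^{-12}$, and matching it coefficient by coefficient with the Hilbert-scheme series above.

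The hard part is precisely this last comparison. In the toric del-Pezzo case Corollary~\ref{cor:toric} supplies the needed threefold GV/DT statement directly, but here $S$ has $K_S^2=0$ and is non-toric, so the local genus zero invariants are not covered by that corollary and must instead be extracted from the modularity of the local half-K3 (rational elliptic) geometry. The work therefore lies in establishing that the resulting series equals $\prod_{k\geqslant 1}(1-q^k)^{-12}$ on the nose, and in checking that the plus orientation fixed in Corollary~\ref{rational elliptic surface} is the one making the two positive series agree.
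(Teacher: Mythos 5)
Your proposal is correct and follows essentially the same route as the paper: the $\DT_4$ side is handled exactly as you describe via Lemma~\ref{lem:ratell}, Lemma~\ref{lem:locDT4/3} and Corollary~\ref{rational elliptic surface}, and the residue GW side reduces (since the virtual dimension of $\overline{M}_{0,0}(S,\beta_n)$ is zero) to $\deg[\overline{M}_{0,0}(S,\beta_n)]^{\rm{vir}}$, whose generating series is matched with $\prod_{k\geqslant 1}(1-q^k)^{-12}$. The ``hard part'' you flag at the end --- that this GW series equals the modular form on the nose --- is not left open in the paper; it is precisely the G\"ottsche--Yau--Zaslow formula for the rational elliptic surface, quoted from Bryan--Leung \cite[Theorem 1.2]{BL}, and your detour through the local threefold $Y=\mathrm{Tot}_S(K_S)$ and Conjecture~\ref{conj:katz} is an equivalent but unnecessary reformulation of that same comparison.
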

\begin{proof}
Since the virtual dimension of 
$\overline{M}_{0, 0}(S, \beta_n)$ is zero, we have 
\begin{align*}
\mathrm{GW}_{0, \beta_n}^{\rm{res}}=
\deg[\overline{M}_{0, 0}(S, \beta_n)]^{\rm{vir}}. 
\end{align*}
Its generating series 
fits into G\"{o}ttsche-Yau-Zaslow formula  (see e.g. ~\cite[Theorem 1.2]{BL})
\begin{align*}
\sum_{n=0}^{\infty}\textrm{deg}[\overline{M}_{0,0}(S,\beta_{n})]^{\rm{vir}}q^n=\prod_{k\geqslant1}\frac{1}{(1-q^{k})^{12}}. 
\end{align*}
Therefore the
result follows from Corollary~\ref{rational elliptic surface}. 
\end{proof}

\section{Local curves}
Let $C$ be a smooth projective curve of genus $g(C)=g$, and
\begin{align}\label{X:tot2}
p \colon
X=\mathrm{Tot}_C(L_1 \oplus L_2 \oplus L_3) \to C
\end{align}
be the total space of split rank three vector bundle on it.
Assuming that
\begin{align}\label{L123}
L_1 \otimes L_2 \otimes L_3 \cong \omega_C,
\end{align}
then the variety (\ref{X:tot2}) is a non-compact CY 4-fold.
Below we set $l_i \cneq \deg L_i$, and may assume that
$l_1 \geqslant l_2 \geqslant l_3$ without loss of generality.
In this section, we study an equivariant version of Conjecture~\ref{conj:GW/GV}
for $X$. 
\subsection{Localization for GW invariants}
Let $T=(\mathbb{C}^{\ast})^{\times 3}$ be the three dimensional complex torus
which acts on the fibers of $X$
given by (\ref{X:tot2}). 
Let $\bullet$ denote $\Spec \mathbb{C}$ with trivial $T$-action.
Let $\mathbb{C} \otimes t_i$ be the one dimensional vector
space with $T$-action
with weight $t_i$,
and
$\lambda_i \in H_T^{\ast}(\bullet)$
its 1st Chern class.
We note that
\begin{align}\label{H:lambda}
H_{T}^{\ast}(\bullet)=\mathbb{C}[\lambda_1, \lambda_2, \lambda_3]. 
\end{align}
Let $j \colon C \hookrightarrow X$
be the zero section of the projection (\ref{X:tot2}).
Note that we have
\begin{align*}
H_2(X, \mathbb{Z})=\mathbb{Z}[C],
\end{align*}
where $[C]$ is the fundamental class
of $j(C)$.
For $d \in \mathbb{Z}_{>0}$,
we consider the diagram
\begin{align*}
\xymatrix{
\cC  \ar[r]^{f} \ar[d]^{h} & C \\
\overline{M}_h(C, d[C]), &
}
\end{align*}
where $\cC$ is the universal curve and $f$ is the universal stable map.
The $T$-equivariant GW invariant
of $X$ is defined
by
\begin{align*}
\mathrm{GW}_{h, d[C]}=
\mathrm{GW}_{h, d} \cneq \int_{[\overline{M}_h(C, d[C])]^{\rm{vir}}}
e(-\dR h_{\ast}f^{\ast}N) \in \mathbb{Q}(\lambda_1, \lambda_2, \lambda_3),
\end{align*}
where
$N$ is the
$T$-equivariant normal bundle of $j(C) \subset X$:
\begin{equation}\label{nor bdl N}
 N=(L_1 \otimes t_1) \oplus (L_2 \otimes t_2) \oplus (L_3 \otimes t_3). 
\end{equation}

If $g(C)>0$, we have the obvious
vanishing of genus zero GW invariants
\begin{align*}
\mathrm{GW}_{0, d}=0, \ g(C)>0, \ d \in \mathbb{Z}_{>0}
\end{align*}
because $\overline{M}_0(C, d[C])=\emptyset$.

If $g(C)=0$, we have
\begin{align*}
\mathrm{GW}_{0, d}
=\int_{[\overline{M}_0(\mathbb{P}^1, d)]}
e\Big(-\dR h_{\ast}f^{\ast}\left(\oO_{\mathbb{P}^1}(l_1)t_1 \oplus
\oO_{\mathbb{P}^1}(l_2)t_2 \oplus
\oO_{\mathbb{P}^1}(l_3)t_3
\right)\Big).
\end{align*}
For example in the $d=1$ case,
$\overline{M}_0(\mathbb{P}^1, 1)$ is one point and
\begin{align}\label{GW01}
\mathrm{GW}_{0, 1}
=\lambda_1^{-l_1-1} \lambda_2^{-l_2-1} \lambda_3^{-l_3-1}.
\end{align}
In the $d=2$ case, a straightforward localization calculation
as in~\cite{Kont}
with respect to the $(\mathbb{C}^{\ast})^2$-action
on $\mathbb{P}^1$
gives
\begin{align}\label{GW02}
\mathrm{GW}_{0, 2}=&\frac{1}{8}
\lambda_1^{-2l_1-1}\lambda_2^{-2l_2-1}\lambda_3^{-2l_3-1}
\left\{(\overline{l}_1^2-(\overline{l}_1-1)^2+\cdots)
\lambda_1^{-2} + \right. \\
\notag &\left.
(\overline{l}_2^2-(\overline{l}_2-1)^2+\cdots) \lambda_2^{-2}
+(\overline{l}_3^2-(\overline{l}_3-1)^2+\cdots) \lambda_3^{-2} \right. \\
 \notag & \left.
+l_1 l_2 \lambda_1^{-1} \lambda_2^{-1}
+l_2 l_3 \lambda_2^{-1} \lambda_3^{-1}
+l_1 l_3 \lambda_1^{-1} \lambda_3^{-1} \right\}.
\end{align}
Here we write $\overline{l}=l$ for $l\geqslant 0$
and $\overline{l}=-l-1$ for $l<0$.

\subsection{Localization for stable sheaves on local curves}
The $T$-action on $X$ does not preserve its CY 4-form, 
so we cannot apply the $T$-localization for $\DT_4$-theory. 
Instead, we consider its restriction to the subtorus
\begin{align*}
T_0=\{t_1 t_2 t_3=1\} \subset T
\end{align*}
which preserves the CY 4-form on $X$. Then the Serre duality pairing on moduli space $M_\beta$ is preserved by $T_0$.
Similarly to the case of local surfaces, we will define equivariant virtual classes for $M_\beta$
using localization formula with respect to $T_0$-action, and investigate their relation with equivariant GW invariants.

We consider the $T_0$-action on 
the moduli space of one dimensional stable sheaves $M_{d[C]}$
on the local curve $X$. 
Let $\eE \in \Coh(X \times M_{d[C]}^{T_0})$ be the 
universal sheaf, and 
$\pi$ the projection from $X \times M_{d[C]}^{T_0}$
to $M_{d[C]}^{T_0}$.  
Analogous to (\ref{vir:loc}), heuristically speaking, virtual localization formula for $M_{d[C]}$ should give
the following definition: 
\begin{align}\label{DT4:locurve}
\DT_4(d[C])=\int_{[M_{d[C]}^{T_0}]^{\rm{vir}}}
e( \dR \hH om_{\pi}(\eE, \eE)^{\rm{mov}})^{1/2}
\in \mathbb{Q}(\lambda_1, \lambda_2),
\end{align}
where we use the isomorphism
\begin{align*}
H_{T_0}(\bullet)=\mathbb{C}[\lambda_1, \lambda_2, \lambda_3]/(\lambda_1+\lambda_2+\lambda_3)
\cong \mathbb{C}[\lambda_1, \lambda_2]. 
\end{align*}
This equivariant invariant (\ref{DT4:locurve}) will be rigorously defined for $d=1$ (\ref{DT4:C}) and $d=2$ (\ref{DT4-2C}) respectively below,
where the virtual class of the torus fixed locus is its usual fundamental class and there is a preferred choice of square root of the 
equivariant Euler class of the virtual normal bundle. 

\subsection{Contribution from the component $M_C(d, 1)$}
Let $M_C(d, 1)$ be the moduli space of rank $d$
stable vector bundles $F$ on $C$
with $\chi(F)=1$. 
By push-forward $j_{\ast}$ for the zero section $j \colon C \hookrightarrow X$, 
$M_C(d, 1)$ is a connected component of 
$M_{d[C]}^{T_0}$. Below we use the following fact:
\begin{lem}\label{lem:adjoint}
For $F, F' \in \Coh(C)$
and push-forwards $j_{\ast}F, j_{\ast}F' \in \Coh(X)$, 
 we have
\begin{align*}
\chi(j_{\ast}F, j_{\ast}F')
=\chi_C(F, F')-\chi_C(F, F' \otimes N)
+\chi_C(F, F' \otimes \wedge^2 N)-\chi_C(F, F' \otimes \wedge^3 N).
\end{align*}
Here $\chi(-, -)$ is the Euler pairing on $X$ and 
$\chi_C(-, -)$ is the Euler pairing on $C$. 
\end{lem}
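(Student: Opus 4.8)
The plan is to reduce the Euler pairing on $X$ to one on the curve $C$ via adjunction together with the derived self-intersection formula for the zero section. The starting point is the adjunction isomorphism
\begin{align*}
\RHom_X(j_{\ast}F, j_{\ast}F') \cong \RHom_C(\dL j^{\ast} j_{\ast}F, F'),
\end{align*}
which reduces everything to computing $\dL j^{\ast} j_{\ast}F$.

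The key step is to compute $\dL j^{\ast} j_{\ast}F$ using the fact that $j \colon C \hookrightarrow X$ is the zero section of the vector bundle $X=\mathrm{Tot}_C(L_1 \oplus L_2 \oplus L_3)$, hence a regular embedding of codimension three with normal bundle $N \cong L_1 \oplus L_2 \oplus L_3$. The zero section is cut out by the tautological section of $p^{\ast}N$, which is a regular section, so its Koszul complex
\begin{align*}
0 \to p^{\ast}\wedge^3 N^{\vee} \to p^{\ast}\wedge^2 N^{\vee} \to p^{\ast}N^{\vee} \to \oO_X \to j_{\ast}\oO_C \to 0
\end{align*}
is a locally free resolution of $j_{\ast}\oO_C$. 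Restricting this resolution along $j$, every differential is given by contraction with the tautological section, which vanishes along $C$; hence all the induced differentials vanish and the complex splits, giving
\begin{align*}
\dL j^{\ast} j_{\ast}F \cong \bigoplus_{i=0}^3 (F \otimes \wedge^i N^{\vee})[i].
\end{align*}
Establishing this splitting — equivalently, invoking the standard derived self-intersection formula for the zero section of a vector bundle — is the only point where the geometry of $X$ genuinely enters; everything else is formal.

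Combining the two steps and using the duality isomorphism $\RHom_C(F \otimes \wedge^i N^{\vee}, F') \cong \RHom_C(F, F' \otimes \wedge^i N)$ for the locally free sheaf $\wedge^i N$, I obtain
\begin{align*}
\RHom_X(j_{\ast}F, j_{\ast}F') \cong \bigoplus_{i=0}^3 \RHom_C(F, F' \otimes \wedge^i N)[-i].
\end{align*}
Taking the alternating sum of dimensions of cohomology, where the shift $[-i]$ contributes a factor $(-1)^i$, then yields
\begin{align*}
\chi(j_{\ast}F, j_{\ast}F') = \sum_{i=0}^3 (-1)^i \chi_C(F, F' \otimes \wedge^i N),
\end{align*}
which is exactly the claimed identity once the four terms are written out.

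As an alternative that bypasses $\dL j^{\ast}$, one could instead tensor the Koszul resolution above by the locally free sheaf $p^{\ast}F$ to resolve $j_{\ast}F$ directly, and evaluate each term through the adjunction $\RHom_X(p^{\ast}G, j_{\ast}F') \cong \RHom_C(G, F')$ arising from $p \circ j = \id_C$. This produces the same alternating sum, so I expect no real obstacle beyond the splitting of the restricted Koszul complex.
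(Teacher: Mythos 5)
Your proposal is correct and follows essentially the same route as the paper: adjunction for $j_{\ast}$ plus the derived self-intersection formula $\dL j^{\ast}j_{\ast}F \cong \bigoplus_{i}(F \otimes \wedge^i N^{\vee})[i]$, with the alternating signs coming from the shifts. You simply spell out the Koszul-complex justification of the splitting that the paper leaves implicit.
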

\begin{proof}
For $F \in \Coh(C)$, we have the isomorphism
\begin{align*}
\dL j^{\ast}j_{\ast}F \cong \bigoplus_{i\geqslant 0} F \otimes \wedge^i N^{\vee}[i].
\end{align*}
Therefore the lemma follows by
the adjunction.
\end{proof}
We now investigate the contribution of $M_C(d, 1)$ to (\ref{DT4:locurve}). 
For $[F] \in M_C(d, 1)$,
by Lemma~\ref{lem:adjoint}
we have
\begin{align*}
\chi(j_{\ast}F, j_{\ast}F)
=\chi_C(F, F)-\chi_C(F, F \otimes N)+\chi_C(F, F \otimes \wedge^2 N)
-\chi_C(F, F \otimes \wedge^3 N).
\end{align*}
We set
\begin{align*}
\chi(j_{\ast}F, j_{\ast}F)^{1/2} \cneq \chi_C(F, F)-\chi_C(F, F \otimes N).
\end{align*}
Then as 
elements
of $T_0$-equivariant K-theory of $M_{C}(d, 1)$, 
we have
\begin{align*}
\chi(j_{\ast}F, j_{\ast}F)=\chi(j_{\ast}F, j_{\ast}F)^{1/2}+\chi(j_{\ast}F, j_{\ast}F)^{1/2, \vee}.
\end{align*}
The $T_0$-fixed and movable parts of $\chi(j_{\ast}F, j_{\ast}F)^{1/2}$ are given by
\begin{align}\label{T_0:mov}
\chi(j_{\ast}F, j_{\ast}F)^{1/2, \rm{fix}} &=\chi_C(F, F), \\
\notag
\chi(j_{\ast}F, j_{\ast}F)^{1/2, \rm{mov}} &=-\chi_C(F, F \otimes N).
\end{align}
The first identity of (\ref{T_0:mov}) implies that the 
virtual class $[M_{d[C]}^{T_0}]^{\rm{vir}}$ should be the usual 
fundamental class $[M_C(d, 1)]$ on the component 
$M_C(d, 1) \subset M_{d[C]}^{T_0}$. 
Let $\fF$ be a universal vector bundle on $C \times M_C(d, 1)$ and
$\pi$ the projection from
$M_C(d, 1) \times C$ to $M_C(d, 1)$.
By the second identity of (\ref{T_0:mov}),
the contribution of $M_C(d, 1)$
to (\ref{DT4:locurve}) should be given by 
\begin{align}\label{DT4:C}
 \int_{M_C(d, 1)}
e(-\dR \hH om_{\pi}(\fF, \fF \boxtimes N))
\in \mathbb{Q}(\lambda_1, \lambda_2),
\end{align}
where we regard $N$ (\ref{nor bdl N}) as a $T_0$-equivariant vector bundle on $C$.

\begin{prop}\label{cor on DT4 C}
The integral (\ref{DT4:C}) is zero 
unless $g(C)=0$ and 
$d=1$. 
If $g(C)=0$ and $d=1$, it is equal to 
\begin{align}\label{DT4(C)}
\lambda_1^{-l_1-1} \lambda_2^{-l_2-1}
(-\lambda_1-\lambda_2)^{-l_3-1}.
\end{align}
\end{prop}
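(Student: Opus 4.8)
The plan is to dispose of the genus–zero case by direct inspection and to treat the positive–genus case by exploiting the tensoring action of $\Pic^0(C)$. Throughout I would first record the reductions. Since $\chi(F)=1$ forces $\deg F=1+d(g-1)$, we have $\gcd(d,\deg F)=\gcd(d,1)=1$, so $M_C(d,1)$ is a smooth (possibly empty) fine moduli space carrying a universal bundle $\fF$. Writing $N=\bigoplus_{i=1}^3 L_i\otimes t_i$, the integrand factors as $\prod_{i=1}^3 e(-\dR\hH om_\pi(\fF,\fF\boxtimes L_i)\otimes t_i)$, and each factor is built from $\dR\hH om_\pi(\fF,\fF\boxtimes L_i)=\dR\pi_\ast(\mathcal{E}nd(\fF)\otimes L_i)$, hence depends only on $\mathcal{E}nd(\fF)$.

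For $g=0$ I would argue by hand. If $d\geqslant 2$ there are no stable bundles of rank $\geqslant 2$ on $\mathbb{P}^1$: every bundle splits as $\bigoplus\mathcal{O}(a_i)$, and the summand of largest degree destabilizes it; thus $M_{\mathbb{P}^1}(d,1)=\emptyset$ and (\ref{DT4:C}) vanishes. If $d=1$ then $M_{\mathbb{P}^1}(1,1)=\{\mathcal{O}_{\mathbb{P}^1}\}$ is a reduced point, $\mathcal{E}nd(\fF)=\mathcal{O}$, and $\dR\hH om_\pi(\fF,\fF\boxtimes L_i)$ is the trivial complex $\dR\Gamma(\mathbb{P}^1,\mathcal{O}(l_i))\otimes t_i$, whose class in $T_0$-equivariant $K$-theory has pure weight $t_i$ and rank $\chi(\mathcal{O}(l_i))=l_i+1$. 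Consequently each factor $e(-\dR\Gamma(\mathbb{P}^1,\mathcal{O}(l_i))\otimes t_i)$ equals $\lambda_i^{-l_i-1}$, irrespective of the sign of $l_i$, and substituting $\lambda_3=-\lambda_1-\lambda_2$ yields exactly (\ref{DT4(C)}).

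For $g\geqslant 1$ the key device is the action of the abelian variety $\Pic^0(C)$ on $M_C(d,1)$ by $L\cdot F=F\otimes L$, which preserves rank, degree, and stability, hence $\chi$. Because $F$ is stable and $\gcd(d,\deg F)=1$, the stabilizer $\{L:F\otimes L\cong F\}$ is finite, so every orbit has dimension $g\geqslant 1$. The crucial observation is that $\mathcal{E}nd(F\otimes L)\cong\mathcal{E}nd(F)$, so the complex $\dR\hH om_\pi(\fF,\fF\boxtimes L_i)$ is constant along orbits; equivalently its $K$-theory class, and therefore every Chern class entering the integrand, is pulled back along the quotient map $q\colon M_C(d,1)\to M_C(d,1)/\Pic^0(C)$, whose target has dimension $\dim M_C(d,1)-g<\dim M_C(d,1)$. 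Writing the integrand as $q^\ast\bar{\mathcal{I}}$, the projection formula gives $\int_{M_C(d,1)}q^\ast\bar{\mathcal{I}}=\int \bar{\mathcal{I}}\cdot q_\ast 1=0$, since $q_\ast 1=0$ for fibres of positive dimension. The case $d=1$ is the transparent instance of this: $M_C(1,1)=\Pic^g(C)$, the integrand is the constant $\prod_i\lambda_i^{g-1-l_i}$, and its integral over a $g$-dimensional abelian variety vanishes.

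The main obstacle is making the descent in the last step rigorous. The universal bundle $\fF$ itself does not descend along $q$ (only the adjoint/$\mathcal{E}nd$-bundle does), and the $\Pic^0(C)$-action may carry finite stabilizers, so $M_C(d,1)/\Pic^0(C)$ is a priori a quotient stack. I would circumvent this either by working directly with the descended family of $\mathrm{End}$-algebras, or, to avoid stacks entirely, by contracting the integrand with the $g$ commuting vector fields generating the $\Pic^0(C)$-action: each orbit map $\Pic^0(C)\to M_C(d,1)$ pulls the integrand back to a degree-zero class on the abelian variety, so all these contractions vanish, and this basic-ness of the integrand relative to a foliation by positive-dimensional leaves forces its top-degree component, and hence the integral, to vanish.
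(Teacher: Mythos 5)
Your genus-zero analysis coincides with the paper's: emptiness of $M_{\mathbb{P}^1}(d,1)$ for $d\geqslant 2$, and for $d=1$ the identification $\dR \hH om_{\pi}(\fF,\fF\boxtimes N)=\dR\Gamma(C,N)$ giving $\prod_i\lambda_i^{-l_i-1}$. For $g(C)>0$ you have the right key observation --- the integrand depends only on $\mathcal{E}nd(\fF)$ and is therefore insensitive to twisting by $\Pic^0(C)$ --- but you implement it by descending to the quotient $M_C(d,1)/\Pic^0(C)$, which is exactly where the technical trouble lies, as you yourself note. The paper instead goes \emph{up} to the \'etale surjective cover $h\colon \Pic^0(C)\times M_C(d,L)\to M_C(d,1)$, $(L',F)\mapsto L'\otimes F$ (fixing the determinant $L$): on the cover the pulled-back universal family is $\oO_{\Pic^0(C)}\boxtimes\fF_L$ up to a line bundle twist, so the integrand is literally $p_M^{\ast}$ of a class on the factor $M_C(d,L)$, whose dimension is smaller by $g>0$, and the integral vanishes for trivial degree reasons. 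This sidesteps stacks, finite stabilizers, and descent of $\fF$ entirely, and buys the vanishing with a one-line projection-formula argument; your quotient route requires either orbifold integration or the foliation argument you sketch. On that last point, be careful: your claim that the contractions of the integrand with the generating vector fields vanish does not follow from the restriction of the \emph{cohomology class} to each orbit being of degree zero --- invariance of a class along orbits does not produce a basic (horizontal) representative (the volume form of an abelian variety is invariant but not basic), so as written that step is a non sequitur. The fix is precisely to replace the quotient by the cover, or to argue with an honest descended bundle of endomorphism algebras; I would recommend the former.
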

\begin{proof}
We first assume that $g(C)=0$. 
Then we have 
 $M_C(d, 1) =\emptyset$ 
for $d\geqslant 2$, so (\ref{DT4:C}) is zero. 
For $g(C)=0$ and $d=1$, then 
$M_C(1, 1)=\Spec \mathbb{C}$ and the 
identity (\ref{DT4(C)})
  follows from
\begin{align*}
\dR \hH om_{\pi}(\fF, \fF \boxtimes N)
=\dR \Gamma(C, N). 
\end{align*}
Next we assume that $g(C)>0$ and 
show the vanishing of (\ref{DT4:C}). 
For $L \in \Pic(C)$, let
$M_C(d, L) \subset M_C(d, 1)$ be the closed subscheme
given by stable bundles $F$ on $C$ with
fixed determinant $\det F =L$.
We have an \'etale surjective map
\begin{align*}
h \colon
\Pic^0(C) \times M_C(d, L) \to M_C(d, 1)
\end{align*}
given by $(L', F) \mapsto L' \otimes F$.
By setting $\fF'=(h \times \id)^{\ast}\fF$, it is enough
to show the vanishing
\begin{align*}
\int_{\Pic^0(C) \times M_C(d, L)}e(-\dR \hH
om_{\pi'}(\fF', \fF' \boxtimes N))=0,
\end{align*}
where $\pi'$ is the projection from
$\Pic^0(C) \times M_C(d, L) \times C$ to $\Pic^0(C) \times M_C(d, L)$.

We set $\fF_L=\fF|_{M_C(d, L) \times C}$.
Since $\fF'$ is isomorphic to
$\oO_{\Pic^0(C)} \boxtimes
\fF_L$
up to tensoring a line bundle,
the above integral coincides with
\begin{align*}
\int_{\Pic^0(C) \times M_C(d, L)} p_M^{\ast}
e(-\dR \hH om_{\pi''}(\fF_L, \fF_L \boxtimes N)),
\end{align*}
where $p_M$ is the projection from $\Pic^0(C) \times M_C(d, L)$ to $M_C(d, L)$,
and $\pi''$ is the projection from $M_C(d, L) \times C$ to $M_C(d, L)$.
Since $\dim \Pic^0(C)=g(C)>0$, the above integral
obviously vanishes.
\end{proof}

When $d=1$, $M_C(1, 1)\cong M_{[C]}^{T_0}$ is the only $T_0$-fixed component, we have the following corollary: 
\begin{cor}\label{cor on deg one local curve}
For $g(C)>0$, we have 
$\mathrm{GW}_{0, [C]}=\DT_4([C])=0$. For $g(C)=0$, 
we have 
\begin{align*}
\mathrm{GW}_{0, [C]}|_{\lambda_3=-\lambda_1-\lambda_2}
=\DT_4([C])=
\lambda_1^{-l_1-1} \lambda_2^{-l_2-1} (-\lambda_1 -\lambda_2)^{-l_3-1}. 
\end{align*}
\end{cor}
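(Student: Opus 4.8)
The plan is to assemble the statement directly from Proposition~\ref{cor on DT4 C}, the degree one Gromov--Witten evaluation (\ref{GW01}), and the emptiness of the genus zero stable map space in positive genus; no new computation is needed beyond matching the two explicit rational functions, using the identification $\mathrm{GW}_{0,[C]}=\mathrm{GW}_{0,1}$.

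First I would treat the case $g(C)>0$. On the Gromov--Witten side, the paper already records $\mathrm{GW}_{0,d}=0$ for $g(C)>0$: a nonconstant genus zero stable map of positive degree to the positive genus curve $C$ cannot exist, so $\overline{M}_0(C,d[C])=\emptyset$ and in particular $\mathrm{GW}_{0,[C]}=0$. On the sheaf side, since $d=1$ the unique $T_0$-fixed component is $M_C(1,1)$, so the heuristic localization expression (\ref{DT4:locurve}) for $\DT_4([C])$ reduces to the single contribution (\ref{DT4:C}); by Proposition~\ref{cor on DT4 C} this integral vanishes whenever $g(C)>0$, giving $\DT_4([C])=0$. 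This settles the first assertion.

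Next I would treat $g(C)=0$. Because $M_C(1,1)\cong M_{[C]}^{T_0}$ is the \emph{entire} $T_0$-fixed locus, the expression (\ref{DT4:locurve}) for $\DT_4([C])$ equals precisely the contribution of $M_C(1,1)$, namely the integral (\ref{DT4:C}) with its preferred square root already fixed by the choice $\chi(j_\ast F,j_\ast F)^{1/2}$ in (\ref{T_0:mov}). Proposition~\ref{cor on DT4 C} evaluates this to
\[
\lambda_1^{-l_1-1}\lambda_2^{-l_2-1}(-\lambda_1-\lambda_2)^{-l_3-1},
\]
so $\DT_4([C])$ equals the claimed value. For the Gromov--Witten side I would take (\ref{GW01}), $\mathrm{GW}_{0,1}=\lambda_1^{-l_1-1}\lambda_2^{-l_2-1}\lambda_3^{-l_3-1}$, and substitute $\lambda_3=-\lambda_1-\lambda_2$; this reproduces the same rational function, yielding $\mathrm{GW}_{0,[C]}|_{\lambda_3=-\lambda_1-\lambda_2}=\DT_4([C])$.

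The one genuinely delicate point — which I would flag as the main (though mild) obstacle — is the identification $M_{[C]}^{T_0}=M_C(1,1)$ invoked throughout. One must know that every $T_0$-fixed one dimensional stable sheaf of class $[C]$ is scheme-theoretically supported on the zero section and is thus the pushforward of a rank one stable bundle on $C$: since $[C]$ is primitive and the $T_0$-fixed locus in the fibers of $X$ is exactly the zero section, such a sheaf is set-theoretically supported on $j(C)$, and the simple-sheaf argument of Lemma~\ref{lem:known} upgrades this to scheme-theoretic support, ruling out fiberwise thickenings that could add further terms to (\ref{DT4:locurve}). This is the content of the statement preceding the corollary, which I would cite directly rather than re-derive; everything else is a formal comparison of the two explicit formulas.
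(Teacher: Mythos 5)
Your proposal is correct and takes essentially the same route as the paper's own (very terse) proof: identify $M_{[C]}^{T_0}$ with $M_C(1,1)=\Pic^g(C)$ via $j_{\ast}$ and then combine Proposition~\ref{cor on DT4 C} with the evaluation (\ref{GW01}). The only small caveat is that Lemma~\ref{lem:known} does not literally apply to the zero section (it concerns fibers of a morphism and uses $\Hom(E,E)=\mathbb{C}$); for $d=1$ the absence of fiberwise thickenings is instead forced by the $T_0$-weight decomposition of $p_{\ast}F$, whose total rank is one so that the maps $\phi_i\colon p_{\ast}F\to p_{\ast}F\otimes L_i$ must vanish — a cosmetic point, since the paper does not justify the identification either.
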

\begin{proof}
For $d=1$, we have 
the isomorphism 
\begin{align*}
j_{\ast} \colon M_C(1, 1) =\Pic^{g}(C) \stackrel{\cong}{\to}
M_{[C]}^{T_0}. 
\end{align*}
Therefore the result follows from (\ref{GW01}) and 
Proposition~\ref{cor on DT4 C}. 
\end{proof}

\subsection{Localization for degree two stable sheaves}
We next
consider the case of $d=2$. 
For $(k, k') \in \mathbb{Z}^2$, we
denote by $\Pic^{(k, k')}(C)$
the moduli space of triples
\begin{align*}
(L, L', \iota), \ 
(L, L') \in \Pic^{k}(C) \times \Pic^{k'}(C), \ 
\iota \colon L \hookrightarrow L'
\end{align*}
Here $\iota$ is an inclusion of line bundles. 
Then $M_{2[C]}^{T_0}$ can be described as follows.
\begin{prop}\label{isom:deg2}
We have the following isomorphism
\begin{align}\label{isom:2[C]}
M_C(2, 1)\sqcup \coprod_{i=1}^3
\coprod_{\begin{subarray}{c}
(d_0, d_i) \in \mathbb{Z}^2 \\
d_0+d_i=2g-1+l_i \\
g\leqslant d_0 \leqslant g+(l_i-1)/2
\end{subarray}}
\Pic^{(d_0, d_i)}(C)
\stackrel{\cong}{\to} M^{T_0}_{2[C]}.
\end{align}
\end{prop}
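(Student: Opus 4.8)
The plan is to classify directly the $T_0$-fixed one-dimensional stable sheaves $F$ with $[F]=2[C]$ and $\chi(F)=1$. Since $p\colon X\to C$ is affine with $p_{\ast}\oO_X=\Sym^{\bullet}(L_1^{-1}\oplus L_2^{-1}\oplus L_3^{-1})$, giving such an $F$ is the same as giving the coherent sheaf $\mathcal F\cneq p_{\ast}F$ on $C$ together with three commuting maps $\phi_i\colon \mathcal F\to \mathcal F\otimes L_i$ ($i=1,2,3$) encoding the action of the fiber coordinates. Purity of $F$ forces $\mathcal F$ to be torsion-free, hence locally free on the smooth curve $C$, and of rank $2$ since $[F]=2[C]$. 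A $T_0$-fixed structure is precisely a grading $\mathcal F=\bigoplus_{\bar n}\mathcal F_{\bar n}$ by the character lattice $\widehat{T_0}=\mathbb Z^3/\mathbb Z(1,1,1)$ under which each $\phi_i$ has weight $\bar e_i$, the image of the $i$-th standard vector; note that $\bar e_1,\bar e_2,\bar e_3$ are distinct, nonzero, and sum to zero.

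Next I would run through the possible supports of the grading. Since $\mathrm{rk}\,\mathcal F=2$, either $\mathcal F$ is concentrated in a single weight, or it splits into two rank-one pieces in weights $\bar n_0,\bar n_1$. In the first case all $\phi_i$ vanish (each would shift the weight off the support), so $F=j_{\ast}\mathcal F$ for a rank-two bundle $\mathcal F$ on $C$, and stability of $F$ is equivalent to stability of $\mathcal F$; this yields the component $M_C(2,1)$. In the second case, stability (indecomposability) forces the two weights to be joined by a nonzero operator, so after an overall shift $\bar n_0=0$, $\bar n_1=\bar e_i$ for a unique $i$, with $\phi_i\colon \mathcal F_0\to \mathcal F_{\bar e_i}\otimes L_i$ the only nonzero map; a nonzero map of line bundles is an inclusion $\iota$. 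Writing $A\cneq\mathcal F_0$ and $A'\cneq\mathcal F_{\bar e_i}\otimes L_i$ recovers exactly the data $(A,A',\iota\colon A\hookrightarrow A')$ parametrized by $\Pic^{(d_0,d_i)}(C)$, with $d_0=\deg A$ and $d_i=\deg A'$. All remaining grading configurations make $F$ decompose and hence cannot be stable, so these are the only possibilities.

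I would then pin down the numerical ranges. Riemann-Roch gives $\chi(F)=\chi(A)+\chi(A'\otimes L_i^{-1})$, and setting this equal to $1$ yields $d_0+d_i=2g-1+l_i$. For stability it suffices to test $T_0$-invariant subsheaves, since the Harder-Narasimhan maximal destabilizer is canonical, hence invariant; these are the graded submodules. The submodule $\mathcal F_{\bar e_i}$ has $\chi=d_i-l_i+1-g$, and requiring $\chi\leqslant 0$ gives the lower bound $d_0\geqslant g$. The upper bound $d_0\leqslant g+(l_i-1)/2$ is exactly the inequality $d_0\leqslant d_i$ needed for an inclusion $A\hookrightarrow A'$ to exist at all, with equality forcing $\iota$ to be an isomorphism. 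I would then check that every remaining invariant subsheaf $(B_0,B_1)$ with $B_0\subseteq A$ and $B_1\subseteq\mathcal F_{\bar e_i}$ closed under $\phi_i$ has $\chi\leqslant 0$ throughout this range; this is short once one uses the relation $d_0+d_i=2g-1+l_i$, which pins the maximal invariant subsheaves at $\chi=0$.

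Finally, the constructions in both directions are algebraic in families—the weight decomposition of $p_{\ast}\eE$ is a direct sum of $T_0$-eigen-subsheaves, and the reconstruction of $F$ from $\mathcal F$ or from $(A,A',\iota)$ is a relative-$\Spec$ operation—so the set-theoretic bijection above upgrades to an isomorphism of moduli schemes by exhibiting mutually inverse morphisms on universal families. The main obstacle is the stability bookkeeping together with confirming that the stated range is sharp: one must verify both that every $(d_0,d_i)$ in the range produces a genuinely stable sheaf and that nothing outside it does, paying particular attention to the boundary case $d_0=g+(l_i-1)/2$, where $\iota$ becomes an isomorphism and the candidate destabilizing submodule $\langle A\rangle$ ceases to be proper.
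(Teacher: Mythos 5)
Your proposal is correct and follows essentially the same route as the paper: classify $T_0$-fixed stable sheaves as either rank-two bundles on $C$ or $L_i$-thickened sheaves determined by a pair of line bundles with an inclusion, then pin down $(d_0,d_i)$ via $\chi(F)=1$, the stability constraint $\chi(F_i)\leqslant 0$ on the sub-line-bundle, and the nonvanishing of $\iota$. The extra detail you supply — the weight-lattice decomposition of $p_{\ast}F$ and the reduction of the stability check to $T_0$-invariant subsheaves via the canonicity of the maximal destabilizer — is exactly what the paper leaves implicit, and your bookkeeping for the converse direction is sound.
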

\begin{proof}
For $[F] \in M_{2[C]}^{T_0}$,
it is either a rank two stable vector bundle on $C$ or
thickened into one of the $L_i$-direction.
In the latter case, we have
\begin{align*}
p_{\ast}F=F_0 \oplus (F_i \otimes t_i^{-1})
\end{align*}
where $F_0$, $F_i$ are line bundles on $C$. The
$\oO_X$-module structure of $F$ is
given by a non-trivial morphism
\begin{align}\label{F:phi}
\phi \colon F_0 \otimes L_i^{-1} \to F_i.
\end{align}
We write $F_i=F_i' \otimes L_i^{-1}$,
and set $d_0=\deg F_0$, $d_i=\deg F_i'$.
By the above argument,
the $L_i$-thickened sheaf $[F] \in M_{2[C]}^{T_0}$
determines a point
\begin{align*}
(F_0, F_i', \phi \colon F_0 \hookrightarrow F_i') \in
\Pic^{(d_0, d_i)}(C).
\end{align*}
Conversely by going back the above argument,
we easily see that any point in $\Pic^{(d_0, d_i)}(C)$
determines a point in $M_{2[C]}^{T_0}$.
Therefore
it is enough to determine the possible
$(d_0, d_i)$.

Since $\chi(F)=1$, we have the identity
\begin{align}\label{d_0+d_1}
d_0+d_i=2g-1+l_i.
\end{align}
By the exact sequence in $\Coh(X)$
\begin{align}\label{exact:FL}
0 \to F_i \to F \to F_0 \to 0
\end{align}
and the stability of $F$,
we have $\chi(F_0) \geqslant 1$, i.e.
$d_0 \geqslant g$.
Also since (\ref{F:phi}) is non-zero,
we have $d_0 \leqslant d_i$.
By substituting (\ref{d_0+d_1}),
we see that
\begin{align}\label{gd_0}
g \leqslant d_0 \leqslant g+\frac{l_i}{2}-\frac{1}{2}.
\end{align}
Conversely if $(d_0, d_i)$ satisfy the
conditions (\ref{d_0+d_1}), (\ref{gd_0}), then
the corresponding sheaf on $X$
determines a point in $M_{[2C]}^{T_0}$.
Therefore we obtain the desired isomorphism (\ref{isom:2[C]}).
\end{proof}
The following lemma is obvious: 
\begin{lem}\label{lem:Picd}
We have an isomorphism
\begin{align*}
\Pic^{(d, d')}(C) \stackrel{\cong}{\to} \Pic^{d'}(C) \times
S^{d'-d}(C)
\end{align*}
given by $(L, L', \iota) \mapsto (L', \Supp(\Cok(\iota)))$.
In particular, $\Pic^{(d, d')}(C)$ is smooth of
dimension $g+d'-d$.
\end{lem}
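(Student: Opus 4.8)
The plan is to realize the isomorphism through the classical dictionary between inclusions of line bundles and effective divisors. An inclusion $\iota \colon L \hookrightarrow L'$ of line bundles on the smooth curve $C$ is, after tensoring by $L^{-1}$, the same datum as a nonzero section $s \in H^0(C, L' \otimes L^{-1})$, and two inclusions define isomorphic triples precisely when they differ by a scalar (which is absorbed by an automorphism of $L'$). Thus to a triple $(L, L', \iota)$ I attach the zero divisor $D \cneq \divv(s)$, effective of degree $\deg L' - \deg L = d' - d$, together with $L'$. Since $\oO_C(D) \cong L' \otimes L^{-1}$, one recovers $L \cong L'(-D)$ and $\iota$ (up to scalar) as the canonical inclusion $L'(-D) \hookrightarrow L'$, whose cokernel is the structure sheaf of $D$ so that $\Supp(\Cok(\iota)) = D$ as a point of $S^{d'-d}(C)$. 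Hence the assignments
\[
\Phi \colon (L, L', \iota) \mapsto (L', D), \qquad \Psi \colon (L', D) \mapsto \big(L'(-D) \hookrightarrow L'\big)
\]
are mutually inverse bijections on points.

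First I would upgrade this to a morphism of schemes by carrying out the construction in families, using that for the smooth curve $C$ the symmetric product $S^{d'-d}(C) \cong \Hilb^{d'-d}(C)$ represents families of degree $d'-d$ effective (relative Cartier) divisors on $C$. Given a $B$-family $(\mathcal{L}, \mathcal{L}', \iota)$ of triples, the line bundle $\mathcal{L}' \otimes \mathcal{L}^{-1}$ on $C \times B$ carries the fibrewise nonzero section induced by $\iota$; its relative vanishing scheme is finite flat of degree $d'-d$ over $B$, hence a relative divisor, and together with the classifying map of $\mathcal{L}'$ this produces the morphism $\Phi \colon \Pic^{(d,d')}(C) \to \Pic^{d'}(C) \times S^{d'-d}(C)$. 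Conversely, pulling back the universal divisor $\mathcal{D} \subset C \times S^{d'-d}(C)$ and a Poincar\'e bundle for $\Pic^{d'}(C)$, the canonical inclusion $\mathcal{L}'(-\mathcal{D}) \hookrightarrow \mathcal{L}'$ defines $\Psi$ in the other direction.

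Then I would verify $\Phi \circ \Psi = \id$ and $\Psi \circ \Phi = \id$, which is the fibrewise computation above performed in families: the canonical section of $\oO_C(\mathcal{D})$ has divisor $\mathcal{D}$, and $\mathcal{L}'(-\divv(\iota)) \cong \mathcal{L}$ compatibly with $\iota$. This yields the isomorphism. For the smoothness and dimension claim I would invoke that $\Pic^{d'}(C)$ is a torsor under the Jacobian $\Pic^0(C)$, hence smooth of dimension $g$, while the symmetric product $S^{d'-d}(C)$ of a smooth curve is smooth of dimension $d'-d$, so the product is smooth of dimension $g + (d'-d)$. The point I would be most careful about is the representability and rigidification issue: line bundles carry $\mathbb{C}^{\ast}$-automorphisms, so $\Pic^{(d,d')}(C)$ is best treated as the moduli scheme arising as a component of the fine fixed locus $M_{2[C]}^{T_0}$ of Proposition~\ref{isom:deg2}; the cleanest route is therefore to define $\Psi$ directly into $M_{2[C]}^{T_0}$ by sending $(L', D)$ to the stable sheaf on $X$ built from the inclusion $L'(-D) \hookrightarrow L'$ in the $L_i$-direction, and to check it is an isomorphism onto the relevant components. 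The divisor--section dictionary underlying everything else is entirely standard.
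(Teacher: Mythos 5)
Your proposal is correct. The paper offers no proof at all here — it simply declares the lemma obvious — and your argument is precisely the standard one the authors are taking for granted: the dictionary between inclusions of line bundles and effective divisors (a nonzero section of $L'\otimes L^{-1}$ up to scalar, hence a point of $S^{d'-d}(C)\cong \Hilb^{d'-d}(C)$), carried out in families to get mutually inverse morphisms, plus smoothness of $\Pic^{d'}(C)$ and of the symmetric product of a smooth curve. Your closing remark about rigidifying the scalar automorphisms by viewing $\Pic^{(d,d')}(C)$ as a component of the fixed locus $M_{2[C]}^{T_0}$ is a sensible way to make the moduli problem precise, and is consistent with how the space is actually used in Proposition~\ref{isom:deg2}.
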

${}$ \\
\textbf{Determine $[M_{2[C]}^{T_0}]^{\rm{vir}}$ and square root of Euler class of virtual normal bundle}.
To apply localization formula (\ref{DT4:locurve}), we need to determine the virtual class $[M_{2[C]}^{T_0}]^{\rm{vir}}$, the equivariant virtual normal bundle of $M_{2[C]}^{T_0}$ and square roots of its Euler class.
By Proposition~\ref{cor on DT4 C}, 
the component $M_C(2,1)$
does not contribute to (\ref{DT4:locurve}). 
We need to consider contributions from 
thickened sheaves. 

For a $L_i$-thickened sheaf
$[F] \in M_{2[C]}^{T_0}$, the exact sequence
(\ref{exact:FL}) gives $F=F_0+F_i \cdot t_i^{-1}$ in the
$T_0$-equivariant $K$-theory of $X$.
Therefore
\begin{align*}
\chi(F, F)=\chi(j_{\ast}F_0, j_{\ast}F_0)+\chi(j_{\ast}F_0, j_{\ast}F_i) 
t_i^{-1} +
\chi(j_{\ast}F_i, j_{\ast}F_0)t_i + \chi(j_{\ast}F_i, j_{\ast}F_i).
\end{align*}
We set
\begin{align}\label{chi:half}
\chi(F, F)^{1/2} \cneq \chi(j_{\ast}F_0, j_{\ast}F_0)+\chi(j_{\ast}F_0, 
j_{\ast}F_i) t_i^{-1}.
\end{align}
Then as 
elements of $T_0$-equivariant $K$-theory of $M_{2[C]}^{T_0}$, 
we obtain
\begin{align*}
\chi(F, F)=\chi(F, F)^{1/2}+\chi(F, F)^{1/2, \vee}.
\end{align*}
By Lemma~\ref{lem:adjoint},
we have
\begin{align*}
\chi(F, F)^{1/2}&=\chi(\oO_C)-\chi(N)+\chi(\wedge^2 N)-\chi(\wedge^3 N) \\
&+(\chi(A)-\chi(A \otimes N)+\chi(A \otimes \wedge^2 N)-\chi(A \otimes
 \wedge^3 N))t_i^{-1},
\end{align*}
where we set $A=F_i \otimes F_0^{\vee}$.
By the above formula, the 
$T_0$-fixed part of (\ref{chi:half}) is
\begin{align*}
\chi(F, F)^{1/2, \rm{fix}}=
\chi(\oO_C)-\chi(\wedge^3 N)-\chi(A \otimes L_i)
\end{align*}
which is $(1-g-d_i+d_0)$-dimensional by Riemann-Roch theorem.
Therefore
\begin{align*}
\Hom(F, F)-\chi(F, F)^{1/2, \rm{fix}}
\end{align*}
is of dimension $g+d_i-d_0$, which
coincides with the dimension of
$\Pic^{(d_0, d_i)}(C)$ by Lemma~\ref{lem:Picd}.
It follows that, by Proposition~\ref{isom:deg2},
the virtual class associated to the
$T_0$-fixed obstruction theory
on $M_{2[C]}^{T_0}$ should be its usual fundamental class.  

We now give a definition of $\DT_4(2[C]) \in \mathbb{Q}(\lambda_1, \lambda_2)$.  
Let 
\begin{align*}
(\fF_0, \fF_i', \iota), \
\iota \colon \fF_0 \hookrightarrow \fF_i'
\end{align*}
be the universal object on $\Pic^{(d_0, d_i)}(C) \times C$, i.e.
$\fF_0, \fF_i'$ are
line bundles on $\Pic^{(d_0, d_i)}(C) \times C$
and $\iota$ is the universal injection.
Let $\fF_i \cneq \fF_i' \boxtimes L_i^{-1}$, 
and consider its push-forward
\begin{align*}
j_{\ast} \fF_i \in \Coh(\Pic^{(d_0, d_i)}(C) \times X), \ i=1, 2.
\end{align*}
Based on the localization formula (\ref{DT4:locurve}) and the above discussions, we define $\DT_{4}(2[C])$ to be
\begin{equation}\label{DT4-2C}
\sum_{i=1}^{3} \sum_{\begin{subarray}{c}(d_0, d_i) \in \mathbb{Z}^2 \\d_0+d_i=2g-1+l_i \\
g\leqslant d_0 \leqslant g+(l_i-1)/2 \end{subarray}}
\int_{\Pic^{(d_0, d_i)}(C)}e\big(\dR \hH om_{\pi}(j_{\ast}\fF_0, j_{\ast}\fF_0)^{\rm{mov}}+ 
\dR \hH om_{\pi}(j_{\ast}\fF_0, j_{\ast}\fF_i \cdot t_i^{-1})^{\rm{mov}} \big),   
\end{equation}
as an element in $\mathbb{Q}(\lambda_1, \lambda_2)$. 
Here $\pi$ is the projection
from $M_{2[C]}^{T_0} \times X$ to $M_{2[C]}^{T_0}$, and 
we have used the isomorphism (\ref{isom:2[C]}). 
The second integrations are derived from (\ref{chi:half}).  \\

The rest of this subsection is devoted to an explicit computation of $\DT_{4}(2[C])$.
\begin{lem}\label{lem:thick}
Let $\zZ \subset S^{d_i-d_0}(C) \times C$
be the universal divisor and
set
\begin{align*}
\aA=\oO_{S^{d_i-d_0}(C) \times C}(\zZ)
\boxtimes L_i^{-1}.
\end{align*}
We set $N_i=N-L_i \otimes t_i$ in the $T_0$-equivariant K-theory
of $C$, 
and denote by $p_S$, $\pi_S$ the projections from
$\Pic^{d_i}(C) \times S^{d_i-d_0}(C)$,
$S^{d_i-d_0}(C) \times C$
 to $S^{d_i-d_0}(C)$
respectively. Then we have the identity
\begin{align*}
&\int_{\Pic^{(d_0, d_i)}(C)}e\left(\dR \hH om_{\pi}(j_{\ast}\fF_0, j_{\ast}\fF_0)^{\rm{mov}}
+\dR \hH om_{\pi}(j_{\ast}\fF_0, j_{\ast}\fF_i \cdot t_i^{-1})^{\rm{mov}} \right) \\
&=-\lambda_1^{-2l_1-2}\lambda_2^{-2l_2-2}
(\lambda_1+\lambda_2)^{-2l_3-2} \\
&\cdot \int_{\Pic^{d_i}(C) \times S^{d_i-d_0}(C)}
p_S^{\ast}e \left((\dR \pi_{S\ast}\aA -\dR \pi_{S\ast}(\aA \boxtimes N_i)
+\dR \pi_{S\ast}(\aA \boxtimes \wedge^2 N) -
\dR \pi_{S\ast}(\aA \boxtimes \omega_C))t_i^{-1} \right).
\end{align*}
\end{lem}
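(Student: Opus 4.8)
The plan is to reduce both summands of the half Euler class~(\ref{chi:half}) to Euler-pairing computations on $C$ via Lemma~\ref{lem:adjoint}, and then to separate the $T_0$-fixed and movable parts using the explicit weights of $N$. The key input is that $\wedge^3 N \cong \omega_C$ carries trivial $T_0$-weight (because $t_1 t_2 t_3 = 1$), whereas the summands of $N$ have weights $\lambda_1,\lambda_2,\lambda_3$ and those of $\wedge^2 N$ have weights $-\lambda_3,-\lambda_2,-\lambda_1$. First I would treat the diagonal term $\dR \hH om_{\pi}(j_{\ast}\fF_0, j_{\ast}\fF_0)$. Since $\fF_0$ is fibrewise a line bundle, the pairing only sees the twists appearing in Lemma~\ref{lem:adjoint}, giving the constant (over the base) class $\chi_C(\oO_C) - \chi_C(N) + \chi_C(\wedge^2 N) - \chi_C(\omega_C)$; here $\oO_C$ and $\omega_C$ are the fixed part, and $-\chi_C(N) + \chi_C(\wedge^2 N)$ is the movable part.

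Because this movable class is pulled back from $C$, its equivariant Euler class is a scalar $e(-\dR\Gamma(C,N) + \dR\Gamma(C,\wedge^2 N))$, which by Riemann--Roch and the relation $l_1 + l_2 + l_3 = 2g-2$ evaluates to $\pm\prod_i \lambda_i^{2(g-1-l_i)}$; for $g=0$ this is exactly the prefactor $-\lambda_1^{-2l_1-2}\lambda_2^{-2l_2-2}(\lambda_1+\lambda_2)^{-2l_3-2}$ after substituting $\lambda_1+\lambda_2 = -\lambda_3$. For the off-diagonal term $\dR \hH om_{\pi}(j_{\ast}\fF_0, j_{\ast}\fF_i \cdot t_i^{-1})$, I would set $A = \fF_i \otimes \fF_0^{\vee}$ and apply Lemma~\ref{lem:adjoint} to obtain $\big(\chi_C(A) - \chi_C(A\otimes N) + \chi_C(A\otimes\wedge^2 N) - \chi_C(A\otimes\omega_C)\big)\,t_i^{-1}$. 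The only $T_0$-fixed summand is the $-\chi_C(A\otimes L_i)$ piece coming from the $L_i\otimes t_i$ factor of $N$, whose weight cancels the $t_i^{-1}$; removing it replaces $N$ by $N_i = N - L_i\otimes t_i$ and leaves precisely the movable class $\big(\chi_C(A) - \chi_C(A\otimes N_i) + \chi_C(A\otimes\wedge^2 N) - \chi_C(A\otimes\omega_C)\big)\,t_i^{-1}$. Under the isomorphism $\Pic^{(d_0,d_i)}(C) \cong \Pic^{d_i}(C) \times S^{d_i-d_0}(C)$ of Lemma~\ref{lem:Picd}, the line bundle $A$ becomes $\aA = \oO(\zZ)\boxtimes L_i^{-1}$, which is pulled back from the $S^{d_i-d_0}(C)\times C$ factor; taking relative cohomology over the base turns each $\chi_C(A\otimes\,-)$ into the corresponding $\dR\pi_{S\ast}(\aA\boxtimes\,-)$, accounting for the $p_S^{\ast}$ and $\pi_{S\ast}$ in the statement.

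Assembling the two pieces and pushing forward then gives the claimed factorization: the scalar from the diagonal term becomes the prefactor, and the Euler class of the movable off-diagonal part becomes the integrand. I expect the main difficulty to be purely in the bookkeeping of the fixed/movable split and of signs --- specifically, confirming that it is exactly the $\chi_C(A\otimes L_i)$ term that is fixed (so that $N$ is correctly replaced by $N_i$), and checking that the half-dimensional choice~(\ref{chi:half}) is the one compatible with the square-root Euler class prescribed in~(\ref{DT4:locurve}). A closing remark reconciles the apparently $g$-independent prefactor: for $g>0$ the integrand on the right is pulled back from $S^{d_i-d_0}(C)$ and hence carries no cohomology from the positive-dimensional factor $\Pic^{d_i}(C)$, so both sides vanish exactly as in Proposition~\ref{cor on DT4 C}; it therefore suffices that the prefactor be correct for $g=0$, where $\Pic^{d_i}(C)$ is a point and the computation above is exact.
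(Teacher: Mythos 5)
Your proposal is correct and follows essentially the same route as the paper's proof: compute $\chi(F,F)^{1/2}$ via Lemma~\ref{lem:adjoint}, split off the $T_0$-fixed summands (the $\oO_C$, $\wedge^3N\cong\omega_C$, and $\chi(A\otimes L_i)$ terms) so that $N$ is replaced by $N_i$ in the off-diagonal movable part, and identify $A=\fF_i\otimes\fF_0^{\vee}$ with $\aA$ under the isomorphism of Lemma~\ref{lem:Picd}. You are in fact more explicit than the paper on two points it leaves implicit --- the evaluation of the scalar $e(-\dR\Gamma(C,N)+\dR\Gamma(C,\wedge^2N))$ giving the prefactor, and the observation that for $g>0$ both sides vanish so the prefactor only needs checking at $g=0$ --- and both are handled correctly.
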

\begin{proof}
The $T_0$-movable part of (\ref{chi:half}) is
\begin{align}\label{deg2:mov}
&\chi(F, F)^{1/2, \rm{mov}} \\
\notag
&=-\chi(N)+\chi(\wedge^2 N)
+(\chi(A)-\chi(A \otimes N_i)
+\chi(A \otimes \wedge^2 N)
-\chi(A \otimes \wedge^3 N))t_i^{-1}.
\end{align}
Suppose that $(F_0, F_i')$ corresponds
to $(F_i', Z)$ under the isomorphism
in Lemma~\ref{lem:Picd}.
Then
\begin{align*}
A=F_i \otimes F_0^{\vee}=F_i' \otimes F_0^{\vee} \otimes L_i^{-1}
=\oO_C(Z) \otimes L_i^{-1}.
\end{align*}
Therefore the desired identity holds.
\end{proof}
Similar to Proposition \ref{cor on DT4 C},
we have the following vanishing for higher genus.
\begin{cor}\label{cor on DT4 2C}
For $g(C)>0$, we have $\DT_4(2[C])=0$.
\end{cor}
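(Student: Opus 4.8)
The plan is to imitate the proof of Proposition~\ref{cor on DT4 C}, using the positive-dimensional Picard factor to force the vanishing summand by summand.

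First I would recall that $\DT_4(2[C])$ is computed by the formula (\ref{DT4-2C}), a finite sum over $i \in \{1,2,3\}$ and over the admissible pairs $(d_0, d_i)$ of integrals over the thickened components $\Pic^{(d_0, d_i)}(C)$. The rank-two vector bundle component $M_C(2,1) \subset M_{2[C]}^{T_0}$ plays no role: its contribution to the localization formula (\ref{DT4:locurve}) is exactly the integral (\ref{DT4:C}), which vanishes for $g(C)>0$ by Proposition~\ref{cor on DT4 C}. So it suffices to show that each thickened summand in (\ref{DT4-2C}) vanishes.

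Next I would apply Lemma~\ref{lem:thick} to each such summand. Under the isomorphism $\Pic^{(d_0, d_i)}(C) \cong \Pic^{d_i}(C) \times S^{d_i-d_0}(C)$ of Lemma~\ref{lem:Picd}, the lemma rewrites the corresponding integral as a universal scalar rational function of $\lambda_1, \lambda_2$ (namely $-\lambda_1^{-2l_1-2}\lambda_2^{-2l_2-2}(\lambda_1+\lambda_2)^{-2l_3-2}$) times
\begin{align*}
\int_{\Pic^{d_i}(C) \times S^{d_i-d_0}(C)} p_S^{\ast} e\left((\dR \pi_{S\ast}\aA -\dR \pi_{S\ast}(\aA \boxtimes N_i) +\dR \pi_{S\ast}(\aA \boxtimes \wedge^2 N) - \dR \pi_{S\ast}(\aA \boxtimes \omega_C))t_i^{-1} \right),
\end{align*}
where $p_S$ is the projection onto the symmetric-product factor $S^{d_i-d_0}(C)$. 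The decisive structural point, which is built into Lemma~\ref{lem:thick}, is that the entire integrand is pulled back via $p_S$ and therefore carries no cohomology class in the directions of the Picard factor $\Pic^{d_i}(C)$.

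Finally I would conclude by the projection formula along $p_S$: pushing forward $p_S^{\ast}e(\cdots)$ gives $e(\cdots)\cdot p_{S\ast}(1)$, and $p_{S\ast}(1)=0$ since the fibers of $p_S$ are copies of $\Pic^{d_i}(C)$ of positive dimension $g(C)>0$, so integration along them kills any pullback class. Hence every summand in (\ref{DT4-2C}) vanishes and $\DT_4(2[C])=0$. I expect no genuine obstacle here beyond confirming that Lemma~\ref{lem:thick} indeed presents the integrand as a $p_S$-pullback; once that is granted, the vanishing is the same elementary degree-counting argument as in Proposition~\ref{cor on DT4 C}, with $\Pic^{d_i}(C)$ now playing the role that $\Pic^0(C)$ played there.
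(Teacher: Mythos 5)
Your proposal is correct and is essentially the paper's own argument: the paper disposes of the $M_C(2,1)$ component via Proposition~\ref{cor on DT4 C} before defining (\ref{DT4-2C}), and then derives Corollary~\ref{cor on DT4 2C} from Lemma~\ref{lem:thick} by exactly the observation you make — the integrand is a $p_S$-pullback, so integrating over the positive-dimensional $\Pic^{d_i}(C)$ fibers kills it, just as the $\Pic^0(C)$ factor did in Proposition~\ref{cor on DT4 C}. Your write-up merely makes explicit the projection-formula step that the paper leaves as "similar to Proposition~\ref{cor on DT4 C}."
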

For $g(C)=0$, we compute the integral in
Lemma~\ref{lem:thick} as follows.
\begin{lem}\label{lem:g=0}
Suppose that $g(C)=0$.
By setting $k=d_i-d_0+1$,
the integral 
\begin{align*}
\int_{\Pic^{d_i}(C) \times S^{d_i-d_0}(C)}
p_S^{\ast}e \left((\dR \pi_{S\ast}\aA -\dR \pi_{S\ast}(\aA \boxtimes N_i)
+\dR \pi_{S\ast}(\aA \boxtimes \wedge^2 N) -
\dR \pi_{S\ast}(\aA \boxtimes \omega_C))t_i^{-1} \right)
\end{align*}
in Lemma~\ref{lem:thick} for $i=1$ is
calculated as
\begin{align*}
&A(l_1, l_2, l_3, k) \cneq
\mathrm{Res}_{h=0}
\left\{h^{-k}(-\lambda_1+h)^2 (\lambda_2+h)^{k+l_2} \right. \\
&\left.(-\lambda_1-\lambda_2+h)^{k+l_3}
(-\lambda_1+\lambda_2+h)^{l_1-l_2-k}
(-2\lambda_1-\lambda_2+h)^{l_1-l_3-k}
(-2\lambda_1+h)^{k-2-2l_1} \right\}.
\end{align*}
The integral for $i=2$ is given by
\begin{align*}
&B(l_1, l_2, l_3, k) \cneq \mathrm{Res}_{h=0}
\left\{h^{-k}(-\lambda_2+h)^2 (\lambda_1+h)^{k+l_1} \right. \\
&\left. (-\lambda_2-\lambda_1+h)^{k+l_3}
(-\lambda_2+\lambda_1+h)^{l_2-l_1-k}
(-2\lambda_2-\lambda_1+h)^{l_2-l_3-k}
(-2\lambda_2+h)^{k-2-2l_2} \right\}.
\end{align*}
\end{lem}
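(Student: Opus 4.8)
The plan is to specialize the geometry of Lemma~\ref{lem:thick} to $g(C)=0$, reduce each summand to an integral over a projective space, and repackage that integral as the residue $A$ (resp. $B$). When $C=\mathbb{P}^1$ we have $\Pic^{d_i}(C)=\{\mathrm{pt}\}$ and $S^{d_i-d_0}(C)\cong\mathbb{P}^{k-1}$ with $k=d_i-d_0+1$, so $p_S$ is the projection off a point and the integral in Lemma~\ref{lem:thick} becomes an integral over $\mathbb{P}^{k-1}$. Writing $h=c_1(\oO_{\mathbb{P}^{k-1}}(1))$, the universal divisor $\zZ\subset\mathbb{P}^{k-1}\times\mathbb{P}^1$ is the incidence locus, so $\oO(\zZ)\cong\oO_{\mathbb{P}^{k-1}}(1)\boxtimes\oO_{\mathbb{P}^1}(k-1)$ and hence $\aA\cong\oO_{\mathbb{P}^{k-1}}(1)\boxtimes\oO_{\mathbb{P}^1}(k-1-l_i)$.

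Next I would compute the four push-forwards $\dR\pi_{S\ast}(\aA\boxtimes(-))$ in the integrand by the projection formula, using that on $\mathbb{P}^1$ one has $\dR\Gamma(\oO(m))=H^0$ of dimension $m+1$ for $m\geqslant 0$, $=H^1[-1]$ of dimension $-m-1$ for $m\leqslant -2$, and $0$ for $m=-1$. Each push-forward is then $\oO_{\mathbb{P}^{k-1}}(1)$ tensored by a trivial bundle carrying a definite $T_0$-weight: for $i=1$ the weights are read off from $N_1=L_2t_2\oplus L_3t_3$, from $\wedge^2N$ (whose summand $L_iL_jt_it_j$ equals $\oO(-2-l_m)\,t_m^{-1}$ for $\{i,j,m\}=\{1,2,3\}$, using $L_1L_2L_3\cong\oO(-2)$ and $t_1t_2t_3=1$ on $T_0$), from $\wedge^3N=\omega_C$ with trivial weight, and from the global $t_i^{-1}$-twist. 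Since the $T_0$-equivariant Euler class of $\oO_{\mathbb{P}^{k-1}}(1)^{\oplus r}$ of weight $w$ is $(h+w)^r$, the whole integrand becomes a product $\prod_j (h+w_j)^{\pm r_j}$, with negative exponents for the $H^1$-contributions and for the terms entering (\ref{chi:half}) with a minus sign; these are legitimate in $H^\ast_{T_0}(\mathbb{P}^{k-1})\otimes\mathbb{Q}(\lambda_1,\lambda_2)$ after inverting the $\lambda_j$. A short bookkeeping check (carried out for $i=1$) shows the $\aA$-term and the $\omega_C$-term both have weight $-\lambda_1$ and combine to $(h-\lambda_1)^2$, while the $\wedge^2N$ and $N_1$ summands produce precisely the remaining five factors of $A(l_1,l_2,l_3,k)$.

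Finally, since $T_0$ acts trivially on $\mathbb{P}^{k-1}$ and $\int_{\mathbb{P}^{k-1}}h^j=\delta_{j,k-1}$, integrating any $P(h)=\sum_j a_jh^j$ with $a_j\in\mathbb{Q}(\lambda_1,\lambda_2)$ over $\mathbb{P}^{k-1}$ extracts the coefficient $a_{k-1}$, which equals $\mathrm{Res}_{h=0}\big(h^{-k}P(h)\big)$; applying this to the product above yields exactly $A(l_1,l_2,l_3,k)$. The case $i=2$ follows verbatim after interchanging $(\lambda_1,l_1)$ with $(\lambda_2,l_2)$, producing $B$, and no $i=3$ term occurs because $l_1\geqslant l_2\geqslant l_3$ together with $l_1+l_2+l_3=-2$ forces $l_3\leqslant -1$, so the index range $0\leqslant d_0\leqslant (l_3-1)/2$ in (\ref{DT4-2C}) is empty. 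The main obstacle will be the weight bookkeeping: correctly distributing the global $t_i^{-1}$-twist, the relation $t_1t_2t_3=1$, and the Calabi--Yau relation $l_1+l_2+l_3=-2$ across the six factors, and verifying that the two genuinely overlapping contributions merge into the single square $(-\lambda_1+h)^2$ rather than appearing as separate factors.
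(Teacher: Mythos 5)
Your proposal is correct and follows essentially the same route as the paper: identify $\Pic^{d_i}(\mathbb{P}^1)=\{\mathrm{pt}\}$ and $S^{k-1}(\mathbb{P}^1)=\mathbb{P}^{k-1}$, compute $\aA=\oO(1,k-1-l_i)$, push forward each summand to a (possibly negative) multiple of $\oO_{\mathbb{P}^{k-1}}(1)$ with its $T_0$-weight, take equivariant Euler classes, and extract the $h^{k-1}$-coefficient as $\mathrm{Res}_{h=0}h^{-k}P(h)$. Your weight bookkeeping (including the merging of the $\aA$- and $\omega_C$-terms into $(-\lambda_1+h)^2$ and the use of $\lambda_1+\lambda_2+\lambda_3=0$) matches the paper's computation exactly.
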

\begin{proof}
For $g(C)=0$, we have $\Pic^{d_i}(C)=\Spec \mathbb{C}$
and $S^{d_i-d_0}(C)=\mathbb{P}^{d_i-d_0}$.
The universal divisor $\zZ \subset \mathbb{P}^{d_i-d_0} \times \mathbb{P}^1$
is a $(1, d_i-d_0)$-divisor, so we have
\begin{align*}
\aA=\oO_{\mathbb{P}^{d_i-d_0} \times \mathbb{P}^1}(1, d_i-d_0-l_i).
\end{align*}
Therefore
we have (here we write $\oO(1)=\oO_{\mathbb{P}^{d_i-d_0}}(1)$)
\begin{align*}
&\dR \pi_{S\ast}\aA=\oO(1)^{\oplus k-l_1}, \
\dR \pi_{S\ast}(\aA \boxtimes \omega_C)
=\oO(1)^{\oplus k-l_1-2}, \\
&\dR \pi_{S\ast}(\aA \boxtimes N_1)=\oO(1)^{\oplus k-l_1+l_2}t_2
\oplus \oO(1)^{\oplus k-l_1+l_3}t_3, \\
&\dR \pi_{S\ast}(\aA \boxtimes \wedge^2 N)
=\oO(1)^{\oplus k+l_2}t_1 t_2
\oplus \oO(1)^{\oplus k+l_3}t_1 t_3
\oplus \oO(1)^{\oplus k-2l_1-2}t_2 t_3.
\end{align*}
Let $h=c_1(\oO(1))$.
Then the desired integral is the $h^{k-1}$-part
of
\begin{align*}
&(-\lambda_1+h)^{2}
(-\lambda_1+\lambda_2+h)^{-k+l_1-l_2}(-\lambda_1+\lambda_3+h)^{-k+l_1-l_3} \\
&(\lambda_2+h)^{k+l_2}(\lambda_3+h)^{k+l_3}(-\lambda_1+\lambda_2+\lambda_3+h)^{k-2-2l_1}.
\end{align*}
Using $\lambda_1+\lambda_2+\lambda_3=0$, we obtain the desired result.
The case of $i=2$ is similar.
\end{proof}

\begin{cor}\label{DT4 for 2C}
For $g(C)=0$,
we have
\begin{align*}
\mathrm{DT}_{4}(2[C])=
&-\lambda_1^{-2l_1-2}\lambda_2^{-2l_2-2}(\lambda_1+\lambda_2)^{-2l_3-2} \\
& \cdot \left(
\sum_{\begin{subarray}{c}
1\leqslant k\leqslant l_1, \\
k \equiv l_1 \ (\mathrm{mod} 2)
\end{subarray}}
A(l_1, l_2, l_3, k)
+
\sum_{\begin{subarray}{c}
1\leqslant k\leqslant l_2, \\
k \equiv l_2 \ (\mathrm{mod} 2)
\end{subarray}}
B(l_1, l_2, l_3, k) \right).
\end{align*}
\end{cor}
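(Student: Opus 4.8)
The plan is to assemble the corollary directly from the definition (\ref{DT4-2C}) of $\DT_4(2[C])$ together with Lemmas~\ref{lem:thick} and~\ref{lem:g=0}, specialized to $g(C)=0$. First I would recall that, in the notation of Proposition~\ref{isom:deg2}, the $T_0$-fixed locus $M_{2[C]}^{T_0}$ decomposes via the isomorphism (\ref{isom:2[C]}) into the vector-bundle component $M_C(2,1)$ together with the thickened components $\Pic^{(d_0,d_i)}(C)$. For $g=0$ every bundle on $\mathbb{P}^1$ splits, so the moduli space $M_C(2,1)$ of rank-two stable bundles with $\chi=1$ is empty; in any case Proposition~\ref{cor on DT4 C} shows this component contributes nothing. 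Hence only the thickened summands in (\ref{DT4-2C}) survive.

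Next I would carry out the reindexing $k:=d_i-d_0+1$ used in Lemma~\ref{lem:g=0}. Using the defining constraint $d_0+d_i=2g-1+l_i=l_i-1$ one gets $d_0=(l_i-k)/2$ and $d_i=(l_i+k-2)/2$, so that $2d_0=l_i-k$ forces $k\equiv l_i \pmod 2$ automatically; and the range $g\leqslant d_0\leqslant g+(l_i-1)/2$, i.e.\ $0\leqslant d_0\leqslant (l_i-1)/2$, translates precisely into $1\leqslant k\leqslant l_i$. This reproduces exactly the summation ranges in the statement.

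I would then apply Lemma~\ref{lem:thick} termwise. Since the part $-\chi(N)+\chi(\wedge^2 N)$ appearing in the movable contribution (\ref{deg2:mov}) is independent both of $i$ and of $A=F_i\otimes F_0^\vee$, the prefactor $-\lambda_1^{-2l_1-2}\lambda_2^{-2l_2-2}(\lambda_1+\lambda_2)^{-2l_3-2}$ factors out uniformly across all three directions, reducing each summand to an integral over $\Pic^{d_i}(C)\times S^{d_i-d_0}(C)$. By Lemma~\ref{lem:g=0} these reduced integrals equal $A(l_1,l_2,l_3,k)$ for $i=1$ and $B(l_1,l_2,l_3,k)$ for $i=2$, with $k$ running over the ranges just determined.

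Finally I would dispose of the $i=3$ contribution. The Calabi--Yau condition (\ref{L123}) with $g=0$ gives $\omega_C\cong\mathcal{O}_{\mathbb{P}^1}(-2)$, hence $l_1+l_2+l_3=-2$; combined with the standing normalization $l_1\geqslant l_2\geqslant l_3$ this forces $l_3\leqslant 0$. Consequently the range $1\leqslant k\leqslant l_3$ is empty, the $i=3$ sum vanishes, and only the $i=1$ and $i=2$ sums remain. Collecting these under the common prefactor yields the displayed formula. The only point requiring care is the bookkeeping in the reindexing step (matching both the range $1\leqslant k\leqslant l_i$ and the parity $k\equiv l_i$) together with the observation that the degree constraint kills the third thickening direction; the rest is a direct substitution of the two preceding lemmas.
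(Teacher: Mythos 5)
Your proposal is correct and follows essentially the same route as the paper's own proof: the definition (\ref{DT4-2C}) is combined with Lemma~\ref{lem:thick} and Lemma~\ref{lem:g=0}, the substitution $k=d_i-d_0+1$ converts the constraint $0\leqslant d_0\leqslant (l_i-1)/2$ into the range $1\leqslant k\leqslant l_i$ with parity $k\equiv l_i \pmod 2$, and the $L_3$-direction drops out because $l_1+l_2+l_3=-2$ with $l_1\geqslant l_2\geqslant l_3$ forces $l_3<0$. The only additions beyond the paper's argument are the (correct but already-handled) remarks that $M_C(2,1)$ does not contribute and that the prefactor is uniform across the thickening directions.
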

\begin{proof}
Since
$l_1+l_2+l_3=-2$ and we have assumed
$l_1 \geqslant l_2 \geqslant l_3$, so
$l_1 \geqslant 0>l_3$.
In particular, by the inequality
\begin{align}\label{ineq:di}
0 \leqslant d_0 \leqslant \frac{l_i-1}{2}
\end{align}
in the definition
 of $\DT_4(2[C])$, there is
no contribution from $L_3$-thickened sheaves

By setting $k=d_i-d_0+1$,
since $d_0+d_i=l_i-1$, we have
\begin{align}\label{d:k}
d_0=\frac{l_i-k}{2}, \ d_i=\frac{l_i+k}{2}-1.
\end{align}
The inequality
(\ref{ineq:di})
is then equivalent to $1\leqslant k \leqslant l_i$.
Conversely given an integer $k$ satisfying
$1\leqslant k\leqslant l_i$, there exists
$(d_0, d_i) \in \mathbb{Z}^2$
satisfying (\ref{ineq:di}) and (\ref{d:k})
if and only if $k\equiv l_i(\mod 2)$.
Therefore the desired identity holds by
Lemma~\ref{lem:thick} and
 Lemma~\ref{lem:g=0}.
\end{proof}

\subsection{Equivariant version of Conjecture \ref{conj:GW/GV} for local curves}
We propose an equivariant version of Conjecture \ref{conj:GW/GV} (without insertions) for degree two case (the degree one case is given by Corollary \ref{cor on deg one local curve}).
\begin{conj}\label{conj on local P1}
For any smooth projective curve $C$ and line bundles $L_i$ ($i=1,2,3$) on $C$ with $L_1 \otimes L_2 \otimes L_3 \cong \omega_C$, we have 
the identity
\begin{align*} \mathrm{GW}_{0,2[C]}=\mathrm{DT}_{4}(2[C])+\frac{1}{8}\mathrm{DT}_{4}([C])\in\mathbb{Q}(\lambda_1, \lambda_2) \end{align*}
after substituting $\lambda_3=-\lambda_1-\lambda_2$.
\end{conj}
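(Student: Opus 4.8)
The plan is to reduce the conjectural identity to an explicit comparison of rational functions in $\lambda_1,\lambda_2$, splitting according to the genus of $C$. The positive genus case should be immediate: the GW side vanishes because $\overline{M}_0(C,2[C])=\emptyset$ for $g(C)>0$ (a genus zero stable map cannot cover a positive multiple of a positive genus curve, and no complete curve lies in a fiber $\cong\mathbb{A}^3$), so $\mathrm{GW}_{0,2[C]}=0$. On the sheaf side, Corollary~\ref{cor on DT4 2C} gives $\mathrm{DT}_4(2[C])=0$ and the vanishing half of Corollary~\ref{cor on deg one local curve} gives $\mathrm{DT}_4([C])=0$, so the asserted identity reads $0=0+\tfrac18\cdot 0$. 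Hence the whole content of the conjecture lives in the case $g(C)=0$, where $C\cong\mathbb{P}^1$ and $l_1+l_2+l_3=-2$.

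For $g(C)=0$ every term is known in closed form, and I would substitute them all. The GW side is given by the localization formula (\ref{GW02}); $\mathrm{DT}_4([C])$ is given by (\ref{DT4(C)}) (equivalently $\mathrm{DT}_4([C])=\mathrm{GW}_{0,1}$ after the substitution); and $\mathrm{DT}_4(2[C])$ is given by Corollary~\ref{DT4 for 2C} as the prefactor $-\lambda_1^{-2l_1-2}\lambda_2^{-2l_2-2}(\lambda_1+\lambda_2)^{-2l_3-2}$ times the sum over $k$ of the residues $A(l_1,l_2,l_3,k)$ and $B(l_1,l_2,l_3,k)$ of Lemma~\ref{lem:g=0}. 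The conjecture then becomes a single identity of rational functions in $\lambda_1,\lambda_2$, which one checks is homogeneous of degree $-1$ on both sides (a useful first consistency test), and which after clearing denominators is a Laurent polynomial identity depending on the integers $l_1,l_2$. Before grinding, I would record two structural facts that cut down the work: the whole identity is symmetric under $(\lambda_1,l_1)\leftrightarrow(\lambda_2,l_2)$, which interchanges $A$ and $B$ and fixes $\mathrm{GW}_{0,2}$; and the coefficient $\tfrac18=\tfrac1{2^3}$ is exactly the $k=2$ weight in the residue multiple-cover formula (\ref{id:res}) of Conjecture~\ref{conj:red}, so the statement is literally the degree two instance of that multiple-cover formula.

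The computational heart is then threefold: (i) evaluate the residues $A,B$ by coefficient extraction, using $\mathrm{Res}_{h=0}\,h^{-k}g(h)=\tfrac{1}{(k-1)!}g^{(k-1)}(0)$ and packaging the sum over $k\equiv l_i\ (\mathrm{mod}\ 2)$ into a generating function in an auxiliary variable; (ii) simplify the GW side, where the alternating sums $\overline{l}_i^2-(\overline{l}_i-1)^2+\cdots$ telescope to simple quadratics in the $l_i$; and (iii) match the two expansions monomial by monomial after imposing $\lambda_1+\lambda_2+\lambda_3=0$.

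I expect step (iii) to be the main obstacle. The residue sums $A$ and $B$ are uniform in shape but genuinely intricate once expanded, and evaluating them in closed form valid for \emph{all} triples $(l_1,l_2,l_3)$ with $l_1+l_2+l_3=-2$ — rather than for each fixed triple — is exactly where a clean symbolic argument is hard to produce. Absent such a uniform evaluation, the honest route to an unconditional result is to prove the $g(C)\geqslant1$ case by the vanishing above and, for $g(C)=0$, to verify the reduced rational-function identity by direct symbolic computation for $l_1,l_2$ in a bounded range, using the $(\lambda_1,l_1)\leftrightarrow(\lambda_2,l_2)$ symmetry to halve the cases; a fully uniform proof would require a residue-calculus or generating-function identity that I do not presently see how to establish in closed form.
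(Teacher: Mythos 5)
Your proposal follows essentially the same route as the paper: for $g(C)\geqslant 1$ both sides vanish by Proposition \ref{cor on DT4 C}, Corollary \ref{cor on deg one local curve} and Corollary \ref{cor on DT4 2C}, and for $g(C)=0$ the identity is reduced via (\ref{GW02}), (\ref{DT4(C)}) and Corollary \ref{DT4 for 2C} to an explicit Laurent-polynomial relation in $\lambda_1,\lambda_2$ that is then checked by symbolic computation for bounded $l_1,l_2$ (Theorem \ref{prop on local P1} does this for $|l_1|,|l_2|\leqslant 10$ with Mathematica). Your added observations (homogeneity of degree $-1$, the $(\lambda_1,l_1)\leftrightarrow(\lambda_2,l_2)$ symmetry, and $\tfrac18=\tfrac1{2^3}$ as the $k=2$ multiple-cover weight) are correct consistency checks, and your assessment that a uniform closed-form residue identity is the missing ingredient for a full proof accurately reflects why the statement remains a conjecture.
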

\begin{rmk}
As there is no insertion here, the coefficient of $\mathrm{DT}_{4}([C])$ is $1/2^3$ instead of $1/2^2$ 
in Conjecture \ref{conj:GW/GV}.
\end{rmk}
When $g(C)\geqslant1$, the above conjecture is obviously true by Proposition \ref{cor on DT4 C} and Corollary \ref{cor on DT4 2C}. 
For $g(C)=0$ case, we fix constants $l_i\in \mathbb{Z}$ ($i=1,2,3$) with $l_1+l_2+l_3=-2$ and may assume $l_1 \geqslant l_2 \geqslant l_3$ without loss of generality, the above conjecture is expressed in terms of a polynomial relation in variables $\{\lambda_i^{\pm}\}$, we verify the conjecture with the help of software 'Mathematica' in examples as follows.
\begin{thm}\label{prop on local P1}
When $g(C)=0$ and denote $\deg(L_i)=l_i$, then Conjecture \ref{conj on local P1} is true if
\begin{equation}|l_1|\leqslant10, \quad |l_2|\leqslant10.  \nonumber \end{equation}
\end{thm}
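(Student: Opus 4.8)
The plan is to reduce the claimed identity, for each admissible triple of degrees, to an identity of explicit rational functions in $\lambda_1,\lambda_2$ and to verify it by direct symbolic computation. Since $g(C)=0$, every term appearing in Conjecture~\ref{conj on local P1} is already available in closed form: the left-hand side $\mathrm{GW}_{0,2[C]}$ is the localization expression (\ref{GW02}); the term $\mathrm{DT}_4([C])$ is given by (\ref{DT4(C)}) of Proposition~\ref{cor on DT4 C}; and $\mathrm{DT}_4(2[C])$ is given by Corollary~\ref{DT4 for 2C} as a finite sum of the residues $A(l_1,l_2,l_3,k)$ and $B(l_1,l_2,l_3,k)$ from Lemma~\ref{lem:g=0}. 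Thus no further geometry is needed; the content is purely a comparison of these three expressions after the substitution $\lambda_3=-\lambda_1-\lambda_2$.

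First I would make the residues fully explicit. Each of $A$ and $B$ is the coefficient of $h^{k-1}$ in a product of powers $(\,\cdot\,+h)^{m}$, and the factors carrying negative or $k$-dependent exponents, such as $(-2\lambda_1+h)^{k-2-2l_1}$, have nonzero value at $h=0$; they are therefore expanded by the binomial series, so that $A$ and $B$ are well-defined elements of $\mathbb{Q}(\lambda_1,\lambda_2)$. I would record the symmetry $B(l_1,l_2,l_3,k)=A(l_2,l_1,l_3,k)|_{\lambda_1\leftrightarrow\lambda_2}$ visible in Lemma~\ref{lem:g=0}, which halves the number of independent evaluations, and carry out the sums over $k\equiv l_i\ (\mathrm{mod}\ 2)$, $1\leqslant k\leqslant l_i$, noting that only $i=1,2$ contribute since $l_1\geqslant l_2\geqslant l_3$ together with $l_1+l_2+l_3=-2$ forces $l_3<0$ and hence an empty $L_3$-thickened range, exactly as in the proof of Corollary~\ref{DT4 for 2C}.

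A useful structural check and simplification is homogeneity: using $l_1+l_2+l_3=-2$ one verifies that $\mathrm{GW}_{0,2[C]}$, $\mathrm{DT}_4(2[C])$, and $\tfrac18\mathrm{DT}_4([C])$ are each homogeneous of degree $-1$ in $(\lambda_1,\lambda_2)$ after the substitution $\lambda_3=-\lambda_1-\lambda_2$. This lets me set $\lambda_2=1$ and reduce the desired identity to one of rational functions in the single variable $\lambda_1$, which is both faster to test and a sanity check on the formulas. I would then enumerate the finitely many triples $(l_1,l_2,l_3)$ with $l_1+l_2+l_3=-2$, $l_1\geqslant l_2\geqslant l_3$, $|l_1|\leqslant 10$ and $|l_2|\leqslant 10$ (which forces $|l_3|\leqslant 22$), and for each triple evaluate all three closed-form expressions in Mathematica, confirming $\mathrm{GW}_{0,2[C]}=\mathrm{DT}_4(2[C])+\tfrac18\mathrm{DT}_4([C])$.

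The main obstacle is computational rather than conceptual: the residues $A$ and $B$ do not collapse to a manageable uniform closed form, so there is no evident global cancellation to exploit, and the verification is genuinely case-by-case. The delicate points are the correct binomial expansion at $h=0$ of the factors with negative exponents and the bookkeeping of the parity condition $k\equiv l_i\ (\mathrm{mod}\ 2)$ together with the range $1\leqslant k\leqslant l_i$; once these are implemented faithfully, the check over the stated finite range of degrees is routine. The nontrivial polynomial identities in $\lambda_1,\lambda_2$ that result are recorded in Appendix~B.
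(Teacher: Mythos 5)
Your proposal is correct and matches the paper's own approach: the paper proves this theorem precisely by substituting the closed forms (\ref{GW02}), (\ref{DT4(C)}), and Corollary~\ref{DT4 for 2C} into the conjectured identity and verifying the resulting rational-function identity in Mathematica over the finite range of degree triples, with the code and a sample computation recorded in the appendix. Your added observations (the $\lambda_1\leftrightarrow\lambda_2$ symmetry between $A$ and $B$ and the homogeneity of degree $-1$ permitting the reduction to one variable) are sound simplifications but do not change the substance of the argument.
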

In the appendix, we will list a computational example (the case of $l_1=8$, $l_2=6$, $l_3=-16$) given by 'Mathematica' and one
can see there are really non-trivial cancellations involved to make Conjecture \ref{conj on local P1} true in this case. \\

For curve classes of higher degree, we may define $\mathrm{DT}_{4}(d[C])\in\mathbb{Q}(\lambda_1, \lambda_2)$ and have
\begin{conj}
For any smooth projective curve $C$, line bundles $L_i$ ($i=1,2,3$) on $C$ with $L_1 \otimes L_2 \otimes L_3 \cong \omega_C$
and $d\geqslant1$, we have the identity
\begin{align*} \mathrm{GW}_{0, d[C]}=\sum_{k|d}\frac{1}{k^{3}}\cdot\mathrm{DT}_{4}\Big(\frac{d}{k}[C]\Big)\in\mathbb{Q}(\lambda_1, \lambda_2) \end{align*}
after substituting $\lambda_3=-\lambda_1-\lambda_2$.
\end{conj}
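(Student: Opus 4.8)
The plan is to split the argument by the genus of $C$, following the pattern already established for $d=1,2$. For $g(C)>0$ I would show that both sides vanish. The Gromov--Witten side is immediate: $\overline{M}_0(C,d[C])=\emptyset$, since a genus-zero stable map cannot represent a positive multiple of a curve of positive genus. For the $\DT_4$ side I would generalize the Jacobian-vanishing mechanism behind Proposition~\ref{cor on DT4 C} and Corollary~\ref{cor on DT4 2C}. Each $T_0$-fixed component of $M^{T_0}_{d[C]}$ is either the pure-bundle locus $M_C(d,1)$, which carries a free $\Pic^0(C)$-action so that after the fixed-determinant \'etale cover the integrand $e(\dR \hH om_\pi(\eE,\eE)^{\mathrm{mov}})^{1/2}$ is pulled back from the determinant-fixed factor, or a thickened locus which, by the analogue of Lemma~\ref{lem:Picd}, fibers as $\Pic^\bullet(C)\times S^\bullet(C)$ with the integrand pulled back from the symmetric-product factor exactly as in Lemma~\ref{lem:thick}. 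In both cases integration over the positive-dimensional abelian variety $\Pic^\bullet(C)$ kills the class. This reduces everything to $C=\mathbb{P}^1$.

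For $g(C)=0$ the real work begins, and the first task is to make $\mathrm{DT}_4(d[C])$ itself precise for all $d$, promoting the heuristic (\ref{DT4:locurve}) to an actual definition. Generalizing Proposition~\ref{isom:deg2}, I would describe $M^{T_0}_{d[\mathbb{P}^1]}$ as a disjoint union of loci indexed by ``thickening profiles'' recording how the pure one-dimensional sheaf distributes its length-$d$ fiber structure among the three directions $L_1,L_2,L_3$; over $\mathbb{P}^1$ each such locus should be a product of symmetric products $S^m(\mathbb{P}^1)=\mathbb{P}^m$, as already seen for $d=2$ in Lemma~\ref{lem:g=0}, hence smooth. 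On each component the fixed part of the obstruction theory is of shifted-cotangent type, so the $T_0$-fixed virtual class is the ordinary fundamental class, and the canonical ``half'' $\chi(F,F)^{1/2}$ of (\ref{chi:half}) supplies a preferred square root of the equivariant Euler class of the virtual normal bundle together with a sign. Assembling these contributions then expresses $\mathrm{DT}_4(d[\mathbb{P}^1])$ as a sum of iterated residues in the Chern roots of the tautological bundles, generalizing Corollary~\ref{DT4 for 2C}.

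On the Gromov--Witten side I would compute $\mathrm{GW}_{0,d[\mathbb{P}^1]}$ by the same $(\mathbb{C}^\ast)^2$-localization on $\overline{M}_0(\mathbb{P}^1,d)$ that produced (\ref{GW02}); equivalently one may import the closed TQFT/gluing description of the local Gromov--Witten theory of $\mathbb{P}^1$ to obtain a generating function for $\sum_d \mathrm{GW}_{0,d[\mathbb{P}^1]}$ in terms of $l_1,l_2,l_3$. The conjecture is then equivalent to a single identity of rational functions in $\lambda_1,\lambda_2$ (after setting $\lambda_3=-\lambda_1-\lambda_2$): the combination $\sum_{k\mid d}k^{-3}\mathrm{DT}_4((d/k)[\mathbb{P}^1])$ of the residue sums from the previous step must equal the corresponding Gromov--Witten coefficient. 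The natural strategy is to recognize both sides as the same equivariant vertex, so that the multiple-cover structure on the right is matched by the multiplicativity of the localization contributions on the left.

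The main obstacle is twofold. First, the foundational input---a virtual localization formula for the $\DT_4$ virtual class together with a canonical, deformation-consistent choice of the square root and orientation---is not available for $d\geqslant 3$, so even the left-hand side is presently only heuristic; making the second step rigorous requires proving that the $T_0$-fixed locus over $\mathbb{P}^1$ is everywhere smooth with shifted-cotangent obstruction theory and that the signs assemble coherently. Second, even granting the definitions, the residue identity matching the two generating functions has no uniform closed form: the cancellations are delicate, as the $l_1=8,\,l_2=6$ example in Appendix~B illustrates, which is precisely why Theorem~\ref{prop on local P1} proceeds by bounded case-checking rather than a structural argument. A uniform proof would most plausibly come from a vertex/TQFT factorization reducing arbitrary $d$ to the primitive contributions, and finding such a factorization on the $\DT_4$ side is the crux.
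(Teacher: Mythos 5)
The statement you are proving is one of the paper's conjectures, not one of its theorems: the paper offers no proof of the general-$d$ identity, only supporting evidence, namely the $d=1$ computation (Corollary~\ref{cor on deg one local curve}), the vanishing of both sides for $g(C)>0$ when $d\leqslant 2$ (Proposition~\ref{cor on DT4 C}, Corollary~\ref{cor on DT4 2C}), and a computer verification of the $d=2$ identity of Conjecture~\ref{conj on local P1} in the bounded range of Theorem~\ref{prop on local P1}. So there is no paper proof to compare against, and your text is rightly framed as a program rather than a proof. The $g(C)>0$ half of your plan does coincide with the mechanism the paper actually uses where it can: the bundle component $M_C(d,1)$ is killed via the \'etale cover $\Pic^0(C)\times M_C(d,L)\to M_C(d,1)$ exactly as in the proof of Proposition~\ref{cor on DT4 C}, and the thickened components via the Jacobian factor split off in Lemma~\ref{lem:thick}. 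But note that even this ``easy'' half is conditional on first classifying the $T_0$-fixed components for general $d$, which the paper only does for $d\leqslant 2$ (Proposition~\ref{isom:deg2}).

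The genuine gaps are where you already suspect them, but they are worth naming more sharply. First, for $d\geqslant 3$ the left-hand side of your reduction is undefined: (\ref{DT4:locurve}) is heuristic, and the paper promotes it to an actual definition only for $d=1,2$ (via (\ref{DT4:C}) and (\ref{DT4-2C})), after exhibiting the fixed locus explicitly and choosing the square root through the half Euler characteristic (\ref{chi:half}). Second, your assertion that each fixed component over $\mathbb{P}^1$ ``should be a product of symmetric products'' is too optimistic: already at $d=3$ there are $T_0$-fixed sheaves whose fiberwise support profile is a two-dimensional partition occupying two of the directions $L_i$, $L_j$ simultaneously; their moduli involve two-step filtrations by twisted line bundles with nontrivial extension data, not a single pair $(L,L',\iota)$ as in Lemma~\ref{lem:Picd}, and—more seriously—for such components it is not clear that $\chi(F,F)$ admits a canonical equivariant splitting $\chi(F,F)=\chi(F,F)^{1/2}+\chi(F,F)^{1/2,\vee}$ analogous to (\ref{chi:half}); for $d=2$ the splitting works because the off-diagonal terms of the filtration $0\to F_i\to F\to F_0\to 0$ carry pure weights $t_i^{\pm1}$, and this cleanness is lost for multi-direction thickenings. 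Third, even granting all definitions, neither you nor the paper has any mechanism producing the multiple-cover coefficient $\sum_{k|d}k^{-3}$: the paper's own $d=2$ check is a term-by-term residue identity with the delicate cancellations displayed in Appendix~B, verified by Mathematica, and your proposed vertex/TQFT factorization on the $\mathrm{DT}_4$ side is precisely the missing idea rather than a step of an argument. In short, your proposal is a sound reading of how the paper's partial evidence is organized, but it does not close, and cannot be expected to close, the distance between that evidence and the conjecture.
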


\appendix
\addcontentsline{toc}{section}{Appendices}


\section{Genus zero GV/DT conjecture for CY 3-folds}\label{append:CY3}
Here we recall the genus zero GV/DT conjecture for CY 3-folds and 
explain how it can be derived from the conjectural GW/PT correspondence, a geometric 
vanishing conjecture and wall-crossing in the derived category. 

Let $Y$ be a smooth projective CY 3-fold. 
For $\beta \in H_2(Y, \mathbb{Z})$, 
the virtual dimension of $\overline{M}_g(Y, \beta)$ is 
zero due to the CY3 condition of $Y$. 
Hence we have the GW invariant without insertion
\begin{align*}
\mathrm{GW}_{g, \beta} =\int_{[\overline{M}_g(Y, \beta)]^{\rm{vir}}}1 \in \mathbb{Q}. 
\end{align*}
Its generating series is uniquely written as 
\begin{align}\label{GW/GV:Y}
\sum_{g\geqslant 0, \beta>0}
\mathrm{GW}_{g, \beta}\lambda^{2g-2} t^{\beta}=
\sum_{\beta>0}
\sum_{g\geqslant 0, k\geqslant 1}
\frac{n_{g, \beta}^{\rm{GW}}}{k}
(-1)^{g-1}
((-q)^{\frac{k}{2}}-(-q)^{-\frac{k}{2}})^{2g-2} t^{k\beta}
\end{align}
for some $n_{g, \beta}^{\rm{GW}} \in \mathbb{Q}$. 
Here $q=-e^{i\lambda}$. 
The GV conjecture \cite{GV} claims that $n_{g, \beta}^{\rm{GW}}$ are integers. 
If we focus on the genus zero invariants, the formula (\ref{GW/GV:Y})
implies
\begin{align*}
\mathrm{GW}_{0, \beta}=\sum_{k\geqslant 1, k|\beta}\frac{1}{k^3} n_{0, \beta/k}^{\rm{GW}}. 
\end{align*}
On the other hand, let $M_{\beta}$ be the moduli space of 
one dimensional stable sheaves $E$ on $Y$ with $[E]=\beta$, 
$\chi(E)=1$. 
Then there is a zero-dimensional virtual fundamental cycle 
on $M_{\beta}$, and its integral
yields the $\mathrm{DT}_3$ invariant
\begin{align*}
\mathrm{DT}_3(\beta) \cneq \int_{[M_{\beta}]^{\rm{vir}}} 1 \in \mathbb{Z}. 
\end{align*}
The genus zero GV/DT conjecture for CY 3-folds is as follows: 
\begin{conj}\emph{(\cite{Katz})}\label{conj:katz}
We have the identity 
$n_{0, \beta}^{\rm{GW}}=\mathrm{DT}_3(\beta)$. 
\end{conj}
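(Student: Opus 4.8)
\emph{Strategy.} The plan is to compare the two sides of Conjecture~\ref{conj:katz} by routing both through the stable pairs partition function
\[
Z_{\mathrm{PT}}(q,t)=\sum_{n,\beta}P_{n,\beta}\,q^{n}t^{\beta},
\]
which mediates between Gromov--Witten theory and the enumeration of one-dimensional sheaves. On one side, the genus zero Gopakumar--Vafa invariant $n_{0,\beta}^{\mathrm{GW}}$ is read off from $Z_{\mathrm{PT}}$ via the Gromov--Witten/stable-pairs correspondence; on the other side, wall-crossing in $D^{b}(\Coh(Y))$ rewrites $Z_{\mathrm{PT}}$ in terms of the generalized Donaldson--Thomas invariants $N_{n,\beta}$ of one-dimensional semistable sheaves $F$ with $[F]=\beta$ and $\chi(F)=n$. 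Matching the genus zero coefficients then identifies $n_{0,\beta}^{\mathrm{GW}}$ with $N_{1,\beta}=\mathrm{DT}_3(\beta)$.

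\emph{Step 1 (GW/PT).} First I would invoke the conjectural GW/PT correspondence, which identifies the Gromov--Witten generating series of $Y$ with $Z_{\mathrm{PT}}$ under the change of variables $q=-e^{i\lambda}$. Combined with the Gopakumar--Vafa form (\ref{GW/GV:Y}), the genus zero part of the resulting product expansion is
\[
\prod_{\beta>0}\prod_{k\geqslant 1}\bigl(1-(-q)^{k}t^{\beta}\bigr)^{k\,n_{0,\beta}^{\mathrm{GW}}},
\]
so that $n_{0,\beta}^{\mathrm{GW}}$ is pinned down by a single explicit coefficient of $\log Z_{\mathrm{PT}}$.

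\emph{Step 2 (wall-crossing).} Next I would apply wall-crossing in the derived category: the DT/PT correspondence together with the Joyce--Song/Toda wall-crossing relating stable pairs to one-dimensional semistable sheaves. This expresses the same part of $\log Z_{\mathrm{PT}}$ as a universal expression in the invariants $\{N_{n,\beta'}\}$, where $\beta'$ runs over effective subclasses of $\beta$ and $n$ over $\mathbb{Z}$. Here one uses the Calabi--Yau symmetry $N_{n,\beta}=N_{-n,\beta}$ and the fact that, since a one-dimensional sheaf with $\chi=1$ admits no proper subsheaf of the same reduced Hilbert polynomial, the moduli space $M_\beta$ consists only of stable sheaves (as in Remark~\ref{rmk on stability}), so that $N_{1,\beta}=\mathrm{DT}_3(\beta)$.

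\emph{Step 3 (geometric vanishing) and conclusion.} Finally, the geometric vanishing conjecture is used to collapse the wall-crossing expression: it implies that the invariants $N_{n,\beta}$ are independent of $n$ and equal to the genus zero invariant, so that the universal expression from Step 2 reduces exactly to the product of Step 1 with exponents $k\,N_{1,\beta}$. Comparing coefficients then gives $n_{0,\beta}^{\mathrm{GW}}=N_{1,\beta}=\mathrm{DT}_3(\beta)$. \emph{The main obstacle} is precisely this combination of Steps 2 and 3: one must control how the full tower $\{N_{n,\beta'}\}$ over all effective subclasses and all Euler characteristics organizes into the genus zero coefficient, and verify that every contribution except the primitive $\chi=1$ count cancels. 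Since the GW/PT correspondence and the geometric vanishing input are themselves conjectural, the argument yields only a conditional derivation of Conjecture~\ref{conj:katz}.
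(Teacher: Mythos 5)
Your overall route is the one the paper takes in Appendix~\ref{append:CY3}: Conjecture~\ref{conj:katz} is only derived conditionally, from the (strong) GW/PT correspondence together with wall-crossing in the derived category, exactly as in Lemma~\ref{lem:conj:katz}. One correction to your Step 3, though: the vanishing input does not make the generalized DT invariants $N_{n,\beta}$ independent of $n$. What the strong rationality $n^{P}_{g,\beta}=0$ for $g<0$ is equivalent to (via the wall-crossing of Joyce--Song and Toda) is the multiple cover formula (\ref{N:mult}), $N_{n,\beta}=\sum_{k\geqslant 1,\ k|(n,\beta)}\frac{1}{k^{2}}\DT_3(\beta/k)$, in which $N_{n,\beta}$ depends on $n$ through $\gcd(n,\beta)$; independence of $n$ holds only when $\beta$ is primitive. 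The paper sidesteps this bookkeeping by assuming the strong form of GW/PT, $n^{\rm{GW}}_{g,\beta}=n^{P}_{g,\beta}$ (Conjecture~\ref{conj:smnop}), which yields the negative-genus vanishing $n^{P}_{g,\beta}=0$ for $g<0$ for free since $n^{\rm{GW}}_{g,\beta}$ is supported in $g\geqslant 0$, and then quotes \cite[Theorem~6.4]{Toda2} for the identity $n^{P}_{0,\beta}=\DT_3(\beta)$; your Steps 1--2 reproduce exactly this. Your observation that $N_{1,\beta}=\DT_3(\beta)$ because a one-dimensional sheaf with $\chi=1$ is automatically stable is correct and consistent with Remark~\ref{rmk on stability}.
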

Let $P_n(Y, \beta)$ be the moduli space of stable pairs $(F, s)$ on $Y$
such that $[F]=\beta$, $\chi(F)=n$. 
Similarly, we have the PT invariant \cite{PT}
\begin{align*}
P_{n, \beta} \cneq \int_{[P_n(Y, \beta)]^{\rm{vir}}} 1 \in \mathbb{Z}. 
\end{align*}
The logarithm of its generating series is uniquely written as 
\begin{align}\label{PT/GV:Y}
\log \left(
1+
\sum_{\beta>0, 
 n\in \mathbb{Z}}
P_{n, \beta}q^n t^{\beta} \right) 
=
\sum_{\beta>0}
\sum_{g\in \mathbb{Z}, k\geqslant 1}
\frac{n_{g, \beta}^{P}}{k}
(-1)^{g-1}
((-q)^{\frac{k}{2}}-(-q)^{-\frac{k}{2}})^{2g-2} t^{k\beta}. 
\end{align}
for some $n_{g, \beta}^{P} \in \mathbb{Z}$
with $n_{g, \beta}^P=0$ for $g\gg 0$. 
We have the following strong form of the 
GW/PT correspondence: 
\begin{conj}\label{conj:smnop}
We have the identity $n_{g, \beta}^{\rm{GW}}=n_{g, \beta}^P$
for any $g \in \mathbb{Z}$ and $\beta>0$. 
\end{conj}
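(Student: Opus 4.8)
The plan is to recognize that Conjecture~\ref{conj:smnop} is equivalent to the \emph{unrefined} Gromov--Witten/stable-pairs correspondence, and to reduce it to the latter via the uniqueness of the Gopakumar--Vafa inversion, thereby isolating all the genuine content in a single analytic--geometric input. Concretely, I would first observe that proving $n_{g,\beta}^{\mathrm{GW}}=n_{g,\beta}^{P}$ is the same as establishing the equality of the two ``bare'' generating series, namely that under $q=-e^{i\lambda}$ one has
\begin{align*}
\sum_{g\geqslant 0,\, \beta>0}\mathrm{GW}_{g,\beta}\lambda^{2g-2}t^{\beta}
=\log\Big(1+\sum_{\beta>0,\, n\in\mathbb{Z}}P_{n,\beta}\,q^{n}t^{\beta}\Big),
\end{align*}
which is exactly the statement that the left-hand sides of (\ref{GW/GV:Y}) and (\ref{PT/GV:Y}) coincide after the substitution, the right-hand side being the rational function in $q$ supplied by rationality of the stable-pairs series \cite{PT}.

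Granting this bare equality, the two sides of our identity become the Gopakumar--Vafa expansions (\ref{GW/GV:Y}) and (\ref{PT/GV:Y}) of one and the same series, written in the common basis of functions
\begin{align*}
\frac{(-1)^{g-1}}{k}\Big((-q)^{k/2}-(-q)^{-k/2}\Big)^{2g-2}t^{k\beta}.
\end{align*}
The second step is then purely formal: I would prove that this expansion determines its coefficients uniquely. This is the standard BPS inversion, carried out by induction on $\beta$ in the monoid of effective curve classes. For $\beta$ minimal the coefficient of $t^{\beta}$ receives only the $k=1$ contributions, and since $\big((-q)^{1/2}-(-q)^{-1/2}\big)^{2g-2}=\big(2i\sin(\lambda/2)\big)^{2g-2}$ has leading $\lambda$-order exactly $2g-2$, the matrix relating genera to powers of $\lambda$ is triangular and the $n_{g,\beta}$ are read off successively; for general $\beta$ one subtracts the multiple-cover terms $n_{g,\beta/k}$ with $k\mid\beta$, $k>1$, already known by induction. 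Applying this to both sides forces $n_{g,\beta}^{\mathrm{GW}}=n_{g,\beta}^{P}$ for all $g$ and $\beta>0$. Moreover, because the Gromov--Witten side produces only $g\geqslant 0$ terms, its $\lambda$-expansion has poles of order at most $2$, whereas a nonzero $n_{g,\beta}^{P}$ with $g<0$ would contribute a pole of order $2-2g>2$; uniqueness therefore simultaneously yields the vanishing $n_{g,\beta}^{P}=0$ for $g<0$, which is part of the assertion.

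The hard part is of course the first step. The bare GW/PT correspondence is a deep statement proved only in restricted settings (for example for toric threefolds, and more generally wherever the techniques of Pandharipande--Thomas apply), and it remains open for a general projective Calabi--Yau threefold; this is precisely why the statement is recorded here as a conjecture rather than a theorem. The reduction above is formal, but it rests on an input not available in full generality. A more hands-on alternative would try to match the two BPS expansions degree by degree in $\beta$ directly, using the virtual/weighted-Euler-characteristic description of $P_{n,\beta}$ against the genus-$g$ Gromov--Witten contributions; however this still demands, for each fixed $\beta$, exactly the content of the correspondence in that class, so it does not circumvent the essential obstacle.
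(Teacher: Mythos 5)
This statement is recorded in the paper as a \emph{conjecture} (a strong form of the GW/PT correspondence), and the paper offers no proof of it; there is therefore no ``paper proof'' to measure you against. Your proposal is, by your own admission, not a proof either: it is a reduction of the identity $n_{g,\beta}^{\rm{GW}}=n_{g,\beta}^{P}$ to the bare equality of the Gromov--Witten series with the (logarithm of the) stable-pairs series under $q=-e^{i\lambda}$, followed by the uniqueness of the BPS expansion. That is an honest and accurate account of the situation, and it coincides with the discussion the authors themselves give immediately after the conjecture: they observe that the MNOP GW/DT correspondence together with the DT/PT correspondence yields only the identity of (\ref{GW/GV:Y}) and (\ref{PT/GV:Y}) as functions of $q$, and that the coefficient identity then follows from the uniqueness of the expansions once one knows either $n_{g,\beta}^{\rm{GW}}=0$ for $g\gg 0$ or $n_{g,\beta}^{P}=0$ for $g<0$.

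The one place where you go beyond what the authors assert is your claim that the bare correspondence \emph{automatically} forces $n_{g,\beta}^{P}=0$ for $g<0$, via the observation that a nonzero $g<0$ term would contribute a pole of order $2-2g>2$ at $\lambda=0$ while the Gromov--Witten side has pole order at most $2$. The triangularity/induction-on-$\beta$ argument you sketch is the standard one, and with the usual interpretation of the correspondence (the GW Laurent series in $\lambda$ equals the full Laurent expansion of the PT rational function at $q=-1$) it does appear to go through. But note that the authors deliberately do \emph{not} make this claim: they present the vanishing $n_{g,\beta}^{P}=0$ for $g<0$ (Pandharipande--Thomas' strong rationality) as an input \emph{equivalent} to Conjecture~\ref{conj:smnop} once GW/PT is granted, not as a consequence of GW/PT alone. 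You should either justify carefully the precise form of the correspondence you are invoking, or adopt the paper's more cautious phrasing. In any case the essential content --- the unproven GW/PT correspondence for a general projective Calabi--Yau threefold --- remains exactly where you locate it, which is why the statement is a conjecture and your proposal does not close it.
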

We have the following: 
\begin{lem}\label{lem:conj:katz}
Suppose that $Y$ satisfies 
Conjecture~\ref{conj:smnop}. 
Then $Y$ satisfies Conjecture~\ref{conj:katz}. 
\end{lem}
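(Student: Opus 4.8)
The plan is to split the asserted equality $n_{0,\beta}^{\mathrm{GW}}=\mathrm{DT}_3(\beta)$ into a correspondence half and a sheaf-theoretic half. Comparing the two genus expansions (\ref{GW/GV:Y}) and (\ref{PT/GV:Y}), the hypothesis of Conjecture~\ref{conj:smnop} gives at once $n_{g,\beta}^{\mathrm{GW}}=n_{g,\beta}^{P}$ for all $g$ and $\beta$, in particular $n_{0,\beta}^{\mathrm{GW}}=n_{0,\beta}^{P}$. Hence the whole statement reduces to the single identity
\[
n_{0,\beta}^{P}=\mathrm{DT}_3(\beta),
\]
which is where all the work goes and which involves only stable pairs and one-dimensional sheaves on $Y$.

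First I would make the genus-$0$ part of (\ref{PT/GV:Y}) explicit, to see what invariant of the PT series $Z_\beta(q):=\sum_n P_{n,\beta}q^n$ the number $n_{0,\beta}^{P}$ really is. Writing $y=(-q)^{1/2}$, the $g=0$ summand of (\ref{PT/GV:Y}) contributes $n_{0,\beta}^{P}\,q/(1+q)^2=n_{0,\beta}^{P}\sum_{m\geqslant1}(-1)^{m-1}m\,q^m$ (together with its $k\geqslant2$ descendants $n_{0,\beta/k}^{P}$), while every $g\geqslant1$ summand contributes a Laurent polynomial in $q$, since $(y-y^{-1})^{2g-2}$ is even in $y$. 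Using the rationality and the $q\leftrightarrow q^{-1}$ symmetry of $Z_\beta$ (Bridgeland, Toda), the higher-genus Laurent polynomials can be stripped away and $n_{0,\beta}^{P}$ is recovered from the leading coefficient of the double pole of $Z_\beta$ at $q=-1$; concretely one expects $\sum_{k\mid\beta} c_k\, n_{0,\beta/k}^{P}=-\lim_{q\to-1}(1+q)^2 Z_\beta(q)$ for explicit constants $c_k$, so that for primitive $\beta$ one simply gets $n_{0,\beta}^{P}=-\lim_{q\to-1}(1+q)^2Z_\beta(q)$.

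Next I would pass from stable pairs to one-dimensional sheaves by wall-crossing in $D^b(\Coh Y)$. Via the $\mathrm{DT}/\mathrm{PT}$ correspondence and the Joyce--Song (Kontsevich--Soibelman) wall-crossing formula, $Z_\beta$ is expressed through the generalized $\mathrm{DT}$ invariants $N_{n,\beta}$ counting $\omega$-semistable one-dimensional sheaves $F$ with $[F]=\beta$, $\chi(F)=n$. Because these generalized invariants are signed, Behrend-weighted Euler characteristics, the specialization $q\to-1$ extracted above is exactly the specialization at which they appear; the geometric vanishing conjecture is then invoked to kill the contributions of strictly semistable sheaves and of sheaves supported on reducible or multiple curves, so that the surviving genus-$0$ term for primitive $\beta$ is precisely the invariant of the $\chi=1$ stable moduli space $M_\beta$, namely $\mathrm{DT}_3(\beta)=\deg[M_\beta]^{\rm{vir}}=\chi(M_\beta,\nu_{M_\beta})$. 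For primitive $\beta$ there are no strictly semistable sheaves, and this step is essentially the original stable-pairs analysis of Pandharipande--Thomas for irreducible classes.

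The hard part will be this last step for non-primitive $\beta$: both the $q=-1$ double-pole coefficient of $Z_\beta$ and the wall-crossing output naturally carry the full multiple-cover structure $\sum_{k\mid\beta}(\cdots)$, and one must show that after matching them the primitive pieces $n_{0,\beta}^{P}$ and $\mathrm{DT}_3(\beta)$ agree term by term. This is precisely where the geometric vanishing conjecture does the real work, guaranteeing that the auxiliary (higher-genus and strictly semistable) contributions reassemble into the universal multiple-cover factors of the Gopakumar--Vafa formula rather than contaminating the genus-$0$ coefficient; the requisite M\"obius-type inversion then yields $n_{0,\beta}^{P}=\mathrm{DT}_3(\beta)$ for every $\beta$, completing the reduction.
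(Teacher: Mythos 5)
Your first reduction is fine: Conjecture~\ref{conj:smnop} gives $n_{0,\beta}^{\rm{GW}}=n_{0,\beta}^{P}$ at once, so everything hinges on $n_{0,\beta}^{P}=\mathrm{DT}_3(\beta)$. But your argument for that identity has a genuine gap, and it misses the one place where the hypothesis actually does work beyond genus zero. You claim that ``rationality and the $q\leftrightarrow q^{-1}$ symmetry of $Z_\beta$'' let you strip away the higher-genus terms and read off $n_{0,\beta}^{P}$ from the double pole at $q=-1$. That is false as stated: a nonzero $n_{g,\beta}^{P}$ with $g<0$ contributes a pole of order $2-2g>2$ at $q=-1$, and rationality plus symmetry (which are known unconditionally by Bridgeland and Toda) do not exclude such terms --- their exclusion is exactly Pandharipande--Thomas' strong rationality conjecture. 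The point of the paper's proof is that Conjecture~\ref{conj:smnop} \emph{delivers} this vanishing for free: since the GW expansion (\ref{GW/GV:Y}) runs only over $g\geqslant 0$, one has $n_{g,\beta}^{\rm{GW}}=0$ for $g<0$, hence $n_{g,\beta}^{P}=0$ for $g<0$ by the hypothesis. You never extract this consequence from the hypothesis, and without it your pole-extraction step collapses.

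The second problem is that you then invoke ``the geometric vanishing conjecture'' to kill strictly semistable and reducible contributions in the wall-crossing, and you explicitly defer the non-primitive case to it. No such conjecture is a hypothesis of the lemma (the geometric vanishing $n_{g,\beta}^{\rm{GW}}=0$ for $g\gg 0$ discussed in the appendix plays a different role: it is one way to \emph{arrive at} Conjecture~\ref{conj:smnop} from the weaker GW/PT correspondence, not an extra input on top of it). The lemma must follow from Conjecture~\ref{conj:smnop} alone together with cited theorems. The paper's route is: once $n_{g,\beta}^{P}=0$ for $g<0$ is known, \cite[Theorem~6.4]{Toda2} (the wall-crossing argument in the derived category, equivalently the multiple cover formula for the generalized DT invariants $N_{n,\beta}$) yields $n_{0,\beta}^{P}=\mathrm{DT}_3(\beta)$ for \emph{all} $\beta$, primitive or not. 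So the non-primitive case you flag as ``the hard part'' is precisely the content of that cited theorem, conditional on the negative-genus vanishing you failed to establish. To repair your proof: derive $n_{g,\beta}^{P}=0$ for $g<0$ from the hypothesis as above, then cite Toda's theorem rather than attempting the wall-crossing and M\"obius inversion by hand.
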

\begin{proof}
Conjecture~\ref{conj:smnop} implies that 
$n_{g, \beta}^P=0$ for $g<0$. 
By~\cite[Theorem~6.4]{Toda2}, the
wall-crossing argument in the derived category shows 
the identity $n_{0, \beta}^P=\mathrm{DT}_3(\beta)$. 
Therefore $n_{0, \beta}^{\rm{GW}}=\mathrm{DT}_3(\beta)$ holds. 
\end{proof}
Note that the original GW/DT conjecture~\cite{MNOP}
together with DT/PT correspondence~\cite{Toda}
only claims the identity of (\ref{GW/GV:Y}) and (\ref{PT/GV:Y})
as rational functions of $q$. 
In order to further have the identity 
$n_{g, \beta}^{\rm{GW}}=n_{g, \beta}^P$, 
we need to know either one of the following properties: 
\begin{enumerate}
\item For any fixed $\beta$, we have $n_{g, \beta}^{\rm{GW}}=0$
for $g\gg 0$. 
\item For any $\beta$, we have $n_{g, \beta}^P=0$ for $g<0$. 
\end{enumerate}
Indeed if one the above conditions is satisfied, then 
the uniqueness of the expressions in the 
form of (\ref{GW/GV:Y}) or (\ref{PT/GV:Y}) shows the 
identity $n_{g, \beta}^{\rm{GW}}=n_{g, \beta}^P$. 
So assuming that the GW/PT conjecture holds, 
the conditions (1), (2), and Conjecture~\ref{conj:smnop} are equivalent. 

The vanishing $n_{g, \beta}^P=0$ for $g<0$ is 
nothing but Pandharipande-Thomas' strong rationality conjecture~\cite{PT}.
By the wall-crossing argument in the derived category~\cite{Toda2}, 
this vanishing is equivalent to the multiple cover formula
of generalized DT invariants~\cite{JS}. 
Let 
$N_{n, \beta} \in \mathbb{Q}$ be the generalized DT invariant
which counts one dimensional semistable sheaves $E$ on $Y$
with $[E]=\beta$, $\chi(E)=n$. 
Its multiple cover conjecture is given as follows:
\begin{conj}\emph{\cite{JS, Toda2}}
We have the identity
\begin{align}\label{N:mult}
N_{n, \beta}=\sum_{k\geqslant 1, k|(n, \beta)}
\frac{1}{k^2} \DT_3(\beta/k).
\end{align}
\end{conj}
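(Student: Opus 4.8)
The plan is to deduce the multiple cover formula (\ref{N:mult}) from the wall-crossing formalism for generalized Donaldson--Thomas invariants, following the strategy of \cite{JS, Toda2} already invoked in the discussion preceding the statement. Recall that $N_{n,\beta}$ is the Behrend-function-weighted virtual count of the moduli stack of one-dimensional Gieseker-semistable sheaves $E$ with $([E],\chi(E))=(\beta,n)$, and that Joyce--Song's inversion defines BPS invariants $\Omega(n,\beta)$ by $N_{n,\beta}=\sum_{k\mid(n,\beta)}\frac{1}{k^2}\Omega(n/k,\beta/k)$. In these terms (\ref{N:mult}) is exactly the assertion that $\Omega(n,\beta)$ is independent of $n$ and equals $\DT_3(\beta)$. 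Thus the entire content is the $\chi$-independence of the BPS count together with its identification in the primitive case, and I would organize the proof around these two points.

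First I would dispose of the coprime case. When $\gcd(n,\beta)=1$, semistability coincides with stability, the sum in (\ref{N:mult}) has the single term $k=1$, and the claim reduces to $N_{n,\beta}=\DT_3(\beta)$. This follows once one knows the stable count $\int_{[M_{n,\beta}]^{\mathrm{vir}}}1$ over the moduli space $M_{n,\beta}$ of stable sheaves with Euler characteristic $n$ is independent of the chosen $n$ coprime to $\beta$. That independence I would obtain by tensoring with a line bundle $L$ on $Y$ with $L\cdot\beta=1$, which induces an isomorphism $M_{n,\beta}\cong M_{n+L\cdot\beta,\beta}$ compatible with the symmetric obstruction theories, in the same spirit as the CY$_3$ manipulations behind Lemma~\ref{lem:conj:katz}.

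The substance lies in the non-primitive case, where I would invoke Toda's wall-crossing theorem \cite[Theorem~6.4]{Toda2}. Varying through a family of weak stability conditions in the derived category that interpolates between Gieseker stability for one-dimensional sheaves and the stable-pair chamber, one obtains a universal identity in the motivic Hall algebra whose integration map relates the generating series $\sum_n N_{n,\beta}q^n$ to the stable-pair series $\sum_n P_{n,\beta}q^n$. Taking logarithms and matching the two BPS expansions (\ref{PT/GV:Y}) then shows that (\ref{N:mult}) is \emph{equivalent} to the strong rationality vanishing $n^P_{g,\beta}=0$ for $g<0$, precisely as recorded in the paragraph before the statement. I would carry out this matching carefully, using the no-pole property of the logarithm of the generating series and the compatibility of the Behrend-function weighting with wall-crossing, both supplied by \cite{JS}.

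The hard part will be establishing this vanishing (equivalently, the $\chi$-independence of $\Omega(n,\beta)$) unconditionally; in full generality it is open. I would therefore complete the argument under a geometric hypothesis: either boundedness of the Gopakumar--Vafa genus, i.e. condition (1) above, which forces $\sum_n P_{n,\beta}q^n$ to be a Laurent polynomial of the shape predicted by (\ref{PT/GV:Y}) and hence yields the vanishing; or, for the geometries actually needed later (toric or complete-intersection CY$_3$'s, cf. Corollary~\ref{cor:CI}), where the relevant moduli stacks are explicit enough that the Hall-algebra identity can be integrated and the coefficients read off directly. The main obstacle, then, is not the wall-crossing bookkeeping but the input $n^P_{g,\beta}=0$ for $g<0$, and it is exactly this strong rationality that must be verified in each case to make (\ref{N:mult}) unconditional.
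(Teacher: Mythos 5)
The statement you are asked to prove is recorded in the paper as a \emph{conjecture}, and the paper offers no proof of it: the authors only note, in the surrounding discussion and in Lemma~\ref{lem:primitive}, that (\ref{N:mult}) is equivalent via the wall-crossing of \cite[Theorem~6.4]{Toda2} to Pandharipande--Thomas' strong rationality ($n^{P}_{g,\beta}=0$ for $g<0$), that it holds for primitive $\beta$ by \cite[Lemma~2.12]{Toda3}, and that for the toric local CY 3-folds used later it follows from the known GW/PT correspondence. Your proposal reproduces this reduction faithfully --- identifying (\ref{N:mult}) with the $\chi$-independence of the Joyce--Song BPS invariants and with strong rationality is exactly the paper's point of view --- but, as you concede, it does not close the argument: the required vanishing $n^{P}_{g,\beta}=0$ for $g<0$ is open in general, so what you have is a conditional reduction, not a proof. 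That outcome is consistent with the paper, which deliberately leaves the statement as a conjecture.

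One concrete step in your sketch would fail as written: in the coprime case you propose to prove $N_{n,\beta}=\DT_3(\beta)$ by tensoring with a line bundle $L$ satisfying $L\cdot\beta=1$. Such an $L$ need not exist (for instance when every divisor class has even intersection with $\beta$), and tensoring by line bundles only shifts $n$ within the coset of the subgroup $\{L\cdot\beta : L\in\Pic(Y)\}\subset\mathbb{Z}$, which can be proper. The $\chi$-independence $N_{n,\beta}=N_{1,\beta}$ for $n$ coprime to $\beta$ is itself a nontrivial part of what the conjecture asserts and cannot be dispatched by this elementary twist; note that even the primitive case in the paper rests on the Jacobian-localization argument of \cite{Toda3} rather than on a line-bundle tensoring.
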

We have the following lemma: 
\begin{lem}\label{lem:primitive}
For a CY 3-fold $Y$, suppose that the GW/PT conjecture~\cite{MNOP} holds. 
Moreover suppose that for a fixed $\beta \in H_2(Y, \mathbb{Z})$, 
the identity (\ref{N:mult}) holds. 
Then for any $k\geqslant 1$
with $k|\beta$, we have the identity
$n_{0, \beta/k}^{\rm{GW}}=\DT_3(\beta/k)$. 
In particular, $n_{0, \beta}=\DT_3(\beta)$ holds 
for any primitive curve class $\beta$. 
\end{lem}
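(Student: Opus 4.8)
The plan is to route everything through the PT side and the wall-crossing results recalled just before the statement, so that the only genuinely new content is a genus-zero extraction. First I would invoke the standing hypothesis: the GW/PT conjecture \cite{MNOP}, together with the DT/PT correspondence \cite{Toda}, furnishes the equality of the two generating series (\ref{GW/GV:Y}) and (\ref{PT/GV:Y}) as rational functions of $q$ after the substitution $q=-e^{i\lambda}$. Comparing the coefficient of $t^{\gamma}$ for each class $\gamma=\beta/k$ with $k\mid\beta$, it then suffices to prove two things for every such $\gamma$: that the genus-zero Gopakumar--Vafa coefficients match, $n_{0,\gamma}^{\rm{GW}}=n_{0,\gamma}^{P}$, and that the PT coefficient is the sheaf-counting invariant, $n_{0,\gamma}^{P}=\mathrm{DT}_3(\gamma)$.

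Next I would exploit the assumption that (\ref{N:mult}) holds for $\beta$. By the wall-crossing argument in the derived category of \cite{Toda2} (equivalently, the multiple cover formula for generalized DT invariants of \cite{JS}), the identity (\ref{N:mult}) is equivalent to the strong rationality vanishing $n_{g,\beta/k}^{P}=0$ for $g<0$. This is the decisive input: the GW expansion (\ref{GW/GV:Y}) is supported in $g\geqslant 0$ with no a priori upper bound, while the PT expansion (\ref{PT/GV:Y}) is always supported in $g\leqslant g_0$, so once the lower bound $g\geqslant 0$ is known on the PT side, both series are genus-bounded expansions written in the same basis $\{((-q)^{k/2}-(-q)^{-k/2})^{2g-2}\}_{g\geqslant 0,\,k\geqslant 1}$. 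The uniqueness of such expansions, as in the discussion following Lemma~\ref{lem:conj:katz}, then forces $n_{g,\beta/k}^{\rm{GW}}=n_{g,\beta/k}^{P}$ for all $g$ and all $k\mid\beta$; in particular the identity holds in genus zero.

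Finally I would identify $n_{0,\beta/k}^{P}$ with the sheaf invariant. Under the same multiple cover hypothesis, \cite[Theorem 6.4]{Toda2} gives $n_{0,\beta/k}^{P}=\mathrm{DT}_3(\beta/k)$, exactly as used in the proof of Lemma~\ref{lem:conj:katz}. Combining with the previous step yields $n_{0,\beta/k}^{\rm{GW}}=\mathrm{DT}_3(\beta/k)$ for every $k\mid\beta$, which is the assertion. Specializing to a primitive class $\beta$, the only divisor is $k=1$, so we recover $n_{0,\beta}=\mathrm{DT}_3(\beta)$, i.e. Conjecture~\ref{conj:katz} in the primitive case.

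The main obstacle I expect is bookkeeping of which classes require (\ref{N:mult}): the general conclusion ranges over all divisor classes $\beta/k$, so one must check that the wall-crossing equivalence and \cite[Theorem 6.4]{Toda2} apply to each $\beta/k$ rather than to $\beta$ alone. This is automatic for a primitive class, which is the only case actually needed in Theorem~\ref{DT4/DT3 primitive}, but in the non-primitive case one should read the hypothesis as supplying (\ref{N:mult}) for the relevant sub-multiples. The second delicate point is the uniqueness of the Gopakumar--Vafa expansion: it genuinely relies on the genus lower bound provided by strong rationality, since otherwise the higher-order poles arising from $g<0$ would prevent reading off the genus-zero term as the leading $((-q)^{k/2}-(-q)^{-k/2})^{-2}$ pole at the appropriate root of unity.
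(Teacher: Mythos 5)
Your proposal is correct and follows essentially the same route as the paper: use the hypothesis (\ref{N:mult}) together with the wall-crossing argument of \cite[Theorem~6.4]{Toda2} to get $n^{P}_{g,\beta/k}=0$ for $g<0$ and $n^{P}_{0,\beta/k}=\DT_3(\beta/k)$, then compare (\ref{GW/GV:Y}) with (\ref{PT/GV:Y}) via the GW/PT input and the uniqueness of such expansions to conclude $n^{\rm{GW}}_{0,\beta/k}=\DT_3(\beta/k)$.

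The one ingredient you omit is the justification of the final ``in particular'' clause as it is actually used later (Corollary~\ref{cor:CI} invokes the lemma for primitive classes after verifying only the GW/PT hypothesis): the paper's proof notes that the identity (\ref{N:mult}) holds \emph{unconditionally} for a primitive curve class $\beta$ by \cite[Lemma~2.12]{Toda3}, so that no multiple cover hypothesis needs to be assumed in that case. Your reading --- that for primitive $\beta$ the only divisor is $k=1$ --- is literally consistent with the statement as written, but without the citation to \cite{Toda3} the primitive case would still carry (\ref{N:mult}) as a hypothesis, which is not how the lemma is applied. Your closing remark about which sub-multiples require (\ref{N:mult}) is a fair point; the paper simply asserts that the single identity for $\beta$ suffices ``by the argument of'' \cite[Theorem~6.4]{Toda2}, in line with your suggested reading.
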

\begin{proof}
By the argument of~\cite[Theorem~6.4]{Toda2}, 
the identity (\ref{N:mult}) implies 
that $n_{g, \beta/k}^P=0$ for any $g<0$ and $k\geqslant 1$
with $k|\beta$, and $n_{0, \beta/k}^{P}=\DT_3(\beta/k)$ holds. 
Then comparing (\ref{GW/GV:Y}) with (\ref{PT/GV:Y}), 
we obtain 
$n_{0, \beta/k}^{\rm{GW}}=\DT_3(\beta/k)$. 
The identity (\ref{N:mult}) always holds
 for primitive curve class $\beta$ by~\cite[Lemma~2.12]{Toda3}. 
Therefore $n_{0, \beta}=\DT_3(\beta)$
holds for any primitive $\beta$. 
\end{proof}
Then using the result of~\cite{PP}, we have the following: 
\begin{cor}\label{cor:CI}
Let $Y$ be a complete intersection CY 3-fold in the 
product of projective 
spaces $\mathbb{P}^{n_1} \times \cdots \times \mathbb{P}^{n_k}$. 
Then for any primitive curve class $\beta$ on $Y$, 
we have $n_{0, \beta}^{\rm{GW}}=\DT_3(\beta)$. 
\end{cor}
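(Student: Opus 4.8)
The plan is to recognize that this corollary is essentially a direct application of Lemma~\ref{lem:primitive}, whose two hypotheses can be verified for the class of varieties at hand. Recall that Lemma~\ref{lem:primitive} asserts: if the GW/PT correspondence of~\cite{MNOP} holds for $Y$, and the multiple cover formula~(\ref{N:mult}) holds for a fixed class $\beta$, then $n_{0,\beta/k}^{\rm{GW}}=\DT_3(\beta/k)$ for every $k$ with $k\mid\beta$; in particular the genus zero GV/DT identity holds for primitive $\beta$. So the whole task reduces to checking these two inputs for complete intersections in products of projective spaces.

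First I would establish the GW/PT correspondence for such a $Y$. Since $Y$ is a complete intersection in $\mathbb{P}^{n_1}\times\cdots\times\mathbb{P}^{n_k}$, the GW/DT correspondence of~\cite{MNOP} is available by the work of Pandharipande-Pixton~\cite{PP}, which is precisely tailored to this geometric setting. Combining this with the DT/PT correspondence~\cite{Toda}, one obtains the full GW/PT correspondence for $Y$ as an equality of rational functions in $q$, which is exactly the hypothesis demanded by Lemma~\ref{lem:primitive}.

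Second, I would supply the multiple cover formula~(\ref{N:mult}) for the generalized DT invariants $N_{n,\beta}$. Here the restriction to a \emph{primitive} curve class $\beta$ is what makes life easy: for primitive $\beta$ the identity~(\ref{N:mult}) holds unconditionally by~\cite[Lemma~2.12]{Toda3}, since every semistable sheaf of class $\beta$ is automatically stable and there are no nontrivial divisors $k\mid\beta$ to account for. With both hypotheses of Lemma~\ref{lem:primitive} in place, the conclusion $n_{0,\beta}^{\rm{GW}}=\DT_3(\beta)$ follows at once.

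The only genuine input in this argument is the first step, namely the validity of the GW/DT correspondence for complete intersections in products of projective spaces; everything else is formal given the chain of reductions already assembled in this appendix. Thus the main obstacle is not located in the present argument but is absorbed into the external theorem~\cite{PP}, which delivers the required correspondence exactly for this family of CY 3-folds. Consequently the corollary is really a bookkeeping statement recording that the abstract criterion of Lemma~\ref{lem:primitive} is met in a concrete, well-studied case.
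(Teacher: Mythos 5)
Your proposal is correct and follows essentially the same route as the paper: the paper's proof likewise cites \cite{PP} for the GW/PT correspondence for complete intersections in products of projective spaces and then invokes Lemma~\ref{lem:primitive}, whose proof already contains the observation that (\ref{N:mult}) holds unconditionally for primitive $\beta$ by \cite[Lemma~2.12]{Toda3}. The only cosmetic difference is that you reach GW/PT by combining GW/DT with the DT/PT correspondence of \cite{Toda}, whereas the paper takes \cite{PP} as proving GW/PT directly; this does not affect the argument.
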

\begin{proof}
The GW/PT conjecture is proved for such CY threefolds in~\cite{PP}.
Then the result follows from Lemma~\ref{lem:primitive}. 
\end{proof}
In the case of some toric CY 3-fold, we can 
derive Conjecture~\ref{conj:katz}
from the vanishing $n_{g, \beta}^{\rm{GW}}=0$ for 
$g\gg 0$. 
Let $S$ be a smooth toric 
del-Pezzo surface and 
consider the non-compact CY 3-fold 
$Y=\mathrm{Tot}_S(K_S)$. 
Although $Y$ is non-compact, 
all the relevant objects 
(stable maps, stable pairs, stable sheaves)
are supported on the zero section of $Y \to S$. 
Therefore all the above 
invariants, results are obtained as in the 
projective CY 3-fold case. 
We have the following corollary: 
\begin{cor}\label{cor:toric}
Let $S$ be a smooth toric
del-Pezzo surface. 
Then Conjecture~\ref{conj:katz} holds for 
$Y=\mathrm{Tot}_S(K_S)$. 
\end{cor}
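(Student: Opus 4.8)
The plan is to reduce the statement to the geometry of the zero section and then feed the result into the chain of implications assembled in the appendix. First I would exploit that $S$ is del-Pezzo, so $-K_S$ is ample: for every effective class $\beta\neq 0$ the relevant moduli on $Y=\mathrm{Tot}_S(K_S)$ are proper. For one-dimensional stable sheaves this is exactly the argument of Proposition~\ref{vir:loc neg} (ampleness of $K_S^{-1}$ forces the $\oO_X$-module structure off the zero section to vanish), giving $M_{Y,\beta}=M_{S,\beta}$; for stable maps and stable pairs the images of curves in class $\beta$ are likewise supported on the zero section $j\colon S\hookrightarrow Y$. Consequently $\overline{M}_g(Y,\beta)$, $P_n(Y,\beta)$ and $M_\beta$ are proper, the invariants $\mathrm{GW}_{g,\beta}$, $P_{n,\beta}$, $\mathrm{DT}_3(\beta)$ are defined as in the projective case, and the generating-series identities (\ref{GW/GV:Y}), (\ref{PT/GV:Y}) together with Lemma~\ref{lem:conj:katz} apply verbatim to $Y$.

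Next I would invoke the GW/DT correspondence \cite{MNOP}, which is a theorem for toric threefolds, together with the DT/PT correspondence \cite{Toda}; this gives the equality of (\ref{GW/GV:Y}) and (\ref{PT/GV:Y}) as rational functions of $q$, i.e.\ the GW/PT correspondence in its rational-function form. As explained in the discussion following Lemma~\ref{lem:conj:katz}, the remaining task is to upgrade this to the coefficient-wise identity $n_{g,\beta}^{\mathrm{GW}}=n_{g,\beta}^{P}$, for which, by uniqueness of the Gopakumar--Vafa expansions, it suffices to check condition (1) there: for each fixed $\beta$ one has $n_{g,\beta}^{\mathrm{GW}}=0$ for $g\gg 0$.

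The essential geometric input is therefore a genus bound on the del-Pezzo surface. For effective $\beta$ the arithmetic genus of curves in the class is the fixed integer $p_a(\beta)=1+\tfrac12\,\beta\cdot(\beta+K_S)$, so the geometric genus of any (possibly reducible or degenerate) curve in class $\beta$ is at most $p_a(\beta)$, whence the local Gopakumar--Vafa numbers $n_{g,\beta}^{\mathrm{GW}}$ vanish for $g>p_a(\beta)$. I expect this vanishing to be the main point requiring care: it must be argued intrinsically from the structure of the genus $0$ through $p_a(\beta)$ Gromov--Witten theory of the local surface — for instance via the sheaf-theoretic description of the contributing BPS states in \cite{HST} — rather than from the naive expectation, since it is precisely the finiteness that forces the $q$-expansion to terminate. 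Granting it, condition (1) holds, the rational-function GW/PT correspondence upgrades to the strong form $n_{g,\beta}^{\mathrm{GW}}=n_{g,\beta}^{P}$ (Conjecture~\ref{conj:smnop}) for $Y$, and Lemma~\ref{lem:conj:katz} then yields $n_{0,\beta}^{\mathrm{GW}}=\mathrm{DT}_3(\beta)$, which is Conjecture~\ref{conj:katz} for $Y=\mathrm{Tot}_S(K_S)$.
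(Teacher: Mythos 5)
Your overall architecture coincides with the paper's: reduce everything to the zero section, obtain the rational-function form of GW/PT from \cite{MNOP} and \cite{Toda}, upgrade to the coefficient-wise identity $n_{g,\beta}^{\mathrm{GW}}=n_{g,\beta}^{P}$ by verifying that $n_{g,\beta}^{\mathrm{GW}}=0$ for $g\gg 0$, and then conclude via Lemma~\ref{lem:conj:katz}. The problem is that you leave precisely the load-bearing step --- the vanishing $n_{g,\beta}^{\mathrm{GW}}=0$ for $g\gg 0$, equivalently Conjecture~\ref{conj:smnop} for the local del-Pezzo --- as something to be ``granted.'' The paper's proof is two lines: Conjecture~\ref{conj:smnop} holds for toric Calabi--Yau threefolds by \cite{MNOP} together with \cite{Konishi}, and then Lemma~\ref{lem:conj:katz} applies. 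The citation to Konishi's theorem on integrality and finiteness of Gopakumar--Vafa invariants of toric CY 3-folds is exactly the input you are missing.

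The heuristic you offer in its place --- that curves in class $\beta$ have geometric genus at most $p_a(\beta)=1+\tfrac12\beta\cdot(\beta+K_S)$, hence $n_{g,\beta}^{\mathrm{GW}}=0$ for $g>p_a(\beta)$ --- does not constitute a proof and cannot be promoted to one by the route you sketch. The numbers $n_{g,\beta}^{\mathrm{GW}}$ are defined by formally inverting the expansion (\ref{GW/GV:Y}) applied to the \emph{full} Gromov--Witten partition function; for every fixed $\beta$ the invariants $\mathrm{GW}_{g,\beta}$ are nonzero for arbitrarily large $g$ (maps with contracted higher-genus components contribute in all genera), and the assertion that after subtracting the universal multiple-cover and degenerate contributions the result terminates at $g=p_a(\beta)$ is a genuinely nontrivial resummation statement. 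For toric targets this is established through the topological vertex formalism and Peng's/Konishi's integrality analysis, not from a bound on geometric genera of embedded curves. You correctly flag this as ``the main point requiring care,'' but flagging it is not closing it; as written the argument has a gap exactly where the paper invokes \cite{Konishi}.
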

\begin{proof}
Since $S$ is toric, the 
Conjecture~\ref{conj:smnop}
holds by~\cite{MNOP, Konishi}. 
Therefore the result follows by Lemma~\ref{lem:conj:katz}. 
\end{proof}
\begin{rmk}
In particular for $Y=\mathrm{Tot}_S(K_S)$ 
in Corollary~\ref{cor:toric},
the identity (\ref{N:mult}) holds.
We don't know how to prove this without 
using the GW/PT correspondence~\cite{MNOP}.   
\end{rmk}

\section{An orientability result for smooth moduli spaces of one dimensional stable sheaves}
Let $X$ be a smooth projective CY 4-fold and $M_\beta$ be the moduli space of one dimensional stable sheaves of Chern character $(0,0,0,\beta,1)$.
We denote $\mathcal{L}$ to be its determinant line bundle (see (\ref{det line bdl})) and $Q$ the non-degenerate quadratic form induced by Serre duality. Then we have the following:
\begin{prop}
If $M_\beta$ is a normal variety, then $(\mathcal{L},Q)$ has an orientation.
\end{prop}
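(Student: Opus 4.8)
The plan is to identify an orientation of $(\lL,Q)$ with an algebraic trivialization of $\lL$, and then to prove $\lL\cong\oO_{M_\beta}$ using normality. First note that by relative Serre duality on the CY $4$-fold one has $\dR\hH om_{\pi_M}(\eE,\eE)^{\vee}\cong \dR\hH om_{\pi_M}(\eE,\eE)[4]$, so taking determinants yields a canonical isomorphism $\lL^{\otimes 2}\cong\oO_{M_\beta}$, which is precisely the pairing $Q$. An orientation is a square root of $Q$, i.e. an isomorphism $o\colon \lL\to\oO_{M_\beta}$ with $o^{\otimes 2}=Q$, and such data is a global section of the $\mu_2$-torsor of local square roots of $Q$, whose class lies in $H^1(M_\beta,\mu_2)$. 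Since $M_\beta$ is a normal variety it is integral and proper, so $\Gamma(M_\beta,\oO^{\ast})=\mathbb{C}^{\ast}$ and the Kummer sequence gives an injection $H^1(M_\beta,\mu_2)\hookrightarrow \Pic(M_\beta)[2]$ carrying this torsor to the class of $\lL$. Hence an orientation exists if and only if $\lL\cong\oO_{M_\beta}$.

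Next I would reduce the triviality of $\lL$ to the smooth locus. Let $U=M_\beta\setminus\Sing(M_\beta)$ with open immersion $j\colon U\hookrightarrow M_\beta$. Because $M_\beta$ is normal one has $\codim(\Sing(M_\beta))\geqslant 2$ and $\oO_{M_\beta}\cong j_{\ast}\oO_U$; since a line bundle is reflexive, every line bundle on $M_\beta$ is recovered as $j_{\ast}$ of its restriction to $U$, so $\Pic(M_\beta)\to\Pic(U)$ is injective and $\lL\cong\oO_{M_\beta}$ if and only if $\lL|_U\cong\oO_U$. The same reflexivity argument lets me replace $U$ by any open subset whose complement in $M_\beta$ has codimension $\geqslant 2$.

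The main step is to trivialize $\lL$ on a suitable open locus. On the open $U^{\circ}\subseteq U$ where all the sheaves $\eE xt^i_{\pi_M}(\eE,\eE)$ are locally free, the determinant of the complex collapses (using $\eE xt^0\cong\oO$, $\eE xt^4\cong\oO$ and the Serre-duality isomorphism $\eE xt^3\cong(\eE xt^1)^{\vee}$) to $\lL|_{U^{\circ}}\cong\det\eE xt^2_{\pi_M}(\eE,\eE)$, where $(\eE xt^2_{\pi_M}(\eE,\eE),Q)$ is an orthogonal bundle. To trivialize its determinant I would exhibit a global maximal isotropic (Lagrangian) subbundle $\vV\subset\eE xt^2_{\pi_M}(\eE,\eE)$, since then $\det\eE xt^2\cong\det\vV\otimes\det\vV^{\vee}\cong\oO$. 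This Lagrangian is produced by Serre–Grothendieck duality along the one-dimensional support: for $F$ a line bundle on a smooth curve $C=\Supp(F)$ with $N=N_{C/X}$ one computes $\Ext^2_X(F,F)\cong H^1(C,N)\oplus H^0(C,\wedge^2 N)$, and $\det N\cong\omega_C$ forces $\wedge^2 N\cong N^{\vee}\otimes\omega_C$, so $H^0(C,\wedge^2 N)\cong H^1(C,N)^{\vee}$ and $H^1(C,N)$ is maximal isotropic with the summands Serre-dual to one another. This splitting is canonical, hence globalizes to a Lagrangian subbundle over the open locus of line bundles on smooth (or Gorenstein lci) support curves, where $\lL$ is therefore canonically trivial.

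The hard part will be to guarantee that this good locus has complement of codimension $\geqslant 2$ in $M_\beta$, so that triviality there propagates to all of $M_\beta$ by the reflexivity argument of the second step. This needs a dimension estimate for the strata parametrizing sheaves with non-lci support or non-locally-free behaviour on their support, and this is exactly where I expect the hypothesis that $M_\beta$ is a (normal, hence $R_1$) variety to be essential: once the generic stable sheaf is known to be a line bundle on a smooth curve, these degeneracy strata should have codimension at least two, while the $R_1$ condition rules out a codimension-one singular locus intervening. The genuinely delicate case, which I expect to absorb most of the work, is when the generic support is itself singular or $\eE xt^2$ does not admit a canonical Lagrangian; there one would instead need a finer local model for the $(-2)$-shifted symplectic structure on $M_\beta$, or a direct Grothendieck–Riemann–Roch computation showing $\det\eE xt^2$ is trivial.
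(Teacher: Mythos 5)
Your first two steps agree in substance with the paper's. The reduction of ``orientation exists'' to ``$\lL$ is trivial'' is handled there more simply: given any isomorphism $\lL\cong\oO_{M_\beta}$, its square differs from $Q$ by a global unit on the proper variety $M_\beta$, i.e.\ a nonzero constant, which has a square root in $\mathbb{C}$. The reduction to the smooth locus via normality and reflexivity of line bundles is exactly as you describe.

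The gap is in your third step, which is where all the content lies, and you have not closed it; nor can it be closed along the route you outline. Your plan is to trivialize $\det \eE xt^2_{\pi_M}(\eE,\eE)$ by exhibiting a Lagrangian subbundle over the locus of sheaves that are line bundles on smooth support curves, and then to argue that the complement of this good locus has codimension at least $2$. But the hypothesis that $M_\beta$ is normal gives no control whatsoever over the supports of the parametrized sheaves: the good locus can be empty or have a divisorial complement (every sheaf in $M_\beta$ may be supported on singular, non-reduced, or non-planar curves), and the $R_1$ condition on $M_\beta$ says nothing about the stratification by support type. So the ``hard part'' you flag at the end is not a technical loose end but a genuine obstruction to this strategy; the proposal as written does not prove the statement.

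The paper sidesteps all of this with a K-theoretic argument adapted from \cite[Proposition 3.13]{MT}, which is essentially the ``direct Grothendieck--Riemann--Roch computation'' you mention as a fallback but do not carry out. Once one reduces to $M_\beta$ smooth, the product $X\times M_\beta$ is smooth and the class $[\eE]$ lies in the piece $K^{\geqslant 3}(X\times M_\beta)$ of the codimension filtration, since $\eE$ is supported on a family of curves, i.e.\ in codimension $3$. By Grothendieck's multiplicativity of this filtration, $[\eE^{\vee}]\otimes[\eE]\in K^{\geqslant 6}(X\times M_\beta)$, and pushing forward along the projection of relative dimension $4$ lands in $K^{\geqslant 2}(M_\beta)$. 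The determinant of any class in $K^{\geqslant 2}$ is trivial, so $\lL\cong\oO_{M_\beta}$ with no hypotheses on the supports at all. This is the missing idea: replacing the geometric analysis of $\eE xt^2$ and its Lagrangians by a purely codimension-theoretic vanishing of the first Chern class.
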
 
\begin{proof}
In fact, we are left to show $\mathcal{L}\cong \mathcal{O}_{M_\beta}$.
Since if $\mathcal{L}\cong \mathcal{O}_{M_\beta}$, the square $\mathcal{L}^{2}\cong \mathcal{O}_{M_\beta}$ of this isomorphism, although may be different from the one given by Serre duality (\ref{Serre duali}), its difference with that one gives an isomorphism $\mathcal{O}_{M_\beta}\cong\mathcal{O}_{M_\beta}$, which has a square root as $M_\beta$ is compact.

Then the argument follows from \cite[Proposition 3.13]{MT} which we adapt to our case as follows.
Note that for a normal variety, a holomorphic line bundle is determined by its restriction to the smooth locus. Without loss of generality, we may
assume $M_\beta$ is a smooth variety. Let $\mathcal{E} \in \Coh(X \times M_\beta)$ be the universal family,
then $[\mathcal{E}] \in K(X \times M_\beta)$ lies in the subgroup $K^{\geqslant 3}(X \times M_\beta)$ generated
by sheaves with codimension $\geqslant3$. A classical result of Grothendieck gives 
\begin{equation}[\dR \hH om(\mathcal{E}, \mathcal{E})]=[\mathcal{E}^{\vee}] \otimes [\mathcal{E}] \in K^{\geqslant 6}(X \times M_\beta). \nonumber \end{equation} 
As $X$ is a 4-fold, we have 
\begin{equation}[\dR\pi_{M_\beta,*}\big(\dR \hH om(\mathcal{E},\mathcal{E})\big)] \in K^{\geqslant 2}(M_\beta), \nonumber \end{equation} 
where $\pi_{M_\beta} : X \times M_\beta \to M_\beta$ is the projection. By taking determinant, we are done. 
\end{proof}

\section{Software code and explicit computations of an example}
In the following, we list the computational result of an example (the case of $l_1=8$, $l_2=6$, $l_3=-16$) for Conjecture \ref{conj on local P1} and
Theorem \ref{prop on local P1} with the help of software 'Mathematica'.  \\

$\textbf{Software code of Mathematica}$.  \\

$\mathrm{DT_4([C])}=\lambda_1^{-1-l_1}\lambda_2^{-1-l_2}(-\lambda_1-\lambda_2)^{-1-l_3}$,  \\

$A=\mathrm{Sum}[\mathrm{Mod}[(k-l_1+1) ,2](-\lambda_1^{-2l_1-2})(\lambda_2^{-2l_2-2})((\lambda_1+\lambda_2)^{-2l_3-2})\cdot \\
{}\quad\quad \quad \mathrm{Residue}[(h^{-k})(-\lambda_1+h)^{2}(\lambda_2+h)^{k+l_2}(-\lambda_1-\lambda_2+h)^{k+l_3}(-\lambda_1+\lambda_2+h)^{l_1-l_2-k}\cdot \\
{}\quad\quad \quad(-2\lambda_1-\lambda_2+h)^{l_1-l_3-k}(-2\lambda_1+h)^{k-2-2l_1},\{h,0\}], \{k,l_1\}]$,  \\

$B=\mathrm{Sum}[\mathrm{Mod}[(k-l_2+1) ,2](-\lambda_1^{-2l_1-2})(\lambda_2^{-2l_2-2})((\lambda_1+\lambda_2)^{-2l_3-2})\cdot \\
{}\quad\quad \quad \mathrm{Residue}[(h^{-k})(-\lambda_2+h)^{2}(\lambda_1+h)^{k+l_1}(-\lambda_1-\lambda_2+h)^{k+l_3}(-\lambda_2+\lambda_1+h)^{l_2-l_1-k}\cdot \\
{}\quad\quad \quad(-2\lambda_2-\lambda_1+h)^{l_2-l_3-k}(-2\lambda_2+h)^{k-2-2l_2},\{h,0\}], \{k,l_2\}], $ \\

$\mathrm{DT_4(2[C])}=A+B$,  \\

$\mathrm{GW_{0,2[C]}}=-\frac{1}{8}(\lambda_1^{-2l_1-1})(\lambda_2^{-2l_2-1})((\lambda_1+\lambda_2)^{-2l_3-3})\cdot \\
{}\quad \Big(\left(\mathrm{Sum}[(-1)^{i-1}(l_1-(i-1))^{2},\{i,l_1+1\}]\right)\lambda_1^{-2}(\lambda_1+\lambda_2)^{2}+ \\
{}\quad  \left(\mathrm{Sum}[(-1)^{i-1}(l_2-(i-1))^{2},\{i,l_2+1\}]\right)\lambda_2^{-2}(\lambda_1+\lambda_2)^{2}+ \\
{}\quad \left(\mathrm{Sum}[(-1)^{i-1}(-l_3-i)^{2},\{i,-l_3\}]\right)+l_1l_2\lambda_1^{-1}\lambda_2^{-1}(\lambda_1+\lambda_2)^{2}-l_2l_3\lambda_2^{-1}(\lambda_1+\lambda_2)-l_1l_3\lambda_1^{-1}(\lambda_1+\lambda_2) \Big)$.  \\

$\textbf{An explicit example}$. For $l_1=8$, $l_2=6$, $l_3=-16$, we have  \\

$\mathrm{DT_4([C])}=\frac{(-\lambda_1-\lambda_2)^{15}}{\lambda_1^{9}\lambda_2^{7}}$,  \\

$\mathrm{DT_4(2[C])}=-\frac{(\lambda_1+\lambda_2)^{15}}{8\lambda_1^{19}\lambda_2^{15}}
\left(21\lambda_1^{18}+480\lambda_1^{17}\lambda_2+5012\lambda_1^{16}\lambda_2^{2}+31776\lambda_1^{15}\lambda_2^{3}+
137460\lambda_1^{14}\lambda_2^{4}+432208\lambda_1^{13}\lambda_2^{5}+ \right.$ \\
${} \quad\quad \quad 1026480\lambda_1^{12}\lambda_2^{6}+1886976\lambda_1^{11}\lambda_2^{7}+2726437\lambda_1^{10}\lambda_2^{8}
+3123120\lambda_1^{9}\lambda_2^{9}+2845128\lambda_1^{8}\lambda_2^{10}+2057120\lambda_1^{7}\lambda_2^{11}+$ \\
${}\left. \quad\quad \quad 1171716\lambda_1^{6}\lambda_2^{12}+518448\lambda_1^{5}\lambda_2^{13}+174160\lambda_1^{4}\lambda_2^{14}+
42816\lambda_1^{3}\lambda_2^{15}+7245\lambda_1^{2}\lambda_2^{16}+752\lambda_1\lambda_2^{17}+36\lambda_2^{18}\right)$,  \\

$\mathrm{GW_{0,2[C]}}=-\frac{(\lambda_1+\lambda_2)^{29}}
{8\lambda_1^{19}\lambda_2^{15}}(21\lambda_1^{4}+186\lambda_1^{3}\lambda_2+497\lambda_1^{2}\lambda_2^{2}+248\lambda_1\lambda_2^{3}+36\lambda_2^{4})$, \\

$\frac{1}{8}\mathrm{DT_4([C])}+\mathrm{DT_4(2[C])}-\mathrm{GW_{0,2[C]}}=0$.


\end{document}